\apptocmd{\sloppy}{\hbadness 10000\relax}{}{}
\apptocmd{\sloppy}{\vbadness 10000\relax}{}{}
\numberwithin{equation}{section}
\theoremstyle{plain}
\newtheorem{theorem}{Theorem}[section]
\newtheorem{corollary}[theorem]{Corollary}
\newtheorem{lemma}[theorem]{Lemma}
\newtheorem{keylemma}{Lemma}
\theoremstyle{definition}
\newtheorem{remark}[theorem]{Remark}
\newtheorem{definition}[theorem]{Definition}
\newtheorem{example}[theorem]{Example}
\newcommand{\Span}{\mathop\mathrm{span}\nolimits}
\def\RR{\mathbb{R}}
\def\ZZ{\mathbb{Z}}
\def\XX{{\mathbb{X}}}
\newcommand{\radius}{\mathop\mathrm{radius}\nolimits}
\newcommand{\diam}{\mathop\mathrm{diam}\nolimits}
\newcommand{\dist}{\mathop\mathrm{dist}\nolimits}
\newcommand{\Mod}{\operatorname{mod}}
\newcommand{\res}{\hbox{ {\vrule height .22cm}{\leaders\hrule\hskip.2cm} }}
\newcommand{\Haus}{\mathcal{H}}
\newcommand{\Start}{\mathop\mathsf{Start}\nolimits}
\newcommand{\End}{\mathop\mathsf{End}\nolimits}
\newcommand{\Domain}{\mathop\mathsf{Domain}\nolimits}
\newcommand{\Image}{\mathop\mathsf{Image}\nolimits}
\newcommand{\Diam}{\mathop\mathsf{Diam}\nolimits}
\newcommand{\Edge}{\mathop\mathsf{Edge}\nolimits}
\newcommand{\Child}{\mathop\mathsf{Child}\nolimits}
\newcommand{\flatepsilon}{\epsilon_2}
\newcommand{\flatarcsepsilon}{\epsilon_1}
\newcommand{\gap}{\mathop\mathrm{gap}\nolimits}
\newcommand{\interior}{\mathop\mathrm{int}\nolimits}
\numberwithin{figure}{section}
\begin{document}

\title[Subsets of rectifiable curves in Banach spaces II]{Subsets of rectifiable curves in Banach spaces II:\\ universal estimates for almost flat arcs}

\author{Matthew Badger \and Sean McCurdy}
\thanks{M.~Badger was partially supported by NSF DMS grants 1650546 and 2154047.}
\date{August 22, 2022}
\subjclass[2010]{Primary 28A75; Secondary 26A16, 46B20, 60G46}
\keywords{rectifiable curves, Analyst's Traveling Salesman problem, Jones' $\beta$ numbers, Banach spaces, Lipschitz projections, overlapping balls, Christ-David cubes, martingales}

\address{Department of Mathematics\\ University of Connecticut\\ Storrs, CT 06269-1009}
\email{matthew.badger@uconn.edu}
\address{Department of Mathematics\\ National Taiwan Normal University\\ Taipei City\\ Taiwan (R.\,O.\,C.)}
\email{smccurdy@ntnu.edu.tw}

\begin{abstract} We prove that in any Banach space the set of windows in which a rectifiable curve resembles two or more straight line segments is quantitatively small with constants that are independent of the curve, the dimension of the space, and the choice of norm. Together with Part I, this completes the proof of the necessary half of the Analyst's Traveling Salesman theorem with sharp exponent in uniformly convex spaces.
\end{abstract}

\maketitle

\vspace{-.32in}

\tableofcontents

\vspace{-.63in}

\listoffigures

\section{Introduction}\label{sec:intro}

\subsection{Background}

Given ``snapshots'' of a set $E$ in a metric space $\XX$ at all locations and scales, the \emph{Analyst's Traveling Salesman Problem} is to determine whether or not $E$ is contained in a rectifiable curve, and if so, to estimate the length of the shortest such curve. Full solutions to the Analyst's TSP (characterizations of subsets of rectifiable curves) have been found in $\RR^n$ \cite{Jones-TST,Ok-TST}, in arbitrary Carnot groups \cite{Li-TSP}, in Hilbert space \cite{Schul-Hilbert}, and in certain fractal-like metric spaces \cite{Guy-Schul}. For the related \emph{Measure-theorist's Traveling Salesman Problem} and its solution in $\RR^n$ and also in Carnot groups, see \cite{BS3,Badger-survey,BLZ}. Partial results on the Analyst's TSP in other metric spaces have been obtained by Hahlomaa \cite{Hah05,Hah08} and David and Schul \cite{DS-metric} and for higher-dimensional objects \cite{AS-TST,BNV,Hyde-TST,Villa-TST}. Refined estimates on the length of the shortest Jordan curve containing a set in $\RR^n$ or Hilbert space have been given in \cite{Bishop-TST,Krandel-Hilbert}. In Part I \cite{Badger-McCurdy-1} and in the present paper, we address the Analyst's TSP on a general Banach space.

Let $\XX$ be a (real) Banach space, let $E\subset \XX$ be a nonempty set, and let $Q\subset\XX$ be a set of finite, positive diameter. If $E\cap Q\neq \emptyset$, we define \begin{equation}\label{def:beta} \beta_E(Q) = \inf_{L} \sup_{x\in E\cap Q}\frac{\dist(x,L)}{\diam Q}\in[0,1],\end{equation} where the infimum ranges over all lines $L\subset \XX$. If $E\cap Q=\emptyset$, then we assign $\beta_E(Q)=0$. These are a geometric variant of least squares errors introduced in \cite{Jones-TST} and are now called \emph{Jones' beta numbers}. If $\beta_E(Q)=0$, then the portion of the set $E$ inside of the ``window'' $Q$ is contained in some line $L$; if $\beta_E(Q) \gtrsim 1$, then for each line $L$ passing through $Q$, at least some part of $E\cap Q$ is far away from $L$. An easy, but important consequence of the definition is \begin{equation} \label{beta-monotone} \beta_E(R) \leq \frac{\diam Q}{\diam R}\, \beta_F(Q)\quad\text{for all }E\subset F\text{ and }R\subset Q.\end{equation} Thus, an estimate of flatness at one scale yields (a worse) estimate of flatness at a smaller scale. Because any rectifiable curve $\Gamma\subset\XX$ admits tangents lines almost everywhere with respect to the 1-dimensional Hausdorff measure $\mathcal{H}^1$, it is perhaps reasonable to expect that $\lim_{r\rightarrow 0}\beta_{\Gamma}(B(x,r))=0$ at $\Haus^1$-a.e.~$x\in\Gamma$. Following \cite{Jones-TST}, which marks the start of quantitative geometric measure theory as its own subject, we are interested in making this qualitative statement more precise.

In Part I \cite{Badger-McCurdy-1}, we established universal sufficient conditions for a set in an arbitrary Banach space to be contained inside a rectifiable curve, as well as improved estimates on the length of the shortest curve containing a set in uniformly smooth spaces. The origin of this result is Jones's criterion \cite{Jones-TST} for the existence of a rectifiable curve passing through a given set in $\RR^n$, which is usually stated using systems of dyadic cubes. However, because we work in infinite-dimensional settings, we prefer to use Schul's formulation \cite{Schul-Hilbert} in terms of multiresolution families. Recall that an \emph{$\epsilon$-net} for $E\subset\XX$ is a maximal set $X\subset E$ such that $|x-y|\geq \epsilon$ for all distinct $x,y\in X$. A \emph{multiresolution family} $\mathscr{G}$ for $E$ with \emph{inflation factor} $A_\mathscr{G}>1$ is a family $\{B(x,A_\mathscr{G} 2^{-k}):x\in X_k,\,k\in\ZZ\}$ of closed balls with centers in some nested family $\cdots\subset X_{-1}\subset X_0\subset X_{1}\subset\cdots$ of $2^{-k}$-nets $X_k$ for $E$. Analogously, if each set $X_k$ is a $2^{-k}$-separated set, but possibly one or more of the sets $X_k$ are not $2^{-k}$-nets, then we call $\mathscr{G}$ a \emph{partial multiresolution family} for $E$.

\begin{theorem}[{\cite{Jones-TST,Schul-Hilbert,Badger-McCurdy-1}}] \label{t:suff} Let $\XX$ be Banach space and let $1\leq p\leq 2$. Suppose that \begin{itemize}
\item[(i)] $\XX$ is an arbitrary Banach space and $p=1$; or,
\item[(ii)] $\XX$ is a uniformly smooth Banach space of power type $1<p\leq 2$; or,
\item[(iii)] $\XX$ is a Hilbert space and $p=2$; or,
\item[(iv)] $\XX$ is a finite-dimensional Banach space and $p=2$.
\end{itemize} If $E\subset\XX$, $\mathscr{G}$ is a multiresolution family for $E$ with inflation factor $A_\mathscr{G}\geq 240$, and \begin{equation}S_{E,p}(\mathscr{G}):=\diam E+\sum_{Q\in\mathscr{G}} \beta_E(Q)^p\diam Q<\infty,\end{equation} then $E$ is contained in a rectifiable curve $\Gamma\subset\XX$ with $\Haus^1(\Gamma)\lesssim_{A_\mathscr{G},\XX} S_{E,p}(\mathscr{G})$. (When $p=1$, restrict the sum in the definition of $S_{E,1}(\mathscr{G})$ to balls $Q\in\mathscr{G}$ with $\diam Q\lesssim \diam E$.)
\end{theorem}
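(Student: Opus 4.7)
The plan is to construct a sequence of polygonal arcs $\Gamma_k\subset\XX$ whose vertex sets approximate the nets $X_k$, with lengths controlled by the partial sum of $S_{E,p}(\mathscr{G})$ restricted to balls of diameter comparable to $2^{-k}$. Because each $X_k$ is a $2^{-k}$-net for $E$, after passing to a Hausdorff limit $\Gamma=\lim_k\Gamma_k$ the inclusion $E\subset\Gamma$ is automatic, while a uniform length bound $\Haus^1(\Gamma_k)\lesssim_{A_\mathscr{G},\XX} S_{E,p}(\mathscr{G})$ transfers to $\Gamma$ either by lower semicontinuity of length or via Arzel\`a--Ascoli applied to constant-speed parametrizations with uniformly bounded Lipschitz norm.

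The heart of the construction is the inductive step from $\Gamma_k$ to $\Gamma_{k+1}$. Given $\Gamma_k$, I would insert each new vertex $z\in X_{k+1}\setminus X_k$ by splitting an existing edge $[x,y]$ of $\Gamma_k$ into $[x,z]\cup[z,y]$, with $x$ and $y$ chosen close to $z$ via a nearest-neighbor or shortest-detour rule. Grouping the insertions by the ball $Q\in\mathscr{G}$ at scale $k$ that contains $z$, the length gained per group is controlled by a sum of detour costs $\|x-z\|+\|z-y\|-\|x-y\|$. When $\beta_E(Q)$ is small, the set $E\cap Q$ lies near a single line $L$, the triangle $\{x,y,z\}$ is nearly degenerate, and each detour is cheap; the task is to quantify this cheapness by a power of $\beta_E(Q)$, and this is precisely where the four hypotheses on $\XX$ enter.

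This is the main obstacle, because the available power depends sensitively on the Banach structure. In an \emph{arbitrary} Banach space, only the triangle inequality is at hand, yielding the crude detour bound $\lesssim \beta_E(Q)\diam Q$, which forces $p=1$. In a uniformly smooth space of power type $p\in(1,2]$, the modulus of smoothness satisfies $\rho_\XX(t)\lesssim t^p$ and upgrades this to $\lesssim \beta_E(Q)^p\diam Q$ via a Pythagoras-type inequality, giving exactly the exponent permitted by the smoothness of $\XX$. Hilbert space yields the sharp exponent $p=2$ through the parallelogram law, as in Schul's martingale-style argument. In finite dimensions, John's ellipsoid supplies an equivalent Hilbert norm with distortion depending on $\dim\XX$, so one recovers $p=2$ at the cost of absorbing the dimension into the implicit constant.

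The subtlest bookkeeping is not the per-ball detour bound itself but the combinatorial task of arranging the insertions so that each edge of $\Gamma_k$ is split only a controlled number of times at scale $k+1$; otherwise the detour costs would aggregate beyond what $S_{E,p}(\mathscr{G})$ can absorb. Following Schul, one achieves this via a travelling-salesman ordering of $X_k$ adapted to $\mathscr{G}$, or equivalently via an auxiliary spanning tree built from $\mathscr{G}$ whose edges are doubled to produce an Eulerian tour. Summing the per-scale increment over $k\in\ZZ$ together with the base cost $\diam E$ for a coarsest curve (a single segment of length at most $\diam E$) then yields $\Haus^1(\Gamma)\lesssim_{A_\mathscr{G},\XX} S_{E,p}(\mathscr{G})$.
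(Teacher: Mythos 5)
A point of orientation first: this paper does not prove Theorem \ref{t:suff} at all; the statement is quoted from \cite{Jones-TST,Schul-Hilbert,Badger-McCurdy-1}, and the sufficiency construction is carried out in Part I. So the relevant comparison is with that construction, whose broad strategy (build polygonal curves scale by scale through the nets $X_k$, bound the length added at each insertion by a ``detour cost'' controlled by $\beta_E(Q)^p\diam Q$ via the triangle inequality, the modulus of smoothness, or the parallelogram law, and then pass to a limit) your sketch does capture, including the correct mechanism tying the exponent $p$ to the smoothness power type and the equivalent-Hilbert-norm reduction in finite dimensions.

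As a proof, however, the proposal has two genuine gaps. First, the limiting step fails as stated in an infinite-dimensional Banach space: closed bounded sets are not compact, so neither a Blaschke-type selection (for a Hausdorff limit of the $\Gamma_k$) nor Arzel\`a--Ascoli (for constant-speed parametrizations) is available. The actual argument must show that the successive parametrizations form a uniformly Cauchy sequence, because the modification made at stage $k$ displaces points by $O(2^{-k})$, and then transfer the uniform length bound to the limit curve directly or by semicontinuity of length along convergent continua. Second, and more seriously, the heart of the theorem, namely the per-scale charging scheme, is asserted rather than constructed. ``Split the nearest edge'' does not work by itself: a new net point $z$ need not lie between two curve-consecutive old vertices at mutual distance $\approx 2^{-k}$; when $z$ projects beyond the endpoints of the candidate edge, the ``detour'' is an extension whose cost is comparable to $2^{-k}$ and is not controlled by any power of $\beta_E(Q)$; and when $\beta_E(Q)\gtrsim 1$ the insertion must instead be paid for directly by $\beta_E(Q)^p\diam Q\gtrsim\diam Q$, which requires a separate accounting of such non-flat balls. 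The known proofs (Jones, Okikiolu, Schul, Part I) devote most of their effort to exactly these points: prescribing at each scale which pairs of net points are joined, handling endpoint extensions and disconnected clusters of $E\cap Q$, and verifying that each ball of $\mathscr{G}$ is charged only boundedly many times (this is also where the hypothesis $A_\mathscr{G}\geq 240$ enters). Your sentence about a travelling-salesman ordering or a doubled spanning tree gestures at this bookkeeping but does not supply it, so the central estimate, that the total length increment over all scales is $\lesssim\sum_{Q\in\mathscr{G}}\beta_E(Q)^p\diam Q$ plus $\diam E$, remains unproved in your outline.
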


\begin{remark} In cases (i) and (iii), the implicit constant in Theorem \ref{t:suff} in the comparison $\Haus^1(\Gamma)\lesssim_{A_\mathscr{G},\XX} S_{E,p}(\mathscr{G})$ depends only on $A_\mathscr{G}$. In case (ii), the implicit constant depends only on $A_\mathscr{G}$ and the modulus of smoothness of $\XX$. In case (iv), the implicit constant depends on $A_\mathscr{G}$, the dimension of $\XX$, and the bi-Lipschitz constant of a chosen embedding $\XX\hookrightarrow \ell_2^{\,\dim\XX}$.\end{remark}

In the present paper, we complete the proof of the following theorem, which is dual to Theorem \ref{t:suff}. Where the modulus of smoothness is the relevant characteristic of a space for sufficient conditions, the modulus of convexity of the space is the relevant characteristic for necessary conditions. The special cases $X=\RR^2$ and $X=\RR^n$, $n\geq 3$ of Theorem \ref{t:nec} are originally due to Jones \cite{Jones-TST} and Okikiolu \cite{Ok-TST}, respectively. When $\XX$ is an infinite-dimensional Hilbert space, the theorem was identified in \cite{Schul-Hilbert}, but the proof in that paper has serious gaps (see \cite[Remark 3.8]{Badger-McCurdy-1} and Appendix \ref{appendix328}) and a complete proof seems to not have been written until now. (An alternative fix of some portions of the proof in Schul's paper is proposed by Krandel \cite{Krandel-Hilbert}.)

\begin{theorem} \label{t:nec} Let $\XX$ be a Banach space and let $2\leq p<\infty$. Suppose that \begin{itemize}
\item[(i)] $\XX$ is a uniformly convex Banach space of power type $2\leq p<\infty$; or,
\item[(ii)] $\XX$ is a Hilbert space and $p=2$; or,
\item[(iii)] $\XX$ is a finite-dimensional Banach space and $p=2$.
\end{itemize} If $E\subset\XX$ is contained in a rectifiable curve $\Gamma$ and $\mathscr{G}$ is any (partial) multiresolution family for $E$, then $S_{E,p}(\mathscr{G})\lesssim_{A_{\mathscr{G}},\XX} \Haus^1(\Gamma)$.
\end{theorem}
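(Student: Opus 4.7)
The plan is to write the sum $S_{E,p}(\mathscr{G})$ as $\diam E + \sum_Q \beta_E(Q)^p \diam Q$ and classify each window $Q \in \mathscr{G}$ with $\beta_E(Q)$ above a small universal threshold $\epsilon_0$ according to the behavior of $\Gamma$ in a suitable dilate $cQ$ of $Q$. Call $Q$ a \emph{flat-arc window} if $\Gamma$ meets $cQ$ in a single subarc whose endpoints lie near opposite sides of $Q$, and a \emph{multi-arc window} otherwise. The term $\diam E$ is bounded by $\Haus^1(\Gamma)$, and windows with $\beta_E(Q) \leq \epsilon_0$ contribute at most $\epsilon_0^p \sum_Q \diam Q$, which is controlled by $\Haus^1(\Gamma)$ since the centers in $\mathscr{G}$ at scale $2^{-k}$ lie in a small neighborhood of $\Gamma$ and are $2^{-k}$-separated.

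For flat-arc windows, I would apply the arc-chord excess inequality available in a uniformly convex space of power type $p$: if a subarc $\gamma$ of $\Gamma$ joins $a$ to $b$ and attains distance at least $\beta \diam Q$ from the chord $\overline{ab}$, then $\Length(\gamma) - |a-b| \geq c_\XX \beta^p \diam Q$, with $c_\XX$ depending only on the modulus of convexity. This converts each $\beta_E(Q)^p \diam Q$ into a genuine excess of length along a specific piece of $\Gamma$ inside $Q$. Summing first within a scale (using the bounded overlap induced by the $2^{-k}$-separation of centers) and then across scales via a Jones-type packing argument yields $\sum \beta_E(Q)^p \diam Q \lesssim_{A_\mathscr{G},\XX} \Haus^1(\Gamma)$ over flat-arc $Q$. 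The pieces of this argument are already present in Part I and in \cite{Schul-Hilbert}, adapted to the uniformly convex setting through the power-$p$ excess inequality.

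Multi-arc windows form the hard case: no convexity-based excess is available, because $\beta_E(Q)$ is large from the separation between two (or more) otherwise nearly-flat pieces of $\Gamma$ rather than from any single piece bowing away from a line. Controlling their contribution is precisely the content of the present paper's ``universal estimate for almost flat arcs'': one expects a bound $\sum \beta_E(Q)^p \diam Q \lesssim_{A_\mathscr{G}} \Haus^1(\Gamma)$ over multi-arc $Q$, with constants independent of $\dim \XX$ and the choice of norm. Since $\beta_E(Q) \in [0,1]$ and $p \geq 2$ forces $\beta_E(Q)^p \leq \beta_E(Q)^2$, it is enough to prove the multi-arc bound universally with the exponent $2$.

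Cases (ii) and (iii) then follow from (i): (ii) is the $p=2$ specialization, since any Hilbert space has power type $2$; and (iii) reduces to (ii) by a bi-Lipschitz embedding $\XX \hookrightarrow \ell_2^{\dim \XX}$ at the cost of dimension-dependent constants. The main obstacle is the multi-arc case, which is the subject of the paper: the argument must avoid orthogonality and any dimension-dependent tool. I would expect it to proceed via a Christ-David-type decomposition of an arclength parameterization of $\Gamma$ and a stopping-time or martingale scheme on that partition, in which each ``two separated pieces appearing in a single window'' event is charged to a distinct quantum of arclength of $\Gamma$ in a dimension-free, norm-free way.
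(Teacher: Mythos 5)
Your top-level architecture does match the paper's: windows whose non-flatness is carried by a genuinely non-flat arc are charged to length excess via the power-type-$p$ modulus of convexity (that is the Part I argument), windows whose non-flatness comes from the separation of several nearly flat pieces are exactly the content of this paper's \hyperref[t:main]{Main Theorem}, and cases (ii), (iii) reduce to (i) as in the paper's remarks. But one step of your reduction is false as stated: the disposal of sub-threshold windows. For a multiresolution family one has $\sum_{Q\in\mathscr{G}}\diam Q=\infty$ in essentially every case: taking $E=\Gamma$ a segment, at each fine scale $2^{-k}$ the net $X_k$ has on the order of $2^{k}\Haus^1(\Gamma)$ points, so the balls of a \emph{single} generation already have total diameter comparable to $A_\mathscr{G}\Haus^1(\Gamma)$, and there are infinitely many generations (the coarse scales diverge too, since for every $k\to-\infty$ there is at least one ball of diameter $2A_\mathscr{G}2^{-k}$). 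So the bound ``$\epsilon_0^p\sum_Q\diam Q\lesssim\Haus^1(\Gamma)$'' is vacuous, and windows with $\beta_E(Q)\le\epsilon_0$ cannot be discarded wholesale; they must be charged to length like all the others. This is precisely why neither Part I nor this paper uses an absolute threshold: the convexity excess inequality is applied at every value of $\beta$ (it yields $\beta_\Gamma(Q)^p\diam Q\lesssim$ excess no matter how small $\beta_\Gamma(Q)$ is), and flatness of individual arcs is measured \emph{relative} to $\beta_\Gamma(Q)$, an arc being almost flat when $\beta(\tau)\le\flatepsilon\beta_\Gamma(Q)$, so that every window with $\beta_\Gamma(Q)\neq0$ lands in one of the classes $\mathscr{A}$, $\mathscr{B}$, $\mathscr{C}$.

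A second, related mismatch: your flat-arc/multi-arc dichotomy is not the dichotomy the two halves of the proof actually cover. The Main Theorem bounds the sum over the class $\mathscr{B}$ of Definition \ref{B-balls-def}, in which \emph{every} arc meeting the net ball near the center of $Q$ is almost flat relative to $\flatepsilon\beta_\Gamma(Q)$ and the $*$-almost flat arcs carry at least an $\epsilon_1$-fraction of the non-flatness of $\Gamma\cap 2\lambda Q$. A window in which $\Gamma\cap cQ$ has several components, one of which is itself strongly non-flat, is ``multi-arc'' in your sense but need not lie in $\mathscr{B}$; it is handled in Part I by applying the excess inequality to a dominant arc (the class $\mathscr{A}$), and there is a further class $\mathscr{C}$. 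Conversely, your multi-arc class absorbs windows containing a single ``radial'' arc. So the case split must be made with the relative thresholds $\epsilon_1,\flatepsilon$ exactly as in the paper, or else your two estimates do not exhaust $\mathscr{G}$. Two smaller points: since the Main Theorem holds for every exponent $q>0$, the reduction $\beta^p\le\beta^2$ is unnecessary; and summing the convexity excesses ``within a scale and then across scales by packing'' is not a packing argument---nested windows at different scales reuse the same length, and the correct bookkeeping is the telescoping/martingale-type accounting of Part I and \cite{Schul-Hilbert}---but since you defer that to Part I it is a citation rather than a gap.
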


\begin{remark} Again, in case (ii), the implicit constant in the comparison $S_{E,p}(\mathscr{G})\lesssim_{A_\mathscr{G},\XX}$ depends only on the inflation factor $A_\mathscr{G}$. In case (i), the implicit constant depends only on $A_\mathscr{G}$ and the modulus of convexity of $\XX$. In case (iii), the implicit constant depends on $A_\mathscr{G}$, the dimension of $\XX$, and the bi-Lipschitz constant of an embedding $\XX\hookrightarrow \ell_2^{\,\dim\XX}$.\end{remark}

Combining Theorems \ref{t:suff} and \ref{t:nec}, we recover Schul's solution of the Analyst's TSP in Hilbert space \cite{Schul-Hilbert}. For derivation of Jones's and Okikiolu's dyadic cube formulation of Corollary \ref{c:hilbert} in any finite-dimensional Banach space, see \cite[\S4]{Badger-McCurdy-1}.

\begin{corollary} \label{c:hilbert} Let $\XX$ be any Hilbert space. A bounded set $E\subset\XX$ is a subset of a rectifiable curve in $\XX$ if and only if \begin{equation}\label{hilbert-condition}\sum_{Q\in \mathscr{G}}\beta_E(Q)^2\diam Q<\infty\end{equation} for some (for every) multiresolution family $\mathscr{G}$ for $E$ with inflation factor $A_\mathscr{G}\geq 240$. Furthermore, if \eqref{hilbert-condition} holds, then $E$ is contained in some rectifiable curve $\Gamma$ with extrinsic length $\Haus^1(\Gamma)\lesssim_{A_\mathscr{G}} S_{E,2}(\mathscr{G})$.
\end{corollary}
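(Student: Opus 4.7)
This corollary is a direct synthesis of Theorems \ref{t:suff} and \ref{t:nec} specialized to a Hilbert space with exponent $p = 2$, so the plan is simply to quote each theorem in the appropriate direction and then combine them to upgrade ``for some'' to ``for every.'' There is no genuine obstacle in the argument beyond a trivial existence statement for nested nets: both theorems are already in hand, and a Hilbert space falls squarely under case (iii) of Theorem \ref{t:suff} and case (ii) of Theorem \ref{t:nec}, so in each case the implicit constant depends only on $A_\mathscr{G}$, matching the form $\lesssim_{A_\mathscr{G}}$ in the conclusion.

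For the ``if'' direction together with the final length estimate, I would start with a multiresolution family $\mathscr{G}$ for $E$ satisfying $A_\mathscr{G} \geq 240$ and $\sum_{Q \in \mathscr{G}} \beta_E(Q)^2 \diam Q < \infty$. Boundedness of $E$ forces $\diam E < \infty$, so $S_{E,2}(\mathscr{G})$ is finite. Invoking Theorem \ref{t:suff} (case (iii), $p = 2$) then produces a rectifiable curve $\Gamma \supset E$ with $\Haus^1(\Gamma) \lesssim_{A_\mathscr{G}} S_{E,2}(\mathscr{G})$, which gives the desired conclusion.

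For the ``only if'' direction, I would assume $E$ lies in a rectifiable curve $\Gamma$ and let $\mathscr{G}$ be an arbitrary (partial) multiresolution family for $E$, not necessarily with $A_\mathscr{G} \geq 240$. By Theorem \ref{t:nec} (case (ii), $p = 2$), $S_{E,2}(\mathscr{G}) \lesssim_{A_\mathscr{G}} \Haus^1(\Gamma) < \infty$, so the $\beta^2$-sum converges. Combining the two directions then yields the ``some $\iff$ every'' equivalence: convergence for a single $\mathscr{G}$ with $A_\mathscr{G} \geq 240$ forces $E$ to lie on a rectifiable curve, and then necessity forces convergence for every multiresolution family. The one remaining detail, that any bounded $E$ admits multiresolution families with any prescribed inflation factor, follows from a standard greedy construction of nested $2^{-k}$-nets initialized with a singleton $X_k = \{x_0\}$ at scales $2^{-k} > \diam E$.
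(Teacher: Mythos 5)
Your proposal is correct and matches the paper's treatment: the corollary is obtained there exactly by combining Theorem \ref{t:suff} (case (iii), sufficiency with $A_\mathscr{G}\geq 240$) and Theorem \ref{t:nec} (case (ii), necessity for arbitrary (partial) multiresolution families), which is precisely your argument, including the upgrade from ``for some'' to ``for every.'' No gaps; the only auxiliary point, existence of multiresolution families for a bounded set, is the standard net construction you indicate.
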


The solution of the Analyst's TSP in Hilbert space depends heavily on the Pythagorean theorem as well as invariance of distances under orthogonal transformation. These special features of Hilbert space are not available in a general Banach space. While Theorem \ref{t:suff} gives a sufficient test for a set to lie in a rectifiable curve and Theorem \ref{t:nec} provides us  necessary conditions, a complete characterization of subsets of rectifiable curves in an infinite-dimensional non-Hilbert Banach space is still unknown. The following example and remarks show that a new idea is needed. See \cite{Schul-survey} for further discussion of the underlying challenges and \cite{DS-metric} for recent partial progress.

\begin{example} If $1<p<\infty$, then the Banach space $\XX=\ell_p$ of real-valued sequences $x=(x_i)_1^\infty$ with $\|x\|_p=\left(\sum_1^\infty |x_i|^p\right)^{1/p}$ is uniformly smooth of power type $\min\{p,2\}$ and uniformly convex of power type $\max\{2,p\}$. Let $E\subset \ell_p$ be bounded. By Theorem \ref{t:suff}, \begin{equation}\label{p-suff}\sum_{Q\in\mathscr{G}} \beta_E(Q)^{\min\{p,2\}}\diam Q<\infty\Longrightarrow \text{$E$ lies inside some rectifiable curve $\Gamma$}.\end{equation} By Theorem \ref{t:nec}, \begin{equation}\label{p-nec} \sum_{Q\in\mathscr{G}} \beta_E(Q)^{\max\{2,p\}}\diam Q<\infty\Longleftarrow \text{$E$ lies inside some rectifiable curve $\Gamma$.}\end{equation} Because $\min\{p,2\}<\max\{2,p\}$ unless $p=2$, this means that there is a strict gap between Theorem \ref{t:suff} and \ref{t:nec} for infinite-dimensional non-Hilbert Banach spaces.\end{example}

\begin{remark} In \cite[\S5]{Badger-McCurdy-1}, we construct examples that show that the exponents in \eqref{p-suff} and \eqref{p-nec} are sharp. For instance, for any $2\leq p<\infty$, we build a curve $\Gamma$ in $\ell_p$ with $\Haus^1_{\ell_p}(\Gamma)<\infty$ and $S_{E,p-\epsilon}(\mathscr{G})=\infty$ for all $\epsilon>0$.\end{remark}

\begin{remark} Equivalence of norms on finite-dimensional spaces ensures that a curve is rectifiable independent of the choice of norm (although the length depends on the norm). By contrast, the infinite-dimensional $\ell_p$ spaces are distinguished by their rectifiable curves in the following sense. For each $1<p<\infty$, there exists a curve $\Gamma$ in $\ell_p$ such that $\Haus^1_{\ell_p}(\Gamma)=\infty$ and $\Haus^1_{\ell_{p+\epsilon}}(\Gamma)<\infty$ for all $\epsilon>0$. See \cite[Proposition 1.1]{Badger-McCurdy-1}.\end{remark}

The proof of Theorem \ref{t:nec} for uniformly convex Banach spaces started in \cite[\S3]{Badger-McCurdy-1} follows the outline of the argument in \cite{Schul-Hilbert}, but with the correction noted in \cite[Remark 3.24]{Badger-McCurdy-1}, which required weakening Schul's original definition of ``almost flat arcs''. More specifically, we proved Theorem \ref{t:nec} modulo verification of \cite[Theorem 3.30]{Badger-McCurdy-1}, which is the \hyperref[t:main]{Main Theorem} of this paper. Roughly speaking, the main theorem is a quantitative strengthening of the statement that at $\Haus^1$ almost every point, at sufficiently small scales, a rectifiable curve does not resemble a union of two or more line segments. By proving the main theorem, we shall complete the demonstration of Theorem \ref{t:nec}.

The estimates that we establish below are \emph{universal} in so far as they are valid in \emph{any} Banach space. Because of the general setting, we have very few tools at our disposal. Our primary tools are the triangle inequality, connectedness of arcs, and the existence of Lipschitz projections onto 1-dimensional subspaces (see Appendix \ref{sec:smooth}).

\subsection{Almost flat arcs and statement of the \texorpdfstring{\hyperref[t:main]{Main Theorem}}{Main Theorem}} \label{sec:main}

For the remainder of the paper fix a Banach space $(\XX,|\cdot|)$, a rectifiable curve $\Gamma$ in $\XX$, a (partial) multiresolution family $\mathscr{H}$ for $\Gamma$ with inflation factor $A_\mathscr{H}>1$ and centers in a family $(X_k)_{k\in\ZZ}$ of $2^{-k}$-separated sets for $\Gamma$, and a continuous parameterization $f:[0,1]\rightarrow\Gamma$. For the purpose of proving the \hyperref[t:main]{Main Theorem} below, we do not need to (and shall not) place any restrictions on the modulus of continuity or multiplicity of $f$, but if so desired, one may assume as in Part I that $f$ is Lipschitz continuous, $\#f^{-1}\{x\}\leq 2$ for $\Haus^1$-a.e.~$x\in\Gamma$, and $f(0)=f(1)$ (see \cite{AO-curves}).

\begin{definition}[classification of arcs {\cite{Badger-McCurdy-1}}] \label{def:arcs} An \emph{arc}, $\tau=f|_{[a,b]}$, of $\Gamma$ is the restriction of $f$ to some interval $[a,b]\subset[0,1]$. Given an arc $\tau:[a,b]\rightarrow\Gamma$, define $$\Domain(\tau)=[a,b],\quad \Start(\tau)=\tau(a)=f(a),\quad\End(\tau)=\tau(b)=f(b),$$ $$\Image(\tau)=\tau([a,b])=f([a,b])\quad\text{and}\quad \Diam(\tau)=\diam\Image(\tau).$$
For any ball $Q\in\mathscr{H}$ and scaling factor $\lambda\geq 1$, let \begin{equation}\Lambda(\lambda Q):=\left\{f|_{[a,b]}:\begin{array}{c}[a,b]\text{ is a connected component of }f^{-1}(\Gamma\cap 2\lambda Q)\\ \text{such that }\lambda Q\cap f([a,b])\neq\emptyset\end{array}\right\}.\end{equation} The elements in $\Lambda(\lambda Q)$ are arcs in $2\lambda Q$ that touch $\lambda Q$. Agree to write $\beta_{\Lambda(\lambda Q)}(2\lambda Q)$ as shorthand for $\beta_{\bigcup\{\Image(\tau):\tau\in\Lambda(\lambda Q)\}}(2\lambda Q)$.

An arc $\tau\in\Lambda(\lambda Q)$ is called \emph{$*$-almost flat} if \begin{equation}\label{e:star-flat} \beta(\tau):=\beta_{\Image(\tau)}(\Image(\tau))= \inf_{L}\sup_{z\in\Image(\tau)} \frac{\dist(z,L)}{\Diam(\tau)} \leq 50\flatepsilon \beta_{\Lambda(\lambda Q)}(2\lambda Q),\end{equation} where $L$ ranges over all lines in $\XX$ and $0<\flatepsilon\ll 1$ is a constant depending on at most the inflation factor $A_\mathscr{H}$ of $\mathscr{H}$ and $\epsilon_1$ (see Definition \ref{B-balls-def}). Denote the set of $*$-almost flat arcs in $\Lambda(\lambda Q)$ by $S^*(\lambda Q)$.

An arc $\tau\in\Lambda(\lambda Q)$ is called \emph{almost flat} if $\beta(\tau)\leq \flatepsilon \beta_{\Sigma}(Q).$ Denote the set of almost flat arcs in $\Lambda(\lambda Q)$ by $S(\lambda Q)$. An arc $\tau\in\Lambda(\lambda Q)\setminus S(\lambda Q)$ is called \emph{dominant}.\label{dom-arcs}%
\end{definition}

\begin{remark}We do not require that arcs be 1-to-1. By \eqref{beta-monotone}, every almost flat arc is $*$-almost flat provided that $\lambda \leq 25$. The peculiar definition of $*$-almost flat arc, i.e.~the constant 50 in \eqref{e:star-flat}, and the focus on scaling factors $\lambda\in\{1,5\}$ in arguments below are made in order to implement the proof of \cite[Lemma 3.29]{Badger-McCurdy-1}. However, these choices will play no direct role in the arguments in this paper.\end{remark}

Below, given an arc $\tau$ and window $Q$, we write $\beta_\tau(Q)$ as shorthand for $\beta_{\Image(\tau)}(Q)$. Similarly, given a set $S$ of arcs, we write $\beta_S(Q)=\beta_{\bigcup\{\Image(\tau):\tau\in S\}}(Q)$.

\begin{definition}[$\mathscr{B}$ balls] \label{B-balls-def} Let $0<\epsilon_1\ll 1$ be a constant depending on at most the inflation factor $A_\mathscr{H}$ of $\mathscr{H}$. Given $\lambda\geq 1$, let $\mathscr{B}^\lambda$ denote the collection of all balls $Q\in\mathscr{H}$ such that \begin{enumerate}
\item[(i)] $\beta_{\Gamma}(Q)\neq 0$ and $\Gamma\setminus 14 Q\neq\emptyset$;
\item[(ii)] if $\tau\in\Lambda(\lambda Q)$ and $\Image(\tau)$ intersects the \emph{net ball} $(1/3A_{\mathscr{H}})Q=B(x,(1/3)2^{-k})$ near the center of $Q=B(x,A_\mathscr{H}2^{-k})$, with $x\in X_k$, then $\tau\in S(\lambda Q)$, and \label{netball}
\item[(iii)] $\beta_{S^*(\lambda Q)}(2\lambda Q) > \epsilon_1 \beta_{\Lambda(\lambda Q)}(2\lambda Q)$.
\end{enumerate}Assign $\mathscr{B}=\mathscr{B}^1\cup\mathscr{B}^5$.\end{definition}

\begin{remark} In Part I, we take $\epsilon_1=1/126A_\mathscr{H}$ to prove and use \cite[Lemma 3.29]{Badger-McCurdy-1}. The importance of the net balls is that they are uniformly separated in each generation. That is, if $k\in\ZZ$, $x_1,x_2\in X_k$ are distinct points, and $Q_i=B(x_i,A_\mathscr{H}2^{-k})\in\mathscr{H}$, then \begin{equation}\label{net-gap} \gap((1/3A_\mathscr{H})Q_1,(1/3A_\mathscr{H})Q_2)\geq (1/3) 2^{-k}>0,\end{equation} where for any nonempty sets $S,T\subset\XX$, $\gap(S,T)=\inf_{s\in S, t\in T}|s-t|$ denotes the \emph{gap} between $S$ and $T$. (In harmonic analysis, the notation $\dist(S,T)$ may be more familiar.)\end{remark}

If $\epsilon_2$ is very small, then at the resolution of $2\lambda Q$, almost flat and $*$-almost flat arcs look like line segments.\footnote{At scales much smaller than $2\lambda Q$, almost flat and $*$-almost flat arcs can look like any rectifiable curve.} Roughly, the class $\mathscr{B}$ consists of all balls in the multiresolution family $\mathscr{H}$ such that $2\lambda Q$ contains at least two $*$-almost flat arcs (with distinct images) and the union of the images of arcs in $S^*(\lambda Q)$ is as non-flat as the union of the images of all arcs in $\Lambda(\lambda Q)$. Our main theorem says that for \emph{any} rectifiable curve in \emph{any} Banach space, the collection $\mathscr{B}$ of locations and scales with this behavior is rare relative to $\Haus^1(\Gamma)$.

\begin{theorem}[Main Theorem] \label{t:main} Assume that $\epsilon_2$ is sufficiently small depending only on $A_\mathscr{H}$ and $\epsilon_1$; $\epsilon_2=2^{-55}\epsilon_1/A_\mathscr{H}$ will suffice. For all $q>0$, \begin{equation}\label{B-sum} \sum_{Q\in\mathscr{B}} \beta_\Gamma(Q)^q\diam Q\lesssim_{q,A_\mathscr{H},\epsilon_1}\Haus^1(\Gamma),\end{equation} where the implicit constant blows up as $q\downarrow 0$.\end{theorem}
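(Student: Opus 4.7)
The plan is, for each $Q \in \mathscr{B}$, to extract a definite geometric ``charge'' of length $\gtrsim \diam Q$ on $\Gamma$ localized near $Q$, and then to package these charges into a Carleson-type bound using the net-ball separation \eqref{net-gap} in $\mathscr{H}$. Since $\beta_\Gamma(Q) \leq 1$, the $\beta_\Gamma(Q)^q$ factor is automatically controlled; the $q$-dependence of the constant should arise from a dyadic decomposition by the size of $\beta_\Gamma(Q)$.

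The first step is to show that each $Q \in \mathscr{B}$ carries at least two $*$-almost flat arcs with distinct images. If $S^*(\lambda Q)$ contained a single arc $\tau$, then $\Image(\tau) \subset 2\lambda Q$ together with \eqref{e:star-flat} and \eqref{beta-monotone} would yield
\[
\beta_{S^*(\lambda Q)}(2\lambda Q) \;\leq\; 50\,\epsilon_2\, \beta_{\Lambda(\lambda Q)}(2\lambda Q) \;<\; \epsilon_1\, \beta_{\Lambda(\lambda Q)}(2\lambda Q),
\]
contradicting condition~(iii) (the strict inequality on the right uses $50\epsilon_2 < \epsilon_1$, which follows from $\epsilon_2 = 2^{-55}\epsilon_1/A_\mathscr{H}$). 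Given two such arcs $\tau_1,\tau_2$, each $\Image(\tau_i)$ lies in a tubular neighborhood of some line $L_i$ of width $\lesssim \epsilon_2\,\beta_{\Lambda(\lambda Q)}(2\lambda Q)\,\diam(2\lambda Q)$, while $\Image(\tau_1)\cup\Image(\tau_2)$ is $\gtrsim \epsilon_1\,\beta_{\Lambda(\lambda Q)}(2\lambda Q)$-far from every single line at the scale of $2\lambda Q$. This forces a quantitative separation between $L_1$ and $L_2$ (either in angle or transverse offset). Composing with a Lipschitz 1-dimensional projection transverse to $L_1$ (Appendix~\ref{sec:smooth}), together with the connectedness of $\Gamma$ and condition~(i) (which guarantees $\Gamma \setminus 14Q \neq \emptyset$ and hence ``room to escape'' $Q$), I would deduce $\Haus^1(\Gamma \cap CQ) \geq c\,\diam Q$ for some $c = c(A_\mathscr{H},\epsilon_1) > 0$ and enlargement factor $C = C(A_\mathscr{H})$.

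The final and most delicate step is a multiscale Carleson packing of these charges. Condition~(ii) forces any arc entering the net ball $(1/3A_\mathscr{H})Q$ to be almost flat, so the non-flatness witnessed by (iii) is confined to the periphery of $Q$ and cannot be ``hidden'' at much smaller scales. At a fixed scale $k$, the net-ball gap \eqref{net-gap} gives bounded overlap of charges from distinct $Q \in \mathscr{B}$ with centers in $X_k$. For trans-scale overlap I would use a stopping-time or martingale decomposition (matching the paper's keyword list), organized by Christ--David cubes subordinate to the net balls, so that each point of $\Gamma$ is charged only boundedly many times across scales. This yields $\sum_{Q\in\mathscr{B}} \diam Q \lesssim_{A_\mathscr{H},\epsilon_1} \Haus^1(\Gamma)$, and therefore $\sum_{Q\in\mathscr{B}} \beta_\Gamma(Q)^q \diam Q \lesssim \Haus^1(\Gamma)$ for every $q > 0$. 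The stated blowup as $q \downarrow 0$ arises naturally from a dyadic slicing $\mathscr{B}_j := \{Q : \beta_\Gamma(Q) \in [2^{-j-1},2^{-j})\}$ if the per-slice packing is only shown to hold with a constant growing mildly in $j$, in which case the geometric sum $\sum_j 2^{-qj}(\text{per-slice constant})$ diverges as $q \downarrow 0$.

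I expect the main obstacle to be the multiscale packing. In the Hilbert-space arguments one relies on Pythagorean identities and orthogonal projections to control multiscale errors; here those tools are absent, and one must coordinate the triangle inequality, connectedness of arcs, and the 1-dimensional Lipschitz projections of Appendix~\ref{sec:smooth} so that charges from vastly different scales do not over-count any portion of $\Gamma$. Designing the correct stopping rule---presumably relating the scale $k$ of $Q$ to which of $L_1, L_2$ a given sub-arc of $\Gamma$ sits near, and using the net-ball separation to keep those relations consistent across generations---is where the substantive work should go.
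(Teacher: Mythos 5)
There is a genuine gap, and it sits exactly where you place the ``final and most delicate step.'' Your plan is to extract a charge $\Haus^1(\Gamma\cap CQ)\geq c\,\diam Q$ for each $Q\in\mathscr{B}$ and then prove the unweighted packing bound $\sum_{Q\in\mathscr{B}}\diam Q\lesssim_{A_\mathscr{H},\epsilon_1}\Haus^1(\Gamma)$ via bounded overlap of charges across scales. That packing bound is false. Take $\Gamma$ to be (a curve containing) two parallel unit segments at distance $\delta\ll 1$: for every scale $r$ with $\delta\lesssim r\lesssim 1$ and every center on either segment, the ball $Q$ sees two perfectly flat arcs, conditions (i)--(iii) of Definition \ref{B-balls-def} hold, and so $Q\in\mathscr{B}$. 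Summing $\diam Q$ over one net per scale gives $\sum_{Q\in\mathscr{B}}\diam Q\gtrsim\log(1/\delta)$ while $\Haus^1(\Gamma)\approx 1$; equivalently, a single point of $\Gamma$ lies in the net balls of $\mathscr{B}$-balls at $\sim\log(1/\delta)$ consecutive scales, so no stopping rule can make the charges boundedly overlapping across scales. (Note this also makes your write-up internally inconsistent: if the unweighted sum were Carleson, the $\beta^q$-weighted sum would be bounded uniformly in $q$, contradicting the blow-up as $q\downarrow 0$ that you correctly anticipate.) The $\beta^q$ weight is not decorative: the theorem survives the example only because $\beta_\Gamma(Q)\approx\delta/r$ decays as the scale grows, and the proof must exploit this. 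The paper does so by fixing $M$ with $2^{-M}<\beta_{S^*(\lambda Q)}(2\lambda Q)\leq 2^{-(M-1)}$ and a residue class of scales mod $KM$, proving the \emph{per-family} bound \eqref{B1-goal} with a constant independent of $M$, and only then summing $\sum_M KM\,2^{-(M-1)q}\lesssim_q 1$; your dyadic slicing by $\beta_\Gamma(Q)$ alone, with ``mildly growing per-slice constants,'' does not produce this structure.

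Moreover, even per slice, a charge of size $\gtrsim\diam Q$ is useless --- that much length is present in \emph{any} ball centered on a connected curve leaving $14Q$, so it distinguishes nothing. What is actually needed is a recursive \emph{excess}: in the paper's martingale scheme one must show $\diam H_Q\leq q\,s_Q$ with a universal $q<1$, i.e.\ the remainder length plus the diameters of the maximal almost-flat fragments $H_{Q'}$ in the child cores strictly dominate $\diam H_Q$ by a definite factor (Lemma \ref{q-lemma} and \eqref{q-goal}). Your observation that each $Q\in\mathscr{B}$ carries two $*$-almost flat arcs with distinct images is correct (and is the paper's starting heuristic), but converting ``two arcs'' into that sub-unit coefficient is where all the work lies: child fragments may be radial ($\diam H_{Q'}\approx\tfrac12\diam U_{Q'}$, cf.\ \eqref{e:coarse estimate}, which only gives coefficient $2.00002$), fragments may be disconnected, and the second arc need not intersect the cores one is summing over. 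The paper's classification of necessary cores (tall/wide arcs, $\mathcal{N}_1,\mathcal{N}_{2.1},\mathcal{N}_{2.2}$), the topological Lemma \ref{topological lemma}, and the two key Lemmas \ref{improve} and \ref{l:B_2 subset} with their iteration and case analysis exist precisely to push the coefficient below $1$; none of this is replaced by anything in your outline, so the proposal as it stands does not prove the theorem.
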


\begin{remark} A consequence of the \hyperref[t:main]{Main Theorem} is that in order to prove Theorem \ref{t:nec} for a particular curve $\Gamma$, in a particular Banach space $\XX$, and for a particular exponent $p$, it (essentially) suffices to prove $\sum_{Q\in\mathscr{A}} \beta_\Gamma(Q)^p\diam Q\lesssim_{p,A_\mathscr{H}} \Haus^1(\Gamma)$, where $Q\in\mathscr{A}\subset\mathscr{H}$ are balls whose \hyperref[netball]{net ball} $(1/3A_\mathscr{H})Q$ is intersected by a \hyperref[dom-arcs]{dominant arc}. (Besides $\mathscr{A}$ and $\mathscr{B}$, there is also a class of $\mathscr{C}$ balls; see \cite[\S3.3]{Badger-McCurdy-1} for details.) In Part I, we do this for curves in uniformly convex Banach spaces of power type $2\leq p<\infty$ and prove Theorem \ref{t:nec} assuming the \hyperref[t:main]{Main Theorem} (\cite[Theorem 3.30]{Badger-McCurdy-1}). \end{remark}

\begin{remark}\label{r:Schul-1} In \cite{Schul-Hilbert}, Schul gives a version of the \hyperref[t:main]{Main Theorem} in Hilbert space, but with some differences.
In particular, almost flat arcs $\tau=f|_{[a,b]}$ in \cite{Schul-Hilbert} satisfy the more stringent requirement \begin{equation}\label{e:arc-beta}\tilde\beta(\tau) = \sup_{c\in[a,b]}\frac{\dist(f(c),[f(a),f(b)])}{\diam \Image(\tau)}\leq \epsilon_2 \beta_{\Gamma}(Q).\end{equation} A geometric consequence is that $\tilde\beta$ almost flat arcs that pass near the center of $Q$ are ``diametrical'' in the sense that $\diam \Image(\tau)\cap Q\geq (1-O(\epsilon_2))\diam Q$. By contrast an almost flat arc with our definition that passes near the center of $Q$ may be ``radial'' in the sense that $\diam \Image(\tau)\cap Q \leq (1/2+O(\epsilon_2))\diam Q$. The existence of radial arcs causes substantial difficulties in the proof of the main theorem; see Remark \ref{remark:challenges}. For additional comments on the proof of the theorem in \cite{Schul-Hilbert}, see Appendix \ref{appendix328}.\end{remark}

\begin{remark}R.~Schul (personal communication) suggested an alternate approach to handling radial arcs. If one assumes $f$ is Lipschitz, then \eqref{B-sum} for the subset of all $Q\in\mathscr{B}$ that contain one or more radial arcs is subsumed by the Carelson-type estimates in Azzam and Schul's quantitative metric differentation theorem \cite{AS-metric}. Such an approach entails passing between multiresolution families in the domain and image of $f$, which is not needed in the direct argument below. The techniques in this paper may be better suited to proving a converse to the H\"older Traveling Salesman theorem \cite{BNV}.\end{remark}

We devote the remainder of the paper to the proof of the \hyperref[t:main]{Main Theorem}. The journey is somewhat long, but we try to make the first few sections as easy to read as possible. We hope that the reader who reaches the end may say that they have gained at least an incrementally better insight into the mysteries of Banach space geometry. Sections \ref{martingale}--\ref{proof-II} are best read in the order presented. In \S\ref{martingale}, we describe Schul's clever idea to prove \eqref{B-sum} by constructing geometric martingales out of curve fragments \cite{Schul-Hilbert}. We show how to modify the original argument to account for the possibility of ``radial'' arcs. An important quantity introduced is $\diam H_Q$, the diameter of a ``maximal arc fragment'' in the ``core'' $U_Q$ of a ball. In \S\ref{proof-main}, we outline the proof of the main theorem, including a discussion of the underlying challenges and a plan to overcome them. Ultimately, we reduce the proof of the main theorem to two key estimates, Lemma \ref{improve} and Lemma \ref{l:B_2 subset}. Section \ref{sec:prelim} sketches the geometry of possible configurations of almost flat arcs that we encounter in later proofs. Section \ref{sec: N'} takes a crucial step towards better estimates by identifying an auxiliary family of cores nearby a given arc fragment that possesses a sufficient amount of ``extra length.'' A vital technical tool is Lemma \ref{topological lemma}, which is proved using a topological argument.

We prove the main estimates in several stages. First, in Lemma \ref{43-estimate}, we show that \begin{equation}\label{intro-est} \diam H_Q\leq 2\Haus^1(\Gamma\cap R_Q)+1.37\sum \diam H_{Q'},\end{equation} where $R_Q$ is a ``remainder'' set and the sum ranges over all ``children'' $U_{Q'}$ of the core $U_Q$. While this is a substantial improvement of the coarse estimate \eqref{e:coarse estimate}, which holds with 2.01 instead of 1.37, to prove the main theorem we need the estimate to hold with the coefficient of the sum strictly less than 1! In the end, by a case analysis and iterating the proof of \eqref{intro-est}, using \eqref{intro-est} instead of \eqref{e:coarse estimate}, we obtain the key estimate with a coefficient less than 0.96. See \S\ref{proof-I} (proof of Lemma \ref{improve}) and \S\ref{proof-II} (proof of Lemma \ref{l:B_2 subset}) for details.

\subsection{Acknowledgements} %While this paper supplies new details needed to complete the proof of the Analyst's Traveling Salesman theorem in Hilbert space (Corollary \ref{c:hilbert}) begun in \cite{Schul-Hilbert}, the authors would like to emphasize that the general strategy that we follow is due to Schul.
This paper would not exist without the solid foundations of its predecessors, especially the body of work by P.~Jones, K.~Okikiolu, and R.~Schul.

Part I of this project began when M.~Badger hosted S.~McCurdy at UConn in Fall 2018. The extended visit was made possible by NSF DMS 1650546.

\section{Schul's martingale argument in a Banach space}\label{martingale}

We describe Schul's martingale argument (see \cite[\S3.3]{Schul-Hilbert}) for upper bounding sums of $\beta_\Gamma(Q)^q\diam Q$ over subfamilies of $\mathscr{B}$, where $q$ is any positive exponent! In this context, martingale refers to a recursively defined sequence of geometric weights associated to a tree of ``cores'' of overlapping balls. We formalize this terminology below. Schul's method is robust and can be implemented in any Banach space.

\subsection{Start of the proof: reduction to existence of weights}\label{ss:B-decomp}
Recall that if $Q\in\mathscr{B}$, say $Q=B(x,A_\mathscr{H}2^{-k})$ for some $k\in\ZZ$ and $x\in X_k$, then there exists $\lambda=\lambda(Q)\in\{1,5\}$ such that every arc $\tau\in\Lambda(\lambda Q)$ that intersects the net ball $B(x,(1/3)2^{-k})$ is almost flat, $\beta(\tau)\leq \flatepsilon\beta_\Gamma(Q)$, and hence is $*$-almost flat, $\beta(\tau)\leq 50\flatepsilon \beta_{\Lambda(\lambda Q)}(2\lambda Q)$. Moreover, the set $S^*(\lambda Q)$ of $*$-almost flat arcs in $\Lambda(\lambda Q)$ satisfies $\beta_{S^*(\lambda Q)}(2\lambda Q) > \flatarcsepsilon \beta_{\Lambda(\lambda Q)}(2\lambda Q)$.

Suppose that we have broken up $\mathscr{B}$ into a finite number of (possibly overlapping) families $\mathscr{B}(1),\dots,\mathscr{B}(N)$, where $N$ is independent of $\XX$ and $\lambda(Q)\equiv \lambda \in\{1,5\}$ is uniform across all $Q$ in any fixed family $\mathscr{B}(n)$. (The partition that we eventually use is described in \S\ref{proof-main}.) To prove the \hyperref[t:main]{Main Theorem}, in particular \eqref{B-sum}, it suffices to prove that for each $\mathscr{B}'=\mathscr{B}(n)$ and $q>0$, we have \begin{equation}\label{H2-goal} \sum_{Q\in\mathscr{B}'} \beta_{S^*(\lambda Q)}(2\lambda Q)^q\diam Q \lesssim_{q,A_\mathscr{H}} \mathscr{H}^1(\Gamma),\end{equation} because $\beta_\Gamma(Q)^q =\beta_{\Lambda(\lambda Q)}(Q)^q\lesssim_q \beta_{\Lambda(\lambda Q)}(2\lambda Q)^q\lesssim_{q,\epsilon_1} \beta_{S^*(\lambda Q)}(2\lambda Q)^q$ for all $Q\in\mathscr{B}$.

We now fix a family $\mathscr{B}'=\mathscr{B}(n)$ and describe a strategy to prove \eqref{H2-goal} for $\mathscr{B}'$. For the remainder of the paper, we set \begin{align}\label{K def}
    K:=100+\lceil \log_2 A_\mathscr{H}\rceil\geq 100.
\end{align} The value of $K$ is chosen according to certain geometric requirements below, but for now the reader may think of $K$ as being some large positive integer that is independent of the family $\mathscr{B}'$.
For the duration of the paper, for all integers $M\geq 1$ and $0\leq j\leq KM-1$, we let $\mathscr{G}^{M,j}$ denote the set of all $Q\in\mathscr{B}'$ such that \begin{itemize}
\item $Q=B(x,A_\mathscr{H}2^{-k})$ for some $k\equiv j\ (\Mod KM)$ and $x\in X_k$, and
\item $2^{-M}<\beta_{S^*(\lambda Q)}(2\lambda Q)\leq 2^{-(M-1)}$.
\end{itemize} Each $Q\in\mathscr{B}'$ belongs to precisely one of the families $\mathscr{G}^{M,j}$ for some integers $M\geq 1$ and $0\leq j<KM-1$. We will prove that when $\flatepsilon$ is sufficiently small compared to $\epsilon_1/A_\mathscr{H}$,   \begin{equation}\label{B1-goal} \sum_{Q\in\mathscr{G}^{M,j}} \diam Q \lesssim_{A_\mathscr{H}} \Haus^1(\Gamma)\end{equation} for all $M$ and $j$. (We only refer to $\epsilon_1$ two more times, once in \eqref{restrictions} and once in the derivation of  \eqref{star-almost-flat-line-estimate}.) This suffices, because for any $q>0$, \begin{align*} \sum_{Q\in\mathscr{B}'} \beta_{S^*(\lambda Q)}(2\lambda Q)^q\diam Q &\leq \sum_{M=1}^\infty 2^{-(M-1)q}\sum_{j=0}^{KM-1} \sum_{Q\in\mathscr{G}^{M,j}}\diam Q\lesssim_{q,A_\mathscr{H}} \Haus^1(\Gamma),\end{align*} where in the last inequality, we used $\sum_{M=1}^\infty M2^{-(M-1)q}\lesssim_q 1$ and $K\lesssim_{A_\mathscr{H}} 1$. That is, \eqref{B1-goal} for all $M$ and $j$ implies \eqref{H2-goal} holds for the family $\mathscr{B}'$.

We now fix integers $M\geq 1$ and $0\leq j_1\leq KM-1$ and write $\mathscr{G}=\mathscr{G}^{M,j_1}$. We make a further reduction. Suppose that for each ball $Q\in\mathscr{G}$ we possess a Borel measurable function $w_Q:\XX\rightarrow[0,\infty]$ which satisfies two properties: \begin{equation} \label{B-mass}
\int_\Gamma w_Q\,d\Haus^1\gtrsim_{A_\mathscr{H}} \diam Q\quad\text{for all }Q\in\mathscr{G};\end{equation}
\begin{equation}\label{B-bounded} \sum_{Q\in\mathscr{G}} w_Q(x)\lesssim 1\quad\text{at $\Haus^1$-a.e. }x\in\Gamma.\end{equation} Then $$\sum_{Q\in\mathscr{G}} \diam Q \lesssim_{A_\mathscr{H}} \sum_{Q\in\mathscr{G}} \int_\Gamma w_Q\,d\Haus^1
= \int_\Gamma \sum_{Q\in\mathscr{G}} w_Q\,d\Haus^1 \lesssim_{A_\mathscr{H}} \Haus^1(\Gamma).$$ That is, the existence of weights $w_Q$ satisfying \eqref{B-mass} and \eqref{B-bounded} imply \eqref{B1-goal}. Our task is to construct the weights, assuming that $\flatepsilon$ is sufficiently small.

\subsection{Cores and maximal almost flat arcs}\label{ss:cores-def}

Following \cite{Schul-Hilbert}, it will be convenient to introduce a \emph{nested} family of ``cores'' $U_{Q}^{J,c}$, lying near the center of balls $Q\in \mathscr{H}$. Cores are formed by joining overlapping dilations of balls in $\mathscr{H}$ from future generations, skipping by $J$ generations at a time.

\begin{definition}[{\cite{Schul-Hilbert}}] \label{def-general-cores} Let $Q\in\mathscr{H}$, say that $Q=B(x,A_\mathscr{H}2^{-k})$ for some $k\in\ZZ$ and $x\in X_k$. For any integer $J\geq 4$ and $0<c\leq 1/5$, we define the $(J,c)$-\emph{core} $U^{J,c}_Q$ of $Q$ inductively by setting $U^{J,c}_{Q,0}:= B(x,c2^{-k})=(c/A_\mathscr{H})Q,$
$$U^{J,c}_{Q,i} := U^{J,c}_{Q,i-1}\cup \bigcup_{\mathclap{\substack{y\in X_{k+Jj}\text{for some }j\geq 1\\ B(y,c 2^{-(k+Jj)})\cap U^{J,c}_{Q,i-1}\neq\emptyset}}} B(y,c 2^{-(k+Jj)})\quad\text{for all $i\geq 1$, and\quad} U^{J,c}_Q := \bigcup_{i=0}^\infty U^{J,c}_{Q,i}.$$
\end{definition}

Cores are a variation on the Christ-David ``dyadic cubes'' in a doubling metric space. Although an infinite-dimensional Banach space is not a doubling metric space, we note that the nets $X_k$ are finite because $\Gamma$ is compact. For a streamlined construction of metric cubes that starts with any nested family of locally finite nets, see \cite{KRS-cubes}.

\begin{lemma}[properties of cores, cf.~{\cite[Lemma 3.19]{Schul-Hilbert}}] \label{core-family} Given $J\geq 4$, $0<c\leq 1/5$, and $0\leq j\leq J-1$, let $\mathscr{U}$ be the family of cores defined by $$\mathscr{U}=\{U^{J,c}_Q:Q=B(x,A_\mathscr{H}2^{-k})\text{ for some } x\in X_k\text{ and }k\equiv j\ (\Mod J)\}.$$ If $Q,R\in\mathscr{H}$ with $Q=B(x,A_\mathscr{H}2^{-k})$ and $R=B(y,A_\mathscr{H}2^{-m})$ for some $k,m\equiv j\ (\Mod J)$, $x\in X_k$, and $y\in X_m$, then the cores $U^{J,c}_Q$ and $U^{J,c}_R$ belong to the family $\mathscr{U}$ and satisfy: \begin{enumerate}
\item[(i)] Shape: $B(x,c2^{-k})\subset U^{J,c}_Q \subset B(x, (1+3/2^J)c2^{-k})\subset B(x,(1/4)2^{-k})$.
\item[(ii)] Separation within levels: If $k=m$ and $x\neq y$, then $\gap(U^{J,c}_Q,U^{J,c}_R) \geq (1/2)2^{-k}.$
\item[(iii)] Tree structure: If $m\geq k$ and $U^{J,c}_Q\cap U^{J,c}_R\neq\emptyset$, then $U^{J,c}_R\subset U^{J,c}_Q$.
\end{enumerate}\end{lemma}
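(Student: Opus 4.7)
The plan is to prove parts (i), (ii), (iii) in that order. Part (i) is the core geometric estimate; (ii) is a one-line consequence of (i) and the separation of $X_k$; (iii) is a closure-under-chaining argument using the congruence $m\equiv k\pmod J$ and falling back on (ii) in the boundary case $m=k$.

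For (i), the lower inclusion $B(x,c2^{-k})\subset U^{J,c}_Q$ is immediate from the base case of Definition~\ref{def-general-cores}. For the upper inclusion I would analyze chains: any $p\in U^{J,c}_Q$ lies in some $B(y_n,c2^{-(k+Jj_n)})$ connected to $B(x,c2^{-k})$ through an overlapping sequence $B(y_0,r_0)=B(x,c2^{-k}),B(y_1,r_1),\dots,B(y_n,r_n)$ with $r_i=c2^{-(k+Jj_i)}$ and $j_i\geq 1$, and the triangle inequality applied along the chain gives
\begin{equation*}
|p-x|\;\leq\; c2^{-k}+2\sum_{i=1}^{n}c2^{-(k+Jj_i)}.
\end{equation*}
The key claim is that a minimal chain may be chosen so the levels $j_1,\dots,j_n$ are pairwise distinct positive integers: two centers in the $2^{-(k+Jj)}$-separated net $X_{k+Jj}$ are at least $(1-2c)2^{-(k+Jj)}$ apart, while the total displacement accumulated by a detour through intermediate levels is controlled by a geometric series whose sum, under $c\leq 1/5$ and $J\geq 4$, is strictly less than $2^{-(k+Jj)}$, forcing any would-be repetition to contradict the net separation. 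Granted distinct levels, $\sum_i 2^{-Jj_i}\leq(2^J-1)^{-1}\leq (3/2)\cdot 2^{-J}$ for $J\geq 4$, yielding $|p-x|\leq (1+3/2^J)c2^{-k}$. The final outer inclusion reduces to the numerical check $(1+3/16)(1/5)=19/80<1/4$.

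For (ii), $|x-y|\geq 2^{-k}$ and each core is contained in a ball of radius $(1/4)2^{-k}$ about its center by (i), so the triangle inequality gives $\gap(U^{J,c}_Q,U^{J,c}_R)\geq 2^{-k}-2\cdot(1/4)2^{-k}=(1/2)2^{-k}$. For (iii), write $m=k+Jl$ with $l\geq 0$. If $l=0$ then (ii) forces $x=y$ whenever the cores meet, so $Q=R$ and the conclusion is trivial. If $l\geq 1$, pick $p\in U^{J,c}_Q\cap U^{J,c}_R$ and trace the chain in $U^{J,c}_R$ from $p$ back to $B(y,c2^{-m})$: each ball in this chain lies at a level $k+Jj'$ with $j'\geq l\geq 1$, which is among the levels chained by $U^{J,c}_Q$, so by induction from $p$ every such ball meets $U^{J,c}_Q$ and hence is absorbed into $U^{J,c}_Q$ by its closure property. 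In particular $B(y,c2^{-m})\subset U^{J,c}_Q$, and forward iteration of the same closure property yields $U^{J,c}_R\subset U^{J,c}_Q$.

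The main obstacle is the distinct-levels claim in (i). Excluding chains that revisit the same level $k+Jj$ via coarser-level intermediates requires a careful quantitative comparison between the net separation $(1-2c)2^{-(k+Jj)}$ and the total displacement accumulated along the detour; the hypotheses $c\leq 1/5$ and $J\geq 4$ should afford just the margin needed, but the combinatorial bookkeeping for arbitrary interleavings of levels is the technical heart of the argument.
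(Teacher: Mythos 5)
Parts (ii) and (iii) of your argument are fine and essentially coincide with the paper's proof: (ii) follows from (i) plus the $2^{-k}$-separation of $X_k$, and your chain-absorption argument for (iii) (using that every ball in the construction of $U^{J,c}_R$ is a candidate ball at a level $\geq 1$ for $U^{J,c}_Q$, together with the closure property of the inductive definition) is the same mechanism the paper uses. The problem is in (i), and it is exactly at the point you flag as the technical heart: the claim that a minimal chain can be chosen with pairwise distinct levels is false, and your sketched justification does not repair it. Your separation argument only excludes detours through \emph{finer} levels (where the accumulated displacement is indeed a small geometric series), but a chain can be forced to revisit a level after passing through a \emph{coarser} ball, and such a detour moves you by an amount comparable to the coarser radius, far more than the separation $2^{-(k+Jj)}$ of the finer net. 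Concretely: take a level-$2$ ball $B_2$ touching the central ball $B(x,c2^{-k})$, a level-$1$ ball $B_1$ touching $B_2$ but not the central ball, and a second level-$2$ ball $B_2'$ touching the far side of $B_1$. Nothing in the hypotheses forbids this, every chain from the center to $B_2'$ visits level $2$ twice, and a point of $B_2'$ can lie at distance $c2^{-k}\bigl(1+2\cdot2^{-J}+4\cdot2^{-2J}\bigr)$ from $x$, which already exceeds the bound $c2^{-k}\bigl(1+2/(2^J-1)\bigr)$ that your distinct-level accounting would give. So the intermediate estimate you rely on is not merely unproven but wrong; only the weaker conclusion $(1+3/2^J)c2^{-k}$ survives.

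The correct bookkeeping is what the paper isolates in Lemma \ref{union-ball-chains}: one allows repeated levels and instead inducts on chain-connected clusters, removing the (unique) largest ball and using the same-level separation hypothesis (this is where $c\leq 1/5$, i.e.\ $1-2c\geq 3c$, enters, matching the lemma's gap requirement $3\xi^{-k}r_0$ with $\xi=2^J$) to force the largest ball of each remaining cluster to sit at a strictly deeper level. This produces the series $1+2\xi^{-1}+4\xi^{-2}+8\xi^{-3}+\cdots$, whose doubling factor accounts precisely for the coarse-level detours your argument ignores, and which is still $\leq 1+3/\xi$ because $\xi=2^J>6$. To fix your proof you would either need to import that cluster induction (or an equivalent multiplicity-$2^j$ count per level-$j$ depth) in place of the distinct-levels claim; as written, Stage (i) has a genuine gap, and (ii) inherits it since it quotes (i).
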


\begin{proof} For (i), given $Q=B(x,A_\mathscr{H}2^{-k})$, the first containment is immediate, because $B(x,c2^{-k})=U^{J,c}_{Q,0}\subset U^{J,c}_Q$. For the second containment, $U^{J,c}_Q\subset B(x,(1+3/2^J)c2^{-k})$, apply Lemma \ref{union-ball-chains} with parameters $\xi=2^J$ and $r_0=c2^{-k}$ and balls $B(y,c2^{-(k+Jj)})$ appearing in the definition of $U^{J,c}_Q$ assigned to level $j$. The reader should check that the hypotheses of Lemma \ref{union-ball-chains} are satisfied, but here are the essential points: With $J\geq 4$, the parameter $\xi\geq 16>6$. The chain hypothesis is satisfied by the construction of the cores. The separation hypothesis is satisfied, because the centers of balls in level $j$ are $2^{-(k+Jj)}$ separated and $(1-2c)\geq 3c$. The final containment in (i) holds, since $(1+3/2^J)c\leq 19/80<1/4$ when $J\geq 4$ and $c\leq 1/5$. Property (ii) holds by property (i) and fact that $|x-y|\geq 2^{-k}$ when $x,y \in X_k$ are distinct. When $m=k$, property (iii) is immediate from property (ii). Finally, when $m>k$, property (iii) follows from the construction. Indeed, $U_Q^{J,c}\cap U_R^{J,c}\neq\emptyset$ only if $U_{Q,i}^{J,c}\cap U_{R,j}^{J,c}\neq\emptyset$ for some $i,j$, so $U_{R,j+l}^{J,c}\subset U_{Q,i+j+1+l}^{J,c}$ for all $l\geq 0$, since $m>k$.\end{proof}

\begin{definition} For all $Q\in\mathscr{H}$ (in particular, for $Q\in\mathscr{G}$), let $U_Q$ denote the $(J,c)$-core $U^{J,c}_Q$ with parameters $J=KM$ and $c=2^{-12}$, with $K$ as in \eqref{K def}, and let $Q_*$ denote $U^{J,c}_{Q,0}$.\end{definition}

\begin{remark}A core $U_Q$ looks like the ball $Q_*$, except that it may have ``tiny bubbles'' pushing outward near the boundary $\partial Q_*$ of the ball. Cores are not necessarily convex.\end{remark}

\begin{remark}If $Q=B(x,A_\mathscr{H}2^{-k})\in\mathscr{H}$ for some $k\in\ZZ$ and $x\in X_k$, then \begin{equation} \label{shape-of-B1-cores} Q_*=B(x,2^{-12}\,2^{-k})\subset U_Q\subset 1.00001 Q_*\end{equation} by Lemma \ref{core-family}, since $1+3/2^{KM}\leq 1+3/2^{100}<1.00001$. (Fifth decimal place precision is chosen to facilitate select estimates in \S\S \ref{proof-main}--\ref{proof-II}.) If $Q\in\mathscr{G}$ and $Q'=B(y,A_\mathscr{H}2^{-m})\in\mathscr{G}$ for some $m\equiv k\ (\Mod KM)$ with $m>k$, then \begin{equation}\label{Q'-diam} \diam 2\lambda Q' \leq 20A_\mathscr{H}2^{-m} \leq 32A_\mathscr{H}2^{-KM}2^{-k}\leq 2^{-84}\diam Q_*,\end{equation} since $2^{-KM}\leq 2^{-100} A_\mathscr{H}^{-1}$, $32A_\mathscr{H}2^{-KM}\leq 2^{-95}$, and $\diam Q_*=2^{-11}2^{-k}$. In particular,  \begin{equation}\label{Q'-containment} 2\lambda Q'\cap 0.99999 Q_*\neq\emptyset \Longrightarrow 2\lambda Q'\subset Q_*\subset U_Q.\end{equation}\end{remark}

\begin{remark}\label{core-separation} The core $U_Q$ of a ball $Q\in\mathscr{H}$ is much smaller than the \hyperref[netball]{net ball} $(1/3A_\mathscr{H})Q$: $2^{10}U_Q\subset (1/3A_\mathscr{H}) Q$, where dilations are relative to the center of $Q$. When $Q'\in\mathscr{H}\setminus\{Q\}$ and $\diam Q'=\diam Q$, Lemma \ref{core-family}(ii) implies $\gap(U_Q,U_{Q'})\geq 2^{10}\diam Q_*\geq 2^9\diam U_Q$. %In particular, if a core $U_{Q'}$ is contained in $500Q_*\setminus U_Q$, then $\diam Q'\leq 2^{-KM}\diam Q$.
\end{remark}

\begin{remark}[tree structure] \label{r:G is a tree} By Lemma \ref{core-family}, we may view $\mathscr{G}$ as a tree ordered by inclusion of the cores $\{U_Q:Q\in\mathscr{G}\}$. That is, we declare $P\in\mathscr{G}$ to be the \emph{parent} of $Q\in\mathscr{G}$ if and only if $P$ is the unique element such that $U_Q\subsetneq U_P$ and $U_Q\subset U_{R}\subset U_P$ for some $R\in\mathscr{G}$ implies $R\in\{P,Q\}$. Note that \begin{equation}\label{max-scale}\sup_{Q\in\mathscr{B}} \diam Q\leq (1/14)\diam \Gamma<\infty,\end{equation} since $\Gamma\setminus 14Q\neq\emptyset$ for all $Q\in\mathscr{B}$ and $\Gamma$ is a rectifiable curve. Hence every element of $\mathscr{G}$ sits below a maximal element in $\mathscr{G}$, i.e.~a ball without a parent. Extending the metaphor, we say that $Q$ is a \emph{child} of $P$ if $P$ is the parent of $Q$. We let $\Child(P)$ denote the set of all $Q\in\mathscr{G}$ such that $Q$ is a child of $P$. For each ball $P\in\mathscr{G}$, the set $\Child(P)$ may be empty, nonempty and finite, or countably infinite.
We also view $\{U_Q:Q\in\mathscr{G}\}$ as a tree ordered by inclusion and call $U_{Q'}$ a \emph{child} of $U_{Q}$ if and only if $Q'\in\Child(Q)$. A child is a \emph{1st generation descendent}, a child of a child is a \emph{2nd generation} descendent, etc.
\end{remark}

We now diverge slightly from \cite{Schul-Hilbert} and introduce (possibly disconnected) fragments of $*$-almost flat arcs on the image side of $f$. We also define a class of closed, connected subsets of fragments called subarcs.

\begin{definition}[fragments of $*$-almost flat arcs] \label{def:arc-fragments} For each $Q\in\mathscr{G}$ and nonempty set $W\subset 2\lambda Q$, with $\lambda\in\{1,5\}$ determined by $\mathscr{G}$, let $\Gamma^*_W=\{\Image(\tau)\cap W:\tau\in S^*(\lambda Q)\}\setminus\{\emptyset\}$.\end{definition}

\begin{definition}[subarcs] \label{def:subarc} Let $T'\in\Gamma^*_W$, say $T'=\Image(\tau)\cap W$ for some arc $\tau\in S^*(\lambda Q)$. We say that $T\subset T'$ is a \emph{subarc} of $T'$ if $T=\tau(I)=f(I)$ for some non-degenerate interval $I=[a,b]\subset\Domain(\tau)$. We say that $T$ is \emph{efficient} if, in addition, $\diam T=|f(a)-f(b)|$.\end{definition}

\begin{remark}[choosing maximal arc fragments] \label{r:maximal-fragments}
 For any $Q\in\mathscr{G}$, say $Q=B(x,A_\mathscr{H}2^{-k})$ for some $k\in\ZZ$ and $x\in X_k$, the set $\Gamma^*_{U_Q}$ of arc fragments is nonempty, since the core $U_Q$ is contained in the net ball $B(x,(1/3)2^{-k})$ and $x\in U_Q$. In fact, for every set $T'\in\Gamma^*_{U_Q}$, there exists an almost flat arc $\tau\in S(\lambda Q)$ such that $T'=\Image(\tau)\cap U_Q$, since $Q\in\mathscr{G}$ and $\mathscr{G}\subset\mathscr{B}^\lambda$ (see Definition \ref{B-balls-def}).

Among all sets in $\Gamma_{U_Q}^*$, choose $H_Q\in\Gamma^*_{U_Q}$ such that \begin{equation}\begin{split}\label{e:HQ}
&H_Q \cap (1/4)Q_* \neq \emptyset\quad\,\text{and}  \\
&\diam H_Q \geq \diam T' \quad\text{for all }T'\in\Gamma^*_{U_Q} \text{ such that } T' \cap (1/4)Q_* \neq \emptyset.\end{split}\end{equation} That is, let $H_Q$ have maximal diameter among all fragments in $U_Q$ of almost flat arcs that intersect $(1/4)Q_*$. Let $\eta_Q\in S(\lambda Q)$ denote any arc such that $H_Q=\Image(\eta_Q)\cap U_Q$. Existence of $H_Q$ is immediate, as $\Gamma^*_{U_Q}$ is a nonempty finite set and at least one fragment in $\Gamma^*_{U_Q}$ passes through the center of $(1/4)Q_*$. If there are several candidates, pick one in an arbitrary fashion. In principle, $H_Q$ may have several connected components; e.g.~even if $\eta_Q$ traces a line segment, the core $U_Q$ need not be a convex set. Nevertheless, $H_Q$ always contains an efficient subarc $G_Q$ with diameter nearly equal to that of $H_Q$; see \eqref{H-G diam est} below. By comparison with an arc $\tau\in S(\lambda Q)$ with $x\in \Image(\tau)$ and \eqref{shape-of-B1-cores}, \begin{equation}\label{HQ-bounds} 0.5\diam Q_*\leq \diam H_Q \leq 1.00001\diam Q_*<3\Haus^1(\Gamma\cap U_Q),\end{equation} where the diameter of $H_Q$ is closer to the lower bound if $H_Q$ is ``radial" and closer to the upper bound if $H_Q$ is ``diametrical". (The constant 3 is overkill.) Alternatively, \begin{equation} \label{HQ-bounds-2} 0.49999 \diam U_Q \leq \diam H_Q \leq \diam U_Q\leq 2.00002\diam H_Q.\end{equation}

Below, we use $\diam H_Q$ to the play the role that $\diam U_Q$ had in \cite{Schul-Hilbert}.
\end{remark}

\subsection{Martingale construction}\label{ss:martingale}
In probability theory \cite[Chapter 4]{Durrett}, a \emph{martingale} defined with respect to an increasing sequence $(\mathcal{F}_k)_{k\geq 0}$ of $\sigma$-algebras is any sequence of real-valued random variables $(Y_k)_{k\geq 0}$ such that each $Y_k$ is $\mathcal{F}_k$ measurable and has finite expectation, and moreover, the conditional expectations $\mathbb{E}(Y_{k+1}|\mathcal{F}_k)=Y_k$ for all $k$. The \emph{martingale convergence theorem} asserts that if $(Y_k)_{k\geq 0}$ is a martingale and $Y_k\geq 0$ for all $k$, then $Y_k$ converges to some random variable $Y$ almost surely. We will use martingales to construct weights satisfying \eqref{B-mass} and \eqref{B-bounded}, where the background ``probability'' is the finite measure $\ell=\Haus^1\res\Gamma$.

Let $P\in\mathscr{G}$ be a fixed ball. For each $k\geq 0$, let $\mathcal{F}_k$ denote the $\sigma$-algebra generated by the cores $U_{Q}$, where $Q$ is a descendent of $P$ in $\mathscr{G}$ of generation at most $k$ (including $P$). Thus, $\mathcal{F}_0=\{\emptyset,U_P,\XX\setminus U_P,\XX\}$ is the $\sigma$-algebra generated by $\{U_P\}$, $\mathcal{F}_1$ is the $\sigma$-algebra generated by $\{U_P\}\cup\{U_Q:Q\in\Child(P)\}$, etc. We remark that $\mathcal{F}_0\subset\mathcal{F}_1\subset\mathcal{F}_2\subset \cdots\subset\mathcal{B}_\XX$, the Borel $\sigma$-algebra.
We build $(Y_k)_{k\geq 0}$ inductively. First, assign $Y_0$ to be the $\mathcal{F}_0$ simple function \begin{equation}\label{X-0} Y_0=\frac{\diam H_P}{\ell(U_P)}\chi_{U_P},\end{equation} where $H_P$ denotes the maximal arc fragment chosen in Remark \ref{r:maximal-fragments}. Note that $Y_0$ is $\mathcal{F}_0$ measurable and $\int Y_0\,d\ell = \diam H_P$. To continue, suppose that $Q\in\mathscr{G}$ with $U_Q\subset U_P$. Let $k\geq 0$ denote the unique integer such that $Q$ is a descendent of $P$ of generation $k$, i.e.~$k=0$ if $Q=P$, $k=1$ if $Q\in\Child(P)$, etc. We will define $Y_{k+1}|_{U_Q}$ to take constant values on elements of $\mathcal{F}_{k+1}$ contained in $U_Q$. If $\Child(Q)=\emptyset$, then $Q$ is terminal in $\mathscr{G}$ and we simply set $Y_{k+i}|_{U_Q}=Y_k|_{U_Q}$ for all $i\geq 1$. Otherwise, $Q$ has at least one and possibly $\aleph_0$ many children in $\mathscr{G}$; let  $Q^1,Q^2,\dots$ be an enumeration of $\Child(Q)$. We remark that the cores $U_{Q^i}$ of children of $Q$ are pairwise disjoint. Now, define the \emph{remainder} $R_Q$, \begin{equation}\label{remainder} R_Q:=U_Q \setminus \textstyle\bigcup_i U_{Q^i},\end{equation} and define the auxiliary quantity $s_Q$, \begin{equation}\label{s-Q} s_Q:=101\,\ell(R_Q)+\textstyle\sum_i \diam H_{Q^i}.\end{equation} Observe that $s_Q\leq 101\,\ell(U_Q)<\infty$ by \eqref{HQ-bounds} and countable additivity of measures.  Assign $Y_{k+1}|_{U_Q}$ to be the function \begin{equation}\label{X-next} Y_{k+1}|_{U_Q}= \Big(\frac{101}{s_Q}\chi_{R_Q} + \sum_{i} \frac{\diam H_{Q^i}}{\ell(U_{Q^i})s_Q}\chi_{Q^i}\Big)\int_{U_Q} Y_k\,d\ell;\end{equation} also assign $Y_{k+i}|_{R_Q}=Y_{k+1}|_{R_Q}$ for all $i\geq 2$. Then $Y_{k+1}|_{U_Q}$ is $\mathcal{F}_{k+1}$ measurable and $\int_{U_Q} Y_{k+1}\,d\ell=\int_{U_Q} Y_k\,d\ell$. As $U_Q$ is an atom in the $\sigma$-algebra $\mathcal{F}_k$, the equality of the integrals ensures that $\mathbb{E}(Y_{k+1}|\mathcal{F}_k)= Y_k$ on $U_Q$. Repeating this construction on each $Q$ that sits $k$ levels below $P$ in $\mathscr{G}$ concludes the description of $Y_{k+1}$ given $Y_k$. We have verified that $Y_{k+1}$ is $\mathcal{F}_{k+1}$ measurable and $\mathbb{E}(Y_{k+1}|\mathcal{F}_k)=Y_k$. Furthermore, the function $Y_{k+1}$ has finite expectation, since $\int Y_{k+1}\,d\ell=\int Y_k\,d\ell=\cdots=\int Y_0\,d\ell=\diam H_P<\infty$. Therefore, $(Y_k)_{k\geq 0}$ is a martingale relative to $(\mathcal{F}_k)_{k\geq 0}$. By the martingale convergence theorem, $(Y_k)_{k\geq 0}$ converges almost surely. Thus, we may define the weight $w_P$ to be any non-negative Borel measurable function such that $w_P=\lim_{k\rightarrow\infty} Y_k$ $\ell$-a.e.

The following observation is the key to unlocking \eqref{B-mass} and \eqref{B-bounded}.

\begin{lemma}[cf.~{\cite[Lemma 3.25, Steps 2--3]{Schul-Hilbert}}] \label{q-lemma} Suppose there is a universal constant $0<q<1$ such that $\diam H_Q \leq q\, s_Q$ for all $Q\in\mathscr{G}$. Then \eqref{B-mass} and \eqref{B-bounded} hold for $\mathscr{G}$. \end{lemma}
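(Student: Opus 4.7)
The plan is to unwind the martingale recursion \eqref{X-next} along the tree $\mathscr{G}_P := \{R \in \mathscr{G} : U_R \subset U_P\}$ of descendants of $P$ to obtain a closed formula for $w_P$, and then to derive \eqref{B-mass} and \eqref{B-bounded} from two geometric-series arguments keyed to the hypothesis $\diam H_R / s_R \leq q < 1$. By Lemma \ref{core-family}(iii), distinct cores in $\mathscr{G}_P$ are either nested or disjoint, so each $Q \in \mathscr{G}_P$ has a unique chain of ancestors $P = R_0 \supsetneq R_1 \supsetneq \cdots \supsetneq R_m = Q$ in the tree. Setting $I_Q := \int_{U_Q} Y_k\, d\ell$ (independent of $k \geq \mathrm{gen}(Q)$ by the martingale property), \eqref{X-next} gives $I_Q = (\diam H_Q / s_{R_{m-1}})\, I_{R_{m-1}}$, which telescopes to
\begin{equation*}
I_Q = \diam H_Q \prod_{i=0}^{m-1} \frac{\diam H_{R_i}}{s_{R_i}} \leq q^m\, \diam H_Q,
\end{equation*}
and hence on the remainder $R_Q$ the weight is $w_P|_{R_Q} = (101/s_Q)\,I_Q \leq 101\, q^{m+1}$.

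For \eqref{B-mass}, I would integrate $w_P$ over the disjoint decomposition $U_P = \bigsqcup_{Q \in \mathscr{G}_P} R_Q$, modulo a limit set consisting of points lying in every core of an infinite descending chain. From \eqref{s-Q}, $\sum_{Q^i \in \Child(R)} \diam H_{Q^i} = s_R - 101\,\ell(R_R)$; summing over generation-$m$ descendants, the residual $M_m := \sum I_Q$ satisfies $M_m - M_{m+1} = \sum_Q \int_{R_Q} w_P\, d\ell$. Because cores of same-generation descendants are pairwise disjoint subsets of $U_P$ (Lemma \ref{core-family}) and $\diam H_Q < 3\,\Haus^1(\Gamma \cap U_Q)$ by \eqref{HQ-bounds}, the telescoped bound $I_Q \leq q^m \diam H_Q$ yields $M_m \leq 3 q^m\,\ell(U_P) \to 0$; in particular, Fatou shows $w_P$ integrates to zero on the limit set. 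Telescoping then gives $\int_{U_P} w_P\,d\ell = \diam H_P$, and combining \eqref{HQ-bounds} with \eqref{shape-of-B1-cores} yields $\diam H_P \geq 2^{-13}\, A_\mathscr{H}^{-1}\, \diam P$, which is \eqref{B-mass}.

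For \eqref{B-bounded}, I would fix $x \in \Gamma$ and observe that, by Lemma \ref{core-family}(iii) again, the balls $P \in \mathscr{G}$ with $w_P(x) > 0$ form a single descending chain $P_0 \supsetneq P_1 \supsetneq \cdots \supsetneq P_N$, terminating for $\ell$-a.e.\ $x$ at the deepest $P_N = Q(x)$ with $x \in U_{Q(x)}$. The pointwise bound $w_{P_j}(x) \leq 101\, q^{N - j + 1}$ derived in the opening paragraph sums geometrically to
\begin{equation*}
\sum_{P \in \mathscr{G}} w_P(x) \;\leq\; \sum_{j = 0}^N 101\, q^{N - j + 1} \;\leq\; \frac{101\, q}{1 - q},
\end{equation*}
which is \eqref{B-bounded}.

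The main obstacle I expect is verifying the mass identity $\int w_P\, d\ell = \diam H_P$ without appealing to uniform integrability of $(Y_k)_{k \geq 0}$, which can easily fail because $Y_k$ may be arbitrarily large on small cores. The direct telescoping of $M_m$ described above sidesteps this issue, relying on the disjointness of same-generation cores supplied by Lemma \ref{core-family} together with the geometric $q^m$-decay furnished by the hypothesis $\diam H_R / s_R \leq q < 1$.
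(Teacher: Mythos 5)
Your proposal follows the same basic strategy as the paper's proof: unwind the recursion \eqref{X-next} along branches of the tree to get the geometric factor $\prod_i \diam H_{R_i}/s_{R_i}\leq q^{m}$, and then sum a geometric series along the chain of cores containing a point to get \eqref{B-bounded}. The one genuine difference is how you obtain the mass identity $\int_{U_P}w_P\,d\ell=\diam H_P$: the paper observes that \eqref{HQ-bounds} gives $\diam H_Q/\ell(U_Q)<3$, hence $Y_k\leq 101$ everywhere (see \eqref{wP-bound}), and applies dominated convergence with respect to the finite measure $\ell$; you instead telescope the residuals $M_m=\sum_{\mathrm{gen}\,m}I_Q$, use disjointness of same-generation cores to get $M_m\leq 3q^m\ell(U_P)\to 0$, and dispose of the limit set by Fatou. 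Your route is correct and self-contained, but the obstacle you cite as motivating it is illusory: under the standing bound \eqref{HQ-bounds} the martingale $(Y_k)$ \emph{is} uniformly bounded by $101$, so uniform integrability is not an issue and the paper's shorter argument goes through.

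Two small points in your write-up need patching, though neither is fatal. First, the formula $w_P|_{R_Q}=(101/s_Q)I_Q$ and the resulting bound $101\,q^{m+1}$ are wrong when $Q$ is terminal: there the construction freezes $Y$ at the ``core'' value, so $R_Q=U_Q$ and $w_P|_{U_Q}=I_Q/\ell(U_Q)\leq 3q^{m}$ by \eqref{HQ-bounds}. This does not affect your telescoping identity for \eqref{B-mass} (which only uses $I_Q=\int_{R_Q}w_P\,d\ell+\sum_i I_{Q^i}$, valid in both cases), but in \eqref{B-bounded} it changes your constant from $101q/(1-q)$ to $101/(1-q)$, which is all that is needed. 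Second, your claim that for $\ell$-a.e.\ $x$ the chain of balls $P$ with $x\in U_P$ terminates at a deepest core is unjustified: nothing in the hypotheses prevents a positive-$\ell$-measure set of points from lying in infinitely many nested cores. The fix is immediate and already implicit in your estimates: for such $x$ one has $Y_k(x)\leq 3q^k\to 0$ for every root $P_j$, so $w_{P_j}(x)=0$ for $\ell$-a.e.\ such $x$; alternatively, as in the paper, bound the partial sums $\sum_{j\leq N}w_{P_j}(x)$ uniformly in $N$ and let $N\to\infty$. With these two repairs your argument is complete.
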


\begin{proof}
Suppose that $Q_0=P$, $Q_1\in\Child(Q_0)$, \dots, $Q_k\in\Child(Q_{k-1})$ is a finite branch of $\mathscr{G}$ below $P$. Then, for all $x\in U_{Q_k}$,
\begin{align*}Y_k(x) &= \frac{\diam H_{Q_k}}{\ell(U_{Q_k})s_{Q_{k-1}}}\int_{U_{Q_k}}Y_{k-1}\,d\ell=\frac{\diam H_{Q_k}}{\ell(U_{Q_k})s_{Q_{k-1}}}\frac{\diam H_{Q_{k-1}}}{s_{Q_{k-2}}}\int_{U_{Q_{k-1}}} Y_{k-2}\,d\ell\\
&=\cdots =\frac{\diam H_{Q_k}}{\ell(U_{Q_k})s_{Q_{k-1}}}\frac{\diam H_{Q_{k-1}}}{s_{Q_{k-2}}}\cdots \frac{\diam H_{Q_1}}{s_{P}}\int_{U_P} \frac{\diam H_P}{\ell(U_P)}\,d\ell
\\ &\leq q^k \frac{\diam H_{Q_k}}{\ell(U_{Q_k})} < 3q^k
\end{align*} by the hypothesis of the lemma and \eqref{HQ-bounds}. Similarly, for all $x\in R_{Q_k}$, $$Y_{k+i}(x)=Y_{k+1}(x) = \frac{101}{s_{Q_k}}\int_{U_{Q_k}} Y_k\,d\ell\leq 101 q^{k+1}\quad\text{for all }i\geq 2.$$ Now, every point $x\in U_P$ either belongs to some $R_Q$ and $Y_k(x)$ is eventually constant, or $x$ is contained in an infinite branch of $\mathscr{G}$ and $Y_k(x)\rightarrow 0$. Hence \begin{equation}\begin{split} \label{wP-bound} Y_k(x) &\leq 101\quad\quad \text{for all $x\in\XX$ and $k\geq 0$, and}\\ w_P(x)&\leq 101 q^k\quad \text{whenever $x$ belongs to a branch }U_{Q_k}\subset U_{Q_{k-1}}\subset\cdots\subset U_P.\end{split}\end{equation} Because $Y_k\rightarrow w_P$ $\ell$-a.e.~and $Y_k$ is uniformly bounded, $Y_k\rightarrow w_P$ in $L^1(\ell)$ by Lebesgue's dominated convergence theorem. Thus, $$\int_\Gamma w_P\,d\Haus^1=\int w_P\,d\ell = \lim_{k\rightarrow\infty} \int Y_k\,d\ell = \diam H_P \gtrsim_{A_\mathscr{H}} \diam P$$ by \eqref{HQ-bounds}. That is, \eqref{B-mass} holds.

Finally, if some ball $Q_0\in\mathscr{G}$ is maximal in $\mathscr{G}$ (i.e.~$Q_0$ has no parent in $\mathscr{G}$) and for some branch $Q_1\in\Child(Q_0)$, \dots, $Q_k\in \Child(Q_{k-1})$ of $\mathscr{G}$ below $Q_0$, a point $x\in U_{Q_k}$, then $$w_{Q_0}(x)+w_{Q_1}(x)+\dots + w_{Q_k}(x) \leq 101q^k+ 101q^{k-1}+\cdots+101 \leq \frac{101}{1-q}$$ by \eqref{wP-bound}. Since the upper bound is independent of the length of the branch and $q$ is a universal constant, this yields \eqref{B-bounded}. \end{proof}

\subsection{Summary}\label{ss:summary} All things considered, we have shown that in order to prove \eqref{H2-goal} for a given family $\mathscr{B}' \subset\mathscr{B}$, it suffices to verify the hypothesis of Lemma \ref{q-lemma} for each subfamily $\mathscr{G}=\mathscr{G}^{M,j_1}$ associated to $\mathscr{B}'$. (Look between \eqref{K def} and \eqref{B1-goal} for the definition of $\mathscr{G}$.)

\section{Outline of the proof of the \texorpdfstring{\hyperref[t:main]{Main Theorem}}{Main Theorem}}\label{proof-main}
Recall that $\mathscr{B}=\mathscr{B}^1\cup\mathscr{B}^5$. Some balls in $\mathscr{B}$ may belong to both families, but this will not concern us. For the remainder of the paper, we let $\lambda\in\{1,5\}$ be fixed and focus on establishing \eqref{H2-goal} for $\mathscr{B}'=\mathscr{B}^\lambda$. Throughout the sequel, we demand that \begin{equation}\label{restrictions} \flatepsilon\leq 2^{-55}\epsilon_1/A_\mathscr{H},\end{equation} which ensures that at appropriate resolutions, every point in the image of an almost flat arc lies close to some line segment. Furthermore, this choice guarantees that any individual $*$-almost flat arc $\tau\in S^*(\lambda Q)$ is much flatter than the union of the images of all $*$-almost flat arcs in $S^*(\lambda Q)$. See \S\ref{ss:basic} for details. We do not optimize $\epsilon_2$.

\begin{remark}If desired, one can replace the scaling factor $\lambda\in\{1,5\}$ in the arguments below with an arbitrary scaling factor $\lambda\geq 1$. However, if $\lambda$ is very large, then one must adjust the values of several parameters, including $\epsilon_2$ in the definition of almost flat arcs, and $J$ and $c$ in the definition of the cores $U_Q=U_Q^{J,c}$. We restrict to $\lambda\in\{1,5\}$, because these are the values needed for the proof of Theorem \ref{t:nec} presented in \cite{Badger-McCurdy-1}.\end{remark}

Later on, we would like to assume that every almost flat arc $\tau\in S(\lambda Q)$ that passes through the \hyperref[netball]{net ball} for $Q$ has endpoints on the boundary of $2\lambda Q$. Exceptions may occur if an endpoint of the full parameterization lies on the arc, but for each endpoint this happens at most a finite number of times per scale. Checking \eqref{H2-goal} for such balls is easy.

\begin{lemma}\label{l:B0} Let $\mathscr{B}^\lambda_0$ denote the set of all $Q\in\mathscr{B}^\lambda$ for which there exists an arc $\tau\in S(\lambda Q)$ such that $\Image(\tau)$ contains $f(0)$ or $f(1)$ and $\Image(\tau)\cap (1/3A_\mathscr{H})Q\neq\emptyset$. For all $q>0$, \begin{equation}\label{B0-sum}
\sum_{Q\in \mathscr{B}^\lambda_0} \beta_{S^*(\lambda Q)}(2\lambda Q)^q \diam Q \leq \sum_{Q\in\mathscr{B}^\lambda_0} \diam Q \lesssim_{A_\mathscr{H}} \Haus^1(\Gamma).\end{equation}
\end{lemma}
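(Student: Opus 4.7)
The plan is as follows. By applying the argument symmetrically to the reversed parameterization $t \mapsto f(1-t)$ to handle the $f(1)$ case, it suffices to bound the contribution from the subfamily $\mathscr{B}^\lambda_{0,+} \subset \mathscr{B}^\lambda_0$ of balls whose witness arc contains $f(0)$. The first inequality in \eqref{B0-sum} is trivial because $\beta \in [0,1]$ and $q>0$. Condition~(i) of Definition~\ref{B-balls-def}, applied to $x_Q \in X_k \subset \Gamma$ and any point of $\Gamma \setminus 14Q$, forces $\diam Q < \diam\Gamma/7$ for every $Q \in \mathscr{B}^\lambda$, so it suffices to establish the per-scale bound
\[
N_k := \#\{Q \in \mathscr{B}^\lambda_{0,+} : \diam Q = 2A_\mathscr{H} 2^{-k}\} \lesssim_{A_\mathscr{H}} 1;
\]
summing a geometric series over $k \geq k_0$ (with $k_0$ the coarsest relevant scale) will then yield
\[
\sum_{Q \in \mathscr{B}^\lambda_{0,+}} \diam Q = 2A_\mathscr{H} \sum_k N_k 2^{-k} \lesssim_{A_\mathscr{H}} 2^{-k_0} \lesssim_{A_\mathscr{H}} \diam\Gamma \leq \Haus^1(\Gamma).
\]

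For the per-scale bound, I would fix, for each $Q \in \mathscr{B}^\lambda_{0,+,k}$, a witness arc $\tau_Q \in S(\lambda Q)$ and a point $y_Q \in \Image(\tau_Q) \cap (1/3A_\mathscr{H})Q$. In the generic situation $f^{-1}(f(0)) = \{0\}$, every $\tau_Q$ is forced to be an initial segment $f|_{[0,b_Q]}$, so the arcs at scale $k$ nest by inclusion under $b_Q$-ordering, and the arc $\tau_{Q_*}$ with maximal $b_{Q_*}$ contains every $y_Q$ in its image. Since $\tau_{Q_*} \in S(\lambda Q_*)$, its image lies within $\epsilon_2 \Diam(\tau_{Q_*}) \leq 4\epsilon_2\lambda A_\mathscr{H} 2^{-k}$ of some line $L_*$. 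Taking a $1$-Lipschitz affine projection $\pi_* \colon \XX \to L_*$ supplied by Hahn-Banach (Appendix~\ref{sec:smooth}), the inequality $|y-\pi_*(y)| \leq 2\dist(y,L_*)$ combined with the net-ball separation \eqref{net-gap} and the smallness \eqref{restrictions} yields
\[
|\pi_*(y_{Q_i}) - \pi_*(y_{Q_j})| \geq \tfrac{1}{3}2^{-k} - 16\epsilon_2\lambda A_\mathscr{H} 2^{-k} \geq \tfrac{1}{4}2^{-k}
\]
for distinct $Q_i, Q_j$ at scale $k$. Because these projections all lie in the interval $\pi_*(\Image(\tau_{Q_*})) \subset L_*$ of length at most $4\lambda A_\mathscr{H} 2^{-k}$, a one-dimensional packing estimate forces $N_k \leq 16\lambda A_\mathscr{H} + 1 \lesssim A_\mathscr{H}$.

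The main technical obstacle is the non-generic case where $f^{-1}(f(0))$ contains times $t^* > 0$: then some witness arcs $\tau_Q$ must be chosen with $0 \notin \Domain(\tau_Q)$, and these no longer nest with the initial segments. I would handle this by partitioning $\mathscr{B}^\lambda_{0,+,k}$ according to the visit $t^*_Q \in f^{-1}(f(0))$ captured by the chosen arc; inside each part, a parameter-side variant of the nesting argument (using that each such domain is an interval containing $t^*$ and sitting inside the maximal sub-interval around $t^*$ on which $f$ stays in $B(f(0), 4\lambda A_\mathscr{H} 2^{-k})$) again gives $\lesssim A_\mathscr{H}$ balls per visit. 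Closing the argument requires bounding the number of visits contributing at scale $k$ by $O_{A_\mathscr{H}}(1)$; this should follow by observing that each contributing visit forces a non-trivial almost flat piece of $\Gamma$ through $f(0)$ whose net-ball footprint, being pairwise separated from those of other visits via \eqref{net-gap}, limits their count.
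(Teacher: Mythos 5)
Your setup (symmetry in $f(0)$, $f(1)$, triviality of the first inequality, $\diam Q\lesssim\diam\Gamma$ from Definition \ref{B-balls-def}(i), per-scale count plus geometric series) and your generic-case argument are sound, and in the generic case you are essentially reproducing the paper's proof: the paper picks $v_Q\in\Image(\tau_Q)\cap(1/3A_\mathscr{H})Q$, observes that all these points lie in $B(f(0),4\lambda A_\mathscr{H}2^{-k})$, and counts them using \eqref{net-gap}, \eqref{almost-flat-line-estimate}, Lemma \ref{j-proj facts} and Lemma \ref{l:count}; your nesting of the initial segments $f|_{[0,b_Q]}$ is precisely what places all the $v_Q$ on one almost flat arc, hence within $\ll 2^{-k}$ of a single line, so that the one-dimensional packing count applies.

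The genuine gap is the closing step of your non-generic case. You need the number of visits $t^*\in f^{-1}(f(0))$ contributing at scale $k$ to be $O_{A_\mathscr{H}}(1)$, and you propose to extract this from the net-ball separation \eqref{net-gap}. That step fails: separation alone gives no cardinality bound inside $B(f(0),4\lambda A_\mathscr{H}2^{-k})$ in an infinite-dimensional space, and every cardinality bound in this framework passes through a projection onto a single line (Lemma \ref{l:count}), whereas distinct visits need not share any approximating line. Concretely, in $\ell_2$ take $\Gamma$ which starts at $f(0)$ and later passes straight through the point $f(0)$ along $N$ nearly orthogonal directions, each straight pass of length comparable to $A_\mathscr{H}2^{-k}$, with the connecting loops routed at distance at least $4\lambda A_\mathscr{H}2^{-k}$ from $f(0)$. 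Placing a net point $x_i\in X_k$ on the $i$-th pass at distance $2^{-k}$ from $f(0)$ (these are pairwise $\sqrt{2}\,2^{-k}$-separated), one checks (i)--(iii) of Definition \ref{B-balls-def} for $Q_i=B(x_i,A_\mathscr{H}2^{-k})$: every arc of $\Lambda(\lambda Q_i)$ is a line segment, and the only one meeting the net ball is the chord of pass $i$, which is an almost flat arc through $f(0)$. Thus $N_k\geq N$ with $N$ arbitrary, so no per-scale bound $N_k\lesssim_{A_\mathscr{H}}1$ holds for witnesses selected by the condition $\Image(\tau)\ni f(0)$, and your geometric-series-against-$\diam\Gamma$ accounting cannot absorb the multi-visit contribution. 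The estimate itself survives because each contributing visit costs its own chunk of length: witness arcs attached to distinct visits lie in distinct components of $f^{-1}(\Gamma\cap2\lambda Q)$, hence have disjoint domains of length $\gtrsim A_\mathscr{H}2^{-k}$ at the coarsest scale where that visit appears, so the multi-visit part must be charged, visit by visit, to $\Haus^1(\Gamma)$ rather than bounded by a per-scale constant times $\diam\Gamma$. Alternatively, note that only balls admitting a witness with $0$ or $1$ in $\Domain(\tau)$ are needed downstream (see the use of Lemma \ref{l:B0} in Lemma \ref{unnecessary-endpoints}); for that subfamily your nesting argument requires no genericity hypothesis at all, since initial and final segments nest regardless of the size of $f^{-1}(f(0))$.
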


\begin{proof} Fix any $z\in\XX$ (e.g.~$z=f(0),f(1)$). For the duration of the proof, let $\mathscr{B}^\lambda_z$ denote the set of all $Q\in\mathscr{B}^\lambda$ for which there exists an arc $\tau\in S(\lambda Q)$ such that $\Image(\tau)$ contains $z$ and intersects the net ball $(1/3A_\mathscr{H})Q$. Choose $k_0\in\ZZ$ so that $A_\mathscr{H} 2^{-k_0}$ is the largest radius of a ball in $\mathscr{B}^\lambda_z$.  For each $k\geq k_0$, let $\mathscr{E}_k$ denote all balls $Q\in\mathscr{B}^\lambda_z$ of radius $A_\mathscr{H} 2^{-k}$. Choose $v_Q\in \Image(\tau)\cap (1/3A_\mathscr{H})Q$ for each $Q\in\mathscr{E}_k$. By \eqref{net-gap},  \eqref{almost-flat-line-estimate}, Lemma \ref{j-proj facts}, and Lemma \ref{l:count}, the set $\{v_Q: Q\in\mathscr{E}_k\} \cap B(z,4\lambda A_\mathscr{H} 2^{-k})$ has cardinality at most $1+36\lambda A_\mathscr{H}$. Thus, by \eqref{max-scale}, $\sum_{k=k_0}^\infty \sum_{Q\in\mathscr{E}_k} \diam Q \leq 2(1+36\lambda A_\mathscr{H})(1/14)\diam \Gamma\lesssim_{A_\mathscr{H}} \Haus^1(\Gamma)$.
\end{proof}

Our strategy to prove \eqref{H2-goal} for $\mathscr{B}'=\mathscr{B}^\lambda\setminus \mathscr{B}^\lambda_0$ is to run Schul's martingale argument. That is to say, we must verify that the hypothesis of Lemma \ref{q-lemma} holds for all $Q\in\mathscr{G}$, for  each possible subfamily $\mathscr{G}=\mathscr{G}^{M,j_1}\subset\mathscr{B}^\lambda\setminus \mathscr{B}^\lambda_0$: \begin{equation}\label{q-goal} \exists_{0<q<1}\,\forall_M\,\forall_{j_1}\,\forall_{Q\in\mathscr{G}}\quad \diam H_Q \leq q\, s_Q,\end{equation} where the maximal arc fragment $H_Q$ associated to $Q$ was chosen in Remark \ref{r:maximal-fragments} and \begin{equation}\label{s-Q-again} s_Q=101\,\ell(R_Q)+\sum_{Q'\in\Child(Q)} \diam H_{Q'}.\end{equation} There will be a number of cases, depending on the geometry of arc fragments in $U_Q$ as well as on the geometry of arcs associated to $Q'\in\Child(Q)$, the children of $Q$ in the tree $\mathscr{G}$ (see Remark \ref{r:G is a tree}), and the size of the remainder $R_Q$ \eqref{remainder}. Let us quickly dispense with an easy case, which is connected to the choice of the constant 101 in \eqref{s-Q-again}.

\begin{definition}\label{remainder-sizes} Let $Q\in \mathscr{G}$. \begin{itemize}
\item  If $\ell(R_Q)>(1/100)\diam H_Q$, then we say that the remainder of $Q$ is \emph{large}.
\item If $\ell(R_Q)\leq (1/100) \diam H_Q$, then we say the remainder of $Q$ is \emph{small}.
\end{itemize}\end{definition}

\begin{lemma}[Case 1: large remainder]\label{case1} If $Q\in\mathscr{G}$ has large remainder, then $\diam H_Q < 0.9901 s_Q.$
\end{lemma}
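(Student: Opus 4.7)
The plan is straightforward: the constant $101$ in the definition of $s_Q$ was chosen precisely so that the large-remainder case handles itself. I would simply unpack the definition of $s_Q$ in \eqref{s-Q-again}, discard the (non-negative) contribution from the children, and use the large remainder hypothesis.

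Specifically, since the sum $\sum_{Q'\in\Child(Q)}\diam H_{Q'}$ is non-negative, we have
\begin{equation*}
s_Q \;\geq\; 101\,\ell(R_Q).
\end{equation*}
By Definition \ref{remainder-sizes}, the large remainder hypothesis gives $\ell(R_Q) > (1/100)\diam H_Q$, so
\begin{equation*}
s_Q \;>\; \frac{101}{100}\,\diam H_Q \;=\; 1.01\,\diam H_Q.
\end{equation*}
Rearranging yields $\diam H_Q < (1/1.01)\, s_Q$, and since $1/1.01 = 0.990099\ldots < 0.9901$, we obtain the desired estimate $\diam H_Q < 0.9901\, s_Q$.

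There is no real obstacle here; the entire content of the lemma is bookkeeping. The only thing worth noting is that the constant $0.9901$ in the statement is a slight numerical cushion above $1/1.01$, which is what lets us write a clean bound rather than the slightly awkward $100/101$. This lemma is the motivation for weighting the remainder term in \eqref{s-Q} by the factor $101$ in the first place; any weight strictly larger than $100$ would produce some explicit $q<1$ in this case. The genuinely hard cases for verifying \eqref{q-goal} will be those where the remainder is small, so that one must extract the factor $q<1$ from the geometry of how the children $H_{Q'}$ sit inside the parent core $U_Q$ — this is what Lemmas \ref{improve} and \ref{l:B_2 subset} are designed to accomplish, and is where all subsequent effort in the paper is spent.
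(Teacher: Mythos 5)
Your proof is correct and is essentially the same as the paper's: both drop the non-negative children sum from $s_Q$, apply the large-remainder inequality $\ell(R_Q)>(1/100)\diam H_Q$, and conclude $\diam H_Q<(100/101)s_Q<0.9901\,s_Q$. Nothing further is needed.
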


\begin{proof} By \eqref{s-Q-again} and  definition of large remainder, $\diam H_Q <100 \ell(R_Q)\leq (100/101)s_Q$ and $100/101=0.\overline{9900}<0.9901$.\end{proof}

Case 1 occurs if, for example, $Q$ has no children in $\mathscr{G}$. Having dealt with Case 1, we may now make a standing assumption that any $Q\in\mathscr{G}$ that we examine has small remainder. At a minimum, this assumption ensures that $\Child(Q)\neq\emptyset$. In fact, the picture that the reader should keep in mind is that $H_Q$ (imagine a line segment through the center of $U_Q$) is intersected by many disjoint cores $U_{Q'}$ with $Q'\in\Child(Q)$. We emphasize that $\Child(Q)$ may be finite or infinite and $\diam U_{Q'}$ can be arbitrarily small relative to $\diam U_Q$.

\begin{remark}[challenges] \label{remark:challenges}Broadly speaking, there are two challenges to verifying \eqref{q-goal} for $Q\in\mathscr{G}$ with small remainder. First, as we previously noted in Remark \ref{r:maximal-fragments}, each fragment $H_Q$ may be disconnected. In principle, it is possible that
\begin{equation}\label{e:bad-if-disconnected}
\diam H_Q > \ell(R_Q \cap H_Q) + \sum_{U_{Q'} \cap H_Q \neq \emptyset} \diam U_{Q'}.
\end{equation} Thus, to verify $\diam H_Q \leq q\, s_Q$, we must locate additional cores $U_{Q'}$ with $Q'\in\Child(Q)$ that do not intersect $H_Q$. In \eqref{e:bad-if-disconnected} and throughout the sequel, when we write $Q'$ inside the subscript position of a summation or union, we implicitly mean that, in addition to any other restrictions, $Q'$ ranges over all $Q'\in\Child(Q)$, with $Q$ fixed nearby.

Secondly and more seriously, $\diam U_{Q'} \geq \diam H_{Q'}$ for all children, but $\diam H_{Q'}$ could be significantly smaller than $\diam U_{Q'}$ if $H_{Q'}$ is ``radial".  For any closed, connected set $T \subset T' \in \Gamma^*_{U_Q}$, the diameter bound \eqref{HQ-bounds-2}
only leads to the \emph{coarse estimate}
    \begin{align}\label{e:coarse estimate}
        \diam T & \leq \ell(R_Q \cap T) + 2.00002\sum_{U_{Q'} \cap T \neq \emptyset}\diam H_{Q'}.
    \end{align}
This implies $\diam T \le 2.00002 s_Q$, which is insufficient to verify \eqref{q-goal} as the coefficient $2.00002\geq 1$. See Lemma \ref{proof-of-coarse} for a proof of \eqref{e:coarse estimate}.\end{remark}

To sidestep the first challenge in Remark \ref{remark:challenges} and avoid complications near the boundary, we narrow our focus to a smaller region inside of $U_Q$ and to an \hyperref[def:subarc]{efficient subarc} $G_Q\subset H_Q$.

\begin{remark}[choosing $G_Q$] \label{proj-lemma} For each $Q\in\mathscr{G}$, we may invoke Lemma \ref{proof:proj-lemma} with $T'=H_Q$ to choose $I_Q=[a_Q,b_Q]\subset\Domain(\eta_Q)$ such that $G_Q:=f(I_Q)\subset H_Q\cap 0.99999Q_*$ and
\begin{equation}\label{H-G diam est}|f(a_Q)-f(b_Q)|=\diam G_Q>0.99993\diam H_Q.\end{equation}
\end{remark}

Overcoming the second challenge is more complicated. We need to account for length in $R_Q$ and cores $U_{Q'}$ appearing in a neighborhood of $T=G_{Q}$ that do not necessarily intersect $G_Q$. Ultimately, the reason that we can improve upon \eqref{e:coarse estimate} is because we can find a sufficient amount of ``extra length" nearby $G_Q$.  Roughly speaking, for each $U_{Q'}$ intersecting $G_Q$, there exist at least two $*$-almost flat arcs in $2\lambda Q'$ that intersect $\lambda Q'$. To describe improved estimates for balls with small remainder, we need to introduce a classification of cores $U_{Q'}$ of $Q'\in\Child(Q)$ involving projections onto lines.

\begin{figure}\begin{center}\includegraphics[width=.57\textwidth]{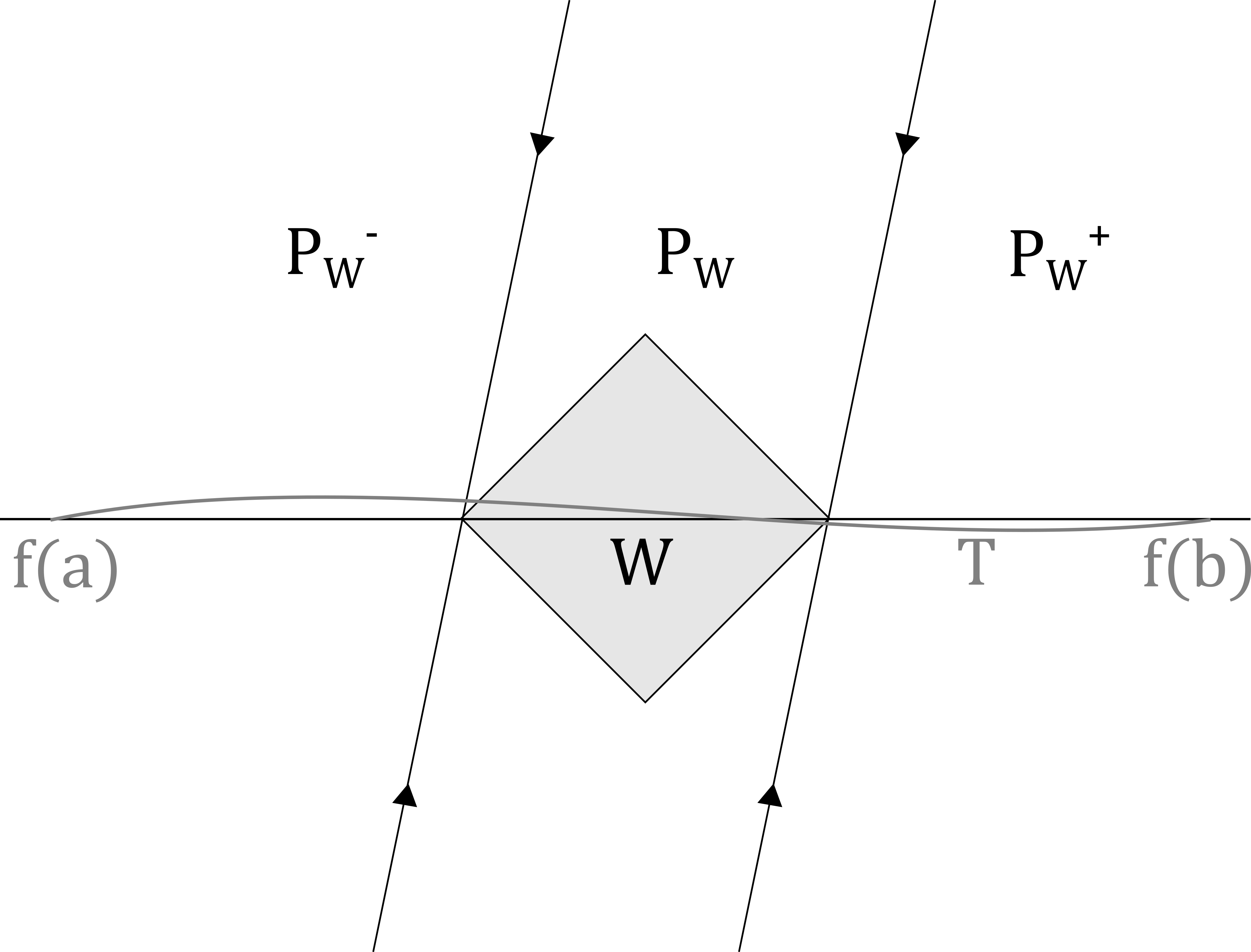}\end{center}\caption[$J$-projection onto a line in $\ell_1^2$]{The cylinder $P_W$ over a ball $W$ with respect to a $J$-projection $\Pi_T=\Pi_{L_T}$ in $\ell_1^2$ (see Appendix \ref{sec:smooth}).}\label{fig:cylinder}\end{figure}

\begin{remark}[projections, cylinders, and transverse arcs] \label{general projections} Given $Q\in\mathscr{G}$ and a \hyperref[def:subarc]{subarc} $T=f([a,b])\subset T'\in\Gamma^*_{U_Q}$, we define the line $L_T := f(a) +\Span\{f(a)-f(b)\}$ and choose a $J$-projection $\Pi_T:\XX\rightarrow L_T$ onto $L_T$ (see Appendix \ref{sec:smooth}). We will often identify $L_T$ with $\mathbb{R}$. By default, we choose this identification so that $f(a)$ lies ``to the left'' of $f(b)$. For every nonempty, bounded set $W\subset \XX$, we define the \emph{cylinder} $P_W:=\Pi_{T}^{-1}(\Pi_{T}(W))$ of $W$ over $L_{T}$. If $W$ is connected, then $P_W$ is connected (because $\Pi_T$ is continuous) and its complement $\XX\setminus P_W$ has two connected components, which we label $P_W^+$ and $P_W^-$ consistent with the orientation of $L_{T}$. If $W$ is convex, then $P_W$ is convex, as well.
See Figure \ref{fig:cylinder}.

We say that an arc $\tau=f|_{[c,d]} \in S^*(\lambda Q)$ is \emph{$W$-transverse} if its two endpoints lie on opposite sides of $P_W$: $\Start(\tau)=f(c)\in P_{W}^\pm$ and $\End(\tau)=f(d)\in P_{W}^\mp$.
\end{remark}

\begin{definition}[``necessary'' cores]\label{nec-cores} Let $Q\in\mathscr{G}$ and let $T=f([a,b]) \subset T' \in \Gamma^*_{U_Q}$ be an efficient subarc. Let $\Pi_T$ be given by Remark \ref{general projections}. Relative to $T$, we declare that a core $U_{Q'}$ with $Q'\in\Child(Q)$ such that $1.00002Q'_*\cap T\neq\emptyset$ has:
\begin{itemize}
\item \emph{Property (N1)} if there exists an arc $\tau\in S(\lambda Q')$ such that $\Image(\tau)$ intersects both $1.00002Q'_*$ and the closed region $P_{1.01Q'_*}\setminus \interior(4Q'_*)$; we say that $\tau$ is \label{tall}\emph{tall}.
\item \emph{Property (N2)} if there exists an arc $\tau\in S(\lambda Q')$ such that $\Image(\tau)\cap 1.00002Q'_*\neq\emptyset$ and $\tau$ is $U_{Q'}$-transverse; we say that $\tau$ is \label{wide}\emph{wide}. See Figure \ref{fig:core-types}.
\end{itemize}
(These properties do not classify all cores $U_{Q'}$ with $Q' \in \Child(Q)$.) Let $\mathcal{N}_1(T)$ and $\mathcal{N}_2(T)$ denote the set of all (N1) cores, and all (N2) cores that are not (N1), respectively. Assign $\mathcal{N}(T):=\mathcal{N}_1(T)\cup\mathcal{N}_2(T)$.
\end{definition}

\begin{figure}\begin{center}\includegraphics[width=.8\textwidth]{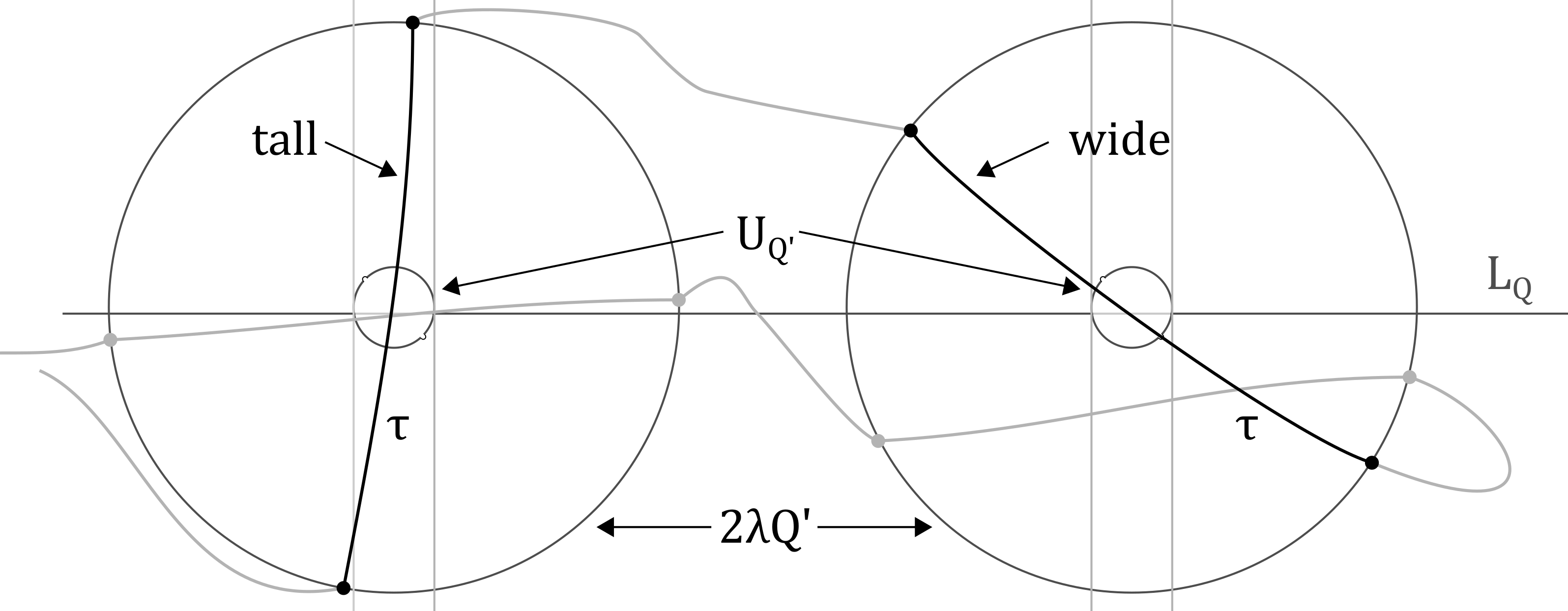}\end{center}\caption[Tall arcs and wide arcs]{On the left, we show a core $U_{Q'}$ of $Q'\in\Child(Q)$ with $2\lambda Q'$ containing a tall arc $\tau$. On the right, we show $U_{Q'}$ with $2\lambda Q'$ containing a wide arc $\tau$. The full set $T=G_Q$ associated to the larger core $U_Q$ is not displayed; since $\diam U_Q\gg \diam U_{Q'}$, the set $G_Q$ may include the union of all arcs in the figure. Cores are much smaller than illustrated.}\label{fig:core-types}\end{figure}

\begin{remark}\label{r:N_1 and N_2 coverage} The cores in $\mathcal{N}(T)$ are ``necessary," because we need them to improve the coarse estimate \eqref{e:coarse estimate}. While necessary cores $U_{Q'}$ lie close to $T$ in the sense that $1.00002Q'_*\cap T\neq\emptyset$, we do not require them to intersect $T$.  The shadows $\Pi_T(U_{Q'})$ of necessary cores cover $\Pi_T(T)\setminus \Pi_T(R_Q)$ up to a small error; see \S\ref{sec: N'} for the details, especially Definition \ref{def:sufficient} and Lemma \ref{topological lemma}.\end{remark}

We now record the main estimates of the paper.

\begin{keylemma}[improving coarse estimate \eqref{e:coarse estimate}] \label{improve} Let $Q\in\mathscr{G}$ and let $T=f([a,b])\subset T'\in\Gamma^*_{U_Q}$ be an \hyperref[def:subarc]{efficient subarc}. Define scales \begin{equation}\label{e:r_T def} r_{T}  := \max\{\diam Q'_*: Q'\in\Child(Q), 1.00002Q'_*\cap T\neq\emptyset\}\ \ \text{and}\ \  \rho_T  := 2\lambda A_\mathscr{H}\cdot 2^{12}r_T.\end{equation}
Suppose $\mathcal{F}$ is a (possibly empty) finite family of cores $U_{Q''}$ with $Q''\in\Child(Q)$ such that $\{2\lambda Q'':U_{Q''}\in\mathcal{F}\}$ is pairwise disjoint and $\mathcal{F}$ satisfies:  \begin{itemize}
\item[(F)] \label{property-F} For all $U_{Q''}\in\mathcal{F}$, we have $2\lambda Q'' \cap 16Q'_*=\emptyset$ for every core $U_{Q'}\in\Child(Q)$ with $\diam Q'>\diam Q''$.
\end{itemize} Let $\mathcal{N}_2=\mathcal{N}_2(T)$ and let $\mathcal{N}_\mathcal{F}$ denote the set of all cores $U_{Q'}$ with $Q'\in\Child(Q)$ such that $U_{Q'}\subset 1.99\lambda Q''$ for some $U_{Q''}\in\mathcal{F}$. Then
\begin{equation}\begin{split} \label{consolidated-improvement}
   \diam T-2\rho_T\leq 2.2\,\ell(R_Q&\cap B_{9r_T}(T)) + \sum_{\mathclap{U_{Q''}\in \mathcal{F}}} \diam 2\lambda Q'' \\ &+ 1.00016 \sum_{\mathclap{U_{Q'}\in \mathcal{N}_2\setminus\mathcal{N}_\mathcal{F}}}\diam H_{Q'} + 0.95 \sum_{\mathclap{U_{Q'}\not\in\mathcal{N}_2\cup\mathcal{N}_\mathcal{F}}} \diam H_{Q'}.
\end{split}\end{equation} where the sums in the second line may be further restricted to $U_{Q'}$ contained in $B_{9r_T}(T)$.
\end{keylemma}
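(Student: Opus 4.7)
The plan is to reduce \eqref{consolidated-improvement} to a length inequality on the line $L_T$ by means of the $J$-projection $\Pi_T:\XX\to L_T$ fixed in Remark \ref{general projections}. Because $T=f([a,b])$ is efficient, $\Pi_T(T)$ is a compact interval of length $|f(a)-f(b)|=\diam T$, and the Lipschitz constant of $\Pi_T$ on $\XX$ is at most $2$ (Appendix \ref{sec:smooth}). The boundary allowance $2\rho_T$ on the left absorbs strips of length $\rho_T$ near each endpoint of $\Pi_T(T)$, where the covering arguments below are unavailable at scale $r_T$ because no child-core of scale $\sim r_T$ can sit strictly inside $T$ near $\Start(T)$ or $\End(T)$.

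The first step is a covering statement on $L_T$: the shadows $\Pi_T(U_{Q'})$ of cores $U_{Q'}\in\mathcal{N}(T)=\mathcal{N}_1(T)\cup\mathcal{N}_2(T)$, together with $\Pi_T(R_Q\cap B_{9r_T}(T))$, cover $\Pi_T(T)$ apart from the two endpoint strips. This is exactly what Lemma \ref{topological lemma} and the notion of sufficient coverage introduced in \S\ref{sec: N'} provide. Next, I would absorb cores in $\mathcal{N}_\mathcal{F}$ into $\mathcal{F}$: each $U_{Q'}\in\mathcal{N}_\mathcal{F}$ sits inside some $1.99\lambda Q''$ with $U_{Q''}\in\mathcal{F}$, so $\Pi_T(U_{Q'})\subset\Pi_T(2\lambda Q'')$; property (F) together with the disjointness of $\{2\lambda Q'':U_{Q''}\in\mathcal{F}\}$ prevents double-counting, and the trivial shadow bound $|\Pi_T(2\lambda Q'')|\leq\diam 2\lambda Q''$ yields the contribution $\sum_\mathcal{F}\diam 2\lambda Q''$ with coefficient $1$.

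For the residual remainder, $|\Pi_T(R_Q\cap B_{9r_T}(T))|\leq 2\,\ell(R_Q\cap B_{9r_T}(T))$ by the Lipschitz bound, and the slack between $2$ and $2.2$ absorbs $O(\rho_T)$-scale errors where arcs straddle the boundaries between $R_Q$ and nearby cores. For each $U_{Q'}\in\mathcal{N}_2\setminus\mathcal{N}_\mathcal{F}$, the wide arc $\tau\in S(\lambda Q')$ is $*$-almost flat, passes near the center of $Q'$, and is $U_{Q'}$-transverse, so its image sweeps out essentially the full shadow of $U_{Q'}$; combining this with \eqref{HQ-bounds-2} and the smallness of $\flatepsilon$ from \eqref{restrictions} converts $|\Pi_T(U_{Q'})|$ into the contribution $1.00016\,\diam H_{Q'}$.

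The main obstacle is the coefficient $0.95$ for cores $U_{Q'}\not\in\mathcal{N}_2\cup\mathcal{N}_\mathcal{F}$. Any such core that actually contributes a shadow to the cover of $\Pi_T(T)$ must lie in $\mathcal{N}_1(T)$, so $2\lambda Q'$ carries a \emph{tall} arc $\tau$ whose image runs from $1.00002Q'_*$ out of $\interior(4Q'_*)$ while remaining in the cylinder $P_{1.01Q'_*}$. The key idea is that the portion of $\Image(\tau)$ lying in $U_Q\setminus U_{Q'}$ contributes substantial $\mathcal{H}^1$-length, bounded below by a fixed multiple of $\diam Q'_*$ via the triangle inequality and connectedness, and that length is \emph{already} accounted for on the right-hand side: either it sits in $R_Q$ (contributing to the $2.2$-weighted term) or in sibling cores (contributing to some $\diam H_{Q''}$ term). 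Subtracting this booked-elsewhere surplus from the naive shadow bound $|\Pi_T(U_{Q'})|\leq\diam U_{Q'}\leq 1.00001\,\diam H_{Q'}$, while using the tree structure to ensure no surplus is attributed twice, should beat the coefficient $1$. As the paragraph around \eqref{intro-est} signals, achieving the constant $0.95$ will require a case analysis together with one application of \eqref{intro-est} in place of \eqref{e:coarse estimate}, and this bookkeeping is what I expect to be the delicate part. Finally, cores $U_{Q'}$ with $1.00002Q'_*\cap T=\emptyset$ never contribute to the shadow cover, so the optional restriction of the sum to $U_{Q'}\subset B_{9r_T}(T)$ is compatible with the argument and only tightens the inequality.
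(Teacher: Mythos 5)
Your overall skeleton (project by $\Pi_T$, use Lemma \ref{topological lemma} to cover the shadow interval away from two $\rho_T$-strips, absorb $\mathcal{N}_\mathcal{F}$ into the $\mathcal{F}$-term, then pay for shadows of necessary cores) matches the paper's Stage~1, but two of your key steps do not survive scrutiny. First, your treatment of $\mathcal{N}_2\setminus\mathcal{N}_\mathcal{F}$ assumes that the wide arc ``passes near the center of $Q'$,'' so that the shadow of $U_{Q'}$ is at most $1.00016\,\diam H_{Q'}$. Property (N2) only guarantees $\Image(\tau)\cap 1.00002Q'_*\neq\emptyset$; when every wide arc misses $2^{-14}Q'_*$ (the paper's class $\mathcal{N}_{2.2}(T)$), the fragment $H_{Q'}$ can be radial with $\diam H_{Q'}\approx 0.5\,\diam Q'_*$ while $\diam\Pi_T(U_{Q'})\approx\diam Q'_*$, so your conversion fails by a factor close to $2$. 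This is precisely the radial-arc difficulty flagged in Remark \ref{remark:challenges}, and the paper handles it by a separate mechanism (Lemmas \ref{N22-geometry}, \ref{N22-auxiliary}, and the iterated version Lemma \ref{N22-better}): one extracts three or four well-separated subarcs of total diameter $\geq 22.46\,\diam Q'_*$ inside $16Q'_*$ and charges the much larger shadow $\diam D_{Q'}=16\,\diam Q'_*$ to length found there; the coefficient $1.00016$ is reserved for the $\mathcal{N}_{2.1}$ case only (Lemma \ref{N21-auxiliary}), and the final statement simply relaxes $0.95$ to $1.00016$ on $\mathcal{N}_{2.2}\setminus\mathcal{N}_\mathcal{F}$ at the very end.

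Second, your route to the $0.95$ coefficient is misattributed and the hard bookkeeping is waved at rather than done. In \eqref{consolidated-improvement} the $0.95$-sum runs over \emph{all} children cores outside $\mathcal{N}_2\cup\mathcal{N}_\mathcal{F}$ that serve as payment---mostly small cores near a tall arc or near the auxiliary subarcs of an $\mathcal{N}_{2.2}$ core, which need not intersect $T$ or belong to $\mathcal{N}(T)$ at all---not over $\mathcal{N}_1$ cores contributing shadows, and the constant comes from the payment lemmas (the $0.84$ of Lemma \ref{N1-auxiliary} and the iterated $0.95$ of Lemma \ref{N22-better}), not from subtracting a surplus from the naive shadow bound of an $\mathcal{N}_1$ core. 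Your claim that ``the tree structure'' prevents double counting is not the actual mechanism: the cores used to pay for a given shadow are siblings (children of $Q$) lying in the neighborhood $E_{Q'}$, and non-duplication requires the locally-maximal/sufficient apparatus of \S\ref{sec: N'} (Definitions \ref{def:locally-maximal}, \ref{def:sufficient}, Lemmas \ref{locally-maximal-M's}--\ref{not-locally-maximal-are-contained}), the metric disjointness of the $D/E/F$ neighborhoods across scales (Lemma \ref{disjoint-E's}), and the scale-by-scale exhaustion $J_0\supset J_1\supset\cdots$ of the shadow interval, none of which appears in your sketch. A smaller but real slip: $\Pi_T$ is $1$-Lipschitz (Lemma \ref{j-proj facts}); the factor $2$ is only in the displacement bound $|x-\Pi_T(x)|\leq 2\dist(x,L_T)$. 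The paper's $2.2$ arises as $1+1.2$ (base remainder term plus the remainder charged inside $E_{Q'}$ in Lemma \ref{N22-better}); starting from a $2$-Lipschitz bound as you propose leaves only $0.2$ of slack, which is not enough to run that scheme.
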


The proof of Lemma \ref{improve} is given in \S\ref{proof-I}, using the setup of \S\ref{sec:prelim} and \S\ref{sec: N'}. We invite the reader to compare and contrast \eqref{consolidated-improvement} with \eqref{e:coarse estimate}.
While the coefficient 1.00016 is substantially smaller than 2.00002, it is unfortunately still not less than 1. As a consequence, we must split verification of \eqref{q-goal} for balls with small remainder into two cases.

\begin{lemma}[Case 2: many non-$\mathcal{N}_2$ cores]\label{case2} If $Q\in\mathscr{G}$ (with or without small remainder) and $U_Q$ has many non-$\mathcal{N}_2(G_Q)$ cores in the sense that \begin{equation}\label{few-N2-cores}\sum_{\mathclap{U_{Q'}\not\in\mathcal{N}_2(G_Q)}}\diam U_{Q'} > 0.05\diam H_Q,\end{equation} then $\diam H_Q < 0.999 s_Q$.
\end{lemma}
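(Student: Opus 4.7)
The plan is to apply Lemma \ref{improve} to the efficient subarc $T=G_Q$ with $\mathcal{F}=\emptyset$ and then exploit the hypothesis to bridge the last tiny gap between the key estimate and the target coefficient $0.999$. With the shorthand
\begin{align*}
\Sigma_1 &:=\sum_{U_{Q'}\in\mathcal{N}_2(G_Q)}\diam H_{Q'}, & \Sigma_2&:=\sum_{U_{Q'}\notin\mathcal{N}_2(G_Q)}\diam H_{Q'}
\end{align*}
(both sums over $Q'\in\Child(Q)$), one has $s_Q=101\,\ell(R_Q)+\Sigma_1+\Sigma_2$ and, abbreviating $R:=2.2\,\ell(R_Q)+1.00016\,\Sigma_1+0.95\,\Sigma_2$, Lemma \ref{improve} gives $\diam G_Q-2\rho_T\leq R$.

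Three reductions prepare the main computation. First, \eqref{H-G diam est} provides $\diam G_Q\geq 0.99993\,\diam H_Q$. Second, every child $Q'\in\mathscr{G}$ of $Q$ satisfies $\diam Q'_*\leq 2^{-KM}\diam Q_*$, so the definition of $\rho_T$ combined with $K\geq 100+\log_2 A_\mathscr{H}$ and $\diam Q_*\leq 2\,\diam H_Q$ from \eqref{HQ-bounds} forces $2\rho_T\leq 2^{-82}\diam H_Q$. Third and crucially, the lower bound $\diam H_{Q'}\geq 0.49999\,\diam U_{Q'}$ from \eqref{HQ-bounds-2} upgrades the hypothesis to
\begin{equation*}
\Sigma_2 \;>\; 0.49999\cdot 0.05\,\diam H_Q \;=\; 0.0249995\,\diam H_Q.
\end{equation*}

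The main algebraic step is the identity
\begin{equation*}
1.00016\,s_Q \;=\; R + 98.816\,\ell(R_Q) + 0.05016\,\Sigma_2,
\end{equation*}
which exactly cancels the unfavorable $\Sigma_1$ contribution (since $1.00016\cdot 101-2.2=98.816$ and $1.00016-0.95=0.05016$). Substituting $R\geq 0.99993\,\diam H_Q-2\rho_T$ from Lemma \ref{improve}, dropping the nonnegative $98.816\,\ell(R_Q)$, and plugging in the bounds $2\rho_T\leq 2^{-82}\diam H_Q$ and $0.05016\,\Sigma_2>0.001254\,\diam H_Q$ yields
\begin{equation*}
1.00016\,s_Q \;>\; \bigl(0.99993 - 2^{-82} + 0.001254\bigr)\,\diam H_Q \;>\; 1.001184\,\diam H_Q.
\end{equation*}
Hence $\diam H_Q < (1.00016/1.001184)\,s_Q < 0.999\,s_Q$, the last inequality because $0.999\cdot 1.001184 = 1.000183 > 1.00016$.

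The main obstacle is arithmetic tightness. The coefficient $1.00016>1$ on $\Sigma_1$ in Lemma \ref{improve} would by itself make the lemma fail, so the hypothesis is essential to force enough mass into $\Sigma_2$ (where the coefficient $0.95<1$ is favorable) to overpower the excess. The resulting margin of about $10^{-3}$ leaves no room for slop in the constants $0.99993$, $0.49999$, or the $2^{-82}$ bound on $2\rho_T$, but no genuinely new geometric input beyond Lemma \ref{improve} is required.
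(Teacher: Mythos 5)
Your proof is correct and follows essentially the same route as the paper: apply Lemma \ref{improve} with $T=G_Q$ and $\mathcal{F}=\emptyset$, convert the hypothesis via $\diam H_{Q'}\geq 0.49999\diam U_{Q'}$, and compare against $1.00016\,s_Q$, exactly as in the paper's Case 2 computation. Two harmless rounding slips ($1.00016\cdot 101-2.2=98.81616$, not $98.816$, and $0.05016\cdot 0.0249995\approx 0.0012540$ is marginally below your stated $0.001254$) are absorbed by the final margin, since $0.999\cdot(0.99993+0.00125)-1.00016>0$.
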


\begin{proof} By Lemma \ref{improve}, with $T=G_Q$ and $\mathcal{F}=\emptyset$, together with \eqref{H-G diam est}, the observation $2\rho_{G_Q}\ll \diam H_Q$ (see \eqref{Q'-diam}, \eqref{HQ-bounds}), and \eqref{HQ-bounds-2} and \eqref{few-N2-cores}, we have
\begin{align*}
1.00016s_Q&= 101.01616\,\ell(R_Q)+ 1.00016\sum_{\mathclap{Q'\in\Child(Q)}} \diam H_{Q'}\\
&\geq \diam G_Q - 2\rho_{G_Q} + (1.00016-0.95)\sum_{\mathclap{U_{Q'}\not \in \mathcal{N}_2(G_Q)}} \diam H_{Q'}\\
&\geq 0.99993\diam H_Q - 0.00001\diam H_Q + (0.05016\times 0.49999\times
0.05)\diam H_Q.\end{align*}
Rearranging, we obtain $\diam H_Q \leq 0.99898...s_Q$.
\end{proof}

The final case is the most difficult, requiring us to combine estimates inside and outside of $\{2\lambda Q'':U_{Q''}\in\mathcal{A}\}$ for a family of cores $\mathcal{A}\subset\mathcal{N}_2(G_Q)$. The family $\mathcal{A}$ is chosen according to the following lemma, which we prove in \S\ref{proof-II}.

\begin{keylemma}\label{l:B_2 subset} If $Q\in\mathscr{G}$ has small remainder and $U_Q$ has few non-$\mathcal{N}_2(G_Q)$ cores in the sense that \begin{equation}\label{many-N2-cores}\sum_{\mathclap{U_{Q'}\not\in\mathcal{N}_2(G_Q)}}\diam U_{Q'} \leq 0.05\diam H_Q,\end{equation}
then there exists a finite collection $\mathcal{A} \subset \mathcal{N}_2(G_Q)$ such that $\{2\lambda Q'':U_{Q''}\in\mathcal{A}\}$ is pairwise disjoint, $\mathcal{A}$ satisfies property \hyperref[property-F]{(F)} with $T=G_Q$, \begin{equation}\label{A-estimates 2}
\sum_{U_{Q''}\in\mathcal{A}} \diam 2\lambda Q''\geq 0.04\diam H_Q,\quad\text{and}\end{equation} \begin{equation} \label{A-estimates} \sum_{U_{Q''}\in\mathcal{A}} \diam 2\lambda Q'' \leq 2\,\ell(R_Q)+0.91\sum_{U_{Q'}\in\mathcal{N}_\mathcal{A}} \diam H_{Q'},\end{equation} where $\mathcal{N}_\mathcal{A}:=\{U_{Q'}:Q'\in\Child(Q), U_{Q'}\subset 1.99\lambda Q''\text{ for some }U_{Q''}\in\mathcal{A}\}$.
\end{keylemma}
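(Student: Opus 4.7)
The plan is to construct $\mathcal{A}$ as a Vitali-style greedy subfamily of $\mathcal{N}_2(G_Q)$ and then verify the four conclusions by combining the coarse estimate \eqref{e:coarse estimate}, the case hypothesis \eqref{many-N2-cores}, the small-remainder assumption, and Lemma \ref{improve} applied inside each selected $2\lambda Q''$. For the selection, I would enumerate the cores in $\mathcal{N}_2(G_Q)$ in decreasing order of $\diam Q''_*$ (truncating at a small scale to keep $\mathcal{A}$ finite; the truncated tail is absorbed into the slack of the target inequalities) and admit $U_{Q''}$ into $\mathcal{A}$ exactly when $2\lambda Q''$ is disjoint from every $2\lambda\widetilde Q$ already admitted and $2\lambda Q''$ avoids $16Q'_*$ for every strictly larger $U_{Q'}\in\Child(Q)$. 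The first rule enforces pairwise disjointness, and the second, processed in decreasing size order, automatically enforces property \hyperref[property-F]{(F)}.

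For the lower bound \eqref{A-estimates 2}, I would combine \eqref{e:coarse estimate} applied to $T=G_Q$ with the small-remainder bound $\ell(R_Q)\leq(1/100)\diam H_Q$, the diameter bound \eqref{H-G diam est}, and hypothesis \eqref{many-N2-cores} to conclude that $\sum_{U_{Q'}\in\mathcal{N}_2(G_Q),\,U_{Q'}\cap G_Q\neq\emptyset}\diam H_{Q'}$ carries at least a definite fraction of $\diam H_Q$. For any $\mathcal{N}_2$ core, the existence of a $U_{Q'}$-transverse arc through $1.00002Q'_*$ forces $\diam 2\lambda Q'\gtrsim_{A_\mathscr{H}}\diam H_{Q'}$, in fact by the uniform factor $4\lambda A_{\mathscr{H}}\cdot 2^{11}$ available from \eqref{Q'-diam} and \eqref{HQ-bounds-2}. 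Cores rejected by the greedy step either lie inside $2\lambda\widetilde Q$ for a previously selected larger $U_{\widetilde Q}$ or are shielded by a larger sibling; in either case a one-dimensional packing argument along the nearly-linear projection $\Pi_{G_Q}(G_Q)$ (using that $G_Q$ is essentially a line segment) absorbs the rejected contribution into $\sum_\mathcal{A}\diam 2\lambda Q''$, leaving at least $0.04\diam H_Q$.

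For the upper bound \eqref{A-estimates}, observe that for each $U_{Q''}\in\mathcal{A}$ the nearly-line-segment $G_Q$ passes through $1.00002Q''_*$ and, because $\diam G_Q \gg \diam 2\lambda Q''$ by \eqref{Q'-diam}, extends well beyond $2\lambda Q''$ on both sides, so $G_Q\cap 2\lambda Q''$ contains an efficient subarc $T_{Q''}$ of diameter comparable to $\diam 2\lambda Q''$. Applying Lemma \ref{improve} to $Q\in\mathscr{G}$ with $T=T_{Q''}$ and $\mathcal{F}=\emptyset$ bounds $\diam T_{Q''}$ by $2.2$ times the local remainder length plus $\{0.95,1.00016\}$-weighted sums of $\diam H_{Q'}$ over children of $Q$ near $T_{Q''}$; by property \hyperref[property-F]{(F)} and the disjointness of $\{2\lambda\widetilde Q\}$, these children are confined to $1.99\lambda Q''$. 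Summing over $\mathcal{A}$, the disjoint $R_Q$-length contributions accumulate to at most $\ell(R_Q)$ up to a bounded overlap (absorbed into the $2\ell(R_Q)$ slack together with $\sum 2\rho_{T_{Q''}}$), and the $\{0.95,1.00016\}$-weighted sums collapse into the single $0.91$-weighted sum over $\mathcal{N}_\mathcal{A}$ after a one-step iteration of Lemma \ref{improve} inside each $2\lambda Q''$ that exploits the local version of \eqref{many-N2-cores} and the geometry of the wide witness arc $\tau_{Q''}$ to swallow the $1.00016$ excess.

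The principal obstacle is obtaining the sharp constant $0.91$ on the right of \eqref{A-estimates}: Lemma \ref{improve} alone gives only $1.00016$ on the $\mathcal{N}_2$ sum, and the improvement requires a one-step bootstrap at the child scale that exploits both the local version of \eqref{many-N2-cores} and the geometry of the wide witness arcs $\tau_{Q''}$. Property \hyperref[property-F]{(F)}, enforced automatically by the decreasing-size greedy selection, is essential here because it prevents larger siblings of $Q''$ in $\Child(Q)$ from contaminating the local iteration inside $2\lambda Q''$; pairwise disjointness of $\{2\lambda Q''\}$ prevents double-counting of $\mathcal{N}_\mathcal{A}$ contributions across distinct $U_{Q''}\in\mathcal{A}$, which is what makes the accumulated bound additive and clean.
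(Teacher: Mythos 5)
Your greedy selection of $\mathcal{A}$ (decreasing scale, disjointness, avoidance of enlarged cores of strictly larger children) is essentially the paper's construction, and the lower bound \eqref{A-estimates 2} is obtainable along the lines you indicate, but not by the charging you describe: cores rejected because they sit inside $6\lambda Q''$ for a selected $U_{Q''}$ cannot be absorbed through $\sum\diam H_{Q'}$, since the curve length inside $6\lambda Q''$ is not controlled by $\diam 2\lambda Q''$. The bookkeeping has to be run on the projected length of the \emph{union}: one shows $\Haus^1\big(\Pi_{G_Q}\big(\bigcup\mathcal{N}_2(G_Q)\big)\big)\geq 0.93993\diam H_Q$ from \eqref{H-G diam est}, small remainder, and \eqref{many-N2-cores}, and then, assuming the algorithm never accumulates $0.04\diam H_Q$, covers that union by the selected balls $6\lambda Q''$ and the balls $16.1Q'_*$ of non-$\mathcal{N}_2$ children (recursing through larger $\mathcal{N}_2$ scales), which caps the projected length by $(3\cdot 0.04+16.1\cdot 0.05)\diam H_Q=0.925\diam H_Q$, a contradiction. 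This is a fixable vagueness.

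The genuine gap is in \eqref{A-estimates}. First, applying Lemma \ref{improve} to $T_{Q''}=G_Q\cap 2\lambda Q''$ with $\mathcal{F}=\emptyset$ is vacuous: your $T_{Q''}$ passes through $1.00002Q''_*$ (and may meet enlarged cores of same-size siblings), and since $Q''\in\Child(Q)$, the scale \eqref{e:r_T def} gives $r_{T_{Q''}}\geq \diam Q''_*$, hence $\rho_{T_{Q''}}=2\lambda A_\mathscr{H}2^{12}\diam Q''_*=\diam 2\lambda Q''$, so $\diam T_{Q''}-2\rho_{T_{Q''}}<0$ and \eqref{consolidated-improvement} carries no information; in particular $\sum 2\rho_{T_{Q''}}$ cannot be absorbed into the $2\ell(R_Q)$ slack, and even ignoring this the remainder coefficient would be $2.2$, not $2$. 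Second, and more fundamentally, no ``one-step bootstrap at the child scale'' can produce $0.91$: there is no local version of \eqref{many-N2-cores} inside a fixed $2\lambda Q''$, and the remark after the proof of Lemma \ref{improve} states explicitly that iteration never lowers the $\mathcal{N}_{2.1}$ coefficient $1.00016$. The missing idea is extra length forced by $Q''\in\mathscr{G}$: since $\beta_{S^*(\lambda Q'')}(2\lambda Q'')>2^{-M}$ while the wide arc $\tau$ lies within $2^{-M-48}\diam 2\lambda Q''$ of a line by \eqref{line-estimate-with-M}, there is a \emph{second} $*$-almost flat arc $\xi$ crossing $\lambda Q''$ far from that line; after trimming all resulting subarcs so that they avoid $1.00002Q'_*$ for every child of the same size as $Q''$ (precisely what keeps $\rho_X$ negligible), one obtains separated subarcs of total diameter at least $1.11\diam 2\lambda Q''$ (Lemmas \ref{A-geometry} and \ref{large-scale-geometry}); applying Lemma \ref{improve} to each of these and summing, the division by $1.11$ is what converts $2.2$ into $2$ and $1.00016$ into $0.91$ (Lemma \ref{large-scale-better}). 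Property \hyperref[property-F]{(F)} and the disjointness of $\{2\lambda Q''\}$ play only the confining and non-double-counting roles you assign them; they do not by themselves generate any constant below $1$.
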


\begin{lemma}[Case 3: few non-$\mathcal{N}_2$ cores]\label{case3} If $Q\in\mathscr{G}$ has small remainder and \eqref{many-N2-cores} holds, then $\diam H_Q < 0.9963s_Q$.
\end{lemma}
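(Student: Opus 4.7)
\emph{Plan.} The strategy is to combine the two main lemmas. Apply Lemma \ref{improve} with $T=G_Q$ and $\mathcal{F}=\mathcal{A}$, where $\mathcal{A}\subset\mathcal{N}_2(G_Q)$ is the family supplied by Lemma \ref{l:B_2 subset}; this is legitimate because $\{2\lambda Q'':U_{Q''}\in\mathcal{A}\}$ is pairwise disjoint and $\mathcal{A}$ has property \hyperref[property-F]{(F)}. Writing $X=\diam H_Q$, $R=\ell(R_Q)$, and partitioning the children of $Q$ according to how they interact with $\mathcal{N}_\mathcal{A}$ and $\mathcal{N}_2(G_Q)$, set
\[
\Sigma_\mathcal{A}=\!\!\sum_{U_{Q'}\in\mathcal{N}_\mathcal{A}}\!\!\diam H_{Q'},\qquad
\Sigma_2=\!\!\sum_{U_{Q'}\in\mathcal{N}_2(G_Q)\setminus\mathcal{N}_\mathcal{A}}\!\!\diam H_{Q'},\qquad
\Sigma_3=\!\!\sum_{U_{Q'}\not\in\mathcal{N}_2(G_Q)\cup\mathcal{N}_\mathcal{A}}\!\!\diam H_{Q'},
\]
so that $s_Q=101R+\Sigma_\mathcal{A}+\Sigma_2+\Sigma_3$. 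Substituting \eqref{A-estimates} for $\sum_{U_{Q''}\in\mathcal{A}}\diam 2\lambda Q''$ in \eqref{consolidated-improvement}, invoking \eqref{H-G diam est} to replace $\diam G_Q$ by (a constant close to) $X$, and absorbing $2\rho_{G_Q}$ via \eqref{Q'-diam}--\eqref{HQ-bounds} (it is of order $2^{-83}X$) yields the master inequality
\begin{equation}\label{case3:master}
0.99992\,X \;\le\; 4.2\,R + 0.91\,\Sigma_\mathcal{A} + 1.00016\,\Sigma_2 + 0.95\,\Sigma_3.
\end{equation}

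The main obstacle is the coefficient $1.00016>1$ on $\Sigma_2$, which already exceeds the target $0.9963$; the other coefficients on the right of \eqref{case3:master} are comfortably below the corresponding targets $100.62,\,0.9963,\,0.9963$ in $0.9963\,s_Q$. The remedy is that Lemma \ref{l:B_2 subset} forces positive mass into $\Sigma_\mathcal{A}$: combining the lower bound \eqref{A-estimates 2}, the upper bound \eqref{A-estimates}, and the small-remainder hypothesis $R\le 0.01\,X$ gives
\[
\Sigma_\mathcal{A} \;\ge\; \frac{0.04\,X-2R}{0.91} \;\ge\; 0.02198\,X.
\]
The hypothesis \eqref{many-N2-cores} supplies the complementary bound $\Sigma_3\le 0.05\,X$ (using $\diam H_{Q'}\le\diam U_{Q'}$ and the fact that $\Sigma_3$ sums over $U_{Q'}\notin\mathcal{N}_2(G_Q)$).

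The closing step is a small linear program: maximize $X/s_Q$ over $R,\Sigma_\mathcal{A},\Sigma_2,\Sigma_3\ge 0$ subject to \eqref{case3:master}, $R\le 0.01X$, $\Sigma_3\le 0.05X$, and $0.04X\le 2R+0.91\Sigma_\mathcal{A}$. The factor $101$ on $R$ in $s_Q$ drives the extremum to $R=0$; the $\Sigma_\mathcal{A}$ constraint then saturates at $\Sigma_\mathcal{A}=0.04/0.91\approx 0.04396$, and since $1.00016>0.95$ it is cheapest to fill the remainder of \eqref{case3:master} through $\Sigma_2=0.95992/1.00016\approx 0.95977$ with $\Sigma_3=0$. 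The extremal ratio is therefore
\[
\frac{X}{s_Q}\;\le\;\left(\tfrac{0.04}{0.91}+\tfrac{0.95992}{1.00016}\right)^{-1}\;\le\;0.99629\;<\;0.9963,
\]
as claimed. The razor-thin margin ($\sim 10^{-5}$) is why it matters that $\rho_{G_Q}$ and the perturbation from \eqref{H-G diam est} are of substantially smaller order; this check, plus the routine bookkeeping above, completes the proof.
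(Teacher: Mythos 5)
Your proposal is correct and follows essentially the same route as the paper: the same inputs (Lemma \ref{l:B_2 subset} to produce $\mathcal{A}$, Lemma \ref{improve} applied with $T=G_Q$ and $\mathcal{F}=\mathcal{A}$, together with \eqref{A-estimates}, \eqref{A-estimates 2}, \eqref{H-G diam est}, the smallness of $2\rho_{G_Q}$, and the small-remainder hypothesis) are combined in the same way, with the paper's single chain of inequalities merely recast as a four-variable linear program whose optimum reproduces the paper's constant $0.99629\ldots$. Two last-digit rounding slips ($0.02/0.91=0.02197\ldots$, not $\geq 0.02198$, and the extremal ratio is $0.996291\ldots$, not $\leq 0.99629$) are immaterial, since neither figure is used beyond establishing the bound $<0.9963$.
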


\begin{proof} Let $\mathcal{A}$ be given by Lemma \ref{l:B_2 subset}. By Lemma \ref{improve}, with $T=G_Q$ and $\mathcal{F}=\mathcal{A}$, and \eqref{A-estimates}, $$\diam G_Q-2\rho_{G_Q}\leq 4.2\ell(R_Q) + 0.91\sum_{\mathclap{U_{Q'}\in\mathcal{N}_\mathcal{A}}} \diam H_{Q'} + 1.00016 \sum_{\mathclap{U_{Q'}\not\in \mathcal{N}_\mathcal{A}}}\diam H_{Q'}.$$ Together with \eqref{H-G diam est} and the observation $2\rho_{G_Q}\ll \diam H_Q$ (see \eqref{Q'-diam}, \eqref{HQ-bounds}), followed by \eqref{A-estimates 2} and \eqref{A-estimates} (again), we obtain \begin{align*}
 1.00016s_Q &= 101.01616\,\ell(R_Q)+ 1.00016\sum_{\mathclap{Q'\in\Child(Q)}} \diam H_{Q'}\\
 &\geq \diam G_Q - 2\rho_{G_Q} + (101-4.2)\,\ell(R_Q)+(1.00016-0.91)\sum_{\mathclap{U_{Q'}\in\mathcal{N}_\mathcal{A}}} \diam H_{Q'}\\
&\geq 0.99993\diam H_Q-0.00001\diam H_Q+(0.04\times 0.09016\div 0.91)\diam H_Q.\end{align*} Rearranging, we obtain $\diam H_Q \leq 0.99629... s_Q$.\end{proof}

In review, the hypothesis of Lemma \ref{q-lemma} is satisfied with $q=0.999<1$. This completes the proof of the \hyperref[t:main]{Main Theorem}, up to verification of Lemma \ref{improve} and Lemma \ref{l:B_2 subset}.

\section{Geometric preliminaries and coarse estimates}\label{sec:prelim}

\subsection{Basic geometry with beta numbers}\label{ss:basic}

Let's record consequences of \eqref{restrictions} on the flatness of \hyperref[def:arcs]{almost flat} and \hyperref[def:arcs]{$*$-almost flat arcs} at some common scales. We use the fact that all beta numbers are bounded by 1, $\epsilon_2=2^{-55}\epsilon_1/A_\mathscr{H}\leq 2^{-55}/A_\mathscr{H}$, and $\lambda\leq 5<8$. Let $Q\in\mathscr{G}$ and $\tau\in\Lambda(\lambda Q)$. If $\tau$ is almost flat, i.e.~$\tau\in S(\lambda Q)$, then there is a line $L$ such that
   \begin{equation}\begin{split}\label{almost-flat-line-estimate} \dist(x,L) \leq 2\epsilon_2\beta_{\Sigma}(Q) \Diam \tau &\leq 2^{-54}A_\mathscr{H}^{-1}\diam 2\lambda Q \\
   & \leq 2^{-50}A_\mathscr{H}^{-1} \diam Q \leq 2^{-38}\diam Q_*\quad \forall x\in\Image(\tau).\end{split}\end{equation}
If $\tau$ is $*$-almost flat, i.e.~$\tau\in S^*(\lambda Q)$, then there is a line $L$ such that
    \begin{equation}\begin{split}\label{star-almost-flat-line-estimate} \dist(x,L) \leq 64\epsilon_2\beta_{\Lambda(\lambda Q)}(2\lambda Q&) \Diam \tau \leq 2^{-49}A_\mathscr{H}^{-1} \beta_{S^*(\lambda Q)}(2\lambda Q)\diam 2\lambda Q\\ &\leq 2^{-45} A_\mathscr{H}^{-1}\diam Q \leq 2^{-33}\diam Q_*\quad \forall x\in\Image(\tau),\end{split}\end{equation} where in the second inequality we used $\epsilon_1 \beta_{\Lambda(\lambda Q)}(2\lambda Q)<\beta_{S^*(\lambda Q)}(2\lambda Q)$ by Definition \ref{B-balls-def}. (We shall never refer to $\epsilon_1$ again.) Recall that $2^{-M}<\beta_{S^*(\lambda Q)}(2\lambda Q)\leq 2^{-(M-1)}$ whenever $Q\in\mathscr{G}$.
In particular, for any $Q\in\mathscr{G}$ and $\tau\in S^*(\lambda Q)$, the line $L$ from \eqref{star-almost-flat-line-estimate} also satisfies \begin{equation}\label{line-estimate-with-M}
\dist(x,L)\leq 2^{-49}A_\mathscr{H}^{-1}\beta_{S^*(\lambda Q)}(2\lambda Q) \diam 2\lambda Q < 2^{-M-48} \diam 2\lambda Q\quad \forall  x\in\Image(\tau).\end{equation}

\begin{lemma}[bilateral-$\beta$ estimate for arcs] \label{l:bilateral} Let $\tau=f|_{[a,b]}$ be an arc, let $L$ be a line in $\XX$, and let $\Pi_L$ be a \hyperref[def:J-proj]{$J$-projection} onto $L$. If $\dist(x,L)\leq \beta$ for all $x\in\Image(\tau)$, then \begin{equation}\label{Pi-x} |\Pi_L(x)-x|\leq 2\dist(x,L)\leq 2\beta\quad\text{for all }x\in\Image(\tau),\text{ and}\end{equation}
\begin{equation}\label{bilateral} \dist(y,\Image(\tau))\leq \dist(y,\Pi_L(\Image(\tau)))+2\beta\quad\text{for all }y\in L.\end{equation}
\end{lemma}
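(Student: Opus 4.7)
The plan is to deduce both inequalities directly from the defining property of a $J$-projection, which I will assume from Appendix \ref{sec:smooth} grants us a retraction $\Pi_L:\XX\to L$ with the triangle-type bound $|\Pi_L(x)-\Pi_L(z)|\le |x-z|$ (or at worst Lipschitz constant bounded by a universal absolute constant; the factor of $2$ in the statement absorbs this). In particular, $\Pi_L$ fixes points of $L$.

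For \eqref{Pi-x}, I would fix $x\in\Image(\tau)$ and pick any $z\in L$ (eventually letting $|x-z|\to \dist(x,L)$). Since $\Pi_L z = z$, the triangle inequality combined with the Lipschitz/retraction property of $\Pi_L$ gives
\begin{equation*}
|\Pi_L(x)-x|\le |\Pi_L(x)-\Pi_L(z)|+|z-x|\le |x-z|+|x-z|=2|x-z|.
\end{equation*}
Taking the infimum over $z\in L$ (or a minimizing sequence) yields $|\Pi_L(x)-x|\le 2\dist(x,L)$, and the hypothesis $\dist(x,L)\le \beta$ closes out the chain.

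For \eqref{bilateral}, I would fix $y\in L$ and use compactness of $\Image(\tau)$ (since $\tau=f|_{[a,b]}$ is continuous on a compact interval) to obtain $x\in\Image(\tau)$ realizing $|\Pi_L(x)-y|=\dist(y,\Pi_L(\Image(\tau)))$; if one prefers to avoid compactness one can argue with an almost-minimizer and let the error tend to zero. Then, by the triangle inequality followed by \eqref{Pi-x},
\begin{equation*}
\dist(y,\Image(\tau))\le |y-x|\le |y-\Pi_L(x)|+|\Pi_L(x)-x|\le \dist(y,\Pi_L(\Image(\tau)))+2\beta,
\end{equation*}
which is the desired bound.

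Neither step contains a genuine obstacle; both are bookkeeping around the triangle inequality and the $2$-Lipschitz retraction property of $\Pi_L$. The only point requiring mild care is ensuring that the quantity $\dist(y,\Pi_L(\Image(\tau)))$ is actually attained (or essentially attained), for which compactness of $\Image(\tau)$ suffices, since $\Pi_L$ is continuous. Everything else is mechanical.
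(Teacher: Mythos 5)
Your proof is correct and follows essentially the same route as the paper: the bound \eqref{Pi-x} is exactly the second statement of Lemma \ref{j-proj facts} (which the paper simply cites, and which your 1-Lipschitz-retraction computation re-derives), and your proof of \eqref{bilateral} — take a (near-)minimizer in $\Pi_L(\Image(\tau))$, lift it to a point of $\Image(\tau)$, and apply the triangle inequality with \eqref{Pi-x} — is the paper's argument verbatim. One small caveat: the factor $2$ genuinely requires $\Pi_L$ to be $1$-Lipschitz (a Lipschitz constant $C$ would give $C+1$, so it would not be ``absorbed''), but since Lemma \ref{j-proj facts} guarantees $1$-Lipschitzness this does not affect your argument.
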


\begin{proof} Let $y\in L$. Choose $z\in \Pi_L(\Image(\tau))$ such that $|y-z|=\dist(y,\Pi_L(\Image(\tau)))=:\delta$. Next, choose $x\in\Image(\tau)$ such that $\Pi_L(x)=z$. By Lemma \ref{j-proj facts}, $|z-x|=|\Pi_L(x)-x|\leq 2\dist(x,L)\leq 2\beta$. Thus, $\dist(y,\Image(\tau))\leq |y-x|\leq |y-z|+|z-x|\leq \delta+2\beta$.
\end{proof}

We emphasize that the following inequality (used to prove Lemma \ref{proof:proj-lemma}) is valid in any Banach space; in particular, it does not require uniform nor strict convexity of the norm. It is instructive to think about the inequality in the case when $\XX=\ell^2_\infty=(\RR^2,|\cdot|_\infty)$ and the line segment $(c,d)$ is horizontal.

\begin{lemma}\label{convexity-lemma} Let $c,d\in\XX$, $r>0$, and $0<s<1$. If $c,d\in B(x,r)$ and the segment $(c,d)$ intersects $B(x,sr)$, then $|(1-\mu)c+\mu d-x|\leq r-r(1-s)\min\{\mu,1-\mu\}$ for all $0\leq \mu\leq 1$.
\end{lemma}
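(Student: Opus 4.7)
The plan is to exploit convexity of the scalar function $\phi(\mu):=|(1-\mu)c+\mu d-x|$ on $[0,1]$, which follows from the triangle inequality and positive homogeneity of the norm and thus requires no geometric hypothesis on $\XX$. I will first record three basic inequalities: $\phi(0)=|c-x|\le r$, $\phi(1)=|d-x|\le r$, and, by the hypothesis that $(c,d)$ meets $B(x,sr)$, there exists some $\mu_0\in[0,1]$ with $\phi(\mu_0)\le sr$.

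Then I will split $[0,1]$ into the two subintervals $[0,\mu_0]$ and $[\mu_0,1]$ and interpolate $\mu$ from the side on which it lies. For $\mu\in[0,\mu_0]$ (assuming $\mu_0>0$), write the convex combination $\mu=(1-\mu/\mu_0)\cdot 0+(\mu/\mu_0)\mu_0$ and apply convexity of $\phi$ to obtain
\[
\phi(\mu)\le (1-\mu/\mu_0)\,r+(\mu/\mu_0)\,sr = r-(\mu/\mu_0)(1-s)r.
\]
Since $\mu_0\le 1$, we have $\mu/\mu_0\ge \mu\ge \min\{\mu,1-\mu\}$, which yields the claimed inequality. The symmetric argument on $[\mu_0,1]$, interpolating between $\mu_0$ and $1$ via $\mu=(1-t)\mu_0+t\cdot 1$ with $t=(\mu-\mu_0)/(1-\mu_0)$, produces
\[
\phi(\mu)\le r-\frac{1-\mu}{1-\mu_0}(1-s)r\le r-(1-\mu)(1-s)r,
\]
and $1-\mu\ge\min\{\mu,1-\mu\}$ finishes the case. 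The degenerate choices $\mu_0\in\{0,1\}$ are covered entirely by whichever of the two subinterval arguments remains non-degenerate.

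There is no real obstacle here; the entire argument is a short convexity computation valid in any Banach space. The only point that requires a moment's attention is pairing $\mu$ with the correct interpolation endpoint (either $0$ or $1$) so that, after using $\mu_0\in[0,1]$ to replace $\mu/\mu_0$ or $(1-\mu)/(1-\mu_0)$ by the coarser factor, the resulting coefficient still dominates $\min\{\mu,1-\mu\}$. Note that the hypothesis $s<1$ enters only to ensure the improvement is nontrivial; the proof itself works with $s=1$ as well, merely giving the trivial bound $\phi(\mu)\le r$.
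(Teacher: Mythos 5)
Your proposal is correct and is essentially the paper's own argument: the paper also interpolates each $\mu$ between the nearer endpoint ($c$ or $d$) and the point $z=(1-\rho)c+\rho d\in B(x,sr)$, obtaining $|y|\le(1-\nu)r+\nu sr$ with $\nu=\mu/\rho\ge\mu$ (resp.\ the symmetric bound with $1-\mu$), which is exactly your convexity computation written out with explicit convex combinations. No gap; the only cosmetic difference is that you package the triangle-inequality step as convexity of $\phi(\mu)=|(1-\mu)c+\mu d-x|$.
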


\begin{proof} Without loss of generality, we may assume that $x=0$. By assumption, there exists $0<\rho<1$ such that $z=(1-\rho)c+\rho d$ satisfies $|z|\leq sr$. Suppose that $y=(1-\mu)c+\mu d$ for some $0\leq \mu\leq \rho$. Then $y=(1-\nu)c+\nu z=(1-\nu\rho)c+\nu\rho d$ for some $0\leq \nu\leq 1$. This shows $\mu=\nu\rho$; in particular, $\mu\leq \nu$. Hence $|y|\leq (1-\nu)|c|+\nu|z|\leq (1-\nu)r+\nu sr \leq r-r(1-s)\mu$. The case $\rho\leq \mu\leq 1$ is similar, except that $\mu$ should be replaced by $1-\mu$. \end{proof}
\begin{figure}\begin{center}\includegraphics[width=3.2in]{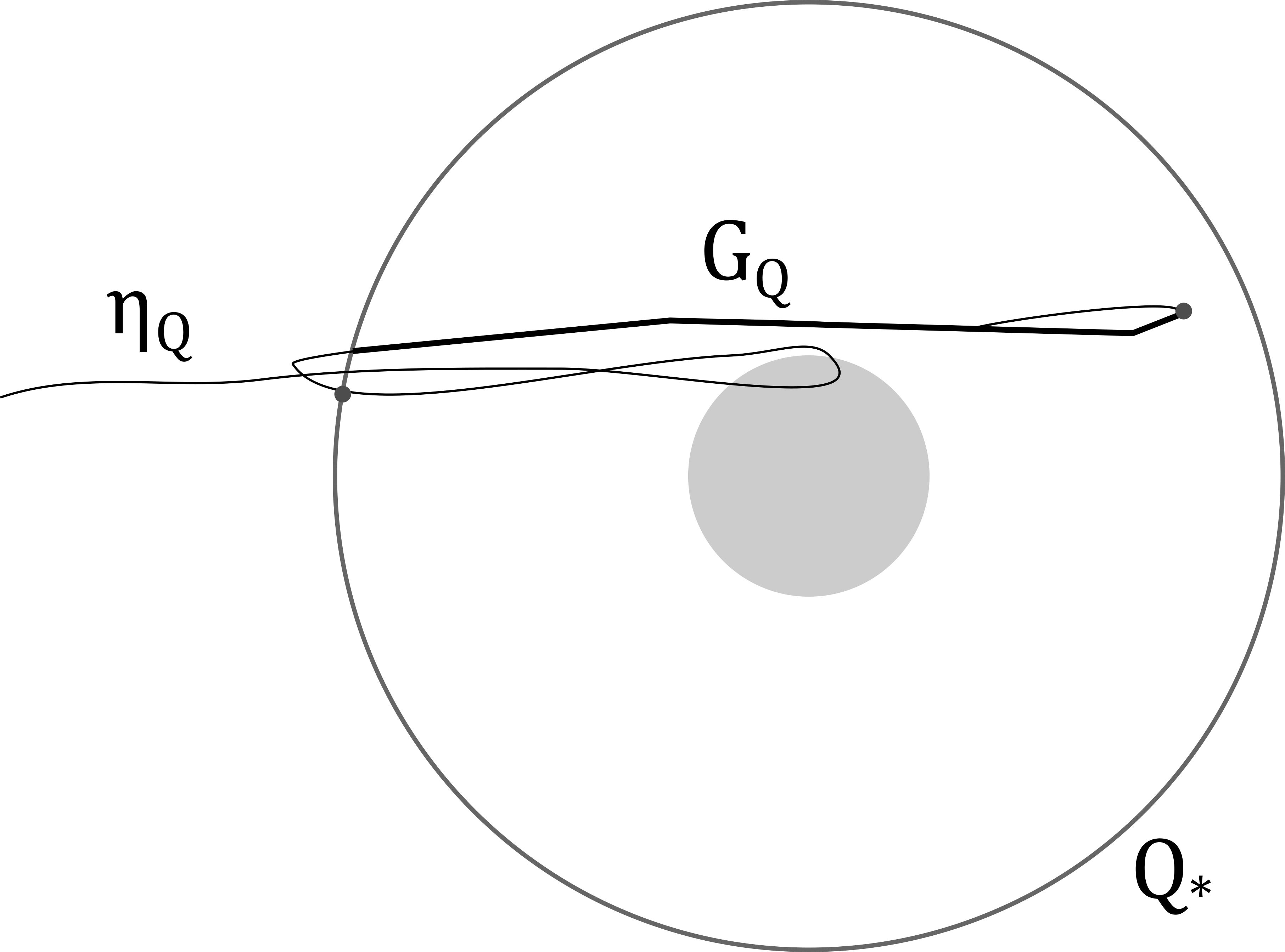}\end{center}\caption[Example of a radial, disconnected arc fragment]{Exaggerated picture (curve should be flatter) of $\eta_Q$ such that $H_Q$ has two connected components. The dots indicate points in $H_Q$ with distance equal to $\diam H_Q$. (Arc through center of $Q_*$ is not displayed.)}\label{fig:GQ}\end{figure}
\begin{lemma}[existence of $G_Q$] \label{proof:proj-lemma} Let $Q\in\mathscr{G}$ and let $T'\in\Gamma^*_{U_Q}$, say $T'=\Image(\tau)\cap U_Q$ for some $\tau=f|_{[a,b]}\in S(\lambda Q)$. If $T'\cap (1/4)Q_*\neq\emptyset$, then there exists $[a_T,b_T]\subset[a,b]$ such that $T:=f([a_T,b_T])$ lies in $T'\cap 0.99999Q_*$, and $|f(a_T)-f(b_T)|=\diam T>0.99993\diam T'$. Moreover, the subarc $T$ intersects $0.25007Q'_*$.\end{lemma}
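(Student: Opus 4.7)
The plan is to obtain $[a_T, b_T]$ by slightly retracting a diameter-realizing chord of $T'$ along the flatness-witnessing line $L$ from \eqref{almost-flat-line-estimate}, confine the resulting subarc inside $0.99999 Q_*$ via Lemma \ref{convexity-lemma} and a tube estimate, then post-process by shrinking $[a_T, b_T]$ to a diameter-maximizing subinterval to enforce the efficiency condition $|f(a_T) - f(b_T)| = \diam T$. By \eqref{almost-flat-line-estimate} and Lemma \ref{l:bilateral}, fix a $J$-projection $\Pi_L$ onto a line $L$ with $|z - \Pi_L(z)| \le \epsilon := 2^{-37} \diam Q_*$ for all $z \in \Image(\tau)$. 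By compactness of $T'$, pick $c = f(s_0), d = f(t_0) \in T'$ with $|c-d| = \diam T'$ and $s_0 < t_0$; identify $L \cong \RR$ so that $\Pi_L(c) \le \Pi_L(d)$. Fix a small parameter $\alpha \sim 10^{-5}$ and set $u_\pm := \Pi_L(c) + \alpha_\pm(\Pi_L(d) - \Pi_L(c))$ with $\alpha_- := \alpha$, $\alpha_+ := 1 - \alpha$. By the intermediate value theorem applied to the continuous map $\Pi_L \circ f|_{[s_0, t_0]}$, define $a_T := \sup\{t' \in [s_0, t_0] : \Pi_L(f(t')) \le u_-\}$ and $b_T := \inf\{t' \ge a_T : \Pi_L(f(t')) \ge u_+\}$; then $\Pi_L(f(a_T)) = u_-$, $\Pi_L(f(b_T)) = u_+$, and $\Pi_L(f([a_T, b_T])) \subset [u_-, u_+]$ by construction.

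The main obstacle is proving $f([a_T, b_T]) \subset 0.99999 Q_*$. For any $w \in f([a_T, b_T])$, one has $\Pi_L(w) = \Pi_L(\chi_\mu)$ for a unique $\mu \in [\alpha, 1-\alpha]$, where $\chi_\mu := (1-\mu) c + \mu d$. To apply Lemma \ref{convexity-lemma} to $[c, d] \subset B(x, 1.00001 R)$ (with $R := \diam Q_* / 2$), I would show the open segment $(c, d)$ meets $B(x, sR)$ for some $s$ uniformly bounded away from $1$, say $s \le 1/3$. The hypothesis $T' \cap (1/4) Q_* \ne \emptyset$ supplies $z_0$; using $|\Pi_L(z_0) - \Pi_L(c)|, |\Pi_L(z_0) - \Pi_L(d)| \le |c-d| + 2\epsilon$ together with $|\Pi_L(c) - \Pi_L(d)| \ge |c-d| - 2\epsilon$, a short dichotomy yields: either (a) $\Pi_L(z_0) \in [\Pi_L(c), \Pi_L(d)]$, in which case the chord point $\chi_{\mu_0}$ with $\Pi_L(\chi_{\mu_0}) = \Pi_L(z_0)$ satisfies $|\chi_{\mu_0} - z_0| \le 2\epsilon$ and so $|\chi_{\mu_0} - x| \le R/4 + 2\epsilon$; or (b) $\Pi_L(z_0)$ lies outside, whence $|\Pi_L(z_0) - \Pi_L(c)| \le 4\epsilon$ forces $|c - z_0| \le 6\epsilon$, so $c$ is already within $R/4 + 6\epsilon$ of $x$. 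Either way, Lemma \ref{convexity-lemma} then gives $|\chi_\mu - x| \le 1.00001 R(1 - (1-s)\alpha)$; combining with $|w - \Pi_L(w)|, |\chi_\mu - \Pi_L(\chi_\mu)| \le \epsilon$ via the triangle inequality yields $|w - x| \le 1.00001 R(1 - (1-s)\alpha) + 2\epsilon \le 0.99999 R$ provided $\alpha$ exceeds a small multiple of $10^{-5}$ (the residual $\epsilon \sim 2^{-38} R$ is negligible).

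The diameter bound $|f(a_T) - f(b_T)| \ge (1 - 2\alpha) \diam T' - O(\epsilon)$ follows from the near-isometry of $\Pi_L$, exceeding $0.99993 \diam T'$ for $\alpha$ near $3 \cdot 10^{-5}$ in the principal regime $\diam T' \gtrsim \diam Q_*$, which is the use case (for instance $T' = H_Q$ has $\diam T' \ge R$ by \eqref{HQ-bounds}). To enforce the efficiency condition $|f(a_T) - f(b_T)| = \diam T$, replace $[a_T, b_T]$ by a diameter-realizing subinterval $[t_1^*, t_2^*] \subset [a_T, b_T]$ with $|f(t_1^*) - f(t_2^*)| = \diam f([a_T, b_T])$ (guaranteed by compactness); the containment $f([t_1^*, t_2^*]) \subset f([a_T, b_T]) \subset 0.99999 Q_*$ is inherited, and the diameter bound only improves. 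Finally, the ``moreover'' clause that $T$ intersects $0.25007 Q_*$ (reading $Q'_*$ as a typo for $Q_*$) follows by one final IVT step: in case (a), there is $t^* \in [a_T, b_T]$ with $\Pi_L(f(t^*)) = \Pi_L(z_0)$, whence $|f(t^*) - x| \le \epsilon + |\Pi_L(z_0) - z_0| + |z_0 - x| \le 2\epsilon + R/4 < 0.25007 R$; in case (b), $f(a_T)$ itself lies within $R/4 + O(\alpha R + \epsilon) < 0.25007 R$ of $x$.
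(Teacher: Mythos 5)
Your construction of $[a_T,b_T]$, the confinement via Lemma \ref{convexity-lemma}, and the passage to a diameter-realizing subinterval follow essentially the same route as the paper's proof, and the containment and efficiency parts are sound (with two small caveats: you should fix $\alpha$ explicitly, since the admissible window is narrow, roughly $3\times10^{-5}\leq\alpha<3.5\times10^{-5}$; and the lower bound $\diam T'\gtrsim\diam Q_*$ should be derived from the hypotheses rather than from ``the use case'' --- it does follow, since $\tau\in\Lambda(\lambda Q)$ has endpoints on $\partial(2\lambda Q)$ for $Q\notin\mathscr{B}^\lambda_0$, so the arc must travel from $(1/4)Q_*$ out of $U_Q\supset Q_*$, giving $\diam T'\geq(3/8)\diam Q_*$).

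The genuine gap is in the ``moreover'' clause. First, the point you exhibit lies on $f([a_T,b_T])$, whereas the $T$ of the lemma is the post-processed efficient subarc $f([t_1^*,t_2^*])$, and neither your $t^*$ (case (a)) nor $a_T$ (case (b)) need lie in $[t_1^*,t_2^*]$. Second, and more seriously, case (a) asserts the existence of $t^*\in[a_T,b_T]$ with $\Pi_L(f(t^*))=\Pi_L(z_0)$; the intermediate value theorem only guarantees this when $\Pi_L(z_0)\in[u_-,u_+]$ (indeed, for the final subarc, when $\Pi_L(z_0)$ lies in the slightly smaller range of $\Pi_L\circ f$ on $[t_1^*,t_2^*]$), while your case (a) only places $\Pi_L(z_0)$ in the larger interval $[\Pi_L(c),\Pi_L(d)]$. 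The sub-case $\Pi_L(z_0)\in[\Pi_L(c),u_-)$ is entirely possible --- e.g.\ $z_0=c$, when a diameter-realizing endpoint of $T'$ itself lies in $(1/4)Q_*$ --- and is not covered: there one must fall back on the endpoint $f(t_1^*)$ of $T$, which can be farther from $x$ than $z_0$ by about $\alpha\bigl(\Pi_L(d)-\Pi_L(c)\bigr)+O(\epsilon)\leq 2.00002\,\alpha\radius Q_*+O(\epsilon)$, so whether it lands in $0.25007Q_*$ hinges on the precise value of $\alpha$ (it fails already at $\alpha=3.5\times10^{-5}$) and on knowing that the projections of $f(t_1^*),f(t_2^*)$ sit within $O(\epsilon)$ of $u_-,u_+$ --- exactly the endpoint-location control the paper records in \eqref{aTbTposition}. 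The clause can be repaired along these lines (fix $\alpha\approx3\times10^{-5}$, prove the analogue of \eqref{aTbTposition} for $t_1^*,t_2^*$ using your near-isometry estimate, and run the endpoint estimate in the missing sub-case), but as written the ``moreover'' statement is not proved.
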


\begin{proof} Because $\tau$ is almost flat, we can find a line $L$ such that \eqref{almost-flat-line-estimate} holds. Further, since $T'$ intersects $(1/4)Q_*$, it follows that $\diam T'\geq (3/8)\diam Q_*>(1/4)\diam Q_*$ and \begin{equation}
\label{central-almost-flat-line-estimate} \dist(x,L)\leq 2^{-38}\diam Q'_*\leq 2^{-36}\diam T'\quad\forall x\in \Image(\tau);\quad 2^{-38}< 10^{-10}.\end{equation} Let $\Pi_L$ be a $J$-projection onto $L$. Then, by Lemma \ref{l:bilateral}, \begin{equation}\label{Pi-short-move} |\Pi_L(x)-x|< 0.00000\,00002\diam Q'_*\leq 0.00000\,00008\diam T'\quad\forall x\in\Image(\tau).\end{equation} Using \eqref{Pi-short-move} and the triangle inequality, we obtain \begin{equation}\label{flat-projection-bilipschitz} |\Pi_L(x)-\Pi_L(y)|\leq |x-y|< 1.00000\,002\,|\Pi_L(x)-\Pi_L(y)|\end{equation} whenever $x,y\in \Image(\tau)$ and $|x-y|\geq 0.1\diam T'$. Identifying $L$ with $\RR$, we can define $$c:=\min\{\Pi_L(x):x\in \overline{T'}\}\quad\text{and}\quad d:=\max\{\Pi_L(x):x\in \overline{T'}\}.$$ Choosing any $u,v\in \overline{T'}$ such that $|u-v|=\diam\overline{T'}=\diam T'$ and using \eqref{flat-projection-bilipschitz}, we see that \begin{equation}\label{d-c-below} \diam T'\geq d-c\geq |\Pi_L(u)-\Pi_L(v)| > (1.00000\,002)^{-1}\diam T'.\end{equation}

Suppose $c+0.00003\diam T'\leq p\leq d-0.00003\diam T'$ and let $x\in \Pi_L^{-1}(p)\cap\Image(\tau)$. By the first inequality in \eqref{d-c-below}, $p=(1-\mu)c+\mu d$ for some $0<\mu<1$ with $\min\{\mu,1-\mu\}\geq 0.00003$. We would like to use Lemma \ref{convexity-lemma} to show that $x\in 0.99999Q_*$. Let's check the hypothesis of the lemma. Certainly, $c,d\in 1.000011Q_*$ and the segment $(c,d)$ intersects $0.25000\,00002 Q_*\subset 0.25\cdot 1.000011Q_*$, since $T'\subset U_Q\subset 1.00001Q_*$, $T'\cap (1/4)Q_*\neq\emptyset$, and \eqref{Pi-short-move} is in effect. By Lemma \ref{convexity-lemma}, applied with $s=0.25$ and $\min\{\mu,1-\mu\}\geq 0.00003$, we discover $p\in 0.99997\,75\cdot 1.000011Q_*\subset 0.99998\,85Q_*$. Thus, by \eqref{Pi-short-move}, $x\in 0.99999Q_*$.

To continue, because $\Pi_L$ is continuous and $\Image(\tau)$ is connected, there must exist $[\tilde a_T,\tilde b_T]\subset[a,b]$ such that $\Pi_L(f(\tilde a_T))=c+.00003\diam T'$, $\Pi_L(f(\tilde b_T))=d-.00003\diam T'$ (or vice-versa), and $\Pi_L(f(t))$ lies in between for all $t\in[\tilde a_T,\tilde b_T]$. Define $\tilde T:=f([\tilde a_T,\tilde b_T])$. On the one hand, by the previous paragraph, we have $\tilde T\subset \Image(\tau)\cap 0.99999Q_*=T'\cap 0.99999Q_*$, since $0.99999Q_*\subset U_Q$ and $T'\in\Gamma^*_{U_Q}$. Hence, by \eqref{Pi-short-move}, \begin{align*}\diam T'\geq \diam \tilde T\geq |f(\tilde a_T)-f(\tilde b_T)| &\geq d-c-0.00006\diam T'-0.00000\,00016\diam T'.\end{align*} Using the last inequality in \eqref{d-c-below}, it follows that $\diam \tilde T>0.99993\,997\diam T'$. On the other hand, if $s,t\in[\tilde a,\tilde b]$, $\Pi_L(f(s))<\Pi_L(f(t))$, and $\Pi_L(f(s))\geq c+0.00003\,01\diam T'$ or $\Pi_L(f(t))\leq d-0.00003\,01\diam T'$, then \begin{align*}|f(s)-f(t)|\leq d-c-0.00006\,01\diam T' + 0.00000\,00016\diam T'&< 0.99993\,991\diam T',\end{align*} whence $|f(s)-f(t)|<\diam \tilde T$. Choose any $a_T,b_T\in[\tilde a_T,\tilde b_T]$ such that $\Pi_L(a_T)<\Pi_L(b_T)$ and $|f(a_T)-f(b_T)|=\diam \tilde T$. By the previous computation, we necessarily have \begin{equation}\label{aTbTposition} \Pi_L(a_T)<c+0.00003\,01\diam T'\quad\text{and}\quad \Pi_L(b_T)>d-0.00003\,01\diam T'.\end{equation} Define $T:=f([a_T,b_T])$. Then $T$ is an efficient subarc of $T'\cap 0.99999Q_*$ with $\diam T=\diam \tilde T>0.99993\diam T'$.

Lastly, let $y$ be any point such that $y \in (c,d)\cap 0.25000\,00002Q_*$. Shift from $y$ to a point $y'\in\Pi_L(T)$ as needed. By \eqref{aTbTposition}, we can do this in such a way that $|y-y'|<0.00003\,01\diam T'$. Then we can find at least one point $x\in T$ such that $\Pi_L(x)=y'$ and $|x-y|<0.00003\,011\diam T'$ by \eqref{Pi-short-move}. Since $\diam T'$ is at most $2.00002$ times the radius of the ball $Q_*$, we conclude that $x\in T$ lies in $0.25006\,023 Q_*$.
\end{proof}

\subsection{Geometry of $\mathcal{N}_1$ cores}

For each necessary core $U_{Q'}$, we define neighborhoods $D_{Q'}$, $E_{Q'}$, and $F_{Q'}$. Their relationship is that $E_{Q'}$ is slightly smaller than $D_{Q'}$, $F_{Q'}$ is slightly smaller than $E_{Q'}$, and $U_{Q'}$ is smaller than $F_{Q'}$. In \S\ref{proof-I}, we use ``extra length'' from $\diam H_{Q''}$ associated to cores $U_{Q''}$ that intersect $F_{Q'}$ and lie inside of $E_{Q'}$ to ``pay for'' the length of the interval $\Pi_T(D_{Q'})$. The definition of the neighborhoods depends on the type of core.

\begin{definition}\label{d:E, E-star N1 def}
Let $Q\in\mathscr{G}$ and let $T=f([a,b])\subset T'\in\Gamma^*_{U_Q}$ be an \hyperref[def:subarc]{efficient subarc}. For all $U_{Q'}\in\mathcal{N}_1(T)$, we define neighborhoods $D_{Q'}\supset E_{Q'}\supset F_{Q'}$ of $U_{Q'}$ by
\begin{align*}
    D_{Q'} := P_{1.04Q'_*}\cap 4Q'_*, \quad E_{Q'} := P_{1.03Q'_*}\cap 3.99Q'_*, \quad F_{Q'} := P_{1.02Q'_*}\cap 3.98Q'_*.
\end{align*}
\end{definition}

\begin{lemma}[tall subarcs] \label{tall arc criterion} Let $Q\in\mathscr{G}$ and $T=f([a,b])\subset T'\in\Gamma^*_{U_Q}$ be an efficient subarc. If $U_{Q'}\in\mathcal{N}_1(T)$ and $\tau\in S(\lambda Q')$ is a \hyperref[tall]{tall} arc, then there exists a subarc $T_{\tau}$ of $\Image(\tau)\cap F_{Q'} \setminus U_{Q'}$ such that $\diam T_{\tau} \geq 1.48\diam Q'_*$.
\end{lemma}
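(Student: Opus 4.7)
The plan is to extract $T_\tau$ from $\tau$ by an intermediate-value-theorem construction applied to the radial distance $r(t) := |\tau(t) - x_{Q'}|$, where $x_{Q'}$ is the center of $Q'_*$ and $\rho := (\diam Q'_*)/2$ is its radius. By Property (N1), fix $t_p, t_q$ in the domain $[c,d]$ of $\tau$ with $\tau(t_p) = p \in 1.00002 Q'_*$ (so $r(t_p) \le 1.00002\rho$) and $\tau(t_q) = q \in P_{1.01Q'_*} \setminus \interior(4Q'_*)$ (so $r(t_q) > 4\rho$ and $\Pi_T(q) \in \Pi_T(1.01Q'_*)$); swap the roles of $p,q$ if necessary so that $t_p < t_q$. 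Define
\[
  t_2 := \min\{t \in [t_p, t_q] : r(t) = 3.98\rho\}
  \quad\text{and}\quad
  t_1 := \max\{t \in [t_p, t_2] : r(t) = 1.00002\rho\},
\]
both of which exist by continuity and IVT. Set $T_\tau := f([t_1, t_2])$.

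By construction $t_1 < t_2$, and for every $t \in (t_1, t_2]$ the radius satisfies $1.00002\rho < r(t) \le 3.98\rho$ (strict lower bound by maximality of $t_1$), while $r(t_1) = 1.00002\rho$. Since $U_{Q'} \subset 1.00001 Q'_*$, it follows that $T_\tau \subset 3.98Q'_* \setminus U_{Q'}$. The reverse triangle inequality gives
\[
  \diam T_\tau \;\ge\; |\tau(t_1) - \tau(t_2)| \;\ge\; r(t_2) - r(t_1) \;=\; 2.97998\rho \;>\; 1.48 \diam Q'_*,
\]
which is the required lower bound on the diameter. It therefore suffices to verify $T_\tau \subset P_{1.02 Q'_*}$, as this combined with $T_\tau \subset 3.98 Q'_*$ yields $T_\tau \subset F_{Q'}$.

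To establish this cylinder containment, which is the main obstacle, I invoke the almost-flatness estimate \eqref{almost-flat-line-estimate} to obtain a line $L_\tau$ with $\dist(x, L_\tau) \le \epsilon := 2^{-38}\diam Q'_*$ for all $x \in \Image(\tau)$. Picking $p', q' \in L_\tau$ within $\epsilon$ of $p$ and $q$, the point $p'$ is essentially at the center of $Q'_*$, while $|q' - x_{Q'}| > 4\rho - \epsilon$ yet $\Pi_T(q')$ remains $O(\epsilon)$-close to $\Pi_T(q) \in \Pi_T(1.01Q'_*)$. Since the $\Pi_T$-images of $p'$ and $q'$ both lie in (a tiny enlargement of) the short interval $\Pi_T(1.01Q'_*)$ while $|p' - q'| > 2.99\rho$, the direction of $L_\tau$ is almost entirely transverse to $L_T$, and every point on the segment of $L_\tau$ running from $p'$ toward $q'$ that remains inside $3.99 Q'_*$ projects into $\Pi_T(1.02 Q'_*)$. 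Propagating this back to the arc $\tau$ via $\dist(\tau(t), L_\tau) \le \epsilon$ together with the Lipschitz-type bound $|\Pi_T(x) - \Pi_T(y)| \le |x-y| + 2\dist(x,L_T) + 2\dist(y,L_T)$ implied by Lemma \ref{j-proj facts} then completes the containment.

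The principal difficulty in this last paragraph is that in a general Banach space a $J$-projection need not be affine, so one cannot claim $\Pi_T((1-s)p' + sq') = (1-s)\Pi_T(p') + s\Pi_T(q')$ directly, as one would in a Hilbert space. The slacks between the constants $1.01$ and $1.02$ in Definition \ref{d:E, E-star N1 def}, and between the radii $3.98$ and $4$ in the definitions of tallness and of $F_{Q'}$, are engineered precisely to absorb the resulting nonlinearity errors, which at the ambient scale are dwarfed by $\epsilon = 2^{-38}\diam Q'_*$.
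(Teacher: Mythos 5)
Your extraction of $T_\tau$ via the radial function, the diameter bound, and the containment $T_\tau\subset 3.98Q'_*\setminus U_{Q'}$ are all fine and essentially parallel the paper. The genuine gap is the cylinder containment $T_\tau\subset P_{1.02Q'_*}$, which is the heart of the lemma, and your argument for it does not work. First, the claim that ``the direction of $L_\tau$ is almost entirely transverse to $L_T$'' is false: all your data give is $|\Pi_T(p')-\Pi_T(q')|\lesssim 2.02\radius Q'_*$ while $|p'-q'|\geq 2.99\radius Q'_*$, i.e.\ an antislope bound of roughly $0.68$, which is nowhere near transverse. Second, and decisively, you only control points lying over the segment of $L_\tau$ from $p'$ toward $q'$, but nothing in your construction forces $\Pi_{L_\tau}(T_\tau)$ to stay on that side of $p'$. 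The part of $L_\tau$ on the far side of $p'$ (away from $q'$) can run through the annulus $3.98Q'_*\setminus 1.00002Q'_*$ for a length comparable to $\radius Q'_*$, and with antislope $\approx 0.5$ its $\Pi_T$-image leaves $\Pi_T(1.02Q'_*)$ almost immediately. Because you anchor the window at the \emph{near} point and stop the inner radius at exactly $1.00002\radius Q'_*$ --- the same radius as the ball containing $p$ --- the arc can exit at radius $1.00002\radius Q'_*$ right next to $p'$, make an excursion along that far-side portion of $L_\tau$, and hit radius $3.98\radius Q'_*$ there before ever heading toward $q$; your $t_1,t_2$ then bracket precisely this excursion, and the resulting $T_\tau$ lies far outside $P_{1.02Q'_*}$. (A concrete Euclidean planar configuration with $p=(1,0)$, $L_\tau$ of antislope $\approx 0.46$ reaching $P_{1.01Q'_*}\setminus\interior(4Q'_*)$, and an arc that first wanders to the far side of $p$ realizes this.) So the defect is not a missing estimate but the construction itself.

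The paper avoids this by anchoring the window at the \emph{far} point $\tau(t_0)\in P_{1.01Q'_*}\setminus\interior(4Q'_*)$ and cutting at the \emph{first} entry time into $\partial(1.00003Q'_*)$, a ball strictly larger than $1.00002Q'_*$: on $[t_0,t_2]$ the arc has never yet come within $1.00003\radius Q'_*$ of the center, so if $\Pi_{L}\circ\tau$ ever passed the near endpoint $z$ of the approximating segment, the intermediate value theorem would put the arc within $2\cdot 2^{-38}\diam Q'_*$ of $z\in 1.000021Q'_*$, hence inside $\interior(1.00003Q'_*)$, a contradiction; overshoot past the far endpoint is excluded because every point of $L$ beyond $y$ lies outside $3.999Q'_*$. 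The $0.00001\radius Q'_*$ slack between $1.00002$ and $1.00003$ (enormous compared with $2^{-38}\diam Q'_*$) is exactly what your construction discards. Finally, the ``principal difficulty'' you flag is a non-issue: by Definition \ref{def:J-proj}, $\Pi_T(x)=\langle J(v),x\rangle v$ (suitably translated) is affine, so $\Pi_T$ maps segments to segments and convex sets to convex sets; the paper uses this linearity explicitly, and no ``nonlinearity errors'' need absorbing. The real work the slack constants do is the overshoot exclusion just described, which your proposal does not address.
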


\begin{proof}Pick any $t_0,t_3\in\Domain(\tau)$ such that $\tau(t_0)\in P_{1.01Q'_*}\setminus \interior(4Q'_*)$ and  $\tau(t_3)\in 1.00002Q'_*$. Without loss of generality, suppose that $t_0<t_3$. We let $t_2 > t_0$ be the first time after $t_0$ with $\tau(t_2) \in \partial(1.00003Q'_*)$. Then we define $t_1 := \max\{t \in [t_0, t_2] : \tau(t) \in \partial(3.97999Q_*') \}.$

We claim that the subarc $T_{\tau}:=\tau([t_1,t_2])$ satisfies the required conditions. Foremost, $\diam T_\tau\geq |\tau(t_1)-\tau(t_2)|\geq 2.97996 \radius Q'_*=1.48998\diam Q'_*$. Also, $T_{\tau}\subset 3.98Q_*\setminus U_{Q'}$ by the way we defined $t_1$ and $t_2$. It remains to verify that $\tau([t_1,t_2]) \subset P_{1.02Q'_*}$. First note that we arranged for $\tau(t_0)$ and $\tau(t_2)$ to lie in $P_{1.01 Q'_*}$. Second note that $\tau$ is almost flat. Consulting \eqref{almost-flat-line-estimate} and \eqref{Pi-x}, we can find a line $L$ and $J$-projection $\Pi_L$ onto $L$ such that \begin{equation}\label{tall-move} |\Pi_L(x)-x|\leq 2\dist(x,L)\leq 2^{-37}\diam Q'_*\quad\text{for every $x\in \Image(\tau)$}.\end{equation} Hence we can locate $y,z\in L$ nearby $\tau(t_0)$ and $\tau(t_3)$ such that $y\not\in 3.999Q'_*$, $z\in 1.001Q'_*$, and $y,z\in P_{1.011}$. By convexity, the whole segment $[y,z]\subset P_{1.011Q'_*}$ too. From \eqref{tall-move}, the fact that $2^{-37}\ll 0.001$, and the triangle inequality it follows that $\tau([t_1, t_2]) \subset B_{2^{-37}\diam Q'_*}([x, y]) \subset P_{1.012Q'_*}$, as well. This shows---with plenty of room to spare---that $T_\tau=\tau([t_1,t_2])$ is a subarc of $\Image(\tau)\cap F_{Q'}\setminus U_{Q'}$.
\end{proof}

\begin{lemma}\label{proof-of-coarse} If $Q\in\mathscr{G}$ and $T\subset \Gamma\cap U_Q$ is a closed, connected set, then the coarse estimate \eqref{e:coarse estimate} holds for $T$.\end{lemma}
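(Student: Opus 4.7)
The plan is a ``first-entry/last-exit shortcut'' argument. Choose $x, y \in T$ with $|x-y| = \diam T$. Since $T$ is compact, connected, and lies inside the rectifiable curve $\Gamma = f([0,1])$, I would first extract a continuous path $\gamma \colon [0,1]\to T$ joining $x$ to $y$; concretely, pick $a, b \in [0,1]$ with $f(a)=x$, $f(b)=y$, and $a\leq b$, and show $f([a,b]) \subset T$ (using that $T$ is closed and connected in $\Gamma$, so contains the arc of $\Gamma$ spanning any two of its points, modulo minor adaptations if $f$ is not injective).

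The main estimate then comes from rerouting $\gamma$ through each child core. Decompose $[0,1]$ via $\gamma^{-1}(R_Q)$ and the disjoint sets $\gamma^{-1}(U_{Q'})$ for $Q'\in \Child(Q)$. For each $Q'$ such that $\gamma$ enters $U_{Q'}$, set $s_{Q'} := \inf \gamma^{-1}(U_{Q'})$ and $t_{Q'} := \sup \gamma^{-1}(U_{Q'})$ and replace $\gamma|_{[s_{Q'},t_{Q'}]}$ by the straight chord joining $\gamma(s_{Q'})$ to $\gamma(t_{Q'})$, of length at most $\diam U_{Q'}$. Processing cores in a consistent order (say, outermost-first, by nesting of the intervals $[s_{Q'}, t_{Q'}]$), the resulting modified path $\tilde\gamma$ from $x$ to $y$ visits each $U_{Q'}$ in a single contiguous block and has total length at most
\[
\ell(R_Q \cap T) \;+\; \sum_{U_{Q'}\cap T \neq \emptyset} \diam U_{Q'}.
\]
Since $|x-y|\leq \text{length}(\tilde\gamma)$, applying $\diam U_{Q'}\leq 2.00002\,\diam H_{Q'}$ from \eqref{HQ-bounds-2} yields the coarse estimate \eqref{e:coarse estimate}.

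The main obstacle is the interleaving of visits to different cores: if $\gamma$ enters $U_{Q_1'}$, then $U_{Q_2'}$, and re-enters $U_{Q_1'}$, then the intervals $[s_{Q_1'}, t_{Q_1'}]$ and $[s_{Q_2'}, t_{Q_2'}]$ overlap. Because the cores are pairwise disjoint (Lemma \ref{core-family}(ii)--(iii)), the overlap must be a proper nesting, and I would handle this by induction on the outermost-first order: after collapsing the outer interval to its chord, the inner intervals lie entirely inside one core and can be ignored (since the chord replaces them). A second technical point is that the ``length of $\gamma$ outside the cores'' should be bounded by $\ell(R_Q\cap T) = \mathcal{H}^1(R_Q\cap T)$; this follows because $\gamma$ is injective on the arc-chain extracted from $f$, so length along $\gamma$ restricted to $R_Q$ equals $\mathcal{H}^1$ of its image in $R_Q \cap T$.
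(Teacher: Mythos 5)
Your overall strategy (shortcut the path through each child core and bound the leftover length by $\ell(R_Q\cap T)$) can be made to work, but as written it has two genuine gaps. The first is the extraction of the connecting path. A closed connected set $T\subset\Gamma\cap U_Q$ need \emph{not} contain $f([a,b])$ for any parameter interval with $f(a)=x$, $f(b)=y$: the paper assumes only that $f$ is continuous, and even a Lipschitz parameterization with multiplicity $\leq 2$ can visit $T$ in several separated parameter intervals (think of $\Gamma$ a tripod traversed out-and-back along each leg from the branch point, and $T$ the union of two legs, with $x,y$ their tips). Moreover, your length bookkeeping requires the connecting path to be \emph{injective}: if $\gamma=f|_{[a,b]}$ retraces portions of $R_Q\cap T$, then the length of $\gamma$ over $\gamma^{-1}(R_Q)$ can strictly exceed $\Haus^1(R_Q\cap T)$ and the bound by $\ell(R_Q\cap T)$ fails; you assert injectivity but nothing in the setup provides it. The standard repair is the classical fact that a continuum of finite $\Haus^1$-measure is arcwise connected, so $x$ and $y$ can be joined by a \emph{simple} rectifiable arc inside $T$ — a substantive external ingredient, not a ``minor adaptation'' of $f$. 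The second gap is the claim that overlapping first-entry/last-exit intervals must be properly nested because the cores are disjoint. Spatial disjointness (Lemma \ref{core-family}) does not order the visits in time: the path may enter $U_{Q_1'}$, then $U_{Q_2'}$, return to $U_{Q_1'}$, and return to $U_{Q_2'}$, producing crossing, non-nested spans, so the outermost-first induction does not apply as stated (and with countably many children one must also justify that the replacement procedure terminates or passes to a limit). Both defects are fixable — e.g.\ perform the replacements one core at a time on the \emph{current} path — but the proof as written does not close.

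For comparison, the paper's proof of Lemma \ref{proof-of-coarse} avoids paths altogether: pick $x,y\in T$ with $|x-y|=\diam T$ and a $1$-Lipschitz $J$-projection $\Pi_T$ onto the line through $x$ and $y$ (Appendix \ref{sec:smooth}); since $\Pi_T$ fixes $x$ and $y$ and $T$ is connected, $\Pi_T(T)=[x,y]$, so covering $T$ by $R_Q\cap T$ and the cores $U_{Q'}$ meeting $T$ and using subadditivity, the isodiametric inequality on the line, and the Lipschitz bound gives \eqref{pre-coarse-estimate}, after which \eqref{HQ-bounds-2} yields \eqref{e:coarse estimate}. This route needs no simple arc, no injectivity, and no combinatorics of entry/exit intervals, which is exactly what makes it robust for a merely continuous parameterization; if you want to keep your rerouting argument, you must import the arcwise-connectivity theorem and redo the replacement step carefully.
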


\begin{proof} Choose $x,y\in T$ such that $|x-y|=\diam T$ and let $\Pi_T$ be a $J$-projection onto the line through $x$ and $y$; see Appendix \ref{sec:smooth}. Since $\Pi_T$ is 1-Lipschitz, $\Pi_T$ fixes $x$ and $y$, and $T$ is connected, $\Pi_T(T)=[x,y]$. Since $T\subset \Gamma\cap U_Q$, we can cover $T$ by $R_Q\cap T$ and the set of cores $U_{Q'}$ of $Q'\in\Child(Q)$ such that $U_{Q'}\cap T\neq\emptyset$. By countable subadditivity of $\Haus^1$, the isodiametric inequality $\Haus^1(A)\leq \diam A$ for all sets $A\subset\RR$, and $\Pi_T$ being 1-Lipschitz,  \begin{equation}\label{pre-coarse-estimate} \diam T \leq \Haus^1(\Pi_T(R_Q\cap T))+\sum_{\mathclap{U_{Q'}\cap T\neq\emptyset}}\Haus^1(\Pi_T(U_{Q'})) \leq \ell(R_Q\cap T)+\sum_{\mathclap{U_{Q'}\cap T\neq\emptyset}} \diam U_{Q'}.\end{equation} Hence \eqref{e:coarse estimate} follows from \eqref{pre-coarse-estimate} and \eqref{HQ-bounds-2}. \end{proof}

\begin{lemma}\label{N1-auxiliary} Let $Q\in\mathscr{G}$ and let $T\subset T'\in\Gamma^*_{U_Q}$ be an efficient subarc. If $U_{Q'}\in\mathcal{N}_1(T)$, then there is a set $\mathcal{M}_{Q'}$ of cores $U_{Q''}$ with $Q''\in\Child(Q)$ and $U_{Q''}\cap F_{Q'}\neq\emptyset$ such that \begin{equation} \label{N1-auxliary-cores-total}
\diam \Pi_T(D_{Q'})< 0.5\,\ell(R_Q\cap F_{Q'}) + 0.84\sum_{U_{Q''}\in\mathcal{M}_{Q'}} \diam H_{Q''}.\end{equation}
\end{lemma}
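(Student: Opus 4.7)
The plan is to bound the shadow $\diam\Pi_T(D_{Q'})$ above by $1.04\diam Q'_*$ and then extract strictly more than that much ``extra length'' near $U_{Q'}$ from two sources: (i) a ``free'' contribution $\diam H_{Q'}\geq 0.5\diam Q'_*$ coming from $U_{Q'}$ itself, and (ii) the tall subarc supplied by Property (N1). The family $\mathcal{M}_{Q'}$ will be defined to include $U_{Q'}$ together with those children of $Q$ whose cores meet the tall subarc.

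First I would show that $\diam\Pi_T(D_{Q'})\leq 1.04\diam Q'_*$. Since $D_{Q'}\subset P_{1.04Q'_*}=\Pi_T^{-1}(\Pi_T(1.04Q'_*))$, one has $\Pi_T(D_{Q'})\subset\Pi_T(1.04Q'_*)$, and the 1-Lipschitz property of $\Pi_T$ (Lemma \ref{j-proj facts}) gives the bound. Next, by the $(N1)$ hypothesis and Lemma \ref{tall arc criterion}, there is a closed connected subarc $T_\tau\subset\Image(\tau)\cap F_{Q'}\setminus U_{Q'}$ with $\diam T_\tau\geq 1.48\diam Q'_*$. Using the scale estimates \eqref{Q'-diam}--\eqref{Q'-containment} one checks that $3.98Q'_*\subset Q_*\subset U_Q$, so in particular $T_\tau\subset\Gamma\cap U_Q$ and the coarse estimate Lemma \ref{proof-of-coarse} applies to it:
\begin{equation*}
1.48\diam Q'_*\leq\diam T_\tau\leq\ell(R_Q\cap T_\tau)+2.00002\!\!\sum_{\substack{Q''\in\Child(Q)\\ U_{Q''}\cap T_\tau\neq\emptyset}}\!\!\diam H_{Q''}.
\end{equation*}
Since $T_\tau\cap U_{Q'}=\emptyset$, none of the cores in this sum equal $U_{Q'}$, and by the tree structure they are pairwise distinct from each other.

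I would then set $\mathcal{M}_{Q'}:=\{U_{Q'}\}\cup\{U_{Q''}:Q''\in\Child(Q),\,U_{Q''}\cap T_\tau\neq\emptyset\}$. Each element meets $F_{Q'}$ (note $U_{Q'}\subset F_{Q'}$ and $T_\tau\subset F_{Q'}$). By \eqref{HQ-bounds} applied to $Q'\in\mathscr{G}$, we have $\diam H_{Q'}\geq 0.5\diam Q'_*$, hence, writing $R:=\diam Q'_*$ and $\ell:=\ell(R_Q\cap F_{Q'})\geq\ell(R_Q\cap T_\tau)$,
\begin{equation*}
\sum_{U_{Q''}\in\mathcal{M}_{Q'}}\!\!\diam H_{Q''}\geq 0.5R+\max\!\left\{0,\,\tfrac{1.48R-\ell}{2.00002}\right\}.
\end{equation*}
A short case split on $\ell$ finishes: when $\ell\leq 1.48R$, both terms are active and $0.5\ell+0.84\sum\geq 0.08\ell+1.0416R>1.04R$; when $\ell>1.48R$, just $0.5\ell+0.84\cdot 0.5R>0.74R+0.42R=1.16R>1.04R$. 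Either way the right-hand side exceeds $1.04R\geq\diam\Pi_T(D_{Q'})$.

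The conceptual obstacle is recognizing the need to include $U_{Q'}$ itself in $\mathcal{M}_{Q'}$: the coarse estimate on $T_\tau$ alone yields only $\sum\diam H_{Q''}\gtrsim(1.48R-\ell)/2$, so $0.84\sum\lesssim 0.62R$, far short of $1.04R$. Adding the self-contribution $\diam H_{Q'}\geq 0.5R$ is precisely what closes the gap with margin $0.0016R$, comfortably absorbing the deviation of $2.00002$ from $2$ in the coarse estimate. The remaining work is routine bookkeeping (verifying $3.98Q'_*\subset U_Q$, checking the containment $U_{Q'}\subset F_{Q'}$, and tracking that $\ell(R_Q\cap T_\tau)\leq \ell(R_Q\cap F_{Q'})$).
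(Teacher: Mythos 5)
Your proof is correct and follows essentially the same route as the paper's: the same tall subarc $T_\tau$ from Lemma \ref{tall arc criterion}, the same family $\mathcal{M}_{Q'}=\{U_{Q'}\}\cup\{U_{Q''}:U_{Q''}\cap T_\tau\neq\emptyset\}$, the coarse estimate \eqref{e:coarse estimate} applied to $T_\tau$, and the crucial self-contribution $\diam H_{Q'}\geq 0.5\diam Q'_*$. The only difference is cosmetic: you close the arithmetic with a case split on $\ell(R_Q\cap F_{Q'})$, whereas the paper combines everything into the single inequality $2.48\diam Q'_*\leq\ell(R_Q\cap T_\tau)+2.00002\sum_{U_{Q''}\in\mathcal{M}_{Q'}}\diam H_{Q''}$ and divides through by $2.48/1.04$.
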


\begin{proof} Choose a tall arc $\tau\in S(\lambda Q')$ and let $T_{\tau}$ be the subarc of $\Image(\tau)\cap F_{Q'}\setminus U_{Q'}$ given by Lemma \ref{tall arc criterion}. Define $\mathcal{M}_{Q'}=\{U_{Q'}\}\cup \{U_{Q''}: U_{Q''}\cap T_\tau\neq\emptyset\}$. Applying the coarse estimate \eqref{e:coarse estimate}, we find that \begin{equation*}
1.48\diam Q'_* \leq \diam T_{\tau} \leq \ell(R_Q\cap T_\tau) + 2.00002\sum_{U_{Q''}\in\mathcal{M}_{Q'}\setminus\{U_{Q'}\}}\diam H_{Q''}.\end{equation*} We also know that $\diam Q'_*\leq \diam U_{Q'}\leq 2.00002 \diam H_{Q'}$ by \eqref{HQ-bounds-2}. Hence \begin{equation*} 2.38461 \diam 1.04Q'_*\leq 2.48\diam Q'_* \leq \ell(R_Q\cap T_{\tau}) + 2.00002\sum_{U_{Q''}\in\mathcal{M}_{Q'}} \diam H_{Q''}.\end{equation*} Since $\diam \Pi_T(D_{Q'})\leq \diam 1.04Q'_*$ and $T_\tau\subset F_{Q'}$, this yields \eqref{N1-auxliary-cores-total}.
\end{proof}

\subsection{Geometry of $\mathcal{N}_2$ cores}\label{ss:N2} By definition, every core $U_{Q'}\in \mathcal{N}_2(T)$ admits a \hyperref[wide]{wide} arc. To prove Lemma \ref{improve}, we will need to distinguish between the case that some wide arc $\tau$ lies near the center of $Q'_*$ and the case that every wide arc is far from the center of $Q'_*$.

\begin{definition} Let $Q\in\mathscr{G}$ and let $T\subset T'\in\Gamma^*_{U_Q}$ be an \hyperref[def:subarc]{efficient subarc}. Suppose that $U_{Q'}\in\mathcal{N}_2(T)$. We say that $U_{Q'}\in\mathcal{N}_{2.1}(T)$ if there exists a wide arc $\tau$ such that $\Image(\tau)\cap 2^{-14}Q'_*\neq\emptyset$. Otherwise, we say that $U_{Q'}\in\mathcal{N}_{2.2}(T)$.\end{definition}

\begin{definition}\label{d:E, E-star N2 def}
Let $Q\in\mathscr{G}$ and $T=f([a,b])\subset T'\in\Gamma^*_{U_{Q'}}$ be an efficient subarc.  For all $U_{Q'}\in\mathcal{N}_{2.1}(T)$, we define neighborhoods $D_{Q'}\supset E_{Q'}\supset F_{Q'}$ of $U_{Q'}$ by
\begin{align*}
    D_{Q'} := 1.00002Q'_*, \quad E_{Q'} := U_{Q'}, \quad F_{Q'} := U_{Q'}.
\end{align*}
For all $U_{Q'}\in\mathcal{N}_{2.2}(T)$, we define neighborhoods $D_{Q'}\supset E_{Q'}\supset F_{Q'}$ of $U_{Q'}$ by
\begin{align*}
    D_{Q'} := 16Q'_*, \quad     E_{Q'} := 15.99Q'_*, \quad F_{Q'} := 15.98Q'_*.
\end{align*}
\end{definition}

\begin{lemma}\label{N21-auxiliary}
Let $Q\in\mathscr{G}$ and let $T\subset T'\in\Gamma^*_{U_Q}$ be an efficient subarc. If $U_{Q'}\in\mathcal{N}_{2.1}(T)$, then $\diam D_{Q'} \le 1.00016 \diam H_{Q'}$.
\end{lemma}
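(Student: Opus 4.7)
The plan is to reduce the statement to showing $\diam H_{Q'} \ge (1 - 2^{-13})\diam Q'_*$, since $1.00002/(1 - 2^{-13}) < 1.00016$. To realize this lower bound, I would exploit the wide arc $\tau \in S(\lambda Q') \subset S^*(\lambda Q')$ supplied by $U_{Q'} \in \mathcal{N}_{2.1}(T)$: by hypothesis, $\Image(\tau) \cap 2^{-14}Q'_* \ne \emptyset$ and $\tau$ is $U_{Q'}$-transverse. Because $2^{-14}Q'_* \subset (1/4)Q'_*$, the fragment $\Image(\tau) \cap U_{Q'} \in \Gamma^*_{U_{Q'}}$ meets $(1/4)Q'_*$, so the maximality in \eqref{e:HQ} yields $\diam H_{Q'} \ge \diam(\Image(\tau) \cap U_{Q'})$. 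The task thus reduces to producing two points in $\Image(\tau) \cap U_{Q'}$ at distance at least $(1 - 2^{-13})\diam Q'_*$.

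Let $c$ be the center of $Q'_*$ and $r' = \radius Q'_*$. By \eqref{almost-flat-line-estimate} I would fix a line $L_\tau$ with $\dist(x, L_\tau) \le 2^{-37}r'$ for all $x \in \Image(\tau)$. Since $\tau$ touches $2^{-14}Q'_*$, a nearest point $c_\tau \in L_\tau$ to $c$ satisfies $|c_\tau - c| \le (2^{-14} + 2^{-37})r'$. Parameterize $L_\tau = c_\tau + \mathbb{R}v$ with $|v| = 1$, and for each $s \in \Domain(\tau)$ define $t_s$ by $\Pi_{L_\tau}(\tau(s)) = c_\tau + t_s v$; Lemma \ref{j-proj facts} gives $|\tau(s) - (c_\tau + t_s v)| \le 2^{-36}r'$, and $s \mapsto t_s$ is continuous. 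Combining these bounds via the triangle inequality yields $|\tau(s) - c| \le 2^{-14}r' + |t_s| + 2^{-35}r'$, so $\tau(s) \in \overline{Q'_*} \subset U_{Q'}$ whenever $|t_s| \le T := (1 - 2^{-14} - 2^{-35})r'$. The heart of the argument is then to produce $s^{\pm} \in \Domain(\tau)$ with $t_{s^{\pm}} = \pm T$, because this will give $|\tau(s^+) - \tau(s^-)| \ge |t_{s^+} - t_{s^-}| - 2 \cdot 2^{-36}r' = 2T - 2^{-35}r' \ge (1 - 2^{-13})\diam Q'_*$.

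To produce $s^{\pm}$ I would use the affineness of the $J$-projection $\Pi_T$: write $\Pi_T(x) = \Pi_T(c_\tau) + g_u(x - c_\tau)u$ with $\|g_u\| = 1$ and set $\gamma := g_u(v) \in [-1, 1]$. Transversality plus the intermediate value theorem forces $\Pi_T(\Image(\tau)) \supset \Pi_T(U_{Q'}) \supset \Pi_T(Q'_*)$, an interval of length $\diam Q'_*$; but if $\gamma = 0$ then $\Pi_T(\Image(\tau))$ has $u$-width at most $2 \cdot 2^{-36}r' \ll \diam Q'_*$, a contradiction. Hence $\gamma \ne 0$, and after flipping $v$ I may assume $\gamma > 0$. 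The transverse inequality $\Pi_T(\Start(\tau)) < \Pi_T(c) - r'$ (choosing orientation so that $\Start(\tau) \in P^-_{U_{Q'}}$), combined with $|\Pi_T(c) - \Pi_T(c_\tau)| \le |c - c_\tau|$ and the affine identity $\Pi_T(\Start(\tau)) = \Pi_T(c_\tau) + (\gamma t_{a_0} + \eta)u$ with $|\eta| \le 2^{-36}r'$, rearranges to $\gamma t_{a_0} < -T$; since $\gamma \le 1$ this gives $t_{a_0} < -T$, and a symmetric argument yields $t_{b_0} > T$. IVT on the continuous function $s \mapsto t_s$ then supplies the required $s^{\pm}$, and the previous paragraph places $\tau(s^{\pm}) \in U_{Q'}$.

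I expect the main obstacle to be the numerical bookkeeping rather than any conceptual difficulty: the various error contributions ($2^{-14}r'$ from the offset $|c_\tau - c|$, $2^{-36}r'$ from the tube width around $L_\tau$, $2^{-35}r'$ from the affine projection estimate) all must cumulate to less than the slack $0.00014 \diam Q'_*$ afforded by the target constant $1.00016$, and a careful tracking of the constants through the transverse inequality is needed to confirm that the chosen threshold $T = (1 - 2^{-14} - 2^{-35})r'$ both guarantees $\tau(s^{\pm}) \in \overline{Q'_*}$ and is within reach of IVT on $s \mapsto t_s$.
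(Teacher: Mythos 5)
Your proposal is correct and follows the same strategy as the paper's proof: take the wide arc $\tau\in S(\lambda Q')$ meeting $2^{-14}Q'_*$, use its flatness at scale $2^{-38}\diam Q'_*$ together with $U_{Q'}$-transversality to produce two points of $\Image(\tau)\cap U_{Q'}$ at distance at least $(1-\text{tiny})\diam Q'_*$, and then invoke the maximality of $H_{Q'}$ from \eqref{e:HQ}. The only difference is in execution of the quantitative step: the paper selects $y,z\in\Image(\tau)\cap\partial Q'_*$, one in each component of $\partial Q'_*\cap B_{2^{-38}\diam Q'_*}(L)$, and bounds $|y-z|$ directly, whereas you route the transversality through the linear functional defining $\Pi_T$ and the intermediate value theorem for the coordinate along $L_\tau$ (ruling out the degenerate case $g_u(v)=0$ via the covering $\Pi_T(\Image(\tau))\supset\Pi_T(Q'_*)$), which makes the ``the arc must span the ball'' step fully explicit; your constant $1-2^{-13}$ versus the paper's $0.99987$ both comfortably yield the target factor $1.00016$.
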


\begin{proof}
Let $\tau$ be a wide arc such that $\Image(\tau)\cap 2^{-14}Q'_*\neq\emptyset$.  By \eqref{almost-flat-line-estimate}, there exists a line $L$ such that $\dist(p,L)\leq 2^{-38}\diam Q'_*$ for all $p\in\Image(\tau)$. Since $\tau$ is wide and $\Image(\tau)$ intersects $2^{-14}Q'_*$, the set $\Image(\tau)$ meets both connected components of $\partial Q'_*\cap B_{2^{-38}\diam Q'_*}(L)$; choose points $y,z\in\Image(\tau)\cap \partial Q'_*$, one from each of the components. Let $x$ denote the center of $Q'_*$; then $\dist(x,\Image(\tau))\leq 2^{-14}\radius Q'_*=2^{-15}\diam Q'_*$. By our assertions above, we can find points $x',y',z'\in L$, with $x'$ lying between $y'$ and $z'$, such that $|x-x'| \leq (2^{-15}+2^{-38})\diam Q'_*$, $|y-y'| \leq 2^{-38}\diam Q'_*$, and $|z-z'|\leq 2^{-38}\diam Q'_*$. Define $y''=y'+x-x'$ and $z''=z'+x-x'$, so that $y''$ and $z''$ lie on a line through $x$, with $x$ in between $y''$ and $z''$. Now, $$|y''-x|\geq |y-x|-|y''-y'|-|y'-y|\geq (1/2-2^{-15}-2^{-37})\diam Q'_*.$$ Similarly, $|z''-x|\geq (1/2-2^{-15}-2^{-37})\diam Q'_*$. Hence $|y''-z''|=|y''-x|+|x-z''|\geq (1-2^{-14}-2^{-36})\diam Q'_*$. It follows that $$|y-z|\geq |y''-z''|-|y''-y'|-|y'-y|-|z''-z'|-|z'-z|\geq (1-2^{-13}-2^{-35})\diam Q'_*.$$ Thus, $\diam H_{Q'}\geq |y-z| \geq 0.99987\diam Q'_*\geq 0.99985\diam D_{Q'}$. The lemma follows.\end{proof}

\begin{lemma}\label{N22-geometry} Let $Q\in\mathscr{G}$ and let $T\subset T'\in\Gamma^*_{U_Q}$ be an efficient subarc. If $U_{Q'}\in\mathcal{N}_{2.2}(T)$, then there exists a finite set $\mathcal{Y}$ of efficient subarcs of arc fragments in $\Gamma^*_{F_{Q'}}$ such that the sets $\{1.00002Q'_*\}\cup\{B_{2^{-40}\diam Q'_*}(Y):Y\in\mathcal{Y}\}$ are pairwise disjoint, $\diam Y\geq 0.00021 \diam Q'_*$ for all $Y\in\mathcal{Y}$, and $\sum_{Y\in\mathcal{Y}} \diam Y \geq 22.46\diam Q'_*$. (The cardinality of $\mathcal{Y}$ is 3 or 4.)
\end{lemma}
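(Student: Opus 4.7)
The plan is to build $\mathcal{Y}$ out of two near-chord fragments inside $F_{Q'}=15.98Q'_*$, each contributing two subarcs once a small buffered neighborhood of $1.00002Q'_*$ is excised. The first chord comes from a wide arc guaranteed by $U_{Q'}\in\mathcal{N}_{2.2}(T)$; the second comes from the almost flat arc $\eta_Q$ carrying $T$ (or, in a degenerate case, from a further $*$-almost flat arc supplied by condition (iii) of Definition \ref{B-balls-def}). If the two chord directions nearly coincide one obtains three subarcs, otherwise four.

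First I would fix a wide arc $\tau_0\in S(\lambda Q')$ with $\Image(\tau_0)\cap 1.00002Q'_*\neq\emptyset$ and $\Image(\tau_0)\cap 2^{-14}Q'_*=\emptyset$, and let $L_0$ be a line with $\Image(\tau_0)\subset B_{2^{-38}\diam Q'_*}(L_0)$ (from \eqref{almost-flat-line-estimate}). The two conditions above force $2^{-14}\radius Q'_*-2^{-37}\radius Q'_*\le\dist(L_0,x_0)\le (1.00002)\radius Q'_*+2^{-37}\radius Q'_*$, where $x_0$ is the center of $Q'_*$, so $L_0$ is essentially a tangent line to the ball $1.00002Q'_*$ that clears the tiny central ball. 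By $U_{Q'}$-transversality, the image of $\tau_0$ realizes the full chord of $F_{Q'}$ along $L_0$; excising a slightly fattened version of $B_{2^{-40}\diam Q'_*}(1.00002Q'_*)$ splits this chord into two components, from which a routine adaptation of Lemma \ref{proof:proj-lemma} (with $L_0$ replacing $L_T$) extracts two efficient subarcs $Y_1,Y_2\in\Gamma^*_{F_{Q'}}$ of diameter roughly $6.98\diam Q'_*$ each, by a chord/convexity estimate in the spirit of Lemma \ref{convexity-lemma}. The same construction applied to $\Image(\eta_Q)\cap F_{Q'}\in\Gamma^*_{F_{Q'}}$ (another near-chord of $F_{Q'}$, this time along $L_T$, since $1.00002Q'_*\cap T\ne\emptyset$ by the defining condition of necessary cores) produces one or two further efficient subarcs $Y_3$ and possibly $Y_4$ of diameter $\gtrsim 6\diam Q'_*$. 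Summing, $\sum_i\diam Y_i\ge 13.96\diam Q'_*+6.98\diam Q'_*\ge 22.46\diam Q'_*$, while $\diam Y_i\ge 0.00021\diam Q'_*$ is trivial.

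The main obstacle is pairwise disjointness of the $2^{-40}\diam Q'_*$-buffers of the $Y_i$. Each $Y_i$ lies within $2^{-38}\diam Q'_*$ of its associated line, so two buffers overlap only if the two lines come within $O(2^{-38}\diam Q'_*)$ of each other in $F_{Q'}\setminus 1.00002Q'_*$. A quantitative angle bound---using the $U_{Q'}$-transversality of $\tau_0$ together with the fact that $T$ projects essentially onto $L_T$ under $\Pi_T$ by Lemma \ref{l:bilateral}---shows the angle between $L_0$ and $L_T$ is bounded below by a universal constant whenever they are not collinear, so outside a ball of radius $O(\radius Q'_*)$ about $x_0$ the buffers around subarcs from distinct lines stay disjoint. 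In the degenerate case when $L_T$ is too close to $L_0$, one replaces $\eta_Q$ by a $*$-almost flat arc $\tau_1\in S^*(\lambda Q')$ whose line is provably far from $L_0$; such a $\tau_1$ must exist because otherwise the entire union $\bigcup_{\tau\in S^*(\lambda Q')}\Image(\tau)$ would cluster on $L_0$, contradicting positivity of $\beta_{S^*(\lambda Q')}(2\lambda Q')$ coming from condition (iii) of Definition \ref{B-balls-def}. The numerical constants $22.46$ and the cardinality split ``$3$ or $4$'' are ultimately pinned down by this angle analysis.
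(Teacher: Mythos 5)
Your overall shape (two distinct almost flat arcs crossing the annulus $15.98Q'_*\setminus 1.00004Q'_*$, yielding $3$ or $4$ long subarcs) is the same as the paper's, but the two steps that carry the actual difficulty do not work as you have set them up. First, the disjointness argument: there is no ``universal lower bound on the angle between $L_0$ and $L_T$ whenever they are not collinear'' --- two non-collinear lines can make an arbitrarily small angle, and in this configuration they genuinely can: the only quantitative separation available is that the wide arc avoids $2^{-14}Q'_*$ while the second arc comes within $\approx 2^{-38}\diam Q'_*$ of the center, so the two approximating lines may be nearly parallel with offset $\sim 2^{-15}\diam Q'_*$ near the center and may come within $2^{-35}\diam Q'_*$ of each other far out in the annulus. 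The paper's proof handles exactly this regime with the ``similar triangles'' computation, which shows the near-intersection of $L_\xi$ with $B_{2^{-35}\diam Q'_*}(L_\tau)$ is confined to a radial band of width $<2^{-14}\diam Q'_*$, and then splits the second chord into $\tilde T_{1.1},\tilde T_{1.2}$ by deleting only a band of width $0.00007\radius Q'_*$; that is where the cardinality ``$3$ or $4$'' comes from, and your dichotomy (either swap arcs or invoke a uniform angle bound) skips it. Second, your choice of second arc is not safe. The piece of $T$ (equivalently of $\eta_Q$) passing through $1.00002Q'_*$ is, locally, some arc of $\Lambda(\lambda Q')$, and nothing prevents it from being the wide arc $\tau_0$ itself, in which case your two ``chords'' coincide and you only collect $\approx 14.98\radius Q'_*$ per side, far short of the target. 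Your fallback via condition (iii) of Definition \ref{B-balls-def} does not repair this: positivity of $\beta_{S^*(\lambda Q')}(2\lambda Q')$ only produces an arc with \emph{one} point at distance $\gtrsim 2^{-M}\diam 2\lambda Q'$ from $L_0$ somewhere in $2\lambda Q'$; that arc need not meet $F_{Q'}=15.98Q'_*$ at all (so it contributes no fragment of $\Gamma^*_{F_{Q'}}$), and the separation it yields is $M$-dependent, incompatible with the $M$-independent buffer $2^{-40}\diam Q'_*$ demanded by the lemma (this mechanism is the one used later, with $M$-dependent buffers, in Lemma \ref{A-geometry}). The paper instead takes the second arc $\xi\in S(\lambda Q')$ to be one through the \emph{center} of $Q'$ --- it exists because the center is a net point of $\Gamma$ and Definition \ref{B-balls-def}(ii) applies --- which is automatically distinct from the wide arc, reaches from $\partial(1.00004Q'_*)$ to $\partial(15.98Q'_*)$, and supplies the quantitative offset ($L_\xi$ within $2^{-38}\diam Q'_*$ of the center versus $L_\tau$ at distance $\ge(2^{-15}-2^{-37})\diam Q'_*$) that powers the thin-band argument.

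Separately, your numerics do not close, and the lemma leaves essentially no slack. Each half-chord of the annulus has diameter at least $(15.98-1.00004)\radius Q'_*=14.97996\radius Q'_*\approx 7.49\diam Q'_*$ (you subtracted the diameter, rather than the radius, of the excised ball from $\radius 15.98Q'_* = 7.99\diam Q'_*$, getting $6.98$), and the paper needs all three pieces at essentially full length, $3\times 7.49\approx 22.47\diam Q'_*$, to clear $22.46\diam Q'_*$. As written, your claimed total $13.96+6.98$ equals $20.94<22.46$, and a second contribution of merely ``$\gtrsim 6\diam Q'_*$'' cannot suffice; likewise, in the four-piece case you cannot afford to excise a ball of radius $O(\radius Q'_*)$ around the near-intersection --- only a band of width $O(10^{-4})\radius Q'_*$, which is exactly what the similar-triangles estimate licenses.
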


\begin{proof} Since $U_{Q'}\in\mathcal{N}_{2.2}(T)$, we can find a wide arc $\tau \in S(\lambda Q')$ such that $\Image(\tau)$ intersects $1.00002Q'_*$ and is disjoint from $2^{-14}Q'_*$. Let $\xi\in S(\lambda Q')$ be any arc whose image contains the center of $Q'$. Since the image of $\tau$ does not contain the center of $Q'$, the arcs $\tau$ and $\xi$ are distinct. The family $\mathcal{Y}$ will be built from subarcs of $\Image(\tau)\cap F_{Q'}$ and $\Image(\xi)\cap F_{Q'}$.

Let $A$ denote the annulus $15.98Q'_*\setminus \interior(1.00004Q'_*)$, which is contained in $F_{Q'}$. Choose a subarc $T_1$ of $\Image(\xi)\cap A$ with one endpoint on $\partial(15.98Q'_*)$ and one endpoint on $\partial (1.00004Q'_*)$ so that $\diam T_1\geq 14.97996\radius Q'_*$; cf.~proof of Lemma \ref{tall arc criterion}. Similarly, we may find two subarcs $T_2$ and $T_3$ of $\Image(\tau) \cap A$ with endpoints in $\partial 15.98Q'_* \cap P_{U_{Q'}}^+$ and $\partial(1.00004Q'_*)$ and endpoints in $\partial(15.98Q'_*) \cap P_{U_{Q'}}^-$ and $\partial (1.00004Q'_*)$, respectively. Observe that $\min\{\diam T_2,\diam T_3\}\geq 14.97996\radius Q'_*$, and the total diameter of the three subarcs is at least $44.93988\radius Q'_*=22.46994\diam Q'_*$.

Now, we show that $B_{2^{-40}\diam Q'_*}(T_2)$ and $B_{2^{-40}\diam Q'_*}(T_3)$ are disjoint.  Let $L_{\tau}$ be a line such that (\ref{almost-flat-line-estimate}) holds for $L_{\tau}$ and all $x \in \Image(\tau)$.  In particular,
\begin{align*}
    B_{2^{-40}\diam Q'_*}(T_2 \cup T_3) \subset B_{(2^{-38}+ 2^{-40})\diam Q'_*}(L_{\tau}) \subset B_{2^{-37}\diam Q'_*}(L_{\tau}).
\end{align*}
By assumption, $\Image(\tau) \cap 1.00002Q'_* \not = \emptyset$. Hence there exists $\tilde{w} \in \Image(\tau) \cap 1.00002Q'_*$ such that $B(\tilde{w}, 0.00002\radius Q'_*) \subset 1.00004 Q'_*$. Let $w \in L_{\tau} \cap B(\tilde{w}, 2^{-38}\diam Q'_*)$. Note that $B(w, 0.00001 \radius Q'_*) \subset 1.00004 Q'_*$. Labeling the two connected components of $L_{\tau} \setminus B(w, 0.00001\radius Q'_*)$ by $L_{\tau}^+, L_{\tau}^-$ we conclude that
\begin{equation}\begin{split}\label{2.2 gap estimate}
&\gap(B_{2^{-40}\diam Q'_*}(T_2), B_{2^{-40}\diam Q'_*}(T_3))  \ge \gap(B_{2^{-37}\diam Q'_*}(L_{\tau}^+), B_{2^{-37}\diam Q'_*}(L_{\tau}^-))\\
&\qquad\ge (0.00001 - 2^{-35})\diam Q'_* > 0.000009 \diam Q'_*.
\end{split}\end{equation}
Observe that for any arc we may shrink its domain as needed to produce an efficient arc of the same diameter. Thus, it remains to obtain a subarc or subarcs of $T_1$ which satisfy the disjointness and diameter estimates in the conclusion of the lemma.

Let $L_{\xi}$ be a line such that \eqref{almost-flat-line-estimate} holds with $L_{\xi}$ and all $x \in \Image(\xi)$. As with $\tau,$ we have
\begin{align*}
    B_{2^{-40}\diam Q'_*}(T_1) \subset B_{(2^{-38}+ 2^{-40})\diam Q'_*}(L_{\tau}) \subset B_{2^{-37}\diam Q'_*}(L_{\xi}).
\end{align*}
If $B_{2^{-40}\diam Q'_*}(T_1)$ and $B_{2^{-40}\diam Q'_*}(T_2 \cup T_3)$ do not intersect, by the previous paragraph we are done.  If, on the other hand, $B_{2^{-40}\diam Q'_*}(T_1)$ and $B_{2^{-40}\diam Q'_*}(L_{\tau})$ intersect, then $B_{2^{-37}\diam Q'_*}(L_\xi)$ and $B_{2^{-37}\diam Q'_*}(L_{\tau})$ intersect.  In this case, we must either shrink $T_1$ or split $T_1$ into two subarcs to obtain the desired disjointness.  See Figure \ref{fig:arc-separation}.

\begin{figure}\begin{center}\includegraphics[width=.9\textwidth]{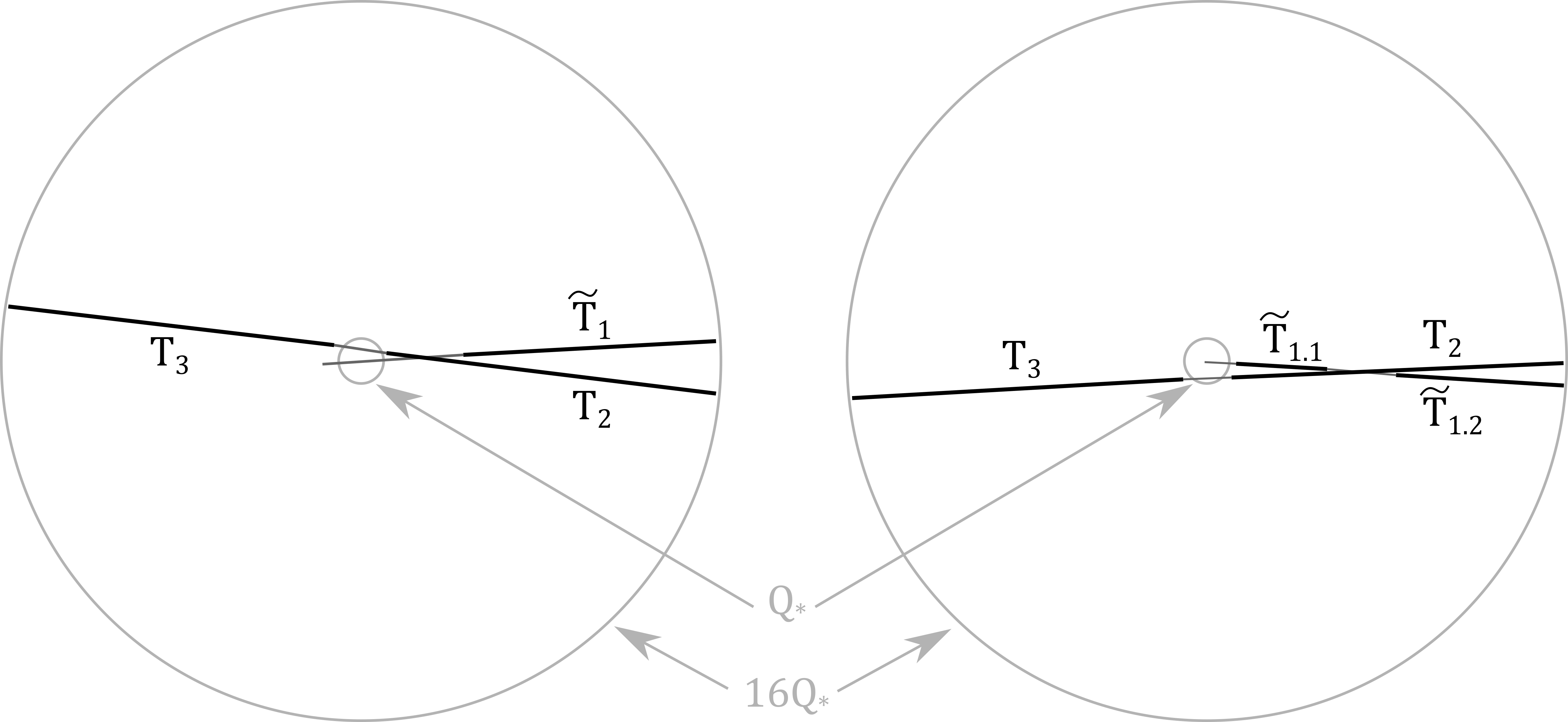}\end{center}\caption[Separated subarcs (small scale geometry)]{Separated subarcs $\mathcal{Y}$ associated to $\mathcal{N}_{2.2}(T)$-type cores $U_{Q'}$. Either $\#\mathcal{Y}=3$ (left) or $\#\mathcal{Y}=4$ (right). The arc $T$ is not displayed.}\label{fig:arc-separation}\end{figure}

Suppose then that $B_{2^{-37}\diam Q'_*}(L_{\tau})$ intersects $B_{2^{-37}\diam Q'_*}(L_{\xi})$. Then, $L_\xi$ intersects $B_2:=B_{2^{-35}\diam Q'_*}(L_\tau)$ by the triangle inequality. Define $$r_1:=\min\{|z-x|: z\in L_\xi \cap B_2\}\quad\text{and}\quad r_2:=\max\{|z-x|:z\in L_\xi \cap B_2\},$$ where as before $x$ denotes the center of $Q'$. Our goal is to show that $r_2-r_1$ is relatively small. There are two cases.

For the easier case, suppose that $r_2\leq 1.00054\radius Q'_*$ or $r_1\geq 15.9795\radius Q'_*$. Replace $T_1$ with a subarc $\tilde T_1$ using the annulus $15.97949Q'_*\setminus\interior(1.00055 Q'_*)$ instead of $A$. Then $\diam \tilde T_1\geq 14.97894\radius Q'_*$ and $\diam \tilde T_1+\diam T_2+\diam T_3\geq 22.46943\diam Q'_*$. Furthermore, because $\tilde T_1\subset T_1$ and $\tilde T_1$ avoids $\{w:|w-x|\in [r_1,r_2]\}$,
\begin{align*}
    &\gap(B_{2^{-40}\diam Q'_*}(\tilde T_1), B_{2^{-40}\diam Q'_*}(T_2 \cup T_3))\\
    &\qquad \ge \gap(B_{2^{-37}\diam Q'_*}\left(L_{\xi}\cap (15.97949Q'_*\setminus\interior(1.00055 Q'_*)\right), B_{2^{-37}\diam Q'_*}(L_{\tau}))\\
    &\qquad \geq (0.00001- 2^{-36})\diam Q'_*>0.
\end{align*}
Thus, the neighborhoods $B_{2^{-40}\diam Q'_*}(\tilde T_1)$ and $B_{2^{-40}\diam Q'_*}(T_2 \cup T_3)$ are disjoint.

For the harder case, suppose that \begin{equation}\label{hard-r} r_2> 1.00054\radius Q'_*\quad\text{and}\quad r_1<15.9795\radius Q'_*.\end{equation} Let $y\in L_\xi \cap B_2\cap \partial B(x,r_1).$ Let $z \in L_\xi \cap B_2 \cap\partial B(x,r_2).$  By translation, we may replace $L_\tau$ with a line (which we relabel as $L_\tau$) such that $y\in L_\xi\cap L_\tau$.  Since we translate by at most $2^{-35}\diam Q'_*$, the triangle inequality implies that $B_{2^{-40}}(\Image(\tau)) \subset B_{2^{-34}\diam Q'_*}(L_{\tau})$ and $$\dist(x,L_\tau)\geq \dist(x,\Image(\tau))-\sup_{w\in \Image(\tau)}\dist(w,L_\tau)\geq (2^{-15}-2^{-34})\diam Q'_*.$$

Now, choose $J$-projections $\Pi_\xi$ and $\Pi_\tau$ onto $L_\xi$ and (the relabeled line) $L_\tau$, respectively. Then the points $x_\xi:=\Pi_\xi(x)$,  $x_{\xi\tau}:=\Pi_\tau(x_\xi)$, and $z_{\tau}:=\Pi_\tau(z)$ satisfy: \begin{align}
|x_\xi-x_{\xi\tau}| &\geq |x-x_{\xi\tau}|-|x-x_\xi|\geq (2^{-15} - 2^{-34})\diam Q'_*,\\
|x_\xi-y| &\leq |x-y|+|x-x_\xi|\leq 15.9795\radius Q'_*+2^{-38}\diam Q'_*< 2^3\diam Q'_*,\\
|z-y|&\geq |z-x|-|x-y|\geq r_2-r_1,\text{ and}\\
|z-z_{\tau}| &\leq 2\dist(z,L_\tau)\leq 2^{-33}\diam Q'_*.\end{align} By ``similar triangles'', it follows that  \begin{equation} r_2-r_1\leq |z-y|=|x_\xi-y|\frac{|z-z_{\tau}|}{|x_\xi-x_{\xi\tau}|}\leq (2^{3}\diam Q'_*)\frac{2^{-33}}{2^{-15} - 2^{-34}}.\end{equation} Hence $r_2-r_1 < 2^{-14}\diam Q'_*$.

Since $\xi$ contains $x$ and \eqref{almost-flat-line-estimate} is in effect we may translate $L_\xi$ (by at most $2^{-38}\diam Q'_*$) to obtain a line $\tilde L_{\xi}$ which contains $x$. Thus, each component of $\tilde L_{\xi} \cap B(x, r_2) \setminus B(x, r_1)$ has diameter $r_2-r_1$. By \eqref{almost-flat-line-estimate}, and \eqref{hard-r}, and the triangle inequality, we estimate that each component $L_{\xi}^{\pm}$ of $L_{\xi} \cap (B_{r_2}(x)\setminus B_{r_1}(x))$ satisfies
\begin{align*}
    \diam L^{\pm}_{\xi} & \le r_2 - r_1 + 2^{-37}\diam Q'_* \le (2^{-14} + 2^{-37}) \diam Q'_*.
\end{align*}
In particular, $\diam B_{2^{-35}\diam Q'_*}(L_{\xi}^{\pm} \cap (B_{r_2}(x)\setminus B_{r_1}(x))) \le (2^{-14} +2^{-34} + 2^{-37}) \diam Q'_* \le 0.000062\diam Q'_*$.
This estimate, and the assumption \eqref{hard-r} imply that we may choose radii $\tilde r_1$ and $\tilde r_2$ such that $$1.00047\radius Q'_*<\tilde r_1<r_1<r_2<\tilde r_2<15.97957\radius Q'_*$$ and $\tilde r_2-\tilde r_1=0.00007\radius Q'_*$. Let $\tilde T_{1.1}$ be a subarc of $T_1\cap B(x,\tilde r_1)\setminus \interior(1.00005Q'_*)$ with one endpoint in $\partial(1.00005Q'_*)$ and one endpoint in $\partial B(x,\tilde r_1)$. Define $\tilde T_{1.2}$ similarly using the annulus $15.97999Q'_*\setminus\interior(B(x,\tilde r_2))$.

We now demonstrate that the neighborhoods $B_{2^{-40}\diam Q'_*}(\tilde T_{1.1})$, $B_{2^{-40}\diam Q'_*}(\tilde T_{1.2})$, and $B_{2^{-40}\diam Q'_*}(T_2 \cup T_3)$ are pairwise disjoint.  First, note that
\begin{align*}
    &\gap(B_{2^{-40}\diam Q'_*}(\tilde T_{1.1}), B_{2^{-40}\diam Q'_*}(\tilde T_{1.2}))\\
    &\qquad \geq \gap(B_{3^{-37}}(L_{\xi} \cap B(x,\tilde r_1)), B_{3^{-37}}(L_{\xi} \cap B(x,\tilde r_2)^c)) \geq (0.00007 - 2^{-36})\diam Q'_*>0.
\end{align*}
Thus, the neighborhoods $B_{2^{-40}\diam Q'_*}(\tilde T_{1.1})$ and $B_{2^{-40}\diam Q'_*}(\tilde T_{1.2})$ are pairwise disjoint. They are also pairwise disjoint from $B_{2^{-40}\diam Q'_*}(T_2 \cup T_3)$, because 
\begin{align*}
    &\gap(B_{2^{-40}\diam Q'_*}(\tilde T_{1.1} \cup \tilde T_{1.2}), B_{2^{-40}\diam Q'_*}(T_2 \cup T_3))\\
    &\qquad \geq \gap\left(B_{2^{-37}}\left(L_{\xi} \cap \left(B(x,\tilde r_1) \cup B(x,\tilde r_2)^c \right)\right), B_{2^{-37}\diam Q'_*} (L_{\tau})\right)\\
    &\qquad \geq (2^{-35} - 2^{-36})\diam Q'_*>0.
\end{align*}
By definition of $\tilde r_1, \tilde r_2$, $\min\{\diam \tilde T_{1.1},\diam \tilde T_{1.2}\}\geq 0.00042\radius Q'_*=0.00021\diam Q'_*$ and $\diam \tilde T_{1.1}+\diam \tilde T_{1.2} \geq (15.97999-1.00005-0.00007)\radius Q'_*=14.97987\radius Q'_*.$ Thus, $\diam \tilde T_{1.1}+\diam\tilde T_{1.2}+\diam T_2+\diam T_3\geq 22.46989\diam Q'_*$. This concludes the proof of the lemma.
\end{proof}

\begin{lemma}\label{N22-auxiliary} Let $Q\in\mathscr{G}$ and let $T\subset T'\in\Gamma^*_{U_Q}$ be an efficient subarc. If $U_{Q'}\in\mathcal{N}_{2.2}(T)$, then there is a family $\mathcal{M}_{Q'}$ of cores $U_{Q''}$ with $Q''\in\Child(Q)$ and $U_{Q''} \cap F_{Q'} \not= \emptyset$ such that \begin{equation} \label{N22-auxliary-cores-total}
\diam D_{Q'} < 0.7\,\ell(R_Q\cap F_{Q'}) + 1.37 \sum_{U_{Q''}\in\mathcal{M}_{Q'}} \diam H_{Q''}.\end{equation}
\end{lemma}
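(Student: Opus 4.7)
The plan is to apply the coarse estimate \eqref{e:coarse estimate} to each member of the separated family $\mathcal{Y}$ produced by Lemma \ref{N22-geometry}, and then aggregate using the sharp generation gap $KM\geq 100$ to prevent double-counting of children of $Q$. The numerology is engineered so that $22.46\cdot 1.37/2.00002+0.685>16=\diam D_{Q'}/\diam Q'_*$.

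First, invoke Lemma \ref{N22-geometry} to produce a finite family $\mathcal{Y}$ of efficient subarcs of arc fragments in $\Gamma^*_{F_{Q'}}$ whose $2^{-40}\diam Q'_*$-neighborhoods are pairwise disjoint (and disjoint from $1.00002 Q'_*$), with $\sum_{Y\in\mathcal{Y}}\diam Y\geq 22.46\diam Q'_*$. Each $Y$ is closed, connected, and contained in $F_{Q'}\subset 2\lambda Q'\subset Q_*\subset U_Q$ via \eqref{Q'-containment} and the bound $\diam 2\lambda Q'\leq 2^{-84}\diam Q_*$, so Lemma \ref{proof-of-coarse} applies:
\begin{equation*}
\diam Y\;\leq\;\ell(R_Q\cap Y)+2.00002\sum_{\substack{Q''\in\Child(Q)\\U_{Q''}\cap Y\neq\emptyset}}\diam H_{Q''}.
\end{equation*}
The key geometric observation is a size dichotomy for any $Q''\in\Child(Q)\setminus\{Q'\}$: its generation must satisfy $m''\neq m$, since Lemma \ref{core-family}(ii) would otherwise give $\gap(U_{Q''},U_{Q'})\geq 2^{-1-m}>\radius F_{Q'}=15.98\cdot 2^{-12-m}$; and since all $\mathscr{G}$-generations are congruent to $j_1\pmod{KM}$, one of $m''\leq m-KM$ (so $\diam U_{Q''}\geq 2^{KM}\diam Q'_*\geq 2^{100}\diam Q'_*$) or $m''\geq m+KM$ (so $\diam U_{Q''}\leq 2^{-99}\diam Q'_*$) must hold.

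If any ``giant'' child ($m''<m$) meets $F_{Q'}$, set $\mathcal{M}_{Q'}:=\{U_{Q''}\}$; then by \eqref{HQ-bounds-2}, $1.37\diam H_{Q''}\geq 1.37\cdot 0.49999\cdot 2^{100}\diam Q'_*\gg 16\diam Q'_*=\diam D_{Q'}$, and \eqref{N22-auxliary-cores-total} is trivial. Otherwise every contributing $U_{Q''}$ has diameter at most $2^{-99}\diam Q'_*$, which is much smaller than the inter-$Y$ separation of at least $2\cdot 2^{-40}\diam Q'_*$; hence each such $U_{Q''}$ meets at most one $Y\in\mathcal{Y}$, and $U_{Q'}$ meets no $Y$ by construction. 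Set
\begin{equation*}
\mathcal{M}_{Q'}\;:=\;\{U_{Q'}\}\cup\{U_{Q''}:Q''\in\Child(Q),\,U_{Q''}\cap Y\neq\emptyset\text{ for some }Y\in\mathcal{Y}\};
\end{equation*}
summing the coarse estimate over $Y\in\mathcal{Y}$ then yields
\begin{equation*}
22.46\diam Q'_*\;\leq\;\ell(R_Q\cap F_{Q'})+2.00002\!\!\sum_{U_{Q''}\in\mathcal{M}_{Q'}\setminus\{U_{Q'}\}}\!\!\diam H_{Q''}.
\end{equation*}

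Finally, combining with $\diam H_{Q'}\geq 0.5\diam Q'_*$ from \eqref{HQ-bounds} and collecting coefficients,
\begin{align*}
0.7\,\ell(R_Q\cap F_{Q'})+1.37\!\!\sum_{U_{Q''}\in\mathcal{M}_{Q'}}\!\!\diam H_{Q''}
&\geq\bigl(0.7-\tfrac{1.37}{2.00002}\bigr)\ell(R_Q\cap F_{Q'})\\
&\qquad+\bigl(\tfrac{1.37\cdot 22.46}{2.00002}+0.685\bigr)\diam Q'_*\\
&>16\diam Q'_*=\diam D_{Q'},
\end{align*}
since $0.7>1.37/2.00002\approx 0.685$ and $(1.37\cdot 22.46)/2.00002+0.685\approx 16.07>16$, establishing \eqref{N22-auxliary-cores-total}. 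The chief obstacle is the size dichotomy in the second step; it is essential that the $KM\geq 100$ gap ensure no child of $Q$ other than $Q'$ has scale comparable to $\diam Q'_*$, for otherwise an intermediate-scale $U_{Q''}$ could simultaneously meet several $Y$'s and spoil the single-intersection bookkeeping.
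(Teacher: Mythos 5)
Your proof is correct and takes essentially the same route as the paper's: invoke Lemma \ref{N22-geometry}, apply the coarse estimate \eqref{e:coarse estimate} to each subarc $Y\in\mathcal{Y}$, dispose of any child with $\diam Q''>\diam Q'$ trivially via \eqref{HQ-bounds-2}, and sum, with your final arithmetic ($1.37\cdot 22.46/2.00002+0.685>16$) being a repackaging of the paper's inequality $(1+22.46)\diam Q'_*\leq\ell(R_Q\cap F_{Q'})+2.00002\sum\diam H_{Q''}$. The only difference is that you make explicit two points the paper leaves implicit — the mod-$KM$ generation gap forcing the size dichotomy, and the separation of the $Y$-neighborhoods preventing a small core from being counted in more than one $Y$ — which is harmless.
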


\begin{proof} Given $\mathcal{Y}$ from Lemma \ref{N22-geometry}, let $\mathcal{M}_{Q'}=\{U_{Q''}:Q''\in\Child(Q), U_{Q''}\cap F_{Q'}\neq\emptyset\}.$ If there happens to exist $U_{Q''} \in \mathcal{M}_{Q'}$ with $\diam Q'' > \diam Q'$, then $$\diam H_{Q''} \ge 0.49999\diam Q''_*\geq 2^{98}\diam Q'_*\gg \diam D_{Q'}$$ and \eqref{N22-auxliary-cores-total} holds trivially. Assume otherwise that $\diam Q'' \le \diam Q'$ for every core $U_{Q''} \in \mathcal{M}_{Q'}$, so that $\diam U_{Q''}\leq 2^{-98}\diam Q'_*$ for all $U_{Q''} \in \mathcal{M}_{Q'}\setminus\{U_{Q'}\}$. Because $\diam Q'_*\leq 2.00002\diam H_{Q'}$ and \eqref{e:coarse estimate} holds for  each $Y\in\mathcal{Y}$, Lemma \ref{N22-geometry} implies that
\begin{align}\label{e:N22 ineq}
    (1+22.46)\diam Q'_* & \le \ell(R_Q\cap F_{Q'}) + 2.00002 \sum_{U_{Q''}\in\mathcal{M}_{Q'}} \diam H_{Q''}.
\end{align}
Since $\diam D_{Q'}=16\diam Q'_*$, this estimate yields \eqref{N22-auxliary-cores-total}.\end{proof}

\subsection{Geometry of unnecessary cores}

\begin{definition} Let $\Pi_T$ be a $J$-projection onto some line $L_T$ in $\XX$. For any line $L$ in $\XX$, the \emph{antislope} $\mathsf{as}(L,\Pi_T)$ of $L$ \emph{relative to} $\Pi_T$ is the unique number in $[0,1]$ given by \begin{equation}\label{as-def}\mathsf{as}(L,\Pi_T)=\frac{|\Pi_T(u)-\Pi_T(v)|}{|u-v|}\quad\text{for any }u,v\in L\text{ with }u\neq v.\end{equation}\end{definition}

\begin{remark}The antislope $\mathsf{as}(L,\Pi_T)$ is well-defined (i.e.~the quantity in \eqref{as-def} does not depend on the choice of points $u,v$) by linearity of $J$-projections onto linear subspaces. At one extreme, $\mathsf{as}(L,\Pi_T)=0$ if and only if $L$ is \emph{vertical} in the sense that $\Pi_T(u)=\Pi_T(v)$ for every $u,v\in L$. At the other extreme, $\mathsf{as}(L,\Pi_T)=1$ if and only if $L$ is parallel to $L_T$.\end{remark}

\begin{lemma}[location of endpoints]\label{unnecessary-endpoints}
Let $Q \in \mathscr{G}$ and let $T=f([a,b]) \subset T' \in \Gamma^*_{U_Q}$ be an \hyperref[def:subarc]{efficient subarc}. If $Q'\in \Child(Q)$ and $1.00002Q'_*\cap T\not = \emptyset$, but the core $U_{Q'}$ is ``unnecessary'' in the sense that $U_{Q'} \not \in \mathcal{N}(T)=\mathcal{N}_1(T)\cup\mathcal{N}_2(T)$, then for all arcs $\tau=f|_{[c,d]}\in S(\lambda Q')$ such that $\Image(\tau)\cap 1.00002 Q'_*\neq \emptyset$,
\begin{align*}
\text{either}\quad\{f(c), f(d)\} \subset P_{15Q_*'}^+ \cap \partial(2\lambda Q')\quad\text{or}\quad\{f(c), f(d)\} \subset P_{15Q_*'}^- \cap \partial (2\lambda Q');
\end{align*} moreover, if $L$ is any line such that \eqref{almost-flat-line-estimate} holds for $\tau$, then $\mathsf{as}(L,\Pi_T)>0.001$.
\end{lemma}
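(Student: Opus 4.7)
I would prove the two assertions in sequence, endpoint-location first and antislope bound second.

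The containment $f(c),f(d)\in\partial(2\lambda Q')$ is essentially automatic: because $1.00002Q'_*\subset (1/3A_\mathscr{H})Q'$ lies inside the \hyperref[netball]{net ball} for $Q'$ and $Q'\in\Child(Q)\subset\mathscr{G}\subset\mathscr{B}^\lambda\setminus\mathscr{B}^\lambda_0$, the arc $\tau$ cannot contain $f(0)$ or $f(1)$, so by the defining property of $\Lambda(\lambda Q')$ its endpoints must land on the sphere $\partial(2\lambda Q')$. For the side-separation claim, I would rule out the two bad configurations:

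First, opposite strict sides. If $f(c)\in P^+_{15Q'_*}$ and $f(d)\in P^-_{15Q'_*}$, then since $U_{Q'}\subset 1.00001Q'_*\subset 15Q'_*$, cylinder monotonicity (smaller ball $\Rightarrow$ thinner cylinder $\Rightarrow$ larger complementary halves) gives $P^\pm_{15Q'_*}\subset P^\pm_{U_{Q'}}$. Hence $\tau$ is $U_{Q'}$-transverse; together with $\Image(\tau)\cap 1.00002Q'_*\neq\emptyset$ this makes $\tau$ wide, forcing $U_{Q'}\in\mathcal{N}_2(T)$ and contradicting unnecessariness. Second, endpoint inside $P_{15Q'_*}$. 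Here I would combine ``not tall'' with ``not wide'': since $f(c)\in\partial(2\lambda Q')$ lies well outside $\interior(4Q'_*)$, the assumption $U_{Q'}\notin\mathcal{N}_1(T)$ forces $f(c)\notin P_{1.01Q'_*}$, so $|\Pi_T(f(c))-\Pi_T(x)|>1.01 \radius Q'_*$. The same holds for $f(d)$, and not-wide forces the two projections to sit on the \emph{same} side of the small interval $\Pi_T(U_{Q'})$. To upgrade from ``beyond $1.01\radius Q'_*$'' to ``beyond $15\radius Q'_*$'', I would follow the line $L$ guaranteed by \eqref{almost-flat-line-estimate}: using the bilateral-$\beta$ estimate (Lemma \ref{l:bilateral}), every point of $\Image(\tau)$ lies within $2^{-37}\diam Q'_*$ of $L$, so the piece of $\tau$ outside $4Q'_*$ can be followed up to $\partial(2\lambda Q')$ along $L$ with linear control of $\Pi_T$, and the ``same-side of $\Pi_T(U_{Q'})$'' constraint together with the monotone growth of $|\Pi_T(\cdot)-\Pi_T(x)|$ along $L$ (when one leaves the annulus) propagates the projection past $\pm 15\radius Q'_*$ by the time we reach $\partial(2\lambda Q')$. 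This is where the main subtlety lies (see below).

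For the antislope bound, with side-separation established, I would pick any $p\in\Image(\tau)\cap 1.00002Q'_*$ and compute in two directions. On the one hand, $|\Pi_T(f(c))-\Pi_T(p)|\geq 15\radius Q'_*-1.00002\radius Q'_*=13.99998\radius Q'_*$ from the side-separation and the fact that $\Pi_T(p)$ lies within $1.00002\radius Q'_*$ of $\Pi_T(x)$. On the other hand, writing $\alpha=\mathsf{as}(L,\Pi_T)$ and projecting through $\pi_L$, the linearity of $\Pi_T$ on $L$ together with the error $|\pi_L(y)-y|\le 2^{-37}\diam Q'_*$ (Lemma \ref{l:bilateral}) yields
\begin{equation*}
|\Pi_T(f(c))-\Pi_T(p)|\leq \alpha\,|f(c)-p|+2^{-35}\diam Q'_*.
\end{equation*}
Bounding $|f(c)-p|$ by $\diam(2\lambda Q')=4\lambda A_\mathscr{H}\cdot 2^{-k}$, which is $2^{13}\lambda A_\mathscr{H}\radius Q'_*$ when measured in units of $\radius Q'_*$, I would rearrange to obtain a quantitative lower bound on $\alpha$ of the shape $\alpha\gtrsim \radius Q'_*/\diam(2\lambda Q')$; the constant $0.001$ is simply a concrete threshold chosen so that this ratio exceeds it in the relevant parameter regime.

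\textbf{Main obstacle.} The most delicate step is the second half of the side-separation argument, namely ruling out endpoints in the strip $P_{15Q'_*}\setminus(P^+_{15Q'_*}\cup P^-_{15Q'_*})$. Not-tall alone pushes projections only past $\pm 1.01\radius Q'_*$, not past $\pm 15\radius Q'_*$, so one must genuinely use the straight-line geometry forced by almost-flatness: any line $L$ within $2^{-38}\diam Q'_*$ of $\tau$ that enters $1.00002Q'_*$ and reaches $\partial(2\lambda Q')$ on a prescribed side of $P_{U_{Q'}}$ is essentially forced to be longitudinal relative to $L_T$, and one must carefully track how $\Pi_T\circ\pi_L$ grows monotonically past the annular ``shadow'' of $\partial(4Q'_*)$. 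Once this geometric rigidity is pinned down, both conclusions of the lemma follow from the same computation, and the antislope inequality is essentially its quantitative shadow.
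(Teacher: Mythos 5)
Your outline reproduces the paper's treatment of the easy parts (endpoints on $\partial(2\lambda Q')$ because $Q'\notin\mathscr{B}^\lambda_0$, exclusion from $P_{1.01Q'_*}$ from not-tall, same side from not-wide), but the step you yourself flag as the main obstacle --- upgrading ``outside $P_{1.01Q'_*}$'' to ``outside $P_{15Q'_*}$'' --- is never actually carried out, and that is where the content of the lemma lies. ``Monotone growth of $|\Pi_T(\cdot)-\Pi_T(x)|$ along $L$'' has no quantitative force by itself: what is needed is a lower bound on the \emph{rate} of growth, i.e.\ on $\mathsf{as}(L,\Pi_T)$, and your sketch never produces one. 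The paper extracts it from the not-tall hypothesis applied at the sphere $\partial(4Q'_*)$: since $\tau$ runs from $1.00002Q'_*$ to $\partial(2\lambda Q')$ it must cross $\partial(4Q'_*)$, and not-tall forces that crossing point outside $P_{1.01Q'_*}$; hence $L$ contains a point $u\in 1.000021Q'_*$ and a point $v\in 4.00001Q'_*\cap P^{+}_{1.00999Q'_*}$, giving $\mathsf{as}(L,\Pi_T)\geq (1.00999-1.000021)/(4.00001+1.000021)\approx 0.002$. Multiplying this antislope by the distance $\geq 8189\radius Q'_*$ from $u$ to the point of $L$ near $f(c)\in\partial(2\lambda Q')$ moves the $\Pi_T$-image by more than $16\radius Q'_*$, which is exactly what places $f(c)$ outside $P_{15Q'_*}$. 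So in the correct argument the antislope bound comes \emph{first} and the $15Q'_*$-separation is its consequence; your proposal runs the logic the other way and is therefore circular: the separation step needs the rate bound you only propose to derive from the separation.

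Your closing antislope computation is also quantitatively insufficient for the stated conclusion. From $|\Pi_T(f(c))-\Pi_T(p)|\geq 13.99998\radius Q'_*$ and $|f(c)-p|\leq \diam 2\lambda Q'$ one only gets $\alpha\gtrsim 14\,\radius Q'_*/\diam 2\lambda Q'$, and since $\radius Q'_*=2^{-12}2^{-m}$ while $\diam 2\lambda Q'=4\lambda A_\mathscr{H}2^{-m}$, this is about $14\cdot 2^{-14}/(\lambda A_\mathscr{H})<0.00086$ even when $\lambda A_\mathscr{H}=1$, and it degenerates further as $A_\mathscr{H}$ grows. The lemma asserts the absolute bound $\mathsf{as}(L,\Pi_T)>0.001$, independent of $A_\mathscr{H}$ and $\lambda$, so ``$0.001$ is a threshold exceeded in the relevant parameter regime'' is false, and the proposal as written proves neither the endpoint location nor the antislope estimate. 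The missing idea, in one sentence, is: use the forced exit of $\tau$ through $\partial(4Q'_*)\setminus P_{1.01Q'_*}$ to tilt $L$ by a definite amount relative to the fibers of $\Pi_T$, and only then let the length of $L$ out to $\partial(2\lambda Q')$ do the work.
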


\begin{proof} Let $Q'$ be given as in the statement. Fix any $\tau=f|_{[c,d]}\in S(\lambda Q')$. Because $Q'\not\in \mathscr{B}^\lambda_0$ (see Lemma \ref{l:B0}), the endpoints $f(c)$ and $f(d)$ of $\tau$ lie on $\partial(2\lambda Q')$. Since $U_{Q'} \not \in \mathcal{N}_1(T),$ we know that $f(c),f(d)\not\in P_{1.01Q'_*}$. Suppose without loss of generality that $f(c)\in P_{1.01Q'_*}^+$ (see Remark \ref{general projections}). Since $U_{Q'}\not\in\mathcal{N}_2(T)$, we have $f(d)\in P_{1.01Q'_*}^+$, as well. To complete the proof, it suffices to show that $f(c), f(d) \not \in P_{15Q'_*}$.

Let $L$ be a line such that \eqref{almost-flat-line-estimate} holds for $\tau$. Since $\Image(\tau)\cap 1.00002Q'_*\neq\emptyset$, it follows that $L\cap 1.000021Q'_* \neq \emptyset$. Choose any $u\in L\cap 1.000021Q'_*$. Similarly, let $x\in \Image(\tau)\cap \partial(4Q'_*)$. Since $\tau$ is not tall, $x\not\in P_{1.01Q'_*}$. Thus, by \eqref{almost-flat-line-estimate}, there exists $v\in L\cap 4.00001Q'_*\cap P_{1.00999Q'_*}^+$. Finally, choose $w\in L$ such that $|w-f(c)|\leq 2^{-38}\diam Q'_*$. This more than guarantees $w\in \XX\setminus (2^{13}\lambda A_\mathscr{H} - 1)Q'_*\subset \XX\setminus 8191Q'_*$. Now, \begin{align*} |\Pi_T(w)-\Pi_T(u)|= |w-u|\frac{|\Pi_T(v)-\Pi_T(u)|}{|v-u|}&\geq (8189\radius Q'_*)\frac{1.00999-1.000021}{4.00001+1.000021}\\ &\geq 16.01697\radius Q'_*.\end{align*} Hence $w$ lies outside of $P_{15.01676Q'_*}$, and therefore, $f(c)$ certainly lies outside of $P_{15Q'_*}$. An identical argument shows that $f(d)$ lies outside of $P_{15Q'_*}$, as well. Finally, from the display, we read off $\mathsf{as}(L;\Pi_T)\geq (1.00999-1.000021)/(4.00001+1.000021)=0.00199...$.
\end{proof}

\begin{lemma}[overlapping arcs]\label{unnecessary-overlaps} Let $Q\in\mathscr{G}$ and let $T=f([a,b])\subset T'\in\Gamma^*_{U_Q}$ be an efficient subarc. Let $Q^\sigma,Q^\tau\in\Child(Q)$ with $\diam Q^\sigma\leq \diam Q^\tau$ and suppose that there is a point $x\in\Pi_T(1.00002Q^\sigma_*\cap T)\cap\Pi_T(1.00002Q^\tau_*\cap T)$, but $U_{Q^\sigma},U_{Q^\tau}\not\in \mathcal{N}(T)$. For any arcs $\sigma\in S(\lambda Q^\sigma)$ and $\tau\in S(\lambda Q^\tau)$ such that $\Domain(\sigma),\;\Domain(\tau)\subset[a,b]$, $x\in \Pi_T(\Image(\sigma))\cap \Pi_T(\Image(\tau))$, and $\Domain(\sigma)\cap\Domain(\tau)\neq\emptyset$, either \begin{enumerate}
\item[(i)] $\diam Q^\sigma<\diam Q^\tau$ and $\Domain(\sigma)\subset\Domain(\tau)$, or
\item[(ii)] $\diam Q^\sigma=\diam Q^\tau$ and $[c,d]:=\Domain(\sigma)\cup\Domain(\tau)$ satisfies \begin{align*}
\text{either}\quad\{f(c), f(d)\} \subset P_{12Q^\sigma_*}^+\cap P_{12Q^\tau_*}^+\quad\text{or}\quad\{f(c), f(d)\} \subset P_{12Q^\sigma_*}^-\cap P_{12Q^\tau_*}^-.
\end{align*}
\end{enumerate}
\end{lemma}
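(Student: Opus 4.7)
The first step is to unpack the geometric consequences of the hypotheses. Because $\Domain(\sigma),\Domain(\tau)\subset[a,b]$ and $T=f([a,b])$, the images $\Image(\sigma),\Image(\tau)$ are contained in $T$, so each point of $\Image(\sigma)$ (respectively $\Image(\tau)$) lies on the almost flat curve fragment $T\subset \Image(\eta_Q)$. Picking $z_\sigma\in\Image(\sigma)$, $z_\tau\in\Image(\tau)$ with $\Pi_T(z_\sigma)=\Pi_T(z_\tau)=x$ and the given witnesses $y_\sigma\in 1.00002Q^\sigma_*\cap T$, $y_\tau\in 1.00002Q^\tau_*\cap T$, all four points lie on $T$ and project to $x$. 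The tube estimate \eqref{bilateral} and the flatness of $\eta_Q$ then force $|z_\sigma-y_\sigma|$ and $|z_\tau-y_\tau|$ to be at most $O(\delta_T)$, which is much smaller than $\radius Q^\sigma_*,\radius Q^\tau_*$ for the scales relevant in each case. In particular, $\Image(\sigma)$ meets $1.00003Q^\sigma_*$ and $\Image(\tau)$ meets $1.00003Q^\tau_*$, so Lemma \ref{unnecessary-endpoints} (applied with only a negligible loss in the constant $1.00002$, which is absorbed in the slack between the $P_{15}$ and $P_{12}$ slabs) furnishes lines $L_\sigma,L_\tau$ of antislope $>0.001$ relative to $\Pi_T$ and yields that $f(\alpha_\sigma),f(\beta_\sigma)$ lie on one side of $P_{15Q^\sigma_*}$ and $f(\alpha_\tau),f(\beta_\tau)$ lie on one side of $P_{15Q^\tau_*}$.

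For case~(i), suppose by contradiction $\Domain(\sigma)\not\subset\Domain(\tau)$. Since $\Domain(\sigma)\cap\Domain(\tau)\neq\emptyset$, some endpoint of $\Domain(\tau)$, say $\alpha_\tau$, lies in $\interior(\Domain(\sigma))$. Because $Q^\tau\notin\mathscr{B}^\lambda_0$, this forces $f(\alpha_\tau)\in\partial(2\lambda Q^\tau)\cap\Image(\sigma)\subset T\cap 2\lambda Q^\sigma$. On the one hand, $\Pi_T(\Image(\sigma))\subset\Pi_T(2\lambda Q^\sigma)$ is an interval of length $\leq\diam 2\lambda Q^\sigma$ centered within $1.00002\radius Q^\sigma_*$ of $x$, hence
\[
|\Pi_T(f(\alpha_\tau))-x|\leq \diam 2\lambda Q^\sigma + 1.00002\radius Q^\sigma_* \leq 2\radius 2\lambda Q^\sigma.
\]
On the other hand, $|f(\alpha_\tau)-y_\tau|\geq \radius 2\lambda Q^\tau - 1.00002\radius Q^\tau_* \geq (1-2^{-12})\radius 2\lambda Q^\tau$, and Lemma \ref{unnecessary-endpoints} gives $\mathsf{as}(L_\tau,\Pi_T)>0.001$. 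Approximating $f(\alpha_\tau)$ and $y_\tau$ by $J$-projections onto $L_\tau$ (costing $O(\delta_\tau+\delta_T)$, both negligible) and applying the antislope inequality \eqref{as-def} yields
\[
|\Pi_T(f(\alpha_\tau))-x| \geq 0.001\,|f(\alpha_\tau)-y_\tau| - O(\delta_\tau+\delta_T) \geq 10^{-4}\radius 2\lambda Q^\tau.
\]
Since scales within $\mathscr{G}$ differ by multiples of $KM\geq 100$, $\radius 2\lambda Q^\tau\geq 2^{KM}\radius 2\lambda Q^\sigma\geq 2^{100}\radius 2\lambda Q^\sigma$, which contradicts the upper bound. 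Hence $\Domain(\sigma)\subset\Domain(\tau)$.

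For case~(ii), $k_\sigma=k_\tau$; put $r:=\radius Q^\sigma_*=\radius Q^\tau_*$. We first verify that $\sigma$ and $\tau$ escape on the \emph{same} side. Suppose otherwise, that (say) $f(\alpha_\sigma),f(\beta_\sigma)\in P^+_{15Q^\sigma_*}$ while $f(\alpha_\tau),f(\beta_\tau)\in P^-_{15Q^\tau_*}$. The estimates $|\Pi_T(c_\sigma)-x|,|\Pi_T(c_\tau)-x|\leq 1.00002\, r$ show that $P_{15Q^\sigma_*}$ and $P_{15Q^\tau_*}$ are the same slab up to a shift of projection at most $2.00004\,r$, so these conditions place $\sigma$'s and $\tau$'s endpoints in opposite half-spaces relative to a common slab of thickness $\approx 30r$. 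Because $\Domain(\sigma)\cap\Domain(\tau)\neq\emptyset$, there is a shared time $t_0$ with $f(t_0)\in 2\lambda Q^\sigma\cap 2\lambda Q^\tau\subset T$; continuity of $\Pi_T\circ f$ on $\Domain(\sigma)\cup\Domain(\tau)$, combined with the intermediate-value theorem, forces $f$ at some time in this union to leave either $2\lambda Q^\sigma$ on the $-$ side or $2\lambda Q^\tau$ on the $+$ side, contradicting the side assignment via Lemma \ref{unnecessary-endpoints}. Hence $\sigma$ and $\tau$ escape on the same side; assume WLOG on $P^+$. Each $f(\zeta)$ with $\zeta\in\{\alpha_\sigma,\beta_\sigma,\alpha_\tau,\beta_\tau\}$ then satisfies $\Pi_T(f(\zeta))>\Pi_T(c_\cdot)+15r\geq x+15r-1.00002\,r=x+13.99998\,r$, while $\Pi_T(c_\sigma)+12r\leq x+13.00002\,r$ and $\Pi_T(c_\tau)+12r\leq x+13.00002\,r$, so in fact $f(\zeta)\in P^+_{12Q^\sigma_*}\cap P^+_{12Q^\tau_*}$. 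Since $f(c)$ and $f(d)$ are among these four points, the conclusion of (ii) follows. The principal technical difficulty is the same-side argument in (ii); all other steps are quantitative consequences of Lemma \ref{unnecessary-endpoints} and the tube geometry set up in \S\ref{ss:basic}.
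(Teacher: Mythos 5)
Your proposal has two genuine gaps. First, the opening ``tube'' step is quantitatively backwards, and every later application of Lemma \ref{unnecessary-endpoints} in your argument rests on it. The flatness of $T$ is inherited from $\eta_Q\in S(\lambda Q)$ at the \emph{parent} scale: by \eqref{almost-flat-line-estimate} the deviation of $T$ from its approximating line is only bounded by $2^{-38}\diam Q_*$, whereas the children of $Q$ in $\mathscr{G}$ sit at least $KM\geq 100$ dyadic generations below, so $\radius Q^\sigma_*\leq 2^{-100}\radius Q_*$ and even $\diam 2\lambda Q^\sigma\leq 2^{-84}\diam Q_*$ by \eqref{Q'-diam}. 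Hence the only bound flatness gives for two points of $T$ lying on a common fiber of $\Pi_T$ is of order $2^{-36}\diam Q_*$, which exceeds the whole ball $2\lambda Q^\sigma$ by a factor of roughly $2^{46}$; it is not ``much smaller than $\radius Q^\sigma_*$.'' So from $z_\sigma,y_\sigma\in T$ with $\Pi_T(z_\sigma)=\Pi_T(y_\sigma)=x$ you cannot conclude that $\Image(\sigma)$ meets $1.00003Q^\sigma_*$, and the hypotheses of Lemma \ref{unnecessary-endpoints} for $\sigma$ and $\tau$ are not secured by your argument. (The paper's proof never needs this comparison: it takes the fiber witnesses on the arcs themselves, choosing $y_\sigma\in 1.00002Q^\sigma_*\cap\Image(\sigma)\cap P_x$ and likewise for $\tau$.)

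Second, and more seriously, the ``same-side'' step in case (ii) --- the actual content of the lemma --- is not a proof. In the partial-overlap configuration, say $\alpha_\sigma<\alpha_\tau\leq\beta_\sigma<\beta_\tau$ with $\Start(\sigma),\End(\sigma)\in P^+_{15Q^\sigma_*}$ and $\Start(\tau),\End(\tau)\in P^-_{15Q^\tau_*}$, continuity and the intermediate value theorem only give fiber crossings inside each domain, which is no contradiction: $f$ exits $2\lambda Q^\sigma$ only at $\alpha_\sigma,\beta_\sigma$ (on the $+$ side) and exits $2\lambda Q^\tau$ only at $\alpha_\tau,\beta_\tau$ (on the $-$ side), exactly as assigned, so nothing is forced to ``leave $2\lambda Q^\sigma$ on the $-$ side or $2\lambda Q^\tau$ on the $+$ side.'' Moreover, with the paper's weakened notion of almost flat arc (a beta number of the image, not Schul's $\tilde\beta$; see Remark \ref{r:Schul-1}), a single arc may backtrack along its approximating line, so each arc individually can have both endpoints deep in one half-space while still crossing $P_x$; no purely topological argument can exclude the opposite-sides scenario. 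What excludes it --- and what the paper's Case 3 actually does --- is quantitative interaction between the two arcs: the fiber points $y_\sigma\in 1.00002Q^\sigma_*\cap P_x$ and $y_\tau\in 1.00002Q^\tau_*\cap P_x$ are at distance at least $(1-1.00002\cdot 2^{-11})2^{-k}$ by net separation of distinct same-scale balls, and splitting into three subcases (both points on $\sigma$, both on $\tau$, or one on each together with an extra crossing of $P_x$ in the overlap interval produced by the intermediate value theorem), one single almost flat arc must contain two points roughly $2^{-k}$ apart with nearly equal $\Pi_T$-projections, forcing the antislope of its approximating line to be of order $2^{-48}$ and contradicting $\mathsf{as}(L,\Pi_T)>0.001$ from Lemma \ref{unnecessary-endpoints}. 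None of this appears in your write-up, so case (ii) is essentially unproved. Case (i) is in the right spirit (it is the paper's scale-separation argument, modulo the first gap), and note that in the nested subcase of (ii) no same-side comparison is needed at all, since the endpoints of $\Domain(\sigma)\cup\Domain(\tau)$ are then the endpoints of a single arc.
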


\begin{proof} Firstly, suppose that $\diam Q^\sigma<\diam Q^\tau$. Let $x_\sigma$ denote the center of $Q^\sigma$ and pick $y_\sigma\in 1.00002Q^\sigma_*\cap \Image(\sigma)\cap P_x$. Here $P_x$ is shorthand for $P_{\{x\}}$ (see Remark \ref{general projections}). Then, for any $z_\sigma\in \Image(\sigma)\subset 2\lambda Q^\sigma$, \begin{equation*}\begin{split}
|\Pi_T(z_\sigma)-x| &\leq |z_\sigma-y_\sigma| \leq |z_\sigma-x_\sigma|+|x_\sigma-y_\sigma| \\ &\leq (1+1.00002\cdot 2^{-13})\radius 2\lambda Q^\sigma
\leq 2^{-99}A_\mathscr{H}^{-1}\radius 2\lambda Q^\tau\leq 2^{-83}\radius Q^\tau_*.\end{split}\end{equation*} Thus, $f(t)\in P_{2Q^\tau_*}$ for all $t\in\Domain(\sigma)$. However, the endpoints $\Start(\tau),\End(\tau)\not\in P_{15Q^\tau_*}$ by Lemma \ref{unnecessary-endpoints}. Therefore, $\Domain(\sigma)\cap\Domain(\tau)\neq\emptyset$ implies $\Domain(\sigma)\subset \Domain(\tau)$.

Secondly, suppose that $\diam Q^\sigma=\diam Q^\tau$ and $Q^\sigma=Q^\tau$. Since the arcs in $\Lambda(\lambda Q^\tau)$ have pairwise disjoint domains (see Definition \ref{def:arcs}), $\Domain(\sigma)\cap\Domain(\tau)$ implies $\sigma=\tau$. Hence the conclusion in this case follows from Lemma \ref{unnecessary-endpoints}.

Finally, suppose that $\diam Q^\sigma=\diam Q^\tau$, but $Q^\sigma\neq Q^\tau$. Let $x_\sigma,y_\sigma$ be given as above; similarly, let $x_\tau$ denote the center of $Q^\tau$ and choose $y_\tau\in 1.00002Q^\tau_*\cap \Image(\tau)\cap P_x$. Using the triangle inequality to form nested balls centered at $x_\sigma$ and $y_\sigma$ and nested balls centered at $x_\tau$ and $y_\tau$, plus the fact that $\radius \Pi_T(B)=\radius B$ for any ball $B$, one can show that \begin{equation}\label{compare-cylinders} P_{15Q^\tau_*}^\pm\subset P_{(15-2.00004)Q^\sigma_*}^\pm\subset P_{12Q^\sigma_*}^\pm\quad\text{and}\quad P_{15Q^\sigma_*}^\pm\subset P_{(15-2.00004)Q^\tau_*}^\pm\subset P_{12 Q^\tau_*}^\pm.\end{equation} Let $[c_\sigma,d_\sigma]$ and $[c_\tau,d_\tau]$ denote the domains of $\sigma$ and $\tau$, respectively. If it happens that $[c_\sigma,d_\sigma]\subset [c_\tau,d_\tau]$ or $[c_\tau,d_\tau]\subset [c_\sigma,d_\sigma]$ or $d_\sigma=c_\tau$ or $d_\tau=c_\sigma$, then the conclusion follows immediately from Lemma \ref{unnecessary-endpoints} and \eqref{compare-cylinders}. Thus, without loss of generality, we may focus on the case that $c=c_\sigma<c_\tau<d_\sigma<d_\tau=d$ and $\Start(\sigma),\End(\sigma)\in P_{15Q^\sigma_*}^-\subset P_{12Q^\sigma_*}^-$. Suppose to reach a contradiction that $\Start(\tau),\End(\tau)\in P_{15Q^\tau_*}^+\subset P_{12Q^\sigma_*}^+$. We will show that this violates the antislope estimate in Lemma \ref{unnecessary-endpoints}. Since $\diam Q^\sigma=\diam Q^\tau$, but $Q^\sigma\neq Q^\tau$, the centers of the balls are far apart: $|x_\sigma-x_\tau|\geq 2^{-k}$, where $k\in\ZZ$ is the unique integer determined by $Q^\sigma_*=B(x_\sigma, 2^{-12-k})$. Since $|x_\sigma-y_\sigma|\leq 1.00002\cdot 2^{-12-k}$ and $|x_\tau-y_\tau|\leq 1.00002\cdot 2^{-12-k}$, the triangle inequality gives $|y_\sigma-y_\tau| \geq (1-1.00002\cdot 2^{-11})2^{-k}$.

To continue, write $$[c,d]=\underbrace{[c_\sigma,c_\tau]}_{I_1}\cup\underbrace{[c_\tau,d_\sigma]}_{I_2}\cup\underbrace{[d_\sigma,d_\tau]}_{I_3}.$$ Choose $t_\sigma\in\Domain(\sigma)$ and $t_\tau\in\Domain(\tau)$ such that $f(t_\sigma)=y_\sigma$ and $f(t_\tau)=y_\tau$. There are three (sub) cases, depending on which of the intervals $I_1,I_2,I_3$ contain $t_\sigma$ and $t_\tau$.

\emph{Case 1. Assume that $t_\sigma,t_\tau\in I_1\cup I_2=[c_\sigma,d_\sigma]=\Domain(\sigma)$.} Choose a line $L$ such that \eqref{almost-flat-line-estimate} holds for $\sigma$ and let $\Pi_L$ be any $J$-projection onto $L$. Since $y_\sigma,y_\tau\in\Image(\sigma)$, their projections $w_\sigma:=\Pi_L(y_\sigma)$ and $w_\tau:=\Pi_L(y_\tau)$ satisfy $\max\{|w_\sigma-y_\sigma|,|w_\tau-y_\tau|\}\leq 2^{-49-k}$ by \eqref{almost-flat-line-estimate} and \eqref{Pi-x}. Hence the estimate on $|y_\sigma-y_\tau|$ from above and the triangle inequality yields $|w_\sigma-w_\tau|\geq (1-2^{-10})2^{-k}$. Since $\Pi_T(y_\sigma)=x=\Pi_T(y_\tau)$ and $\Pi_T$ is 1-Lipschitz, we also have $|\Pi_T(w_\sigma)-\Pi_T(w_\tau)|\leq |w_\sigma-y_\sigma|+|w_\tau-y_\tau|\leq 2^{-48-k}$. It follows that $$\mathsf{as}(L,\Pi_T) = \frac{|\Pi_T(w_\sigma)-\Pi_T(w_\tau)|}{|w_\sigma-w_\tau|} \leq \frac{2^{-48-k}}{(1-2^{-10})2^{-k}} < 0.00000\,00000\,00004.$$ This (radically!) contradicts the antislope estimate for $L$ from Lemma \ref{unnecessary-endpoints}.

\emph{Case 2. Assume that $t_\sigma,t_\tau\in I_2\cup I_3=[c_\tau,d_\tau]=\Domain(\tau)$.} Repeat the argument from Case 1 using an approximating line $L$ for $\tau$ instead of an approximating line $L$ for $\sigma$.

\emph{Case 3. Assume that $t_\sigma\in I_1$ and $t_\tau\in I_3$.} By our supposition above, $I_2=[c_\tau,d_\sigma]$ satisfies $f(c_\tau)\in P_{12Q^\sigma_*}^+$ and $f(d_\sigma)\in P_{12Q^\sigma_*}^-$. Because $x\in P_{1.00002Q^\sigma_*}$ and $\Pi_T\circ f$ is continuous, the intermediate value theorem produces $t'\in (c_\tau,d_\sigma)\subset \Domain(\sigma)\cap\Domain(\tau)$ such that $\Pi_T(f(t'))=x$. Write $y':=f(t')\in\Image(\sigma)\cap\Image(\tau)$. Because  $|y_\sigma-y_\tau|>0.98\cdot 2^{-k}$, the metric pigeon hole principle implies that $|y_\sigma-y'|>0.49\cdot 2^{-k}$ or $|y_\tau-y'|>0.49\cdot 2^{-k}$, say without loss of generality that $|y_\sigma-y'|>0.49\cdot 2^{-k}$. As in Case 1, choose any line $L$ such that \eqref{almost-flat-line-estimate} holds for $\sigma$ and let $\Pi_L$ be any $J$-projection onto $L$. Since $y'\in\Image(\sigma)$, its projection $w':=\Pi_L(y')$ satisfies $|w'-y'|\leq 2^{-49-k}$. Hence $$|w_\sigma-w'|\geq |y_\sigma-y'|-|w_\sigma-y_\sigma|-|w'-y'|>0.48\cdot 2^{-k}.$$ Since $\Pi_T(y_\sigma)=x=\Pi_T(y')$, we again find that $|\Pi_T(w_\sigma)-\Pi_T(w')|\leq |w_\sigma-y_\sigma|+|w'-y'|\leq 2^{-48-k}$. This time it follows that $$\mathsf{as}(L,\Pi_T) = \frac{|\Pi_T(w_\sigma)-\Pi_T(w')|}{|w_\sigma-w'|} < \frac{2^{-48-k}}{0.48\cdot 2^{-k}} < 0.00000\,00000\,00008.$$ This (again!) contradicts the antislope estimate for $L$ from Lemma \ref{unnecessary-endpoints}.
\end{proof}

\begin{remark} In Lemma \ref{unnecessary-overlaps}, intersection of $\Domain(\sigma)$ and $\Domain(\tau)$ in the case $\diam Q^\sigma=\diam Q^\tau$, but $Q^\sigma\neq Q^\tau$ is possible. For example, consider $\XX=\ell^2_\infty=(\RR^2,|\cdot|_\infty)$, $L_T$ horizontal, $\Pi_T$ the vertical projection onto $L_T$, and stack two squares $2\lambda Q^\sigma$ and $2\lambda Q^\tau$ whose centers lie on a common vertical line $P_x$ with $x\in L_T$. Then one can easily draw a picture where $U_{Q^\sigma},U_{Q^\tau}\not\in\mathcal{N}(T)$ and $\End(\sigma)=\Start(\tau)\in\partial(2\lambda Q^\sigma)\cap\partial (2\lambda Q^\tau)$.\end{remark}

\section{Necessary and sufficient cores}\label{sec: N'}

Imagine (or see \S\ref{proof-I}) that you want to ``pay for'' $\diam T=\Haus^1(\Pi_{T}(T))$ for some \hyperref[def:subarc]{efficient subarc} $T\subset T'\in\Gamma^*_{U_Q}$ using $\ell(R_Q)$ and $\{\diam H_{Q''}:Q''\in\Child(Q)\}$. The length $\ell(R_Q)$ pays for $\Haus^1(\Pi_T(R_Q))$, because $\Pi_T$ is 1-Lipschitz. We will pay for the remaining balance $\Haus^1(\Pi_T(T)\setminus\Pi_T(R_Q))$ in installments. Loosely speaking, given a point $x\in \Pi_{T}(T)\setminus \Pi_T(R_Q)$, if we can locate a core $U_{Q'}\in\mathcal{N}_1(T)\cup\mathcal{N}_2(T)$ whose shadow $\Pi_{T}(U_{Q'})$ contains $x$, then we can use Lemma \ref{N1-auxiliary}, \ref{N21-auxiliary}, or \ref{N22-auxiliary} to pay for $\Haus^1(\Pi_T(D_{Q'}))$ using $\{\diam H_{Q''}:U_{Q''}\cap F_{Q'}\neq\emptyset\}$. A worry that we might have is that there exists an exceptional point $x\in \Pi_T(T)\setminus \Pi_T(R_Q)$, which is not contained in the shadow of a core in $\mathcal{N}_1(T)\cup\mathcal{N}_2(T)$. Another concern is that some core $U_{Q''}$ intersecting $F_{Q'}$ could have $\diam Q''>\diam Q'$, in which case $U_{Q''}\not\subset E_{Q'}$. This section ensures that we can effectively ignore these situations.

\begin{definition}\label{def:locally-maximal} Let $Q\in\mathscr{G}$ and let $T\subset T'\in\Gamma^*_{U_Q}$ be an efficient subarc. We say that a core $U_{Q'}\in\mathcal{N}(T)=\mathcal{N}_1(T)\cup\mathcal{N}_2(T)$ is \emph{locally maximal} if $Q''\in\Child(Q)\setminus\{Q'\}$ and $U_{Q''}\cap 16Q'_*\neq\emptyset$ implies $\diam Q''<\diam Q'$.\end{definition}

\begin{remark}\label{top-level-is-locally-maximal} Every core $U_{Q'}\in\mathcal{N}(T)$ with $\diam Q'=2^{-KM}\diam Q$ is locally maximal by Remark \ref{core-separation} and the fact that there do not exist $Q''\in\Child(Q)$ with $\diam Q''>\diam Q'$.\end{remark}

\begin{lemma}\label{locally-maximal-M's} Let $Q\in\mathscr{G}$ and let $T\subset T'\in\Gamma^*_{U_Q}$ be an efficient subarc. If $U_{Q'}\in\mathcal{N}(T)$ is locally maximal, then $Q''\in\Child(Q)$ and $U_{Q''}\cap F_{Q'}\neq\emptyset$ implies $U_{Q''}\subset E_{Q'}$. In particular, if $U_{Q'}\in\mathcal{N}_1(T)\cup\mathcal{N}_{2.2}(T)$ is locally maximal, then $\bigcup\mathcal{M}_{Q'}\subset E_{Q'}$, where $\mathcal{M}_{Q'}$ is the set of auxiliary cores defined in Lemma \ref{N1-auxiliary} / \ref{N22-auxiliary}.\end{lemma}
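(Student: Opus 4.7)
The plan is to reduce to the case $Q''\neq Q'$ using a scale-separation argument, and then verify $U_{Q''}\subset E_{Q'}$ one core type at a time. The only step where I need more than 1-Lipschitz continuity of $\Pi_T$ is a short geometric computation in the $\mathcal{N}_1$ case; everything else is bookkeeping driven by the $2^{-KM}$ scale gap enforced by $\mathscr{G}^{M,j_1}$.

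First I would dispense with the trivial case $Q''=Q'$. By direct inspection of Definitions \ref{d:E, E-star N1 def}--\ref{d:E, E-star N2 def} together with \eqref{shape-of-B1-cores}, one has $U_{Q'}\subset E_{Q'}$ in every type; e.g.\ for $\mathcal{N}_1(T)$, $U_{Q'}\subset 1.00001Q'_*\subset 3.99Q'_*$ and $U_{Q'}\subset P_{1.00001Q'_*}\subset P_{1.03Q'_*}$, and the $\mathcal{N}_{2.1}$ and $\mathcal{N}_{2.2}$ cases are analogous. So assume $Q''\neq Q'$. Since $F_{Q'}\subset 16Q'_*$ in all three definitions, $U_{Q''}\cap F_{Q'}\neq\emptyset$ forces $U_{Q''}\cap 16Q'_*\neq\emptyset$, so local maximality yields $\diam Q''<\diam Q'$. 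Because every ball of $\mathscr{G}=\mathscr{G}^{M,j_1}$ has scale congruent modulo $KM\geq 100$, this strict inequality upgrades to $\diam Q''\leq 2^{-KM}\diam Q'\leq 2^{-100}\diam Q'$, and \eqref{shape-of-B1-cores} then gives $\diam U_{Q''}<2^{-99}\diam Q'_*$. This is vastly smaller than the $0.01\radius Q'_*$ of slack in the definitions of $E_{Q'}\supset F_{Q'}$.

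Next I would split on the type of $U_{Q'}$. If $U_{Q'}\in\mathcal{N}_{2.1}(T)$, then $F_{Q'}=E_{Q'}=U_{Q'}$, and the conclusion is immediate from the tree structure in Lemma \ref{core-family}(iii). If $U_{Q'}\in\mathcal{N}_{2.2}(T)$, then $U_{Q''}$ meets $15.98Q'_*$ and $\diam U_{Q''}\ll 0.01\radius Q'_*$, so the triangle inequality gives $U_{Q''}\subset 15.99Q'_*=E_{Q'}$. If $U_{Q'}\in\mathcal{N}_1(T)$, the ball inclusion $U_{Q''}\subset 3.99Q'_*$ is handled identically, and the cylinder inclusion $U_{Q''}\subset P_{1.03Q'_*}$ reduces to the short sublemma that $\Pi_T(1.03Q'_*)$ contains the $0.01\radius Q'_*$-neighborhood in $L_T$ of $\Pi_T(1.02Q'_*)$. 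To verify that sublemma, given $q=\Pi_T(y)$ with $y\in 1.02Q'_*$ and $p\in L_T$ with $|p-q|\leq 0.01\radius Q'_*$, I would set $y':=y+(p-q)$: the estimate $|y'-x_{Q'}|\leq 1.02\radius Q'_*+0.01\radius Q'_*=1.03\radius Q'_*$ places $y'$ in $1.03Q'_*$, while $p-q$ is a vector parallel to $L_T$ and $\Pi_T$ acts as the identity on $L_T$, so $\Pi_T(y')=\Pi_T(y)+(p-q)=p$. Combined with 1-Lipschitzness of $\Pi_T$ and the bound on $\diam U_{Q''}$, this gives $\Pi_T(U_{Q''})\subset\Pi_T(1.03Q'_*)$.

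The ``in particular'' statement then follows at once: the family $\mathcal{M}_{Q'}$ in Lemma \ref{N1-auxiliary} consists of $U_{Q'}$ and cores meeting $T_\tau\subset F_{Q'}\setminus U_{Q'}$, and the family $\mathcal{M}_{Q'}$ in Lemma \ref{N22-auxiliary} is by definition the collection of cores meeting $F_{Q'}$, so in both cases every element satisfies the hypothesis of the first part and $\bigcup\mathcal{M}_{Q'}\subset E_{Q'}$. The main obstacle---rather modest---is the cylinder translation argument, since it is the one place that uses a property of $J$-projections beyond 1-Lipschitz continuity; the remainder of the proof is organised around the single observation that $\diam U_{Q''}/\diam Q'_*\leq 2^{-99}$ is much smaller than the buffers built into the definitions of $D_{Q'}\supset E_{Q'}\supset F_{Q'}$.
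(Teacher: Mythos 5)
Your argument is correct and follows essentially the same route as the paper's proof: dispose of the case $U_{Q''}=U_{Q'}$, use local maximality together with the $2^{-KM}$ generation gap in $\mathscr{G}$ to get $\diam U_{Q''}\leq 2^{-99}\diam Q'_*$, and then absorb $U_{Q''}$ into the $0.01\radius Q'_*$ buffer between $F_{Q'}$ and $E_{Q'}$ in each of the three core types. The only difference is that you spell out the cylinder inclusion $P_{1.02Q'_*}\rightsquigarrow P_{1.03Q'_*}$ via the translation-along-$L_T$ computation, a detail the paper leaves as ``it easily follows,'' and your verification is valid since $J$-projections are affine and fix $L_T$.
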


\begin{proof} If $U_{Q'}\in\mathcal{N}_{2.1}(T)$, then $F_{Q'}=E_{Q'}=U_{Q'}$ and the conclusion follows since the cores $\{U_{Q''}:Q''\in \Child(Q)\}$ are pairwise disjoint. Thus, suppose that $U_{Q'}\in\mathcal{N}_1(T)\cup\mathcal{N}_{2.2}(T)$ is locally maximal, $Q''\in\Child(Q)$, and $U_{Q''}\cap F_{Q'}\neq\emptyset$. Since $F_{Q'}\subset 16Q'_*$ and $U_{Q'}$ is locally maximal, either $U_{Q''}=U_{Q'}$ or $\diam Q''<\diam Q'$. In the former case, we have $U_{Q''}=U_{Q'}\subset E_{Q'}$ trivially by definition of $E_{Q'}$. In the latter case, $$\diam U_{Q''}\leq 1.00001\diam Q''_* \leq 2^{1-KM}\diam Q'_*\leq 2^{-99}\diam Q'_*.$$ When $U_{Q'}\in\mathcal{N}_1(T)$, it easily follows that $U_{Q''}$ intersecting $F_{Q'}=P_{1.02Q'_*}\cap 3.98Q'_*$ implies $U_{Q''}\subset P_{1.03Q'_*}\cap 3.99Q'_*=E_{Q'}$. Similarly, when $U_{Q'}\in\mathcal{N}_{2.2}(T)$, it follows that $U_{Q''}$ intersecting $F_{Q'}=15.98Q'_*$ implies $U_{Q''}\subset 15.99Q'_*=E_{Q'}$.\end{proof}

\begin{lemma}\label{location-of-E's} Let $Q\in\mathscr{G}$, let $T\subset T'\in\Gamma^*_{U_Q}$ be an efficient subarc, and let $r_T$ be given by \eqref{e:r_T def}. For all $U_{Q'}\in\mathcal{N}(T)$, the neighborhood $E_{Q'}\subset B_{9r_T}(T)$. Moreover, $E_{Q'}\cap 1.99\lambda Q''=\emptyset$ for all $Q''\in\Child(Q)$ such that $\diam Q'\leq \diam Q''$ and $\Pi_T(16Q'_*)\cap \big(L_T\setminus \Pi_T(2\lambda Q'')\big)\neq\emptyset$.\end{lemma}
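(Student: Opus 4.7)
The plan is to treat the two assertions separately; both reduce to routine triangle-inequality bookkeeping in the shadow line $L_T$, using the characterizing property of a $J$-projection $\Pi_T$ onto $L_T$ that it maps any ball $B(c,r)\subset\XX$ exactly onto the interval $B_{L_T}(\Pi_T(c),r)$ (the $1$-Lipschitz direction is immediate; the reverse inclusion follows because $\Pi_T$ fixes $L_T$ pointwise, so the segment $\Pi_T(c)+[-r,r]v$ for $v$ a unit direction of $L_T$ is already in the image).

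For the first claim, I would observe that in each of the three definitions (Definitions~\ref{d:E, E-star N1 def} and \ref{d:E, E-star N2 def}) one has $E_{Q'}\subset 15.99Q'_*$. Since $U_{Q'}\in\mathcal{N}(T)$ forces $1.00002Q'_*\cap T\neq\emptyset$, the definition of $r_T$ yields $\diam Q'_*\le r_T$. Choosing any witness $y\in 1.00002Q'_*\cap T$ and any $z\in E_{Q'}$, the triangle inequality through the center of $Q'_*$ gives $|z-y|\le (7.995+0.50001)\diam Q'_*<9r_T$, so $z\in B_{9r_T}(T)$.

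For the second claim I plan to argue by contradiction: suppose there is some $z\in E_{Q'}\cap 1.99\lambda Q''$, and write $c'$, $c''$ for the centers of $Q'_*$ and $Q''$. From $z\in 15.99Q'_*\cap 1.99\lambda Q''$, the $1$-Lipschitz property of $\Pi_T$ combined with the triangle inequality in $L_T$ yields
\[
|\Pi_T(c')-\Pi_T(c'')|\le 7.995\,\diam Q'_*+0.995\,\lambda\diam Q''.
\]
On the other hand, the hypothesis $\Pi_T(16Q'_*)\not\subset\Pi_T(2\lambda Q'')$ produces a point $p\in B_{L_T}(\Pi_T(c'),8\diam Q'_*)$ with $|p-\Pi_T(c'')|>\lambda\diam Q''$ (using the equality $\Pi_T(2\lambda Q'')=B_{L_T}(\Pi_T(c''),\lambda\diam Q'')$ from the preceding paragraph), whence a second triangle inequality delivers
\[
|\Pi_T(c')-\Pi_T(c'')|>\lambda\diam Q''-8\,\diam Q'_*.
\]

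Combining the two estimates, the terms proportional to $\lambda\diam Q''$ nearly cancel and leave $0.005\,\lambda\diam Q''<15.995\,\diam Q'_*$. Using $\lambda\ge 1$ together with the scale relation $\diam Q'_*=2^{-12}A_\mathscr{H}^{-1}\diam Q'$ and $A_\mathscr{H}>1$, this forces $\diam Q''<3199\,\diam Q'_*<\diam Q'$, contradicting the hypothesis $\diam Q'\le\diam Q''$. I do not anticipate any real obstacle: the only mildly delicate point is the characterization of $\Pi_T(B(c,r))$ noted at the start, and the tight-looking constants ($1.99\lambda$ versus $2\lambda$, $15.99$ versus $16$) are visibly calibrated so that the final cancellation leaves strictly positive slack even for $\lambda=1$.
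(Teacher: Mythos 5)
Your argument is correct. The first half coincides with the paper's proof: both use $E_{Q'}\subset 15.99Q'_*$ (valid in each case of Definitions \ref{d:E, E-star N1 def} and \ref{d:E, E-star N2 def}), a witness $y\in 1.00002Q'_*\cap T$, and $\diam Q'_*\le r_T$ to get $|z-y|\le 16.99002\radius Q'_*=8.49501\diam Q'_*<9r_T$. For the second half you take a genuinely different route. The paper does not stay in the shadow: it notes that $\Pi_T(16Q'_*)\not\subset\Pi_T(2\lambda Q'')$ forces $16Q'_*\cap(\XX\setminus 2\lambda Q'')\neq\emptyset$ (mere monotonicity of images under inclusion, no property of $\Pi_T$ needed), and then concludes in the ambient space from $\gap(\XX\setminus 2\lambda Q'',1.99\lambda Q'')\geq 0.01\lambda\radius Q''\geq 20.48\diam Q'_*$ and $\diam 16Q'_*=16\diam Q'_*$ that $\gap(E_{Q'},1.99\lambda Q'')\geq 4.48\diam Q'_*>0$. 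You instead run a contradiction inside $L_T$, which requires knowing that $\Pi_T$ maps a ball $B(c,r)$ \emph{onto} the full interval $B_{L_T}(\Pi_T(c),r)$. That fact is true and is invoked elsewhere in the paper (cf.\ the proofs of Lemmas \ref{unnecessary-overlaps} and \ref{large-scale-geometry}): a $J$-projection is affine with linear part fixing the unit direction $v$ of $L_T$, so the diameter $c+[-r,r]v\subset B(c,r)$ is carried onto that interval. Your stated justification, however, is not quite a proof of it --- knowing that the segment $\Pi_T(c)+[-r,r]v$ lies in $L_T=\Pi_T(\XX)$ does not place it in $\Pi_T(B(c,r))$; you need the linearity/diameter observation above. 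With that repaired, your numerics are sound: combining $|\Pi_T(c')-\Pi_T(c'')|\le 7.995\diam Q'_*+0.995\lambda\diam Q''$ with $|\Pi_T(c')-\Pi_T(c'')|>\lambda\diam Q''-8\diam Q'_*$ gives $0.005\lambda\diam Q''<15.995\diam Q'_*$, and $\diam Q'_*=2^{-12}A_\mathscr{H}^{-1}\diam Q'$ with $3199<2^{12}$ and $A_\mathscr{H}>1$ contradicts $\diam Q'\le\diam Q''$. In short: the paper's version needs nothing about $J$-projections beyond their existence and 1-Lipschitzness, while yours yields a purely shadow-level statement at the cost of the extra (standard) surjectivity-onto-the-interval input.
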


\begin{proof} Let $U_{Q'}\in\mathcal{N}(T)$. Then $T\cap 1.00002Q'_*\neq\emptyset$; choose any point $y$ in the intersection. Letting $x'$ denote the center of $Q'$, we have $|x'-y|\leq 1.00002\radius Q'_*$. Let $x\in E_{Q'}\subset 15.99Q'_*$. Then $|x-y|\leq |x-x'|+|x'-y| \leq 16.99002\radius Q'_*\leq 8.49501\diam Q'_*<9r_T$. Hence $E_{Q'}\subset B_{9r_T}(T)$ with room to spare.

Let $Q''\in\Child(Q)$ and suppose that $\diam Q'\leq \diam Q''$ and $\Pi_T(16Q'_*)$ intersects the complement of $\Pi_T(2\lambda Q'')$. Then $16Q'_*\cap(\XX\setminus 2\lambda Q'')\neq\emptyset$, as well. Note that $$\gap(\XX\setminus 2\lambda Q'',1.99\lambda Q'') \geq 0.01\radius Q''\geq 20.48\diam Q''_*\geq 20.48\diam Q'_*.$$ Thus, $\gap(E_{Q'},1.99\lambda Q'')\geq \gap(16Q'_*,1.99\lambda Q'')\geq \gap(\XX\setminus 2\lambda Q'',1.99\lambda Q'')-\diam 16Q'_*\geq 4.48\diam Q'_*>0.$ Therefore, $E_{Q'}$ does not intersect $1.99\lambda Q''$.
\end{proof}

\begin{lemma}\label{disjoint-E's} Let $Q\in\mathscr{G}$ and let $T\subset T'\in\Gamma^*_{U_Q}$ be an efficient subarc. If $U_{Q'},U_{Q''}\in\mathcal{N}(T)$, $\diam Q'<\diam Q''$, and $\Pi_T(D_{Q'})$ intersects $L_T\setminus\Pi_T(D_{Q''})$, then $D_{Q'}\cap E_{Q''}=\emptyset$. Also, $D_{Q'}\cap D_{Q'''}=\emptyset$ for all $U_{Q'''}\in\mathcal{N}(T)\setminus\{U_{Q'}\}$ such that $\diam Q'''=\diam Q'$.\end{lemma}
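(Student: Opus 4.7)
The plan is to address the two claims separately. The second is essentially immediate from the separation of same-generation cores; for the first, I will argue by contradiction that if $x \in D_{Q'} \cap E_{Q''}$, then $D_{Q'} \subset D_{Q''}$, which forces $\Pi_T(D_{Q'}) \subset \Pi_T(D_{Q''})$ and contradicts the hypothesis.

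For the second claim, write $\radius Q' = A_\mathscr{H} 2^{-k}$, so that $\radius Q'_* = 2^{-12-k}$. The distinct centers of $Q'$ and $Q'''$ lie in $X_k$ and are therefore at least $2^{-k}$ apart. In all three cases of Definitions \ref{d:E, E-star N1 def} and \ref{d:E, E-star N2 def}, both $D_{Q'}$ and $D_{Q'''}$ are contained in the concentric dilations $16 Q'_*$ and $16 Q'''_*$ respectively, each of radius $16 \radius Q'_* = 2^{-8-k}$, so $\gap(D_{Q'}, D_{Q'''}) \geq (1 - 2^{-7}) 2^{-k} > 0$.

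For the first claim, observe that $Q', Q'' \in \mathscr{G} = \mathscr{G}^{M,j_1}$ and $\diam Q' < \diam Q''$ together with $K \geq 100 + \lceil \log_2 A_\mathscr{H} \rceil$ yield $\diam D_{Q'} \leq 16 \diam Q'_* \leq 2^{-96} \diam Q''_* / A_\mathscr{H}$, which is negligible compared to the ``margins'' $0.01 \radius Q''_*$ (for $U_{Q''}\in\mathcal{N}_1(T)$ or $\mathcal{N}_{2.2}(T)$) or $0.00001 \radius Q''_*$ (for $U_{Q''}\in\mathcal{N}_{2.1}(T)$) between $E_{Q''}$ and $\partial D_{Q''}$. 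For any $y \in D_{Q'}$, the triangle inequality places $y$ in $4Q''_*$, $1.00002Q''_*$, or $16Q''_*$ as appropriate, handling the ball-part of $D_{Q''}$. In the $\mathcal{N}_{2.1}$ and $\mathcal{N}_{2.2}$ cases, $D_{Q''}$ is itself a ball and the containment $D_{Q'} \subset D_{Q''}$ is complete. In the $\mathcal{N}_1$ case, one must further verify $y \in P_{1.04 Q''_*}$: writing $\Pi_T(x) = \Pi_T(u)$ for some $u \in 1.03 Q''_*$ and using linearity of the $J$-projection onto $L_T$ (see Appendix \ref{sec:smooth}), set $\epsilon := \Pi_T(y-x)$, a vector in the direction of $L_T$ of length at most $|y-x| \leq 0.01 \radius Q''_*$. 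Then $u + \epsilon \in 1.04 Q''_*$ by the triangle inequality, and $\Pi_T(u + \epsilon) = \Pi_T(u) + \epsilon = \Pi_T(y)$ by linearity, so $y \in P_{1.04 Q''_*}$. The one technically delicate step is this cylinder computation, which rests on the linearity of the $J$-projection.
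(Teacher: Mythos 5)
Your proof is correct and follows essentially the same route as the paper: both arguments rest on the scale gap $\diam D_{Q'}\leq 16\diam Q'_*\leq 2^{-96}\diam Q''_*$ together with the $0.00001\diam Q''_*$ (or larger) margin separating $E_{Q''}$ from $\XX\setminus D_{Q''}$ in Definitions \ref{d:E, E-star N1 def}/\ref{d:E, E-star N2 def}, and on the same-generation net separation for the second claim. You phrase the first claim contrapositively (a point of $D_{Q'}\cap E_{Q''}$ forces $D_{Q'}\subset D_{Q''}$, hence $\Pi_T(D_{Q'})\subset\Pi_T(D_{Q''})$) and spell out the cylinder containment via linearity of the $J$-projection, which the paper absorbs into its gap estimate, but this is the same argument in a different order.
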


\begin{proof} Under the hypotheses of the lemma, $\diam D_{Q'}\leq 16 Q'_*\leq 2^{-96}\diam Q''_*$ and $D_{Q'}$ intersects $\XX\setminus D_{Q''}$. Reviewing Definitions \ref{d:E, E-star N1 def} / \ref{d:E, E-star N2 def}, we further know $\gap(\XX\setminus D_{Q''},E_{Q''})\geq 0.00001\diam Q''_*$. Therefore, \begin{equation*}\gap(D_{Q'},E_{Q''})\geq \gap(\XX\setminus D_{Q''},E_{Q''})-\diam D_{Q'}\geq (0.00001-2^{-96})\diam Q''_*>0.\end{equation*} If $D_{Q'''}\in\mathcal{N}(T)\setminus\{U_{Q'}\}$ and $\diam Q'''=\diam Q'$, then $D_{Q'}\cap D_{Q'''}=\emptyset$ by Remark \ref{core-separation}.
\end{proof}

\begin{lemma}\label{not-locally-maximal-are-contained} Let $Q\in\mathscr{G}$ and let $T\subset T'\in\Gamma^*_{U_Q}$ be an efficient subarc. If  $U_{Q'}\in\mathcal{N}(T)$ is not locally maximal, then $16Q'_*\subset 1.00002Q''_*$ for some $U_{Q''}\in\mathcal{N}(T)$ that is locally maximal or for some $U_{Q''}\not\in\mathcal{N}(T)$.\end{lemma}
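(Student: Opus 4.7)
The plan is to construct $Q''$ by an iterative climb: starting from $U_{Q'}$, repeatedly pass to a larger nearby core until we reach an element that is either outside $\mathcal{N}(T)$ or locally maximal in $\mathcal{N}(T)$. Because $U_{Q'}$ is not locally maximal, Definition \ref{def:locally-maximal} yields some $Q_1\in\Child(Q)\setminus\{Q'\}$ with $U_{Q_1}\cap 16Q'_*\neq\emptyset$ and $\diam Q_1\geq \diam Q'$. Remark \ref{core-separation} gives $\gap(U_{Q'},U_{Q_1})\geq 2^{10}\diam Q'_*>16\diam Q'_*$ whenever $\diam Q_1=\diam Q'$, which is incompatible with $U_{Q_1}\cap 16Q'_*\neq\emptyset$, so in fact $\diam Q_1>\diam Q'$. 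Since generations of balls in $\mathscr{G}$ differ by multiples of $KM\geq 100$, this forces the scale gap $\radius Q_{1,*}\geq 2^{KM}\radius Q'_*\geq 2^{100}\radius Q'_*$.

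Next I verify the containment $16Q'_*\subset 1.00002\,Q_{1,*}$ by a short triangle-inequality calculation. Using any $z\in U_{Q_1}\cap 16Q'_*$ as a bridge together with $|z-x_{Q_1}|\leq 1.00001\,\radius Q_{1,*}$ from \eqref{shape-of-B1-cores}, every $y\in 16Q'_*$ satisfies
\[
|y-x_{Q_1}|\leq |y-x_{Q'}|+|x_{Q'}-z|+|z-x_{Q_1}|\leq 32\,\radius Q'_* + 1.00001\,\radius Q_{1,*}.
\]
The huge ratio $\radius Q_{1,*}/\radius Q'_*\geq 2^{100}$ absorbs $32\,\radius Q'_*$ comfortably into the $0.00001\,\radius Q_{1,*}$ slack, so $|y-x_{Q_1}|<1.00002\,\radius Q_{1,*}$.

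If $Q_1\notin\mathcal{N}(T)$, or $Q_1\in\mathcal{N}(T)$ is locally maximal, take $Q''=Q_1$. Otherwise $U_{Q_1}\in\mathcal{N}(T)$ is again not locally maximal, and I iterate to produce $Q_2\in\Child(Q)\setminus\{Q_1\}$ with $\diam Q_2>\diam Q_1$ and $U_{Q_2}\cap 16Q_{1,*}\neq\emptyset$, hence $16Q_{1,*}\subset 1.00002\,Q_{2,*}$ by the identical computation. Nesting $16Q'_*\subset 1.00002\,Q_{1,*}\subset 16Q_{1,*}\subset 1.00002\,Q_{2,*}$ preserves the target shape, so by induction along the chain $16Q'_*\subset 1.00002\,Q_{i,*}$ for every constructed $Q_i$.

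Termination is automatic: diameters strictly increase by factors of at least $2^{KM}$ at each step, and $\diam Q_i\leq 2^{-KM}\diam Q$ for every $Q_i\in\Child(Q)$, so the chain is finite. In the extremal case where the chain reaches $\diam Q_n=2^{-KM}\diam Q$, Remark \ref{top-level-is-locally-maximal} guarantees that any $\mathcal{N}(T)$-core at this scale is automatically locally maximal, so the climb must stop there at the latest. The only non-routine step is the base-case containment estimate, which is a purely numerical check made trivial by the $2^{100}$ scale gap forced by $KM\geq 100$; beyond that, the argument is an elementary termination argument.
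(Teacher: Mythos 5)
Your proof is correct and follows essentially the same route as the paper: an iterative climb through strictly larger children meeting $16Q_{i,*}$, the same $32\radius Q'_* + 1.00001\radius Q_{1,*}$ triangle-inequality estimate made harmless by the $2^{KM}\geq 2^{100}$ scale gap, and termination via Remark \ref{top-level-is-locally-maximal}. The only (harmless) differences are bookkeeping: you chain the containments $16Q'_*\subset 1.00002Q_{i,*}\subset 16Q_{i,*}\subset 1.00002Q_{i+1,*}$ instead of the paper's telescoping sum over the centers $x_1,\dots,x_j$, and you make explicit, via Remark \ref{core-separation}, why the equal-diameter case in the negation of local maximality cannot occur, a point the paper leaves implicit.
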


\begin{proof} Assume that $U_{Q^1}\in\mathcal{N}(T)$ is not locally maximal. Then there exists $Q^2\in\Child(Q)$ with $\diam Q^2>\diam Q^1$ such that $U_{Q^2}\cap 16Q^1_*\neq\emptyset$. Let $x_1$ and $x_2$ denote the centers of $Q^1$ and $Q^2$, respectively, and choose $w_1\in U_{Q^2}\cap 16Q^1_*\subset 1.00001Q^2_*\cap 16Q^1_*$. We have \begin{equation}\label{nlm1}
|x_1-x_2| \leq |x_1-w_1|+|w_1-x_2| \leq 16\radius Q^1_*+1.00001\radius Q^2_*.\end{equation} Since $\radius Q^1_*\leq 2^{-100}\radius Q^2_*$, it follows that for all $z\in 16Q^1_*$, \begin{align*} |z-x_2| & \leq 32\radius Q^1_*+1.00001\radius Q^2_*\leq (2^{-95}+1.00001)\radius Q^2_*.\end{align*} Hence $16Q^1_*\subset 1.00002Q^2_*$. If perchance either $U_{Q^2}\in\mathcal{N}(T)$ and $U_{Q^2}$ is locally maximal or $U_{Q^2}\not\in\mathcal{N}(T)$, then we are done. The other possibility is that $U_{Q^2}\in\mathcal{N}(T)$ and $U_{Q^2}$ is not locally maximal and we repeat the argument.

Suppose that for some $j\geq 3$ we have found cores $U_{Q^1},\cdots U_{Q^{j-1}}\in\mathcal{N}(T)$, each of which is not locally maximal, such that \begin{equation}\label{nlm-intersections}\diam Q^i>\diam Q^{i-1}\text{ and }U_{Q^i}\cap 16Q^{i-1}_*\neq\emptyset\quad\text{for all }2 \leq i\leq j-1,\end{equation} and such that the centers $x_1,\dots,x_{j-1}$ of the balls $Q^1,\dots,Q^{j-1}$ satisfy \begin{equation}\label{nlm-centers} |x_{i-1}-x_i| \leq 16\radius Q^{i-1}_* + 1.00001\radius Q^i_*\quad\text{for all }2\leq i\leq j-1.\end{equation} Since $Q^{j-1}$ is not locally maximal, $\diam Q^{j-1}\leq 2^{-2KM}\diam Q$ by Remark \ref{top-level-is-locally-maximal} and there exists $Q^j\in\Child(Q)$ such that $\diam Q^j>\diam Q^{j-1}$ and $U_{Q^j}\cap 16Q^{j-1}_*\neq\emptyset$. Let $x_j$ denote the center of $Q^j$ and choose $w_{j-1}\in U_{Q^j}\cap 16Q^{j-1}_*\subset 1.00001Q^j_*\cap 16Q^{j-1}_*$. Then \begin{equation}\label{nlmj} |x_{j-1}-x_j| \leq |x_{j-1}-w_{j-1}|+|w_{j-1}-x_j| \leq 16\radius Q^{j-1}_*+1.00001\radius Q^j_*.\end{equation} Thus, \eqref{nlm-intersections} and \eqref{nlm-centers} also hold when $i=j$. Let $z\in 16Q^1_*$ and write $|z-x_j|\leq |z-x_1|+|x_1-x_2|+\cdots+|x_{j-1}-x_j|$. Since $\radius Q^{i-1}_*\leq 2^{-100}\radius Q^i_*$ for all $2\leq i\leq j$, we get \begin{equation}\begin{split}\label{nlm-telescope} |z-x_j| &\leq 16\radius Q^1_*+17.00001\left(\sum_{i=1}^{j-1} \radius Q^i_*\right)+1.00001\radius Q^j_*\\ &<(2^{-93}+1.00001)\radius Q^j_*.\end{split}\end{equation} Hence $16Q^1_*\subset 1.00002Q^j_*$. Once again, if either $U_{Q^j}\in\mathcal{N}(T)$ and $U_{Q^j}$ is locally maximal, or $U_{Q^j}\not\in\mathcal{N}(T)$, then we are done. Otherwise, $U_{Q^j}\in\mathcal{N}(T)$ and $U_{Q^j}$ is not locally maximal and we go to the next step of the induction. The iterative scheme eventually terminates after finitely many steps by Remark \ref{top-level-is-locally-maximal}.
\end{proof}

\begin{definition}\label{def:sufficient} Let $Q\in\mathscr{G}$ and let $T\subset T'\in\Gamma^*_{U_Q}$ be an efficient subarc. We say that  $U_{Q'}\in\mathcal{N}(T)$ is \emph{sufficient} if $U_{Q'}$ is \hyperref[def:locally-maximal]{locally maximal} or if $16Q'_*\subset 1.00002Q''_*$ for some locally maximal $U_{Q''}\in\mathcal{N}(T)$. Let $\mathcal{S}(T)\subset\mathcal{N}(T)$ denote the set of all sufficient cores. \end{definition}

The proof of the following lemma is ultimately a topological argument, which follows from our assumption that the parameterization $f:[0,1]\rightarrow\Gamma$ is continuous. (Furthermore, the proof invokes Lemma \ref{unnecessary-overlaps}, which also exploited the continuity of $f$.)

\begin{figure}\begin{center}\includegraphics[width=.8\textwidth]{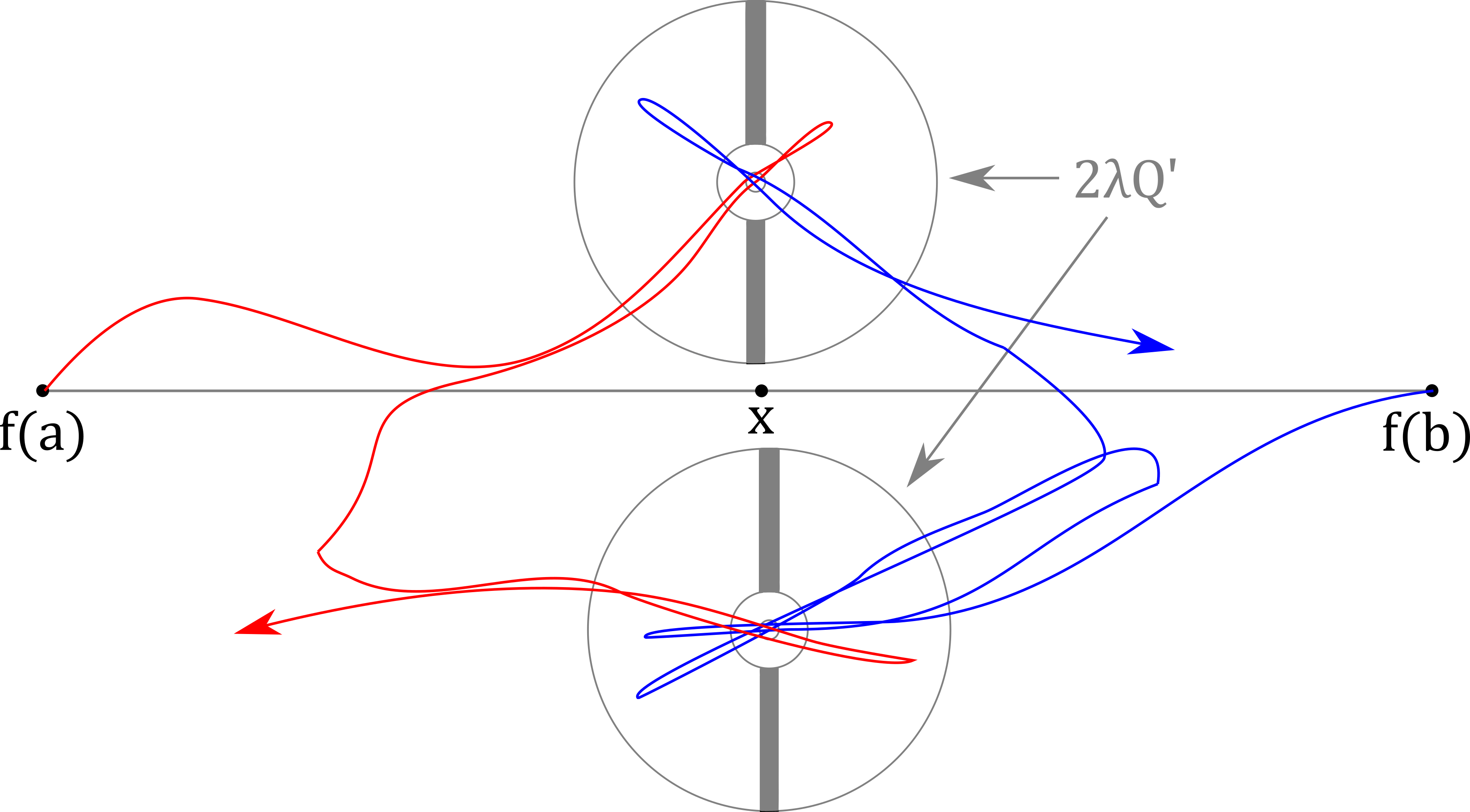}\end{center}\caption[Topological argument]{Proof of Lemma \ref{topological lemma} (simplified): If no core in $\mathcal{N}_1(T)$ or $\mathcal{N}_2(T)$ intersects $T$ above $x\in\Pi_T(T)\setminus\Pi_T(R_Q)$, with $x$ far away from the endpoints of $T$, then it is impossible to reach $f(b)$ from $f(a)$.}\label{fig:topo}\end{figure}

\begin{lemma}[topological lemma] \label{topological lemma}
Let $Q\in\mathscr{G}$ and let $T=f([a,b]) \subset T'\in\Gamma^*_{U_Q}$ be an \hyperref[def:subarc]{efficient subarc}. Define $\rho_T$ by \eqref{e:r_T def}; that is, let $\rho_T$ be largest diameter of a ball $2\lambda Q''$ among all $Q''\in\Child(Q)$ such that $1.00002Q''_*\cap T\neq\emptyset$. For all points $x$ such that
\begin{equation}\label{x-far-away} x \in \Pi_T(T)\setminus\Big( \Pi_T(R_Q\cap T)\cup B_{0.51\rho_T}(\{f(a),f(b)\})\Big),\end{equation} there exists $U_{Q'}\in\mathcal{S}(T)$ such that $x \in \Pi_{T}(U_{Q'} \cap T)$.
\end{lemma}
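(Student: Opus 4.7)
The strategy is by contradiction: assume no sufficient core $U_{Q'} \in \mathcal{S}(T)$ satisfies $x \in \Pi_T(U_{Q'} \cap T)$. Identify $L_T \cong \RR$ so that $f(a)=0 < \diam T = f(b)$; then $P_x := \Pi_T^{-1}(\{x\})$ separates $\XX$ into open halves $P_x^\pm$ with $f(a) \in P_x^-$ and $f(b) \in P_x^+$. Let $\mathcal{F} := f^{-1}(P_x) \cap [a,b]$, which is closed and nonempty by continuity of $f$ and the intermediate value theorem.

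For each $t \in \mathcal{F}$, the hypothesis $x \notin \Pi_T(R_Q \cap T)$ forces $f(t) \in U_{Q^t}$ for some $Q^t \in \Child(Q)$, and by assumption $U_{Q^t} \notin \mathcal{S}(T)$. Combining Definition~\ref{def:sufficient} with Lemma~\ref{not-locally-maximal-are-contained} (applied to $Q^t$ in case $Q^t \in \mathcal{N}(T)$) produces an unnecessary child $Q^{**}_t \in \Child(Q) \setminus \mathcal{N}(T)$ with $f(t) \in 1.00002(Q^{**}_t)_* \cap T \subset (1/3A_\mathscr{H})Q^{**}_t$. The connected component $[c_t, d_t]$ of $f^{-1}(\Gamma \cap 2\lambda Q^{**}_t)$ containing $t$ then gives an arc $\sigma_t \in S(\lambda Q^{**}_t)$ with endpoints on $\partial(2\lambda Q^{**}_t)$ (using $Q^{**}_t \notin \mathscr{B}^\lambda_0$ via Lemma~\ref{l:B0} and Definition~\ref{B-balls-def}(ii)). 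By Lemma~\ref{unnecessary-endpoints} and the inclusion $x \in \Pi_T(1.00002(Q^{**}_t)_*) \subset \Pi_T(15(Q^{**}_t)_*)$, we obtain $\{f(c_t), f(d_t)\} \subset P_{15(Q^{**}_t)_*}^{\varepsilon_t} \subset P_x^{\varepsilon_t}$ for some sign $\varepsilon_t \in \{+, -\}$. Next, using $|f(a) - f(t)| \geq |\Pi_T(f(a)) - \Pi_T(f(t))| = x > 0.51\rho_T$ (since $\Pi_T$ is $1$-Lipschitz), together with $\radius 2\lambda Q^{**}_t \leq \rho_T/2$ and the negligibly small distance from $f(t)$ to the center of $Q^{**}_t$ (a factor $\sim 2^{-13}/(\lambda A_\mathscr{H})$ smaller), an elementary computation excludes $f(a) \in 2\lambda Q^{**}_t$, and symmetrically $f(b) \notin 2\lambda Q^{**}_t$; hence $[c_t, d_t] \subset (a, b)$.

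The topological contradiction is obtained by analyzing $W := \overline{\bigcup_{t \in \mathcal{F}} [c_t, d_t]}$. Decompose $[a,b] \setminus W$ into connected components $A_0 < A_1 < \cdots < A_m$ separated by components $V_1, \ldots, V_m$ of $W$; since $a, b \notin W$ by the previous step, $a \in A_0$ and $b \in A_m$. On each $A_i$, the map $f$ avoids $P_x$ (as $\mathcal{F} \subset W$), so $f(A_i) \subset P_x^{\delta_i}$ for a single sign $\delta_i$, with $\delta_0 = -$ and $\delta_m = +$. The claim $\delta_{i-1} = \delta_i$ yields the desired contradiction and is proved as follows. Within the connected set $V_i$, restrict attention to those arc domains $[c_t, d_t] \subset V_i$ that are maximal under inclusion. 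Lemma~\ref{unnecessary-overlaps}(i) rules out the possibility that two overlapping maximal domains have cores of different diameters (the smaller would be contained in the larger, contradicting maximality); Lemma~\ref{unnecessary-overlaps}(ii) combined with Lemma~\ref{unnecessary-endpoints} then forces the signs $\varepsilon_t$ of two overlapping equal-diameter maximal arcs to coincide. A short argument using continuity of $f$ and the density of $\bigcup_t [c_t, d_t]$ in $V_i$ shows that any gap between maximal arcs of differing signs would contain a point $v$ with $f(v) \in P_x$, hence $v \in \mathcal{F}$, contradicting maximality; so all maximal arcs in $V_i$ share a common sign $\varepsilon$. Continuity of $f$ at the two boundary points of $V_i$ propagates this sign to the neighboring $A_{i-1}$ and $A_i$, yielding $\delta_{i-1} = \varepsilon = \delta_i$. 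Propagating through $i = 1, \ldots, m$ gives $\delta_0 = \delta_m$, contradicting $\delta_0 = - \neq + = \delta_m$. The most delicate step is the sign-consistency within each $V_i$, which is where Lemma~\ref{unnecessary-overlaps}(ii) carries the essential weight.
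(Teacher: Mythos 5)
Your reduction to unnecessary cores and sign-coherent arcs is essentially the paper's argument: passing from a non-sufficient core at $f(t)$ to an unnecessary ball via Definition \ref{def:sufficient} and Lemma \ref{not-locally-maximal-are-contained}, using Definition \ref{B-balls-def}(ii) together with the exclusion of $\mathscr{B}^\lambda_0$ (Lemma \ref{l:B0}) to get almost flat arcs with endpoints on $\partial(2\lambda Q^{**}_t)$, invoking Lemma \ref{unnecessary-endpoints} to place both endpoints strictly on one side of $P_x$, and using the $0.51\rho_T$ buffer to keep $f(a),f(b)$ away from these balls. One small imprecision: knowing $[c_t,d_t]\subset(a,b)$ for every $t$ does not by itself give $a,b\notin W=\overline{\bigcup_t[c_t,d_t]}$; you need the uniform margin that your computation in fact provides (e.g.\ $\dist(f(a),2\lambda Q^{**}_t)\geq 0.009\rho_T$ for all $t$, since $0.51\rho_T-\rho_T/2-\mathrm{tiny}>0.009\rho_T$) together with continuity of $f$ at $a$ and $b$; this is how the paper obtains its uniform gap \eqref{domain-gap}.

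The genuine gap is in the final topological step. You assume $[a,b]\setminus W$ has \emph{finitely} many components $A_0<\cdots<A_m$ and propagate the sign component by component. Nothing justifies finiteness: $\Child(Q)$ may be countably infinite, the balls $2\lambda Q^{**}_t$ can be arbitrarily small relative to $U_Q$, and $f$ is merely continuous, so there may be infinitely many arc domains; then $W$ can have infinitely many components and the components of its complement can be densely ordered, so there need be no ``neighboring'' $A_{i-1}$ and the step-by-step propagation $\delta_0=\cdots=\delta_m$ does not run. The same issue undermines the sign-consistency claim inside a component $V_i$: if two maximal domains of different signs leave a gap, the intermediate value theorem does produce $v\in\mathcal{F}$ in the gap, but $v$ lies in some maximal domain $D_3$ strictly inside the gap, and this contradicts no maximality (of $D_1$, $D_2$, or $D_3$); to conclude one must iterate, and with infinitely many maximal domains there need not exist two ``adjacent'' maximal domains of different signs, so the iteration may never terminate. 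This is precisely the difficulty the paper isolates: after establishing the same endpoint and overlap facts (Lemmas \ref{unnecessary-endpoints} and \ref{unnecessary-overlaps}), it reduces to the nonexistence of a function $g$ with property $(\star)$ and proves that by a supremum argument over left-/right-directed intervals, which handles a possibly infinite and densely ordered family of domains. Your argument is correct when the family of domains in each $V_i$ is finite; to complete the proof you need a replacement for the finite adjacency/propagation step, for instance the paper's supremum argument or an equivalent one-dimensional connectedness argument on the ordered family of domains.
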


\begin{proof}Let $x$ satisfying \eqref{x-far-away} be given. Following the convention in Remark \ref{general projections}, $f(a)\in P_{\{x\}}^-$ and $f(b)\in P_{\{x\}}^+$. For simplicity, we shall write $P_x$ and $P^\pm_x$ instead of $P_{\{x\}}$ and $P^\pm_{\{x\}}$. Consider the set $\mathcal{U}:=\{U_{Q''}:Q''\in\Child(Q), U_{Q''}\cap T\cap P_{x}\neq\emptyset\}$ of cores that intersect $T$ and whose shadows contain $x$. Our assumption that $x\in\Pi_T(T)\setminus \Pi_T(R_Q\cap T)$ guarantees that $\mathcal{U}$ is nonempty and $\emptyset\neq T\cap P_x\subset \bigcup_{U_{Q''}\in\mathcal{U}} U_{Q''}$. Suppose for the sake of contradiction that no core $U_{Q''}\in\mathcal{U}$ belongs to $\mathcal{S}(T)$. Then, by Lemma \ref{not-locally-maximal-are-contained}, for all $U_{Q''}\in\mathcal{U}$, there exists at least one core $U_{Q'}$ in $$\mathcal{O}:=\{U_{Q'}\not\in\mathcal{N}(T):Q'\in\Child(Q),1.00002Q'_*\cap T\cap P_x\neq\emptyset\}$$ such that $U_{Q''}\subset 16Q''_*\subset 1.00002Q'_*$. Hence $\mathcal{O}\neq\emptyset$ and \begin{equation}\label{O-cover} T\cap P_x\subset \bigcup_{U_{Q'}\in\mathcal{O}}1.00002Q'_*.\end{equation} Further,
our assumption that $x\in \XX\setminus B_{0.51\rho_T}(\{f(a),f(b)\})$ ensures that if $U_{Q'}\in\mathcal{O}$, then $2\lambda Q'\cap\{f(a),f(b)\}=\emptyset$. Indeed, given $U_{Q'}\in\mathcal{O}$, let $x'$ denote the center of $Q'$ and pick $y'\in 1.00002Q'_*\cap T\cap P_x$. Since $\Pi_T$ is 1-Lipschitz, $$|x-\Pi_T(x')|\leq |y'-x'|\leq \radius 1.00002Q'_*<2^{-12}\radius 2\lambda Q'\leq 2^{-13}\rho_T.$$ Using the fact that $\Pi_T$ is 1-Lipschitz once more and the fact that $\Pi_T$ fixes $f(a)$ and $f(b)$, we find that \begin{align*}\dist(x',\{f(a),f(b)\})&\geq \dist(\Pi_T(x'),\{f(a),f(b)\}) \geq \dist(x,\{f(a),f(b)\})-|x-\Pi_T(x')|\\ &>(0.51-2^{-13})\rho_T>0.509\rho_T\geq 0.009\rho_T+\radius 2\lambda Q'.\end{align*} That is, $\{f(a),f(b)\}$ does not intersect an open tubular neighborhood of $2\lambda Q'$ of width $0.009\rho_T$. As a corollary, since $f$ is uniformly continuous, there exists $\delta>0$ depending on $\rho_T$ and the modulus of continuity of $f$ such that \begin{equation}\label{domain-gap}\Domain(\tau)\cap\big([a,a+\delta)\cup(b-\delta,b]\big)=\emptyset\quad\text{for every arc }\tau\in S^*(\lambda Q').\end{equation} (Below we only need to know that $\delta>0$.)
To proceed, define collections of arcs and an associated collection of intervals by \begin{align}
\mathcal{A}_y&:=\{\tau\in S(\lambda Q'):U_{Q'}\in\mathcal{O}, y\in\Image(\tau), \Domain(\tau)\subset[a,b]\}\quad\forall y\in T\cap P_x,\\
\mathcal{I}&:=\{\text{connected components $I$ of }\bigcup\{\Domain(\tau): y\in T\cap P_x\text{ and }\tau\in A_y\}\}.\end{align} By definition, $\mathcal{I}$ is pairwise disjoint, and by \eqref{O-cover} and \eqref{domain-gap}, we have $T\cap P_x\subset \bigcup_{I\in\mathcal{I}} f(I)$ and $I\subset[a+\delta,b-\delta]$ for all $I\in\mathcal{I}$. By Lemma \ref{unnecessary-endpoints} and Lemma \ref{unnecessary-overlaps}, for each interval $I=[c,d]\in\mathcal{I}$, either $\Pi_T(f(c)),\Pi_T(f(d))<x$ or $\Pi_T(f(c)),\Pi_T(f(d))>x$, where we identify $[f(a),f(b)]$ with an isometric subset of $\RR$. Modulo applying continuous reparameterizations to the domain and image of the continuous map $\Pi_T\circ f:[a,b]\rightarrow[f(a),f(b)]$, we have built a function $g:[0,1]\rightarrow[0,1]$ such that \begin{quotation}$(\star)$: $g$ is continuous, $g(0)=0$, $g(1)=1$, and there exists a pairwise disjoint collection $\mathcal{J}$ of non-degenerate closed subintervals of $[1/4,3/4]$ such that the preimage $g^{-1}(1/2)\subset\bigcup\mathcal{J}$ and for all intervals $J=[c,d]\in\mathcal{J}$, either $g(c),g(d)<1/2$ or $g(c),g(d)>1/2$.\end{quotation} (Explicitly, send $a\mapsto 0$, $b\mapsto 1$, $a+\delta\mapsto 1/4$, $b-\delta\mapsto 3/4$, $f(a)\mapsto 0$, $f(b)\mapsto 1$, $x\mapsto 1/2$.) By the next lemma, no such function exists. Therefore, our supposition was false, and there exists $U_{Q'}\in \mathcal{S}(T)$ such that $x\in \Pi_T(U_{Q'}\cap T)$.\end{proof}

\begin{lemma} A function $g:[0,1]\rightarrow[0,1]$ with property $(\star)$ does not exist.\end{lemma}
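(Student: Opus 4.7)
The plan is to argue by contradiction: assuming such a $g$ exists, I will recursively construct an infinite sequence of pairwise distinct intervals $J_n = [c_n, d_n] \in \mathcal{J}$ with strictly increasing right endpoints $d_n$, and then derive a contradiction from the fact that the common limit $d_\infty$ of $d_n$ and $\tau_n$ must lie in yet another interval of $\mathcal{J}$.

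Suppose $g$ satisfies $(\star)$. Since $g(0) = 0 < 1/2 < 1 = g(1)$, the set $F := g^{-1}(1/2)$ is nonempty and closed. First I set $\tau_0 := \inf F$; by closedness $g(\tau_0) = 1/2$, and by minimality together with continuity $g < 1/2$ on $[0, \tau_0)$. The interval $J_0 = [c_0, d_0] \in \mathcal{J}$ containing $\tau_0$ must have $c_0 < \tau_0$ (else $g(c_0) = 1/2$ contradicts $(\star)$), so $g(c_0) < 1/2$, and then $(\star)$ forces $g(d_0) < 1/2$. For the inductive step, given $J_n = [c_n, d_n]$ with $g(d_n) < 1/2$, the intermediate value theorem produces $\tau_{n+1} := \inf(F \cap (d_n, 1]) > d_n$ with $g(\tau_{n+1}) = 1/2$ and $g < 1/2$ on $[d_n, \tau_{n+1})$. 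Let $J_{n+1} = [c_{n+1}, d_{n+1}] \in \mathcal{J}$ contain $\tau_{n+1}$. The \emph{key bookkeeping step} is that pairwise disjointness of $\mathcal{J}$ together with $\tau_{n+1} > d_n$ forces $c_{n+1} > d_n$ (otherwise $d_n \in J_n \cap J_{n+1}$); combined with $c_{n+1} < \tau_{n+1}$ (again from $(\star)$), this places $c_{n+1}$ in $(d_n, \tau_{n+1})$, so $g(c_{n+1}) < 1/2$ and $(\star)$ delivers $g(d_{n+1}) < 1/2$.

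To finish, I pass to the limit. Because each $J_n \subset [1/4, 3/4]$, the strictly increasing sequence $(d_n)$ converges to some $d_\infty \in [1/4, 3/4]$, and the sandwich $d_{n-1} < c_n < \tau_n \leq d_n$ forces $\tau_n \to d_\infty$ as well. Continuity of $g$ yields $g(d_\infty) = 1/2$, so $d_\infty$ lies in some $J_\infty = [c_\infty, b_\infty] \in \mathcal{J}$. No $J_n$ can coincide with $J_\infty$, for $d_n < d_\infty$ but $J_n = J_\infty$ would require $d_n = b_\infty \geq d_\infty$. Since $J_\infty$ is non-degenerate we have $c_\infty < d_\infty$, so for all sufficiently large $n$, $d_n \in (c_\infty, d_\infty) \subset J_\infty$, placing $d_n$ in $J_n \cap J_\infty$ and contradicting pairwise disjointness of $\mathcal{J}$. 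The main obstacle I anticipate is verifying the disjointness estimate $c_{n+1} > d_n$ in the induction, which is what prevents collapse of the intervals and makes the limit argument run cleanly.
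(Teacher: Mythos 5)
Your argument is correct, and it takes a genuinely different route from the paper's. The paper decomposes $[0,1]\setminus\bigcup_{J\in\mathcal{J}}J$ into its connected components, labels each component \emph{left-directed} or \emph{right-directed} according to the values of $g$ at the endpoints of adjacent intervals of $\mathcal{J}$, and reaches a contradiction by analyzing $u=\sup L$, the supremum of the union of the left-directed components, via a case analysis on whether $u$ lies in a component or in an interval of $\mathcal{J}$. You instead work directly with the intervals of $\mathcal{J}$: starting from the first point of $F=g^{-1}(1/2)$ you recursively produce a strictly increasing chain $J_0,J_1,\dots$ of distinct members of $\mathcal{J}$ whose endpoint values all lie below $1/2$, and you obtain the contradiction at the limit $d_\infty$ of the right endpoints, using closedness of $F$, the covering $F\subset\bigcup\mathcal{J}$, and pairwise disjointness. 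Your approach dispenses with the auxiliary decomposition and the directed labels, and its sequential limit step handles head-on the possibility that infinitely many intervals of $\mathcal{J}$ accumulate at a point of $F$, a configuration the paper's ``interval immediately to the right'' step treats only implicitly; the price is the bookkeeping in the induction, which you carry out correctly. Two small points deserve an explicit line: (i) in the key bookkeeping step you should note that $J_{n+1}\neq J_n$ (immediate, since $\tau_{n+1}\in J_{n+1}$ while $\tau_{n+1}>d_n$), so that $d_n\in J_n\cap J_{n+1}$ genuinely violates pairwise disjointness; and (ii) the inequality $c_\infty<d_\infty$ does not follow from non-degeneracy of $J_\infty$ alone, since a priori $d_\infty$ could coincide with the left endpoint $c_\infty$, but it does follow from $(\star)$ exactly as in your base case: $g(c_\infty)\neq 1/2=g(d_\infty)$, hence $c_\infty\neq d_\infty$. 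With these clarifications your proof is complete.
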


\begin{proof} Suppose that $g$ exists. Let $\mathcal{I}$ denote the connected components of $[0,1]\setminus\bigcup_{J\in\mathcal{J}}J$. Label each interval $I\in\mathcal{I}$ as \emph{left-directed} or \emph{right-directed} depending on whether there is an interval $J=[c,d]\in\mathcal{J}$ such that $\overline{I}\cap J\neq\emptyset$ and $g(c),g(d)<1/2$ \emph{or} $g(c),g(d)>1/2$, respectively. This concept is well-defined by property $(\star)$, in particular by continuity of $g$ and by the stated properties of $\mathcal{J}$. The unique half-open interval of the form $[0,b)\in\mathcal{I}$ is left-directed, because $g(0)=0$; the unique half-open interval of the form $(a,1]\in\mathcal{I}$ is right-directed, because $g(1)=1$. All other intervals in $\mathcal{I}$ are open intervals $(a,b)$ with $g(t)<1/2$ for all $t\in(a,b)$, if $(a,b)$ is left-directed, and $g(t)>1/2$ for all $t\in(a,b)$, if $(a,b)$ is right-directed. The only restriction on values of $g(t)$ for $t\in[c,d]\in\mathcal{J}$ are at the endpoints $t=c$ and $t=d$.

Let $L:=\{t\in[0,1]:t\in I\text{ for some left-directed interval }I\in\mathcal{I}\}$ and let $u:=\sup L$. Then $g(u)\leq 1/2$, and so, $u$ is not contained in a right-directed interval of $\mathcal{I}$. Let's consider the other two possibilities. First, suppose that $u\in I$ for some left-directed $I\in\mathcal{I}$. Since every left-directed interval is open to the right (as the interval containing $1$ is right-directed), this would mean that $u$ cannot be an upper bound on $L$, which is absurd. Next, suppose that $u\in J$ for some $J\in\mathcal{J}$. Then $J=[u,v]$ for some $u<v$ and $g(u)<1/2$. (This used the approximation property of the supremum.) Let $I'=(v,d)\in\mathcal{I}$ be the interval lying immediately to the right of $J$. The interval $I'$ must exist, since the interval containing $1$ belongs to $\mathcal{I}$. Since $v>u$, $I'$ must be right-directed. Hence $g(v)>1/2$. Thus, $g(u)>1/2$, because $[u,v]\in\mathcal{J}$. This contradicts our observation that $g(u)\leq 1/2$. Therefore, there does not exist a function $g:[0,1]\rightarrow[0,1]$ with property $(\star)$.\end{proof}

\section{Proof of Lemma \ref{improve}}\label{proof-I}

\subsection{Stage 1: improving the coarse estimate}

\begin{lemma}[initial improvement of \eqref{e:coarse estimate}] \label{43-estimate} With notation as in Lemma \ref{improve},
\begin{equation}\begin{split} \label{43-improve}
   \diam T-2\rho_T\leq 1.7\,\ell(R_Q&\cap B_{9r_T}(T)) + \sum_{\mathclap{U_{Q''}\in \mathcal{F}}} \diam 2\lambda Q'' + 1.37 \sum_{\mathclap{U_{Q'}\subset B_{9r_T}(T),\;U_{Q'}\not\in\mathcal{N}_\mathcal{F}}} \diam H_{Q'}.
\end{split}\end{equation}\end{lemma}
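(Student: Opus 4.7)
The approach is to project $T$ onto the line $L_T$ via the $J$-projection $\Pi_T$ and then bound $\Haus^1(\Pi_T(T))=\diam T$ by covering $\Pi_T(T)$ efficiently. Since $\Pi_T$ fixes the endpoints $f(a), f(b)$ and $T$ is connected, $\Pi_T(T)$ equals the segment $[f(a), f(b)]$, of length exactly $\diam T$. I invoke Lemma~\ref{topological lemma}: for each $x\in\Pi_T(T)$ outside $\Pi_T(R_Q\cap T)\cup B_{0.51\rho_T}(\{f(a),f(b)\})$, there is some sufficient $U_{Q'}\in\mathcal{S}(T)$ with $x\in\Pi_T(U_{Q'}\cap T)$. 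When $U_{Q'}$ is not locally maximal, Lemma~\ref{not-locally-maximal-are-contained} replaces it by a locally maximal $U_{\tilde Q}\in\mathcal{N}(T)$ with $16Q'_*\subset 1.00002\,\tilde Q_*\subset D_{\tilde Q}$, so $x\in\Pi_T(D_{\tilde Q})$. If additionally $U_{\tilde Q}\in\mathcal{N}_\mathcal{F}$, then $x$ already lies in $\Pi_T(2\lambda Q'')$ for the enclosing $U_{Q''}\in\mathcal{F}$; otherwise I collect $\tilde Q$ into a family $\mathcal{S}'$ of locally maximal sufficient cores avoiding $\mathcal{N}_\mathcal{F}$.

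Next I would perform a greedy reduction on $\mathcal{S}'$: discard any $U_{Q'}$ whose shadow $\Pi_T(D_{Q'})$ is contained in $\Pi_T(D_{Q''})$ for some strictly larger $U_{Q''}\in\mathcal{S}'$. The surviving subfamily $\mathcal{S}''$ still covers the required portion of $\Pi_T(T)$, and Lemma~\ref{disjoint-E's} (applied both among equal-size and among distinct-size pairs) makes the neighborhoods $\{E_{Q'}\}_{Q'\in\mathcal{S}''}$ pairwise disjoint; each lies inside $B_{9r_T}(T)$ by Lemma~\ref{location-of-E's}. Taking $\Haus^1$ of the cover, with $2\cdot 0.51\,\rho_T\le 2\rho_T$ absorbing the endpoint intervals, gives
\begin{equation*}
\diam T - 2\rho_T \;\le\; \ell(R_Q\cap T) \;+\; \sum_{U_{Q''}\in\mathcal{F}}\diam 2\lambda Q'' \;+\; \sum_{U_{Q'}\in\mathcal{S}''}\diam\Pi_T(D_{Q'}).
\end{equation*}

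Then I would estimate each $\diam\Pi_T(D_{Q'})$ by type. Lemma~\ref{N21-auxiliary} handles $\mathcal{N}_{2.1}$ via $\diam D_{Q'}\le 1.00016\diam H_{Q'}$, while Lemmas~\ref{N1-auxiliary} and~\ref{N22-auxiliary} handle $\mathcal{N}_1$ and $\mathcal{N}_{2.2}$ via bounds $\alpha\,\ell(R_Q\cap F_{Q'}) + \beta\sum_{\mathcal{M}_{Q'}}\diam H_{Q''}$ with $(\alpha,\beta)\in\{(0.5,0.84),(0.7,1.37)\}$. Local maximality of $U_{Q'}\in\mathcal{S}''$ lets me apply Lemma~\ref{locally-maximal-M's} to force $\bigcup\mathcal{M}_{Q'}\subset E_{Q'}$; combined with the pairwise disjointness of the $E_{Q'}$'s, both $\{F_{Q'}\}$ and the families $\{\mathcal{M}_{Q'}\}$ are pairwise disjoint across $Q'\in\mathcal{S}''$. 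Summing and combining with the leading $\ell(R_Q\cap T)$ term produces a coefficient $1+\max(0.5,0.7)=1.7$ on $\ell(R_Q\cap B_{9r_T}(T))$ and a coefficient $\max(0.84,1.37,1.00016)=1.37$ on $\sum\diam H_{Q'}$, which is \eqref{43-improve}.

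The main technical obstacle is confirming that every core $U_{Q'''}$ entering the final $\diam H_{Q'''}$ sum lies outside $\mathcal{N}_\mathcal{F}$, as required by the statement. If $U_{Q'''}\in\mathcal{M}_{Q'}\subset 16Q'_*$ belonged to $\mathcal{N}_\mathcal{F}$ via some $U_{\tilde Q}\in\mathcal{F}$ with $\diam\tilde Q<\diam Q'$, then property~(F) forces $2\lambda\tilde Q\cap 16Q'_*=\emptyset$, a contradiction. The remaining case, where $\diam\tilde Q\ge\diam Q'$ and $1.99\lambda\tilde Q$ slices through $16Q'_*$ without enveloping $U_{Q'}$, requires a short absorption argument charging such $Q'''$ to the $\diam 2\lambda\tilde Q$ term already supplied by $\mathcal{F}$; the disjointness of the $E_{Q'}$'s and the smallness of $\diam 16Q'_*$ relative to $\diam 2\lambda\tilde Q$ make this bookkeeping tractable.
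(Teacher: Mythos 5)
Your overall architecture is the same as the paper's: project by $\Pi_T$, use Lemma \ref{topological lemma} to cover $[f(a),f(b)]$ minus the remainder shadow and endpoint zones by shadows of sufficient cores, pass to locally maximal cores, make the neighborhoods $E_{Q'}$ pairwise disjoint via Lemma \ref{disjoint-E's} (and get $E_{Q'}\subset B_{9r_T}(T)$ and $\mathcal{M}_{Q'}\subset E_{Q'}$ from Lemmas \ref{location-of-E's} and \ref{locally-maximal-M's}), and then pay for each $\diam \Pi_T(D_{Q'})$ with Lemmas \ref{N1-auxiliary}, \ref{N21-auxiliary}, \ref{N22-auxiliary}, producing the coefficients $1+0.7=1.7$ and $\max\{0.84,\,1.37,\,1.00016\}=1.37$. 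Your greedy ``keep only maximal shadows'' reduction is a harmless reorganization of the paper's level-by-level deletion scheme (the nested sets $J_i$), and your coefficient bookkeeping is correct.

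The genuine gap is exactly the point you flag at the end, and the repair you sketch does not work as stated. If $U_{Q'}$ is a selected locally maximal core, $U_{Q''}\in\mathcal{F}$ has $\diam Q''\geq \diam Q'$, and $E_{Q'}\cap 1.99\lambda Q''\neq\emptyset$, then members of $\mathcal{M}_{Q'}$ may lie in $\mathcal{N}_\mathcal{F}$, and ``charging such $Q'''$ to the $\diam 2\lambda Q''$ term'' is not available: that term enters \eqref{43-improve} with coefficient exactly $1$ and is already fully spent covering $\Pi_T(2\lambda Q'')$, so it has no slack to absorb extra $\diam H_{Q'''}$ contributions. The correct resolution is different, and it is what the paper does: delete $\Pi_T\big(\bigcup_{\mathcal{F}}2\lambda Q''\big)$ from the set to be covered \emph{before} selecting any cores (the paper's set $J_0$). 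Then every selected core has $\Pi_T(U_{Q'})$, hence $\Pi_T(16Q'_*)$, meeting $L_T\setminus \Pi_T(2\lambda Q'')$ for every $U_{Q''}\in\mathcal{F}$, so whenever $\diam Q'\leq \diam Q''$ the second part of Lemma \ref{location-of-E's} yields $E_{Q'}\cap 1.99\lambda Q''=\emptyset$ and therefore $\mathcal{M}_{Q'}\cap\mathcal{N}_\mathcal{F}=\emptyset$ (the case $\diam Q''<\diam Q'$ is excluded by property (F), as you correctly note). Equivalently, within your scheme: by the contrapositive of Lemma \ref{location-of-E's}, any problematic core satisfies $\Pi_T(16Q'_*)\subset\Pi_T(2\lambda Q'')$, so $\Pi_T(D_{Q'})$ is already covered by the $\mathcal{F}$-term and such cores should simply be discarded from $\mathcal{S}''$ rather than absorbed; once you add this selection rule, your argument closes and coincides with the paper's proof.
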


\begin{proof} Since $T$ is an \hyperref[def:subarc]{efficient subarc}, $\Pi_T(T)=[f(a),f(b)]$. To start, let \begin{equation}J_0=[f(a)+\rho_T,f(b)-\rho_T]\setminus \Big(\Pi_T(R_Q \cap T) \cup \Pi_T(\textstyle\bigcup_{\mathcal{F}}2\lambda Q'')\Big).\end{equation} By subadditivity of measures and the fact that $\Pi_T$ is 1-Lipschitz, \begin{equation}\begin{split} \label{43-initial} \diam T-2\rho_T & \leq \Haus^1(\Pi_T(R_Q \cap T)) + \Haus^1(\Pi_T(\textstyle\bigcup_{\mathcal{F}}2\lambda Q'')) + \Haus^1(J_0) \\ & \leq \ell(R_Q \cap T) + \sum_{\mathcal{F}} \diam 2\lambda Q'' + \Haus^1(J_0).\end{split}\end{equation}  We shall reach \eqref{43-improve} from \eqref{43-initial} by making a sequence of refined estimates on $\Haus^1(J_0)$. More precisely, we inductively define measurable\footnote{If $\XX$ is not separable, pass to a separable subspace of $\XX$ containing the rectifiable curve $\Gamma$ before defining $J_0$ to ensure the projection $\Pi_T(R_Q)$ is universally measurable.} sets $J_0\supset J_1\supset J_2\supset \cdots$ with $\bigcap_{i=0}^\infty J_i=\emptyset$ and ``pay for'' $\Haus^1(J_{i-1}\setminus J_{i})$ for each $i\geq 1$ using a Borel subset $R_i$ of the remainder set $R_Q$ and certain cores $\mathcal{M}_i$ in lying in $B_{9r_T}(T)$. In particular, we will prove that \begin{equation}\label{43-intermediate} \Haus^1(J_{i-1}\setminus J_{i}) \leq 0.7\,\ell(R_i) + 1.37\sum_{\mathclap{U_{Q'}\in \mathcal{M}_i}} \diam H_{Q'}.\end{equation} Naturally, we will arrange things so that $R_i\cap R_j=\emptyset$ and $\mathcal{M}_i\cap\mathcal{M}_j=\emptyset$ for all $i\neq j$. Further, the cores in $\mathcal{M}_i$ will not belong to $\mathcal{N}_\mathcal{F}$, the set of all cores $U_{Q'}$ with $Q'\in\Child(Q)$ such that $U_{Q'}\subset 1.99\lambda Q''$ for some $U_{Q''}\in\mathcal{F}$. Thus, \eqref{43-improve} follows immediately by combining \eqref{43-initial} and \eqref{43-intermediate}.

Let $\mathcal{S}(T)$ be given as in Definition \ref{def:sufficient}. For each $i\geq 1$, inductively define \begin{align}\label{next-S} \mathcal{S}_i &:=\{U_{Q'}\in\mathcal{S}(T):\diam Q'=2^{-KMi}\diam Q,\,\Pi_T(U_{Q'})\cap J_{i-1}\neq\emptyset\},\\
J_i&:=J_{i-1}\setminus \bigcup_{U_{Q'}\in\mathcal{S}_i} \Pi_T(D_{Q'}).\end{align} By Lemma \ref{topological lemma}, every $x\in J_0$ lies in the shadow $\Pi_T(U_{Q'})$ of some core $U_{Q'}\in\mathcal{S}(T)$. Hence $\bigcap_{i=0}^\infty J_i=\emptyset$. Every core $U_{Q'}\in\mathcal{S}_i$ ($i\geq 1$) is locally maximal (see Definitions \ref{def:locally-maximal} and \ref{def:sufficient}), because $\Pi_T(U_{Q'})\cap J_{i-1}\neq\emptyset$ implies that $16Q'_*\not\subset 1.00002Q''_*$ for any locally maximal core $U_{Q''}\in\mathcal{N}(T)$ with $\diam Q''>\diam Q'$. Indeed, the shadows $\Pi_T(D_{Q''})\supset \Pi_T(1.00002Q''_*)$ of all locally maximal $U_{Q''}\in\mathcal{N}(T)$ with $\diam Q''>\diam Q'$ (which belong to $\mathcal{S}(T)$) were already deleted from $J_0,\dots, J_{i-2}$ in the inductive definition $J_{i-1}$.

Our next task is to bound the length of each set $J_{i-1}\setminus J_{i}$. Fix $i\geq 1$. If $J_{i}=J_{i-1}$, then $\Haus^1(J_{i-1}\setminus J_i)=0$. If $J_{i}\neq J_{i-1}$, then by countable subadditivity of measures, the isodiametric inequality $\Haus^1(A)\leq \diam A$ for all $A\subset \RR$, and the fact that $\Pi_T$ is 1-Lipschitz, \begin{equation}\label{J-diff-Haus-est}
\Haus^1(J_{i-1}\setminus J_{i}) \leq
   \sum_{\mathclap{U_{Q'}\in \mathcal{S}_i}} \diam \Pi_T(D_{Q'}).
\end{equation}
For each $U_{Q'}\in\mathcal{S}_i$, define an auxiliary family of cores $\mathcal{M}_{Q'}$ and Borel set $\hat R_{Q'}$ as follows: \begin{itemize}
\item if $U_{Q'}\in\mathcal{N}_1(T)$, define $\mathcal{M}_{Q'}$ to be the family in Lemma \ref{N1-auxiliary} and $\hat R_{Q'}:=R_Q\cap F_{Q'}$;
\item if $U_{Q'}\in\mathcal{N}_{2.1}(T)$, define $\mathcal{M}_{Q'}:=\{U_{Q'}\}$  and $\hat R_{Q'}:=\emptyset$ (cf.~Lemma \ref{N21-auxiliary}); and,
\item if $U_{Q'}\in\mathcal{N}_{2.2}(T)$, define $\mathcal{M}_{Q'}$ to be the family in Lemma \ref{N22-auxiliary} and $\hat R_{Q'}:=R_Q\cap F_{Q'}$.\end{itemize}
By Lemma \ref{locally-maximal-M's}, the set $M_{Q'}:=\hat R_{Q'}\cup\bigcup\mathcal{M}_{Q'}\subset E_{Q'}$ for all $U_{Q'}\in\mathcal{S}_i$. Furthermore, the set $M_{Q'}\subset B_{9r_T}(T)$ and $\mathcal{M}_{Q'}\cap\mathcal{N}_\mathcal{F}=\emptyset$ by Lemma \ref{location-of-E's} and property \hyperref[property-F]{(F)}. Define \begin{equation}
\mathcal{M}_i := \bigcup_{U_{Q'} \in \mathcal{S}_i} \mathcal{M}_{Q'}\quad\text{and}\quad R_i := \bigcup_{U_{Q'} \in \mathcal{S}_i} \hat R_{Q'}.\end{equation} Then \eqref{43-intermediate} follows immediately from \eqref{J-diff-Haus-est}, the estimates Lemma \ref{N1-auxiliary}, Lemma \ref{N21-auxiliary}, and Lemma \ref{N22-auxiliary}, and the second part of Lemma \ref{disjoint-E's}.

Finally, as required, $\mathcal{M}_i\cap\mathcal{M}_j=\emptyset$ and $R_i\cap R_j=\emptyset$ for all $i\neq j$ by the first part of Lemma \ref{disjoint-E's}.\end{proof}

\subsection{Stage 2: iterating the improved estimate}

\begin{lemma}\label{N22-better} Let $Q\in\mathscr{G}$ and let $T\subset T'\in\Gamma^*_{U_Q}$ be an efficient subarc. If $U_{Q'}\in\mathcal{N}_{2.2}(T)$ is locally maximal, then there is a set $\mathcal{M}_{Q'}$ of cores $U_{Q''}$ with $Q''\in\Child(Q)$ and $U_{Q''} \subset E_{Q'}$ such that \begin{equation}\label{N22-auxliary-cores-better-total}
\diam D_{Q'} < 1.2\,\ell(R_Q\cap E_{Q'}) + 0.95 \sum_{U_{Q''}\in\mathcal{M}_{Q'}}\diam H_{Q''}.\end{equation}
\end{lemma}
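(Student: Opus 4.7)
The plan is to iterate Lemma \ref{43-estimate} over the separated subarcs produced by Lemma \ref{N22-geometry}, exploiting two features that are peculiar to locally maximal $\mathcal{N}_{2.2}$-cores. First, Lemma \ref{N22-geometry} supplies a family $\mathcal{Y}$ of efficient subarcs inside $F_{Q'}$ whose $(2^{-40}\diam Q'_*)$-neighborhoods are pairwise disjoint and disjoint from $1.00002Q'_*$, with $\sum_{Y\in\mathcal{Y}}\diam Y \geq 22.46\diam Q'_*$---comfortably larger than $\diam D_{Q'}=16\diam Q'_*$. Second, local maximality of $U_{Q'}$ forces $r_Y \leq 2^{-KM}\diam Q'_* \leq 2^{-100}A_{\mathscr{H}}^{-1}\diam Q'_*$ for every $Y\in\mathcal{Y}$: any $Q'''\in\Child(Q)$ with $1.00002Q'''_*\cap Y\neq\emptyset$ must satisfy $Q'''\neq Q'$ (since $Y$ misses $1.00002Q'_*$) and $U_{Q'''}\cap 16Q'_*\neq\emptyset$, hence $\diam Q''' < \diam Q'$ by Definition \ref{def:locally-maximal}.

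For each $Y\in\mathcal{Y}$ I will apply Lemma \ref{43-estimate} with $\mathcal{F}=\emptyset$, viewing $Y$ as an efficient subarc inside $U_Q$, to obtain
\[
\diam Y - 2\rho_Y \;\leq\; 1.7\,\ell(R_Q\cap B_{9r_Y}(Y)) \;+\; 1.37 \sum_{U_{Q'''}\subset B_{9r_Y}(Y)} \diam H_{Q'''}.
\]
Since $9r_Y \ll 2^{-40}\diam Q'_*$, the balls $B_{9r_Y}(Y)$ are pairwise disjoint, contained in $E_{Q'}=15.99Q'_*$, and avoid $U_{Q'}$; moreover $\rho_Y=O(2^{-85}\diam Q'_*)$ is negligible. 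Setting $\mathcal{M}' := \{U_{Q'''}\in\Child(Q) : U_{Q'''}\subset B_{9r_Y}(Y)\text{ for some }Y\in\mathcal{Y}\}$ and summing gives
\[
22.46\diam Q'_* \;\leq\; 1.7\,\ell(R_Q\cap E_{Q'}) \;+\; 1.37\sum_{U_{Q'''}\in\mathcal{M}'} \diam H_{Q'''} \;+\; o(\diam Q'_*).
\]

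Next I will multiply through by $\mu := 0.95/1.37$, which was chosen precisely so that the coefficient in front of the $H$-sum drops to $0.95$. Since $\mu\cdot 22.46 \geq 15.57$ and $\mu\cdot 1.7 < 1.2$, this gives
\[
15.57\diam Q'_* \;\leq\; 1.2\,\ell(R_Q\cap E_{Q'}) \;+\; 0.95 \sum_{U_{Q'''}\in\mathcal{M}'} \diam H_{Q'''}.
\]
Setting $\mathcal{M}_{Q'} := \mathcal{M}' \cup \{U_{Q'}\}$ (which is disjoint since no $B_{9r_Y}(Y)$ meets $U_{Q'}$) and using $0.95\diam H_{Q'} \geq 0.95\cdot 0.49999\diam Q'_* \geq 0.474\diam Q'_*$ from \eqref{HQ-bounds-2}, I add this bound to the display to obtain
\[
16.044\diam Q'_* \;\leq\; 1.2\,\ell(R_Q\cap E_{Q'}) \;+\; 0.95\sum_{U_{Q'''}\in\mathcal{M}_{Q'}} \diam H_{Q'''},
\]
which strictly exceeds $\diam D_{Q'}=16\diam Q'_*$ and yields \eqref{N22-auxliary-cores-better-total}.

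The main obstacle is the bookkeeping on scales: I need $B_{9r_Y}(Y)$ to stay pairwise disjoint and trapped inside $E_{Q'}$, which is exactly where local maximality of $U_{Q'}$ enters and keeps $r_Y$ at least $60$ orders of magnitude below the $2^{-40}\diam Q'_*$ separation of $\mathcal{Y}$. Everything else is arithmetic, designed so that the multiplier $\mu=0.95/1.37$ converts the $1.37$ of Lemma \ref{43-estimate} into the target $0.95$, with the slack between $\mu\cdot 22.46\approx 15.57$ and $16$ paid for by including $U_{Q'}$ itself in $\mathcal{M}_{Q'}$.
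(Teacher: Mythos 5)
Your proposal is correct and follows essentially the same route as the paper: it applies the once-improved estimate \eqref{43-improve} (Lemma \ref{43-estimate} with $\mathcal{F}=\emptyset$) to each separated subarc $Y\in\mathcal{Y}$ from Lemma \ref{N22-geometry}, uses local maximality to keep $r_Y$, $\rho_Y$ negligible and the neighborhoods $B_{9r_Y}(Y)$ disjoint and inside $E_{Q'}$, and closes the numerical gap by counting $U_{Q'}$ itself via \eqref{HQ-bounds-2}. The only difference from the paper is cosmetic bookkeeping (you rescale by $0.95/1.37$ before adding the $U_{Q'}$ contribution, whereas the paper adds $0.68499\diam Q'_*\leq 1.37\diam H_{Q'}$ first and rescales at the end), and your $\mathcal{M}_{Q'}$ is a slightly smaller subfamily of the paper's, which is immaterial.
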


\begin{proof} Let $\mathcal{Y}$ be given by Lemma \ref{N22-geometry}. We repeat the proof of Lemma \ref{N22-auxiliary}, but use the improved estimate \eqref{43-improve} with $\mathcal{F}=\emptyset$ instead of the coarse estimate. In effect, we are incorporating the existence of cores $U_{Q''}$ that lie nearby, but do not necessarily intersect the subarcs $Y\in\mathcal{Y}$. By Lemma \ref{N22-geometry} and assumption $U_{Q'}$ is locally maximal, for every subarc $Y\in\mathcal{Y}$, we know that $Y\subset F_{Q'}\setminus Q'_*$, $\diam Y\geq 0.00021\diam Q'_*$, $r_Y\leq 2^{-KM}\diam Q'_*\leq 2^{-100}\diam Q'_*$, $\rho_Y \leq 2\lambda A_\mathscr{H}\cdot 2^{12}r_Y \leq 2^{-84}\diam Q'_*$, and $0.99999\diam Y\leq \diam Y-2\rho_Y$. In addition, $\{1.00002Q'_*\}\cup\{B_{9r_Y}(Y):Y\in\mathcal{Y}\}$ is pairwise disjoint. Since $F_{Q'}=15.98Q'_*$, we easily obtain $B_{9r_Y}(Y)\subset 15.981Q'_*\subset 15.99Q'_*=E_{Q'}$ from the estimate on $r_Y$.

Let $\mathcal{M}_{Q'}=\{U_{Q''}:Q''\in\Child(Q)\text{ and } U_{Q''}\subset E_{Q'}\}.$ Now, $\diam Q'_*\leq 2.00002\diam H_{Q'}$, which implies $0.68499\diam Q'_*\leq 1.37\diam H_{Q'}$. Further, for every $Y\in\mathcal{Y}$, \begin{equation*}
0.99999\diam Y\leq \diam Y - 2\rho_Y \leq 1.7\,\ell(R_Q\cap B_{9r_Y}(Y)) + 1.37 \sum_{\mathclap{U_{Q''}\subset B_{9r_Y(Y)}}}\diam H_{Q''}\end{equation*} by \eqref{43-improve} with $\mathcal{F}=\emptyset$. Also, by Lemma \ref{N22-geometry}, $\sum_{Y\in\mathcal{Y}} 0.99999\diam Y\geq 22.45977\diam Q'_*$. Finally, $B_{9r_Y}(Y)\subset E_{Q'}$. Combining these estimates, we obtain
\begin{align*}
    (0.68499+22.45977)\diam Q'_* & \le 1.7\,\ell(R_Q\cap E_{Q'}) + 1.37 \sum_{\mathclap{U_{Q''}\in\mathcal{M}_{Q'}}} \diam H_{Q''}.
\end{align*}
Since $\diam D_{Q'}=16\diam Q'_*$, this estimate yields \eqref{N22-auxliary-cores-better-total}.
\end{proof}

\begin{proof}[Proof of Lemma \ref{improve}]
Repeat the proof of Lemma \ref{43-estimate}, except use Lemma \ref{N22-better} in place of Lemma \ref{N22-auxiliary}. Instead of \eqref{43-intermediate}, the proof gives \begin{equation}\label{b43-intermediate} \Haus^1(J_{i-1}\setminus J_{i}) \leq 1.2\,\ell(R_i) + 1.00016\sum_{\mathclap{U_{Q'}\in \mathcal{M}_i\cap\mathcal{N}_{2.1}(T)}}\diam H_{Q''}+0.95\sum_{\mathclap{U_{Q'}\in \mathcal{M}_i\setminus\mathcal{N}_{2.1}(T)}} \diam H_{Q'}.\end{equation} Therefore, instead of \eqref{43-improve}, we ultimately obtain \begin{equation}\begin{split} \label{b43-improvement}
   \diam T-2\rho_T\leq 2.2\,\ell(R_Q&\cap B_{9r_T}(T)) + \sum_{\mathclap{U_{Q''}\in \mathcal{F}}} \diam 2\lambda Q'' \\ &+ 1.00016 \sum_{\mathclap{U_{Q'}\in \mathcal{N}_{2.1}(T)\setminus\mathcal{N}_\mathcal{F}}}\diam H_{Q'} + 0.95 \sum_{\mathclap{U_{Q'}\not\in\mathcal{N}_{2.1}(T)\cup\mathcal{N}_\mathcal{F}}} \diam H_{Q'}.
\end{split}\end{equation} where the sums in the second line may be further restricted to $U_{Q'}$ contained in $B_{9r_T}(T)$. Replacing the terms $0.95\sum_{U_{Q'}\in\mathcal{N}_{2.2}(T)\setminus\mathcal{N}_\mathcal{F}}\diam H_{Q'}$ with $1.00016\sum_{U_{Q'}\in\mathcal{N}_{2.2}(T)\setminus\mathcal{N}_\mathcal{F}}\diam H_{Q'}$ yields \eqref{consolidated-improvement}. (The purpose of this last step is to let us avoid defining $\mathcal{N}_{2.1}(T)$ in \S\ref{proof-main}.)\end{proof}

\begin{remark}One could continue to iterate estimates for $\mathcal{N}_{2.2}(T)$ cores to further reduce the coefficient 0.95. However, iteration will never let us improve the coefficient $1.00016$ associated to $\mathcal{N}_{2.1}(T)$ cores.\end{remark}

\section{Proof of Lemma \ref{l:B_2 subset}}\label{proof-II}

Assume for the duration of this section that $Q\in\mathscr{G}$ has small remainder in the sense of Definition \ref{remainder-sizes} and few non-$\mathcal{N}_2(G_Q)$ cores in the sense of \eqref{many-N2-cores}.

\subsection{Existence of $\mathcal{A}$ and proof of \eqref{A-estimates 2}} \label{ss:A}

Because $G_Q=f([a_Q,b_Q])$ satisfies \eqref{H-G diam est}, $Q$ has small remainder, and \eqref{many-N2-cores} holds,
\begin{equation}\begin{split} \label{A construction N_2 estimate}
      \Haus^1\left(\Pi_{G_Q}\left(\textstyle\bigcup\mathcal{N}_2(G_Q)\right)\right)  &\geq \diam G_Q - \ell(R_Q) - \sum_{U_{Q'} \not \in \mathcal{N}_2(G_Q)}\diam U_{Q'}\\
    & \ge (0.99993-0.01-0.05)\diam H_Q = 0.93993 \diam H_Q.
\end{split}\end{equation} (To start, write $\diam G_Q=\Haus^1(\Pi_{G_Q}(G_Q))$. Compare to the derivation of \eqref{43-improve}.)

We will construct $\mathcal{A}$ inductively using a greedy algorithm. To begin, we stratify $\mathcal{N}_2(G_Q)$ by size. For each $i\geq 1$, let $\mathcal{U}_i$ denote the set of all cores $U_{Q'}\in\mathcal{N}_2(G_Q)$ such that $\diam Q' = 2^{-KMi}\diam Q$. Each family $\mathcal{U}_i$ consists of finitely many cores, because $\Gamma$ is compact. Some (but not all) of the families may be empty.

Choose $\mathcal{A}_1$ to be a maximal subset of $\mathcal{U}_{1}$ such that $\{2\lambda Q'':U_{Q''}\in\mathcal{A}_1\}$ is pairwise disjoint. Note that $\mathcal{A}_1$ automatically enjoys property \hyperref[property-F]{(F)} with $T=G_Q$, because there are no $Q'\in\Child(Q)$ with $\diam Q'>\diam Q''$. If $\sum_{U_{Q''} \in \mathcal{A}_1} \diam 2\lambda Q'' \ge 0.04 \diam H_Q$, then we halt and define $\mathcal{A}:= \mathcal{A}_1$. Otherwise, we move to the induction step.

Suppose that we have defined $\mathcal{A}_1\subset\cdots\subset \mathcal{A}_{i-1}$ for some $i\geq 2$ so that  $\mathcal{A}_{i-1}$ satisfies property \hyperref[property-F]{(F)} with $T=G_Q$ and  $\sum_{U_{Q''}\in\mathcal{A}_{i-1}}\diam 2\lambda Q''< 0.04 \diam H_Q$.  Choose a maximal family $\mathcal{A}_i'$ from the collection
\begin{equation}\begin{split}\label{U-subcollection}
\{U_{Q''} &\in \mathcal{U}_i :\ 2\lambda Q'' \cap 2\lambda Q'=\emptyset\text{ for all }U_{Q'}\in \mathcal{A}_{i-1},\text{ and}\\
 &2\lambda Q''\not\subset 16.1Q'_*\text{ when }U_{Q'}\in \Child(Q) \text{ and } \diam Q' > \diam Q''\}
\end{split}\end{equation}
such that $\{2\lambda Q'': U_{Q''} \in \mathcal{A}'_i\}$ is pairwise disjoint. If it happened that $2\lambda Q''\cap 16Q'_*\neq\emptyset$ for some $U_{Q''}\in\mathcal{A}'_i$ and $U_{Q'}\in \Child(Q)$ with $\diam Q'>\diam Q''$, then we would also have $2\lambda Q''\subset 16.1Q'_*$ by \eqref{Q'-diam}, which is impossible. Thus, the next family $\mathcal{A}_i:=\mathcal{A}_{i-1}\cup\mathcal{A}'_i$ also satisfies property \hyperref[property-F]{(F)} with $T=G_Q$. If $\sum_{U_{Q''} \in \mathcal{A}_i} \diam 2\lambda Q'' \ge 0.04 \diam H_Q$, then we halt and define $\mathcal{A}:= \mathcal{A}_i$. Otherwise, carry out the next step of the induction.

We claim that the process described above always halts, i.e.~there is an integer $n\geq 1$ such that $\mathcal{A} =\mathcal{A}_n$ has property \hyperref[property-F]{(F)} and $\sum_{U_{Q''} \in \mathcal{A}} \diam 2\lambda Q'' \ge 0.04\diam H_Q$. Suppose for contradiction that the process does not halt. We will construct an overly efficient cover of $\Pi_{G_Q}(\bigcup_{U_{Q''} \in \mathcal{N}_2(G_Q)}U_{Q''})$. Suppose that $U_{Q''}\in\mathcal{U}_j\setminus\mathcal{A}_j$ for some $j\geq 1$. Then, by maximality of the family $\mathcal{A}'_j$, at least one of the following occurs: \begin{itemize}
\item[(i)] $2\lambda Q''\cap 2\lambda Q'\neq\emptyset$ for some $Q'\in\mathcal{A}_j$ with $\diam Q'\geq \diam Q''$;
\item[(ii)] $2\lambda Q''\subset 16.1Q'_*$ for some $U_{Q'}\in \Child(Q)$ with $\diam Q'>\diam Q''$.
\end{itemize} In situtation (i), $2\lambda Q''\subset 6\lambda Q'$ for some $U_{Q'}\in\mathcal{A}_j$. In the event that (ii) holds, there are two alternatives: \begin{itemize}
\item[(iii)] $2\lambda Q'' \subset 16.1Q_*'$ for some $U_{Q'}\not \in\mathcal{N}_2(G_Q)$;
\item[(iv)] $2\lambda Q''\subset 16.1Q_*'\subset 2\lambda Q'$ for some $U_{Q'}\in\mathcal{N}_2(G_Q)$ with $\diam Q'>\diam Q''$, and hence  $U_{Q'}\in\mathcal{U}_{i}$ for some $i<j$.
\end{itemize} It follows that for each $j\geq 1$, \begin{equation}\label{recursive-U} \bigcup_{U_{Q''}\in\mathcal{U}_j} 2\lambda Q'' \subset \bigcup_{U_{Q'}\in\mathcal{A}_j}6\lambda Q' \quad\cup\quad \bigcup_{U_{Q'}\not\in\mathcal{N}_2(G_Q)} 16.1Q_*' \quad\cup\quad \bigcup_{i=1}^{j-1}\bigcup_{U_{Q'}\in\mathcal{U}_{i}} 2\lambda Q'. \end{equation} After recursively applying \eqref{recursive-U} and then letting $j\rightarrow\infty$, we obtain \begin{equation*}\bigcup_{U_{Q''}\in\mathcal{N}_2(G_Q)} U_{Q''}\subset \bigcup_{U_{Q''}\in\mathcal{N}_2(G_Q)} 2\lambda Q'' \subset \bigcup_{i=1}^\infty \bigcup_{U_{Q'}\in\mathcal{A}_i} 6\lambda Q' \quad\cup\quad \bigcup_{U_{Q'}\not\in\mathcal{N}_2(G_Q)} 16.1Q'_*.\end{equation*} In particular, by countable subadditivity of measures and by the now familiar fact that $\Haus^1(\Pi_{G_Q}(A))\leq \diam \Pi_{G_Q}(A)\leq \diam A$ for all Borel sets $A\subset\XX$, \begin{align*} \Haus^1\left(\Pi_{G_Q}\left(\textstyle\bigcup\mathcal{N}_2(G_Q)\right)\right) &\leq 3 \sum_{U_{Q''} \in \mathcal{A}_1\cup\mathcal{A}_2\cup\cdots} \diam 2\lambda Q'' + 16.1\sum_{U_{Q'}\not\in\mathcal{N}_2(G_Q)}\diam U_{Q'}\\
&<(3\cdot 0.04 + 16.1\cdot 0.05)\diam H_Q=0.925\diam H_Q. \end{align*} This contradicts \eqref{A construction N_2 estimate}. Therefore, the process above halts and $\mathcal{A}=\mathcal{A}_n$ for some $n\geq 1$. We remark that $\mathcal{A}$ is finite, because $\mathcal{A}\subset\bigcup_{i=1}^n \mathcal{U}_i$ and each $\mathcal{U}_i$ is finite. This proves \eqref{A-estimates 2}.

\subsection{Proof of \eqref{A-estimates}} \label{ss:large-scale}

The proof of \eqref{A-estimates} leans on techniques developed in \S\ref{sec:prelim}. To begin, we describe the large-scale geometry of $*$-almost flat arcs in balls around $\mathcal{A}$ cores. Recall that every $\mathcal{A}$ core belongs to $\mathcal{N}_2(G_Q)$. The first lemma below (Lemma \ref{A-geometry}) is a variant of Lemma \ref{N22-geometry} in the large-scale window $2\lambda Q''$ instead of the small-scale window $16Q''_*$. The second lemma (Lemma \ref{large-scale-geometry}) modifies the arcs obtained in Lemma \ref{A-geometry} to avoid cores $U_{Q'} \subset 2\lambda Q''$ such that $\diam Q' = \diam Q''$. This is necessary to get good control on \hyperref[e:r_T def]{$\rho_{X}$}  for the subarcs $X$ that we apply Lemma \ref{improve} to in the third lemma below (Lemma \ref{large-scale-better}).

\begin{lemma}\label{A-geometry} If $U_{Q''}\in\mathcal{A}$, then there exists a finite set $\mathcal{Y}$ subarcs of arc fragments in $\Gamma^*_{1.98\lambda Q''}$ such that the neighborhoods $\{B_{2^{-M-35}\diam Q''_*}(Y) : Y\in\mathcal{Y}\}$ are pairwise disjoint, $\diam Y\geq 0.00199 \diam 2\lambda Q''$ for all $Y\in\mathcal{Y}$, and in total $\sum_{Y\in\mathcal{Y}} \diam Y \geq 1.23\diam 2\lambda Q''$. (The cardinality of $\mathcal{Y}$ is 2 or 3.)
\end{lemma}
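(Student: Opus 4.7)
\emph{Plan.} I would mirror the proof of Lemma \ref{N22-geometry}, adapted to the much larger window $1.98\lambda Q''$ and to the enhanced $2^{-M}$-scale line approximation available for $*$-almost flat arcs. Since $U_{Q''}\in\mathcal{A}\subset\mathcal{N}_2(G_Q)$, fix a wide arc $\tau\in S(\lambda Q'')$. Since $Q''\in\mathcal{G}^{M,j_1}$ we have $\beta_{S^*(\lambda Q'')}(2\lambda Q'')>2^{-M}$, while every individual arc $\sigma\in S^*(\lambda Q'')$ lies within $2^{-M-48}A_{\mathscr{H}}^{-1}\diam 2\lambda Q''$ of its approximating line $L_\sigma$ by \eqref{line-estimate-with-M}. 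Applying the defining inequality for $\beta_{S^*}$ with $L=L_\tau$ produces a point $x^*\in\bigcup_{\sigma\in S^*}\Image(\sigma)$ at distance $>2^{-M}\diam 2\lambda Q''$ from $L_\tau$; this point cannot lie in $\Image(\tau)$, so it lies in $\Image(\sigma)$ for some second arc $\sigma\in S^*(\lambda Q'')\setminus\{\tau\}$, whose approximating line $L_\sigma$ passes within $2^{-M-48}A_{\mathscr{H}}^{-1}\diam 2\lambda Q''$ of $x^*$ and is therefore separated from $L_\tau$ by at least roughly $2^{-M}\diam 2\lambda Q''$ at some point $y\in L_\sigma\cap 2\lambda Q''$.

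Following the structure of Lemma \ref{N22-geometry}, I would extract candidate subarcs in the annular region $1.98\lambda Q''\setminus\interior(1.00004Q''_*)$: two legs $T_2,T_3\subset\Image(\tau)$ on the $P_{U_{Q''}}^+$ and $P_{U_{Q''}}^-$ sides of the slab $P_{U_{Q''}}$, together with a subarc $T_1\subset\Image(\sigma)$ reaching from $\partial(1.98\lambda Q'')$ toward $\lambda Q''$ along $\Image(\sigma)$. Radial distance estimates of the kind used in Lemma \ref{N22-geometry} give $\diam T_i\geq\radius(1.98\lambda Q'')-\radius(1.00004Q''_*)\approx 0.494\diam 2\lambda Q''$ for $i=2,3$, and $\diam T_1\geq\radius(1.98\lambda Q'')-\radius(\lambda Q'')\approx 0.245\diam 2\lambda Q''$ (since $\Image(\sigma)$ meets $\lambda Q''$ by $\sigma\in\Lambda(\lambda Q'')$). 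Both individual diameters comfortably exceed $0.00199\diam 2\lambda Q''$, and the sum $\diam T_1+\diam T_2+\diam T_3$ exceeds $1.23\diam 2\lambda Q''$ with a healthy margin.

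Disjointness of the tubular neighborhoods of width $2^{-M-35}\diam Q''_*$ is handled by a case split analogous to the one at the end of Lemma \ref{N22-geometry}. In the \emph{easy case}, $L_\tau$ and $L_\sigma$ remain well-separated throughout $1.98\lambda Q''$, so I would merge the two $\tau$-legs into a single chord-like subarc of diameter $\approx 0.99\diam 2\lambda Q''$ and keep $T_1$ as a second chord-like subarc, yielding $|\mathcal{Y}|=2$. In the \emph{hard case}, $L_\tau$ and $L_\sigma$ come close inside $1.98\lambda Q''$, forcing me to keep the three-piece decomposition and to slightly shrink or shift $T_1$ away from a neighborhood of the region where the two lines nearly meet, yielding $|\mathcal{Y}|=3$. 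The main obstacle will be making this quantitative transversality argument precise: one must verify that the $\sim 2^{-M}\diam 2\lambda Q''$ line separation extracted at $y$ dominates the required tube half-width $2^{-M-35}\diam Q''_*\approx 2^{-M-47}(\lambda A_{\mathscr{H}})^{-1}\diam 2\lambda Q''$ by a cushion of roughly $2^{43}$, so that this cushion absorbs both the arc-to-line approximation error and the small diameter loss incurred by trimming $T_1$ in the hard case.
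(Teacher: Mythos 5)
Your proposal runs on the same engine as the paper's proof: fix a wide arc $\tau$, use $\beta_{S^*(\lambda Q'')}(2\lambda Q'')>2^{-M}$ together with \eqref{line-estimate-with-M} to produce a second arc $\xi$ (your $\sigma$) containing a point at distance $\geq 2^{-M}\diam 2\lambda Q''$ from $L_\tau$, harvest long subarcs of both, and obtain disjointness of the $2^{-M-35}\diam Q''_*$-neighborhoods through an easy/hard dichotomy whose hard case rests on a similar-triangles transversality estimate; the computation you defer is exactly the one carried out in Lemma \ref{N22-geometry} and again in the paper's proof here, showing the portion of $L_\tau$ inside the tube around $L_\xi$ has length $<2^{-40}\diam 2\lambda Q''$, which comfortably dominates the tube width $2^{-M-35}\diam Q''_*\leq 2^{-M-48}\diam 2\lambda Q''$. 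The structural difference is how the wide arc is used: you split it into two legs in the annulus $1.98\lambda Q''\setminus\interior(1.00004Q''_*)$, mirroring Lemma \ref{N22-geometry}, whereas the paper keeps a single diametrical chord $T_1\subset\Image(\tau)\cap 1.98\lambda Q''$ with $\diam T_1\geq 0.98903\diam 2\lambda Q''$ (in the spirit of Lemma \ref{N21-auxiliary}); avoiding the central region is not required at this stage, since same-size cores are excised only later, in Lemma \ref{large-scale-geometry}. Your variant is workable---the two legs total $\approx 0.9899\diam 2\lambda Q''$ and can be separated from each other by the center-removal trick of Lemma \ref{N22-geometry}---but it costs that extra separation argument, and your hard case trims the less forgiving arc: if the near-intersection region of $L_\sigma$ with the $L_\tau$-tube lies in the interior of the $\sigma$-piece (diameter only $\approx 0.245\diam 2\lambda Q''$), ``shrinking'' is insufficient and you must split it at a quantitative threshold (say $0.002\diam 2\lambda Q''$, so each half clears the $0.00199$ floor) or discard a short end, possibly yielding a four-element family; the budget still survives, since $2\times 0.4949+0.245-0.004>1.23$, but with little slack. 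The paper sidesteps this fork by splitting the long $\tau$-chord instead, which is why its $\mathcal{Y}$ has $2$ or $3$ elements; that cardinality is not used downstream, so your version would serve equally well once the dichotomy threshold and the transversality computation are made explicit.
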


\begin{proof} Let $\tau=f|_{[a,b]}\in S(\lambda Q'')$ be a wide arc for $U_{Q''}$. By our convention in Remark \ref{general projections}, $f(a)$ lies to the left of $f(b)$. Let $T_1=\tau([c,d])$ be a subarc of $\Image(\tau)\cap 1.98\lambda Q''$, where \begin{align*}
c&:=\sup\{t\in[a,b]:\tau(t)\in P_{U_{Q''}}^-\cap\partial(1.98\lambda Q'')\}\text{ and}\\
d&:=\inf\{t\in[a,b]:\tau(t)\in P_{U_{Q''}}^+\cap\partial(1.98\lambda Q'')\}\end{align*} By \eqref{almost-flat-line-estimate} and \eqref{beta-monotone}, there exists a line $L$ such that $\dist(p,L)\leq 2^{-53}\diam 1.98\lambda Q''$ for all $p\in\Image(\tau)$. Since $\Image(\tau)\cap 1.00002Q''_*\neq\emptyset$, repeating the proof of Lemma \ref{proof:proj-lemma} \emph{mutatis mutandis} informs us that $T_1$ (easily) intersects $1.1Q''_*\subset 2^{-11}(1.98\lambda Q'')$. Further, by mimicking the proof of Lemma \ref{N21-auxiliary}, we find that $$\diam T_1\geq (1-2^{-10}-2^{-52})\diam 1.98\lambda Q''\geq 0.98903\diam 2\lambda Q''.$$ Choose a line $L_\tau$ such that \eqref{line-estimate-with-M} holds for $\tau$, choose a $J$-projection $\Pi_\tau$ onto $L_\tau$, and identify $L_\tau$ with $\RR$. By \eqref{Pi-x}, $|\Pi_\tau(w)-w|\leq 2^{-M-47}\diam 2\lambda Q''$ for all $w\in\Image(\tau)$. Thus, the interval $[s_1,s_2]:=\Pi_\tau(T_1)$ is large in the sense that $$s_2-s_1\geq \diam T_1 - 2^{-M-46}\diam 2\lambda Q''\geq 0.98902\diam 2\lambda Q''.$$ Since $\beta_{S^*(\lambda Q'')}(2\lambda Q'')\geq 2^{-M}$, but the excess of $\Image(\tau)$ over $L_\tau$ is comparatively small, we can locate an arc $\xi\in S^*(\lambda Q'')$ and point $x\in \Image(\xi)$ such that $\dist(x,L_\tau)\geq 2^{-M}\diam 2\lambda Q''$. Let $T_2$ be a subarc of $\Image(\xi)\cap 1.98\lambda Q''$ with one endpoint in $\partial(1.98\lambda Q'')$ and one endpoint in $\partial(\lambda Q'')$. We can do this, because the image of every arc in $\Lambda(\lambda Q'')$ intersects $\lambda Q''$ and $Q''\not\in\mathscr{B}^\lambda_0$. Then
\begin{align}\label{big ball T_2 diameter estimate}
    \diam T_2\geq 0.98\radius \lambda Q''=0.245\diam 2\lambda Q''
\end{align} and $\diam T_1+\diam T_2\geq 1.23403\diam 2\lambda Q''.$ If $B_{2^{-M-35}\diam Q''_*}(T_1)\cap B_{2^{-M-35}\diam Q''_*}(T_2)=\emptyset$, then we may take $\mathcal{Y}=\{T_1,T_2\}$.

Suppose otherwise that $B_{2^{-M-35}\diam Q''_*}(T_1)\cap B_{2^{-M-35}\diam Q''_*}(T_2)\neq \emptyset$. For ease of notation, we switch from scale $\diam Q''_*$ to scale $\diam 2\lambda Q''$, recalling that $2^{-M-35}\diam Q''_* \le 2^{-M-48} \diam 2\lambda Q''$.  Let $L_\xi$ be a line such that \eqref{line-estimate-with-M} holds for $\xi$ and let $\Pi_\xi$ be a $J$-projection onto $L_\xi$. Then $$B_{2^{-M-48}\diam 2\lambda Q''}(T_1) \subset B_{(2^{-M-48} + 2^{-M-54})\diam 2\lambda Q''}(L_{\tau}) \subset B_{2^{-M-47}\diam 2\lambda Q''}(L_{\tau}),$$ $$B_{2^{-M-48}\diam 2\lambda Q''}(T_2) \subset B_{(2^{-M-48}+2^{-M-48})\diam 2\lambda Q''}(L_{\xi}) \subset B_{2^{-M-47}\diam 2\lambda Q''}(L_{\xi}),$$ and $L_{\tau}$ intersects $B_2:=B_{2^{-M-45}\diam 2\lambda Q''}(L_{\xi})$ by the triangle inequality.  Continuing to identify $L_{\tau}$ with $\mathbb{R}$, define $$t_1:=\min\{z:z\in L_\tau \cap B_2\}\quad\text{and}\quad t_2:=\max\{z:z\in L_\tau \cap B_2\}.$$ As in the proof of Lemma \ref{N22-geometry}, there are two cases.

For the easier case, suppose that $t_2\leq s_1+0.002\diam 2\lambda Q''$ or $t_1\geq s_2-0.002\diam 2\lambda Q''$. Choose a subarc $\tilde T_1$ of $T_1$ with $\Pi_\tau(\tilde T_1)=[s_1+0.002\diam 2\lambda Q'',s_2-0.002\diam 2\lambda Q'']$. Then by \eqref{almost-flat-line-estimate} and \eqref{Pi-x} $\tilde T_1$ satisfies $$\tilde T_1 \subset B_{2^{-M-53}\diam 2\lambda Q''}([s_1+0.002\diam 2\lambda Q'',s_2-0.002\diam 2\lambda Q'']),$$ and, by the triangle inequality, $\diam \tilde T_1\geq s_2-s_1 - (0.004 + 2^{-M-52})\diam 2\lambda Q'' \geq 0.98501\diam 2\lambda Q''$.  To verify disjointness, we use the triangle inequality again to calculate \begin{align*}
    \gap(B_{2^{-M-47}\diam 2\lambda Q''}(L_{\tau}), B_{2^{-M-47}\diam 2\lambda Q''}(L_{\xi})) \geq (2^{-M-45} - 2^{-M-46})\diam 2\lambda Q''.
\end{align*} Recalling \eqref{big ball T_2 diameter estimate} we see $\diam \tilde T_1+\diam T_2\geq 1.23\diam 2\lambda Q''$. Therefore, in this case we may take $\mathcal{Y}=\{\tilde T_1,T_2\}$.

For the harder case, suppose that \begin{equation}\label{hard-t} t_2 > s_1+0.002\diam 2\lambda Q''\quad\text{and}\quad t_1<s_2-0.002\diam 2\lambda Q''.\end{equation} Our immediate goal is to show that $t_2-t_1$ is relatively small. Let $y, z \in L_\tau$ be such that $y = t_1$ and $z = t_2$ by our identification of $L_\tau$ with $\mathbb{R}$. Since $y,z \in L_{\tau} \cap B_2$, the points $y_\xi, z_{\xi}:=\Pi_\xi(y)$ satisfy $$\max\{|y-y_\xi|, |z-z_\xi|\} \le  2^{-M-45}\diam 2\lambda Q''.$$

Now, define the line $\tilde L_\xi:= L_{\xi} + (y- y_{\xi})$ parallel to $L_{\xi}$ which intersects $y$. Let $\Pi_{\tilde{\xi}}(v):= \Pi_{\xi}(v) + (y-y_{\xi})$ and note that $\Pi_{\tilde{\xi}}$ is a $J$-projection onto $\tilde L_{\xi}$.  Recall that $x \in \Image \xi$, and define $x_{\tilde{\xi}}:=\Pi_{\tilde{\xi}}(x)$, $x_{\tilde{\xi}\tau}:=\Pi_{\tau}(x_{\tilde{\xi}})$, $z_{\tilde{\xi}}:=\Pi_{\tilde{\xi}}(z)$, and $z_{\tilde{\xi}\tau}:=\Pi_{\tau}(z_{\tilde{\xi}})$. Then, we have: \begin{align}
|z_{\tilde{\xi}}-y| & \geq |z-y| - |z_{\tilde{\xi}}-z_{\xi}| - |z_{\xi}-z|\ge   t_2-t_1 -2^{-M-44}\diam 2\lambda Q'',\\
|z_{\tilde{\xi}}-z_{\tilde{\xi}\tau}| & \leq 2\dist(z_{\tilde{\xi}}, L_\tau) \leq 2|z-z_{\tilde{\xi}}| \le 2|z-z_{\xi}| + 2|y - y_{\xi}| \leq 2^{-M-43}\diam 2\lambda Q'', \\
|x_{\tilde{\xi}}-y| &\leq |x-y|+|x_{\tilde{\xi}}-x|\leq (1 +2^{-M-44})\diam 2\lambda Q'',\text{ and}\\
|x_{\tilde{\xi}}-x_{\tilde{\xi}\tau}| & \geq |x-x_{\tilde{\xi}\tau}|-|x-x_{\tilde{\xi}}| \geq 2^{-M-1}\diam 2\lambda Q''.\end{align} By ``similar triangles'', it follows that $$t_2-t_1 - 2^{-M-44}\diam 2\lambda Q''\leq |z_{\tilde{\xi}}-y| = |x_{\tilde{\xi}}-y|\frac{|z_{\tilde{\xi}}- z_{\tilde{\xi}\tau}|}{|x_{\tilde{\xi}}-x_{\tilde{\xi}\tau}|} <
(2\diam 2\lambda Q'')\frac{2^{-M-43}}{2^{-M-1}}.$$ Rearranging, we see that $t_2-t_1<(2^{-M-44}+2^{-41})\diam 2\lambda Q''< 2^{-40}\diam 2\lambda Q''.$ Together with \eqref{hard-t}, it follows that we may choose $\tilde t_1$ and $\tilde t_2$ such that $$\tilde t_1<t_1<t_2<\tilde t_2$$ and $\tilde t_2-\tilde t_1 \le 2^{-39}\diam 2\lambda Q''$. Let $\tilde T_{1.1}$ and $\tilde T_{1.2}$ be a subarcs of $T_1$ with $\Pi_\tau(\tilde T_{1.1})=[s_1,\tilde t_1]$ and $\Pi_\tau(\tilde T_{1.2})=[\tilde t_2,s_2]$.

To see that $B_{2^{-M-48}\diam 2\lambda Q''}(T_{1.1})$ and $B_{2^{-M-48}\diam 2\lambda Q''}(T_{1.2})$ are disjoint, we calculate
\begin{align*}
    \gap(B_{2^{-M-47}\diam 2\lambda Q''}([s_1, \tilde t_1]), B_{2^{-M-47}\diam 2\lambda Q''}([\tilde t_2, s_2])) \ge (2^{-39} - 2^{-M-46}) \diam 2\lambda Q''>0.
\end{align*}
Similarly, to see that $B_{2^{-M-48}\diam 2\lambda Q''}(\tilde T_{1.1}\cup \tilde T_{1.2})$ and $B_{2^{-M-48}\diam 2\lambda Q''}(T_2)$ are disjoint, we estimate
\begin{align*}
    & \gap(B_{2^{-M-48}\diam 2\lambda Q''}(\tilde T_{1.1}\cup \tilde T_{1.2}), B_{2^{-M-48}\diam 2\lambda Q''}(T_2))\\
    & \qquad \ge \gap(B_{2^{-M-47}\diam 2\lambda Q''}(L_{\xi}), B_{2^{-M-47}\diam 2\lambda Q''}([s_1 , \tilde t_1] \cup [\tilde t_2, s_2]))\\
    & \qquad \ge (2^{-M-45} - 2^{-M-46})\diam 2\lambda Q'' > 0.
\end{align*}

We not turn to estimating the diameters of these subarcs.  By \eqref{line-estimate-with-M} and \eqref{Pi-x}
\begin{align}\label{big ball hard case containment}
    \tilde T_{1.1} & \subset B_{2^{-M-53}\diam 2\lambda Q''}([s_1, \tilde t_1]), \qquad \diam \tilde T_{1.1}  \ge \tilde t_1-s_1-2^{-M-52}\diam 2\lambda Q'',\\
    \tilde T_{1.2} & \subset B_{2^{-M-53}\diam 2\lambda Q''}([\tilde t_2, s_2]), \qquad \diam \tilde T_{1.2}  \ge s_2 -\tilde t_2 - 2^{-M-52}\diam 2\lambda Q''.
\end{align}
Recalling \eqref{hard-t}, $\min\{\diam \tilde T_{1.1},\diam \tilde T_{1.2}\}\geq 0.00199\diam 2\lambda Q''$. Moreover, by \eqref{big ball T_2 diameter estimate} and the fact that $2^{-39}\ll 0.00001$  \begin{align*}&\diam \tilde T_{1.1}+\diam \tilde T_{1.2} + \diam T_2\\
&\qquad\geq s_2-s_1-0.00001\diam 2\lambda Q''- 2^{-M-51}\diam 2\lambda Q'' +\diam T_2\\
& \qquad \geq 1.234\diam 2\lambda Q''.\end{align*} In this case, we may take $\mathcal{Y}=\{\tilde T_{1.1}, \tilde T_{1.2}, T_2\}$.
\end{proof}

\begin{figure}\begin{center}\includegraphics[width=.9\textwidth]{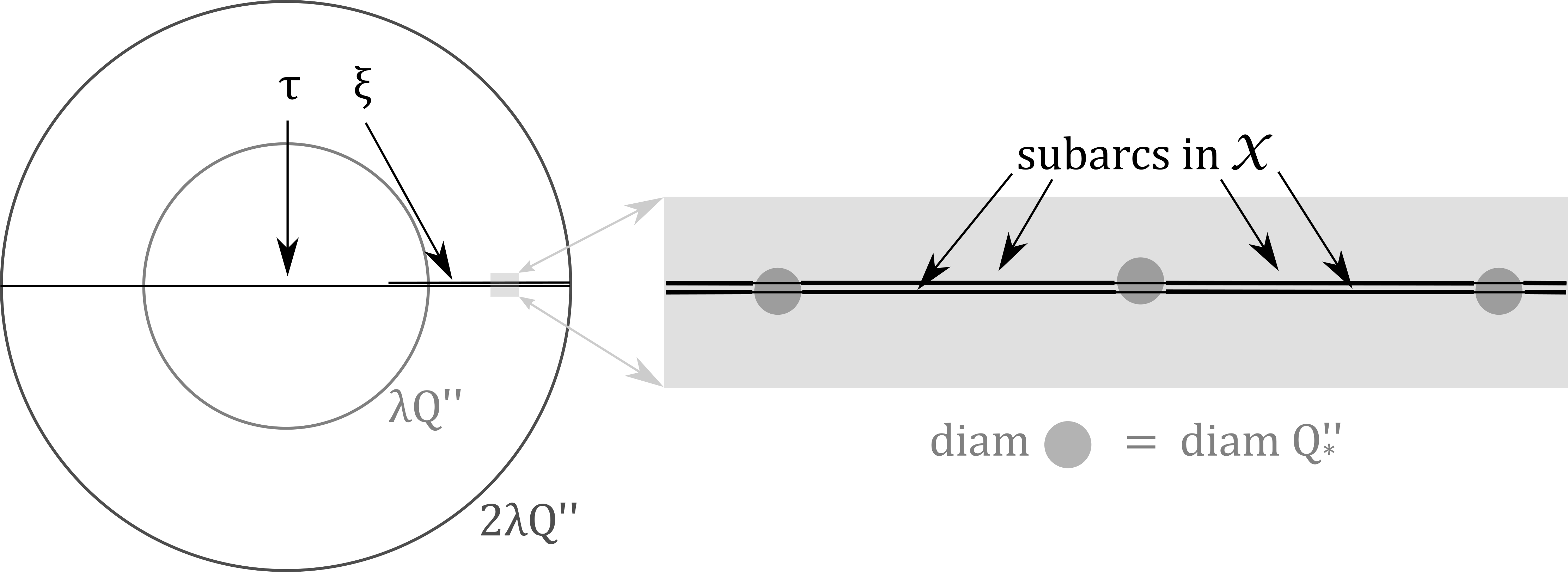}\end{center}\caption[Separated subarcs (large scale geometry)]{Separated subarcs $\mathcal{X}$ associated to a ball $2\lambda Q''$ with $U_{Q''}\in\mathcal{A}$. When $\beta_{S^*(\lambda Q)}(2\lambda Q)$ is sufficiently small, cores $U_{Q'}$ with $\diam Q'_*=\diam Q''_*$ may intersect both of the underlying arcs $\tau$ and $\xi$ used to build $\mathcal{Y}$.}\label{fig:large-scale}\end{figure}

\begin{lemma}\label{large-scale-geometry} If $U_{Q''}\in\mathcal{A}$, then there exists a finite set $\mathcal{X}$ of efficient subarcs of arc fragments in $\Gamma^*_{1.98\lambda Q''}$ such that the set \begin{equation*}\begin{split} \{1.00002Q'_*:&\,Q'\in\Child(Q),\diam Q'=\diam Q''\}\\ &\cup \{B_{2^{-M-35}\diam Q''_*}(X):X\in\mathcal{X}\}\text{ is pairwise disjoint},\end{split}\end{equation*}$\diam X\geq 0.25\diam Q''_*$ for all $X\in\mathcal{X}$, and $\sum_{X\in\mathcal{X}}\diam X \geq 1.11\diam 2\lambda Q''$.
\end{lemma}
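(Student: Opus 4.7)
Start from $\mathcal{Y}$ in Lemma \ref{A-geometry} and refine each $Y\in\mathcal{Y}$ by excising portions that come too close to cores $1.00002Q'_*$ with $Q'\in\Child(Q)$ and $\diam Q'=\diam Q''$. Each $Y$ is a subarc of some $\tau\in S(\lambda Q'')\cup S^*(\lambda Q'')$, so $Y$ lies in a narrow tube around a line $L_Y$; fix a $J$-projection $\Pi_Y$ onto $L_Y$. Let $\mathcal{C}_Y$ be the cores above whose enlargement $1.00002Q'_*$ meets $B_{2\cdot 2^{-M-35}\diam Q''_*}(Y)$, and write $[a_i,b_i]:=\Pi_Y(1.00002Q'_{i*})$ for the shadows. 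Since the centers of cores in $\mathcal{C}_Y$ are $2^{-k}$-separated and lie within roughly $0.51\diam Q''_*$ of $L_Y$, Lemma \ref{j-proj facts} yields projected centers $(1-O(2^{-9}))2^{-k}$-separated; as their projections sit inside $\Pi_Y(2\lambda Q'')$, an interval of length $\leq 4\lambda A_\mathscr{H}\,2^{-k}$, one obtains $|\mathcal{C}_Y|\leq 1+O(\lambda A_\mathscr{H})$ and total bad-shadow length $O(2^{-11})\diam 2\lambda Q''$ on $L_Y$, which is negligible against the $0.12\diam 2\lambda Q''$ budget between the $1.23\diam 2\lambda Q''$ supplied by Lemma \ref{A-geometry} and the desired $1.11\diam 2\lambda Q''$.

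Set $\delta:=2\cdot 2^{-M-35}\diam Q''_*$ and write $J:=\Pi_Y(Y)\setminus\bigcup_i(a_i-\delta,b_i+\delta)=\bigsqcup_\ell J_\ell$, a finite disjoint union of closed intervals. For each $\ell$ I would produce a single efficient subarc $X_\ell\subset Y$ with $\Pi_Y(X_\ell)\subset J_\ell$ and $\diam X_\ell$ as close to $|J_\ell|$ as possible, collect these into $\mathcal{X}$, and drop those with $\diam<0.25\diam Q''_*$. The two requested disjointness properties then follow at once from the $1$-Lipschitz bound on $\Pi_Y$: any point whose $\Pi_Y$-image lies in $J_\ell$ is at distance $\geq\delta$ from every $1.00002Q'_{i*}$, and distinct $J_\ell,J_{\ell'}$ are separated on $L_Y$ by at least $2\delta=4\cdot 2^{-M-35}\diam Q''_*$, so $B_{2^{-M-35}\diam Q''_*}(X_\ell)$ is disjoint both from every core in $\mathcal{C}_Y$ and from $B_{2^{-M-35}\diam Q''_*}(X_{\ell'})$ for $\ell\neq\ell'$. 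Disjointness across different $Y\in\mathcal{Y}$ is inherited from Lemma \ref{A-geometry}, and the short-arc discards cost at most $O(\lambda A_\mathscr{H})\cdot 0.25\diam Q''_*=O(2^{-13})\diam 2\lambda Q''$ to the lost total.

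The main obstacle is producing $X_\ell$ with $\diam X_\ell\approx|J_\ell|$: because $f$ is only continuous, the map $g:=\Pi_Y\circ f|_{\Domain(Y)}$ need not be monotonic, and a single connected component of $g^{-1}(J_\ell)$ may fail to $g$-cover all of $J_\ell$. I would handle this in the spirit of the hard-case construction in Lemma \ref{A-geometry} (the splitting of $T_1$ into $\tilde T_{1.1},\tilde T_{1.2}$): pick the connected component $C_\ell$ of $g^{-1}(J_\ell)$ whose $g$-image is longest, and let $X_\ell$ be an efficient subarc of $f(C_\ell)$. Any $g$-excursion that enters $J_\ell$ and U-turns before reaching the far endpoint leaves a spatial strand lying within twice the tube radius of $X_\ell$—namely $O(2^{-38}\diam Q''_*)$ when the underlying arc is almost flat and $O(2^{-M-35}\lambda A_\mathscr{H}\diam Q''_*)$ when it is only $*$-almost flat—so the diameter deficit $|J_\ell|-\diam X_\ell$ is absorbed into a tube-level error. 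Summed over the $O(\lambda A_\mathscr{H})$ possible deficits per $Y$ and the $|\mathcal{Y}|\leq 3$ arcs, the combined losses from bad shadows, short-arc discards, and non-monotonicity are $O(2^{-10})\diam 2\lambda Q''$, leaving $\sum_{X\in\mathcal{X}}\diam X\geq 1.23\diam 2\lambda Q''-O(2^{-10})\diam 2\lambda Q''\geq 1.11\diam 2\lambda Q''$, as required.
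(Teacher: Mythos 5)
Your overall route is the same as the paper's: start from the family $\mathcal{Y}$ of Lemma \ref{A-geometry}, project each $Y$ onto its approximating line, delete buffered shadows of the same-generation cores near $Y$, take spanning efficient subarcs over the complementary intervals, and use the $2^{-k}$-separation of the net centers to show that the deleted length is negligible. The differences are only bookkeeping: the paper builds the buffer into the $1.00004Q'_*$ versus $1.00002Q'_*$ enlargement and bounds the loss by comparing shadow lengths ($\le 1.00004\diam Q''_*$) against the gaps between shadows ($\ge 2^{10}\diam Q''_*$), obtaining at least $0.904\diam Y$ per arc, whereas you use an explicit $\delta$-buffer and a packing count $\#\mathcal{C}_Y\lesssim \lambda A_\mathscr{H}$ in the spirit of Lemma \ref{l:count}. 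Both accountings land far below the available $0.12\diam 2\lambda Q''$ margin (in yours, the factor $\lambda A_\mathscr{H}$ in the count cancels against $\diam Q''_*=2^{-13}(\lambda A_\mathscr{H})^{-1}\diam 2\lambda Q''$, so the loss is an absolute small multiple of $\diam 2\lambda Q''$), and your disjointness checks via the $1$-Lipschitz projection and Lemma \ref{A-geometry} are correct.

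The one step that does not hold up as written is your treatment of non-monotonicity. The assertion that a U-turning excursion ``leaves a spatial strand within twice the tube radius, so the deficit $|J_\ell|-\diam X_\ell$ is absorbed into a tube-level error'' is a non sequitur: the tube radius controls the transverse distance to $L_Y$, not how much of $J_\ell$ is covered longitudinally by a single component of $g^{-1}(J_\ell)$, so nothing in that sentence rules out, a priori, a deficit that is a definite fraction of $|J_\ell|$. In fact no deficit can occur, and the correct justification is elementary: $\Pi_Y(Y)$ is a compact interval containing $J_\ell=[p,q]$, so $g:=\Pi_Y\circ f$ attains values $\le p$ and $\ge q$ on $\Domain(Y)$; a first-crossing argument (after possibly reversing orientation, set $t^{*}:=\inf\{t\ge u:\ g(t)\ge q\}$ where $g(u)\le p$, then $s^{*}:=\sup\{t\le t^{*}:\ g(t)\le p\}$) yields a parameter interval $[s^{*},t^{*}]$ with $g([s^{*},t^{*}])\subset J_\ell$, $g(s^{*})=p$, and $g(t^{*})=q$, hence an efficient subarc $X_\ell\subset f([s^{*},t^{*}])$ with $\Pi_Y(X_\ell)\subset J_\ell$ and $\diam X_\ell\ge |J_\ell|$. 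This is exactly what licenses the paper's choice of $X_I$ with $\Pi_L(X_I)\subset I$ and $\diam X_I\ge\diam I$. With that replacement (and spelling out that cores outside $\mathcal{C}_Y$ are automatically at distance $\ge 2\cdot 2^{-M-35}\diam Q''_*$ from $X_\ell\subset Y$), your proof goes through and matches the paper's.
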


\begin{proof} Let $U_{Q''}\in\mathcal{A}$, say $Q''=B(x'',A_\mathscr{H}2^{-k})$, and let $\mathcal{Y}$ be given by the \hyperref[A-geometry]{previous lemma}. Because $\{B_{2^{-M-35}\diam Q''_*}(Y):Y\in\mathcal{Y}\}$ is pairwise disjoint, it suffices to construct a family $\mathcal{X}_Y$ of efficient subarcs $X$ of $Y$ for each $Y\in\mathcal{Y}$ such that $$\{1.00002Q'_*:Q'\in\Child(Q),\;\diam Q'=\diam Q''\}\cup\{B_{2^{-M-35}\diam Q''_*}(X):X\in\mathcal{X}_Y\}$$ is pairwise disjoint, $\diam X\geq \diam Q''_*$ for all $X\in\mathcal{X}_Y$, and in total $\sum_{X\in\mathcal{X}_Y}\diam X\geq 0.904\diam Y$. Then $\mathcal{X}=\bigcup_{Y\in\mathcal{Y}}\mathcal{X}_Y$ satisfies the required properties. In particular, $$\sum_{X\in\mathcal{X}}\diam X\geq 0.904\sum_{Y\in\mathcal{Y}}\diam Y \geq 1.111 \diam 2\lambda Q'',$$ since $\sum_{Y\in\mathcal{Y}}\diam Y\geq 1.23\diam 2\lambda Q''$.

Fix $Y=f([a,b])\in \mathcal{Y}$ and let $\tau\in S^*(\lambda Q)$ be an arc, for which $Y$ is a subarc of $\Image(\tau)\cap 1.98\lambda Q''$. Note that $\diam Y\geq 0.00199\diam 2\lambda Q''> 2^{-9}\diam 2\lambda Q''\geq 2^{4}\diam Q''_*$. Let $L$ be a line such that \eqref{line-estimate-with-M} holds for $\tau$ and let $\Pi_L$ be a $J$-projection onto $L$. By \eqref{Pi-x}, we have $|\Pi_L(x)-x|\leq 2^{-M-47}A_\mathscr{H}^{-1}\diam 2\lambda Q''\leq 2^{-M-38}\diam Y$ for all $x\in \Image(\tau)$. Since $Y$ is compact and connected, $I_0:=\Pi_L(Y)=[c,d]$. Considering  any pair of points $u,v\in Y$ such that $|u-v|=\diam Y$, we see that $$\diam I_0\geq |\Pi_L(u)-\Pi_L(v)| \geq |u-v|-|\Pi_L(u)-u|-|\Pi_L(v)-v|\geq (1-2^{-M-37})\diam Y.$$ Hence $\diam I_0> 0.99999\diam Y>15.999\diam Q''_*$. Form the minimal partition $\mathcal{P}$ of $I_0$ into closed intervals with disjoint interiors that includes the set of intervals \begin{align*}
    \mathcal{J}:=\{I_0\cap \Pi_L(1.00004Q'_*):& Q'\in\Child(Q),\diam Q'=\diam Q'',\\ & 1.00002Q'_*\cap B_{2^{-M-35}\diam Q''_*}(Y)\neq\emptyset\}.
\end{align*} If $\mathcal{J}=\emptyset$, then we may simply take $\mathcal{X}_Y=\{\tilde Y\}$, where $\tilde Y$ is any efficient subarc of $Y$ with $\diam \tilde Y=\diam Y$. Thus, suppose that $\mathcal{J}$ is nonempty. Because every ball in $\XX$ contains a diameter parallel to $L$, for each $J=I_0\cap \Pi_L(1.00004Q'_*)\in\mathcal{J}$, $$\diam J\leq \diam \Pi_L(1.00004Q'_*)=1.00004\diam Q'_*=1.00004\cdot 2^{-k-11}$$ with equality unless $J\cap\{c,d\}\neq\emptyset$. The intervals in $\mathcal{J}$ are uniformly separated. Indeed, for each $J=I_0\cap 1.00004Q'_*$, let $x_J$ denote the center of $Q'_*$, let $y_J\in B_{2^{-M-35}\diam Q''_*}(Y)\cap 1.00002Q'_*$, and let $z_J=\Pi_L(y_j)\in J$; then $\diam J<2^{-k-10}$ and \begin{align*}
    |x_J-z_J|&\leq |x_J-y_J|+|y_J-z_J|\\
    &\leq 1.00002\cdot 2^{-k-12} + 2^{-k-12 -M-35} + 2^{-M-47}A_\mathscr{H}^{-1}\cdot 4\lambda A_\mathscr{H} 2^{-k} < 2^{-k-10}.
\end{align*} Because $\{x_J:J\in\mathcal{J}\}$ is $2^{-k}$-separated, it follows that for all distinct $J_1,J_2\in\mathcal{J}$, \begin{align*}\gap(J_1,J_2)&\geq 2^{-k}-|x_{J_1}-z_{J_1}|-\diam J_1-|x_{J_2}-z_{J_2}|-\diam J_2\\
&\geq (1-2^{-8})2^{-k}=(1-2^{-8})2^{11}\cdot 2^{-k-11}\geq 2^{10}\diam Q''_*.\end{align*} For each interval $I\in\mathcal{P}\setminus\mathcal{J}$, choose an efficient subarc $X_I$ of $Y$ such that $\Pi_L(X_I)\subset I$ and $\diam X_I\geq \diam I$. If $I\in\mathcal{P}\setminus\mathcal{J}$ and $I\cap\{c,d\}\neq\emptyset$, then $I$ lies between two distinct intervals $J_1,J_2\in\mathcal{J}$ and $\diam X_I\geq \diam I\geq \gap(J_1,J_2)\geq 2^{10}\diam Q''_*$. At most two \emph{exceptional} $I\in\mathcal{P}\setminus\mathcal{I}$ contain one of the endpoints of $I_0$; the diameter of an exceptional interval $I$ may be relatively large or small. We assign $$\mathcal{X}_Y:=\{X_I:I\in\mathcal{P}\setminus\mathcal{J}\text{ and }\diam I\geq 0.25\diam Q''_*\},$$ which contains all of the subarcs $X_I$ that we defined with at most two exceptions. (We exclude $X_I$ from $\mathcal{X}_Y$ if exceptionally $I\cap \{c,d\}\neq\emptyset$ \emph{and} $\diam I<0.25\diam Q''_*$.) By design, the $2^{-M-35}\diam Q''_*$-neighborhoods of the subarcs in $\mathcal{X}_Y$ do not intersect $\bigcup\{1.00002Q'_*:Q'\in\Child(Q), \diam Q'=\diam Q''\}$. Furthermore, any pair of distinct $X_{I_1},X_{I_2}\in\mathcal{X}_Y$ enjoy $$\gap(X_{I_1},X_{I_2})\geq \gap(\Pi_L(X_{I_1}),\Pi_L(X_{I_2}))\geq 1.00004\diam Q''_*,$$ because $I_1$ and $I_2$ are separated by an interval in $J\in \mathcal{J}$ that does not intersect $\{c,d\}$.

It remains to estimate the total diameter in $\mathcal{X}_Y$ in terms of $\diam Y$. Let us agree to call an interval $I\in\mathcal{P}\setminus\mathcal{I}$ \emph{short}, \emph{medium}, or \emph{long} if $\diam I<0.25\diam Q''_*$, $0.25\diam Q''_*\leq \diam I<2^{10}\diam Q''_*$, or $\diam I\geq 2^{10}\diam Q''_*$, respectively. Above, we showed that any interval $I\in\mathcal{P}\setminus\mathcal{J}$ lying between two intervals in $\mathcal{J}$ is long. Hence any short or medium interval must contain one of the endpoints of $I_0$. Also, if $I$ is short, then $\diam I<0.25\diam Q''_*<0.016\diam I_0$, because $\diam I_0>15.999\diam Q''_*$ (look above).
After deleting any short intervals from the ends of $I_0$, the remaining interval $$I_{00}:=\overline{I_0\setminus\bigcup\{I\in\mathcal{P}\setminus\mathcal{J}:X_I\not\in\mathcal{X}_Y\}}$$ has $\diam I_{00}\geq 0.968\diam I_0\geq 0.96799\diam Y> 15.486\diam Q''_*$. Now, if $J\in\mathcal{J}$, then $\diam J \le 1.00004\diam Q''_*<0.065\diam I_{00}$. If $J\in\mathcal{J}$ and $I$ is long, then $\diam J\leq 1.00004\diam Q''_*<0.001\diam I$. Since there the number of intervals in $\mathcal{J}$ is at most one more than the number of long intervals, it follows that \begin{equation*}\sum_{X_I\in\mathcal{X}_Y}\diam X_I >  \frac{1-0.065}{1.001}\diam I_{00} > 0.90416\diam Y. \qedhere\end{equation*}
\end{proof}

\begin{lemma}\label{large-scale-better} If $U_{Q''}\in\mathcal{A}$, then there exists a family $\mathcal{L}_{Q''}$ of cores $U_{Q'}\subset 1.99\lambda Q''$ with $Q'\in\Child(Q)$ such that \begin{equation} \label{large-scale-better-total}
\diam 2\lambda Q'' \leq 2\ell(R_Q\cap 1.99\lambda Q'') + 0.91 \sum_{U_{Q'}\in\mathcal{L}_{Q''}} \diam H_{Q'}.\end{equation}
\end{lemma}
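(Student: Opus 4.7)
\emph{Proof plan for Lemma \ref{large-scale-better}.}

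The strategy is to apply Lemma \ref{improve} to each of the well-separated subarcs produced by Lemma \ref{large-scale-geometry}, and then add the resulting estimates. First, invoke Lemma \ref{large-scale-geometry} to obtain a finite family $\mathcal{X}$ of efficient subarcs of arc fragments in $\Gamma^*_{1.98\lambda Q''}$ such that the collection
\[
\{1.00002Q'_*:Q'\in\Child(Q),\,\diam Q'=\diam Q''\}\cup\{B_{2^{-M-35}\diam Q''_*}(X):X\in\mathcal{X}\}
\]
is pairwise disjoint, $\diam X\ge 0.25\diam Q''_*$ for each $X\in\mathcal{X}$, and $\sum_{X\in\mathcal{X}}\diam X\ge 1.11\diam 2\lambda Q''$.

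Next, for each $X\in\mathcal{X}$, I will apply Lemma \ref{improve} with $T=X$ and $\mathcal{F}=\emptyset$. The disjointness property above guarantees that any $Q'\in\Child(Q)$ with $1.00002Q'_*\cap X\neq\emptyset$ must satisfy $\diam Q'<\diam Q''$, hence $\diam Q'\le 2^{-KM}\diam Q''$. Consequently $r_X\le 2^{-KM}\diam Q''_*$ and $\rho_X\le 2\lambda A_\mathscr{H}\cdot 2^{12}r_X\le 2^{-84}\diam Q''$, which is negligible compared to both $\diam X$ and $\diam 2\lambda Q''$. Since $KM\ge 100+\log_2 A_\mathscr{H}$, we also have $9r_X\ll 2^{-M-35}\diam Q''_*$, so the sets $\{B_{9r_X}(X):X\in\mathcal{X}\}$ are pairwise disjoint and contained in $1.99\lambda Q''$. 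Combining the two sums on the right-hand side of \eqref{consolidated-improvement} using the worse coefficient $1.00016$, Lemma \ref{improve} yields
\[
\diam X-2\rho_X\le 2.2\,\ell(R_Q\cap B_{9r_X}(X))+1.00016\sum_{U_{Q'}\subset B_{9r_X}(X)}\diam H_{Q'}.
\]

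Summing over $X\in\mathcal{X}$, using the disjointness of $\{B_{9r_X}(X)\}$, absorbing $\sum_X 2\rho_X\lesssim|\mathcal{X}|\cdot 2^{-84}\diam Q''$ into a small slack, and applying $\sum_X\diam X\ge 1.11\diam 2\lambda Q''$, we obtain
\[
1.10\,\diam 2\lambda Q''\le 2.2\,\ell(R_Q\cap 1.99\lambda Q'')+1.00016\sum_{U_{Q'}\in\mathcal{L}_{Q''}}\diam H_{Q'},
\]
where $\mathcal{L}_{Q''}:=\{U_{Q'}:Q'\in\Child(Q),\,U_{Q'}\subset\bigcup_{X\in\mathcal{X}}B_{9r_X}(X)\}\subset\{U_{Q'}:U_{Q'}\subset 1.99\lambda Q''\}$. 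Dividing by $1.10$ gives $2.2/1.10=2$ and $1.00016/1.10<0.91$, yielding \eqref{large-scale-better-total}.

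The main obstacle is justifying Step 2, the application of Lemma \ref{improve} to $X$. Although Lemma \ref{improve} is formally stated for $T\subset T'\in\Gamma^*_{U_Q}$, inspection of its proof (resting on Lemmas \ref{43-estimate}, \ref{topological lemma}, \ref{proof-of-coarse}) reveals that only the following features of $T$ are used: $T=f([a,b])$ is an efficient subarc of $\Gamma$ lying in (an arbitrarily small neighborhood of) $U_Q$, and the classification $\mathcal{N}(T)$ is defined relative to almost flat arcs at child scales $Q'\in\Child(Q)$. Both hold for $X\in\mathcal{X}$, since $X\subset 1.98\lambda Q''\subset 1.00002Q_*$, and the cores interacting with $X$ are all strictly smaller than $Q''$ thanks to the separation in Lemma \ref{large-scale-geometry}; this is precisely what makes the error term $\rho_X$ tiny and permits summing the local estimates coherently.
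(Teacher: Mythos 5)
Your proposal follows essentially the same route as the paper's proof: obtain the separated subarcs $\mathcal{X}$ from Lemma \ref{large-scale-geometry}, apply Lemma \ref{improve} with $T=X$ and $\mathcal{F}=\emptyset$ to each $X\in\mathcal{X}$, sum the resulting estimates using the disjointness of the neighborhoods $B_{9r_X}(X)\subset 1.99\lambda Q''$, and divide; your arithmetic ($2.2/1.10=2$, $1.00016/1.10<0.91$) matches the paper's ($2.2/1.11$, $1.00016/1.11$), and your closing remark about why Lemma \ref{improve} is applicable to $X$ is sound (in fact $X$ is a subarc of $\Image(\tau_0)\cap U_Q$ for the arc $\tau_0\in\Lambda(\lambda Q)$ whose domain contains that of the underlying scale-$Q''$ arc, and $\tau_0\in S(\lambda Q)$ by Definition \ref{B-balls-def}(ii) since its image meets the net ball of $Q$).

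One justification needs repair. You claim that the disjointness statement in Lemma \ref{large-scale-geometry} forces every $Q'\in\Child(Q)$ with $1.00002Q'_*\cap X\neq\emptyset$ to satisfy $\diam Q'<\diam Q''$; but that disjointness only concerns children with $\diam Q'=\diam Q''$, so it does not exclude a child with $\diam Q'>\diam Q''$ whose core meets $X$, and such a child would destroy your bound $r_X\le 2^{-KM}\diam Q''_*$ (and hence the smallness of $\rho_X$ and the disjointness of the $B_{9r_X}(X)$). The missing ingredient is property \hyperref[property-F]{(F)}: since $U_{Q''}\in\mathcal{A}$ and $\mathcal{A}$ satisfies (F) with $T=G_Q$ by Lemma \ref{l:B_2 subset}, one has $2\lambda Q''\cap 16Q'_*=\emptyset$ for every $Q'\in\Child(Q)$ with $\diam Q'>\diam Q''$, so no such larger core can come near $X\subset 1.98\lambda Q''$. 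This is exactly the step the paper makes explicitly; with that citation added your argument is complete.
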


\begin{proof} Fix $U_{Q''}\in\mathcal{A}$ and let $\mathcal{X}$ be the family of efficient subarcs of arc fragments in $\Gamma^*_{1.98\lambda Q''}$ given by Lemma \ref{large-scale-geometry}. With the intention to invoke Lemma \ref{improve}, we define $$\mathcal{L}_{Q''}:=\{U_{Q'}:Q'\in\Child(Q)\text{ and }U_{Q'}\cap B_{9\diam Q''_*}(1.98\lambda Q'')\neq\emptyset\}.$$ Property \hyperref[property-F]{(F)} with $\mathcal{F}=\mathcal{A}$ and $T=G_Q$ tells us $\diam Q'\leq \diam Q''$ for all $Q' \in \Child(Q)$ such that $16Q'_* \cap 2\lambda Q'' \not = \emptyset$. This more than ensures $U_{Q'}\subset 1.99\lambda Q''$ for every $U_{Q'}\in\mathcal{L}_{Q''}$.

Let $X\in\mathcal{X}$. By Lemma \ref{large-scale-geometry}, $\diam X\geq 0.25\diam Q''_*$ and \begin{align}\label{section 7 rho estimate in subarcs}\rho_X\leq 2^{-KM}\cdot 2\lambda A_\mathscr{H} \cdot 2^{12}\diam Q''_* \leq 2^{-M-84}\diam Q''_*,
\end{align} since $X\cap 1.00002Q'_*=\emptyset$ whenever $Q'\in\Child(Q)$ and $\diam Q' = \diam Q''$. It follows that $\diam X-2\rho_X\geq 0.99999\diam X$. By Lemma \ref{improve}, with $T=X$ and $\mathcal{F}=\emptyset$,
we obtain \begin{equation}\label{X-estimate} 0.99999\diam X \leq 2.2\,\ell(R_Q \cap B_{9r_X}(X)) + 1.00016 \sum_{U_{Q'}\subset B_{9r_X}(X)}\diam H_{Q'}.\end{equation}

Finally, by Lemma \ref{large-scale-geometry}, the arcs in $\mathcal{X}$ are well-separated from each other compared with \eqref{section 7 rho estimate in subarcs} and have total diameter $\sum_{X\in\mathcal{X}}\diam X\geq 1.11\diam 2\lambda Q''$. Thus, summing \eqref{X-estimate} over all $X\in\mathcal{X}$ and rearranging, we obtain \eqref{large-scale-better-total}.\end{proof}

Because $\{2\lambda Q'':U_{Q''}\in \mathcal{A}\}$ is pairwise disjoint, \eqref{A-estimates} follows by applying \eqref{large-scale-better-total} to each core $U_{Q''}\in \mathcal{A}$. This concludes the proof of Lemma \ref{l:B_2 subset}.

\smallskip

This completes our demonstration of the \hyperref[t:main]{Main Theorem}. In \emph{any} Banach space, a curve of length 1 rarely looks under a magnifying glass like a union of two or more line segments.

\appendix

\section{Unions of overlapping balls in a metric space}

Lemma \ref{union-ball-chains} bounds the radius of a ball containing the union of chains of balls with geometric decay and good separation between balls of similar radii.  Although it can be lowered slightly by increasing the parameter $\xi$, the factor 3 in the lower bound on the gap between balls in level $k$ cannot be made arbitrarily small.

\begin{lemma}[{cf.~\cite[Lemma 2.16]{Schul-AR}}] \label{union-ball-chains} Let $\XX$ be a metric space, let $\xi>6$, and let $r_0>0$. Suppose $\{B(x_i,r_i)\}_{i=1}^{I}$ is a finite ($I<\infty$) or infinite ($I=\infty$) sequence of closed balls in $\XX$ and $(k_i)_{i=1}^I$ is a sequence of integers bounded from below such that \begin{enumerate}
\item[(i)] chain property: for all $j\geq 2$, each pair $(B_1,B_2)$ of balls in the initial segment $\{B(x_i,r_i):1\leq i\leq j\}$ can be connected by a chain of balls from the collection, i.e.~there exists a finite sequence such that the first ball is $B_1$, the last ball is $B_2$, and consecutive balls in the sequence have nonempty intersection;
\item[(ii)] geometric decay: for all $i\geq 1$, we have $r_i\leq \xi^{-k_i}r_0$; and
\item[(iii)] separation within levels: for all $i,j\geq 1$ with $i\neq j$, if $k_i=k_j=k$, then $\gap(B(x_i,r_i),B(x_j,r_j))\geq 3\xi^{-k}r_0$, where $\gap(S,T)=\inf\{\dist(s,t):s\in S,t\in T\}$.
\end{enumerate} Then there exists a unique $M\geq 1$ such that $k_M=\min_{i\geq 1} k_i$, and moreover, \begin{equation}\label{e:union-location} \textstyle\bigcup_{i=1}^I B(x_i,r_i)\subset B\big(x_M,(1+3/\xi)\xi^{-k_M}r_0\big).\end{equation}\end{lemma}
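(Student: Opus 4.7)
The plan is to prove the containment (\ref{e:union-location}) by induction on $j\ge 1$, jointly establishing (i$'$) that $\min_{i\le j}k_i$ is attained at a unique index $M_j$, and (ii$'$) that $\bigcup_{i=1}^{j}B(x_i,r_i)\subset B\bigl(x_{M_j},(1+3/\xi)\xi^{-k_{M_j}}r_0\bigr)$; the lemma then follows by taking $j\to I$ (finite or infinite). Uniqueness in (i$'$) drops out of (ii$'$) together with separation (iii) and the hypothesis $\xi>6$: if two distinct indices both attained $\min k_i=k$, containment would place their centers within distance $2(1+3/\xi)\xi^{-k}r_0<3\xi^{-k}r_0$ of each other, contradicting $\gap\ge 3\xi^{-k}r_0$. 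The base case $j=1$ is trivial since $r_1\le\xi^{-k_1}r_0\le(1+3/\xi)\xi^{-k_1}r_0$.

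For the inductive step from $j$ to $j+1$, append $B_{j+1}$. The chain property (i) yields a sequence $B_{M_j}=C_0,C_1,\dots,C_s=B_{j+1}$ with $C_t\cap C_{t+1}\ne\emptyset$ for all $t$, so iterating the triangle inequality along the chain gives
\begin{equation*}
|x_{j+1}-x_{M_j}|+r_{j+1}\le r_{M_j}+2\sum_{t=1}^{s}r_{C_t}.
\end{equation*}
Using $r_{M_j}\le\xi^{-k_{M_j}}r_0$ and the elementary inequality $2/(\xi-1)\le 3/\xi$ (valid for $\xi\ge 3$, in particular for $\xi>6$), the needed containment would follow from the uniform chain bound
\begin{equation*}
\sum_{t=1}^{s}r_{C_t}\le\sum_{\ell>k_{M_j}}\xi^{-\ell}r_0=\frac{\xi^{-k_{M_j}}r_0}{\xi-1}
\end{equation*}
for a suitably chosen chain (e.g.\ one minimizing $\sum r_{C_t}$). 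The subcase $k_{j+1}<k_{M_j}$ is handled symmetrically by setting $M_{j+1}=j+1$ and reversing the chain, while the subcase $k_{j+1}=k_{M_j}$ is ruled out by (iii) once the containment is in place.

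The main obstacle is establishing the chain bound uniformly in the length of the chain, and this is where (ii) and (iii) interact essentially. The key observation is that if a chain visits two balls $C_a,C_b$ at a common level $k$, the intermediate balls $C_{a+1},\dots,C_{b-1}$ cannot all lie at levels $\ge k+1$: were this the case, iterated triangle inequality combined with geometric decay (ii) would give
\begin{equation*}
|x_{C_a}-x_{C_b}|\le r_{C_a}+r_{C_b}+2\!\!\sum_{\ell\ge k+1}\xi^{-\ell}r_0=r_{C_a}+r_{C_b}+\frac{2\xi^{-k}r_0}{\xi-1},
\end{equation*}
contradicting $\gap(C_a,C_b)\ge 3\xi^{-k}r_0$ from (iii), since $2/(\xi-1)<3$ whenever $\xi>5/3$. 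Hence every sub-chain between same-level balls must contain an intermediate ball of strictly lower level; the chain property (i) then allows one to reroute through this lower-level ball, bypassing one of the repeated level-$k$ balls. Iterating this shortcut reduces to a chain with pairwise distinct levels, all exceeding $k_{M_j}$, and on such a chain the sum $\sum r_{C_t}\le\sum_{\ell>k_{M_j}}\xi^{-\ell}r_0$ is immediate from geometric decay. This closes the induction.
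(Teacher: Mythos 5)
Your overall scheme (induct on the number of balls, connect the new ball to the current top ball by a chain, and bound the total radius along the chain) hinges on the claim that the chain can be rerouted so that its balls occupy pairwise distinct levels, giving $\sum_t r_{C_t}\le\sum_{\ell>k_{M_j}}\xi^{-\ell}r_0$. This is where the proof breaks, in two ways. First, the justification of your ``key observation'' is circular: the displayed bound $|x_{C_a}-x_{C_b}|\le r_{C_a}+r_{C_b}+2\sum_{\ell\ge k+1}\xi^{-\ell}r_0$ is only valid if the intermediate balls already have pairwise distinct levels, which is exactly what you are trying to establish; making it honest requires applying the lemma (or a separate induction) to the sub-collection of intermediate balls, which you have not set up. Second, and more seriously, the conclusion you draw from it is false: a chain with pairwise distinct levels need not exist, and the repeated-level ball may be impossible to ``bypass.'' Take $\XX=\RR$, $\xi=10$, $r_0=1$, and the balls, listed in order, $B(0,1)$ (level $0$), $B(1.05,0.1)$ (level $1$), $B(1.151,0.001)$ (level $3$), $B(1.162,0.01)$ (level $2$), $B(1.173,0.001)$ (level $3$), $B(1.1741,0.0001)$ (level $4$). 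Consecutive balls in this list touch and no other pairs intersect, so (i) holds and the intersection graph is a path; the radii equal $\xi^{-k_i}r_0$, and the only repeated level is $3$, where the two balls have gap $0.02\ge 3\xi^{-3}$, so (ii) and (iii) hold. The unique chain from the top ball to the last ball has levels $1,3,2,3,4$: level $3$ must repeat (the level-$2$ ball sits between the two level-$3$ balls, and there is no alternative route whatsoever), and the sum of radii along it is $0.1121>1/9=\sum_{\ell\ge 1}\xi^{-\ell}r_0$. So the uniform chain bound you need fails for every admissible chain, the shortcutting step cannot be carried out, and the induction does not close. Configurations of this type can be nested so that chains are forced to revisit levels repeatedly; controlling the total cost of such repeats is the genuine difficulty, and nothing in your proposal addresses it.

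The paper avoids chains altogether. It argues by strong induction on the number of balls: the unique largest ball $B(x_m,r_m)$ is removed, the remaining balls split into maximal chain-connected clusters with strictly fewer balls, and the inductive hypothesis traps each cluster in a ball of radius $\big(1+2\xi^{-1}+4\xi^{-2}+\cdots\big)\xi^{-k_j}r_0$ about its own largest ball $B(x_j,r_j)$. Since $B(x_m,r_m)$ meets each cluster, the gap in (iii) together with $\xi>6$ forces $k_j\ne k_m$, hence $k_j\ge k_m+1$, so each cluster adds at most $2\xi^{-1}\big(1+2\xi^{-1}+\cdots\big)\xi^{-k_m}r_0$ beyond $r_m$, which closes the induction and yields the constant $1+3/\xi$. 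If you want to salvage your write-up, replacing the chain bookkeeping by this remove-the-largest-ball cluster decomposition is the natural fix; as written, the argument has an unfixable step.
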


\begin{proof} Let parameters $\xi$ and $r_0$, a sequence $\{B(x_i,r_i)\}_{i=1}^I$, and a sequence $(k_i)_{i=1}^I$ be given with the stated assumptions. Without loss of generality, we may assume that $r_0=1$. Because $\{k_i:i\geq 1\}$ is a set of integers bounded from below, we may choose and fix $M\geq 1$ such that $k_M=\min_{i\geq 1}k_i$. (We prove $M$ is unique later.) Our main task is to prove that for all integers $1\leq n\leq I$, \begin{equation}\label{e:union-goal} \textstyle\bigcup_{i=1}^n B(x_i,r_i) \subset B\big(x_m,(1+2\xi^{-1}+4\xi^{-2}+8\xi^{-3}+\cdots)\xi^{-k_m}\big),\end{equation} where $1\leq m\leq n$ is an index such that $k_m=\min_{i=1}^n k_i$ and $m=M$ whenever $n\geq M$. When $n=1$, there is only one ball and \eqref{e:union-goal} is trivial by (ii). Note that the series in \eqref{e:union-goal} converges, because $\xi>2$. We proceed by strong induction. Let $1\leq N<I$ and suppose that up to relabeling \eqref{e:union-goal} holds for any chain-connected cluster of $N$ or fewer balls satisfying (ii) and (iii). Set $n=N+1$ and choose any index $1\leq m\leq N+1$ such that $k_m=\min_{i=1}^{N+1} k_i$, if $N+1<M$, and set $m=M$, if $N+1\geq M$. Sort the collection $\{B(x_i,r_i):1\leq i\leq N+1\}\setminus \{B(x_m,r_m)\}$ into a finite number of maximal chain-connected components $\mathscr{U}_1,\dots,\mathscr{U}_l$ and note that each $\mathscr{U}_i$ contains at most $N$ balls. See Figure \ref{fig:chain}.

\begin{figure}\begin{center}\includegraphics[width=2in]{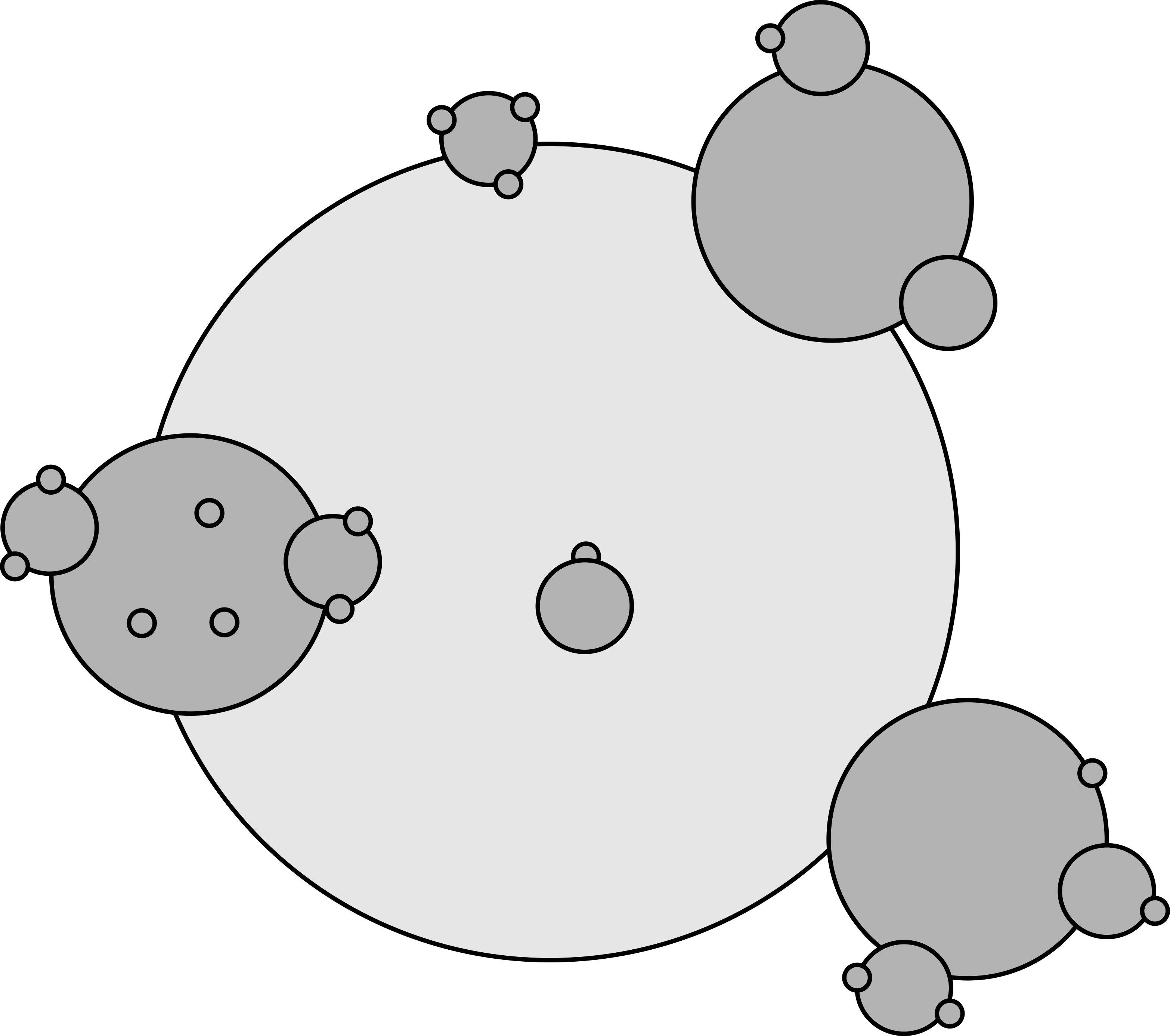}\end{center}\caption[\,Clusters of overlapping balls]{Removing the largest ball (light gray) leaves a finite number of chain-connected ball clusters (dark gray), each of which contains a unique ball of maximal radius.}\label{fig:chain}\end{figure}

Fix a cluster $\mathscr{U}=\mathscr{U}_i$. By the inductive hypothesis, there exists $B(x_j,r_j)\in\mathscr{U}$ so that $$\textstyle\bigcup \mathscr{U}\subset B\big(x_j,(1+2\xi^{-1}+4\xi^{-2}+8\xi^{-3}+\cdots)\xi^{-k_j}\big).$$ Now, $B(x_j,r_j)$ and $B(x_m,r_m)$ both intersect $\bigcup\mathscr{U}$ by (i) and maximality of $\mathscr{U}$. Hence \begin{equation}\label{e:gap-from-above} \gap(B(x_j,r_j),B(x_m,r_m))\leq \diam \textstyle\bigcup \mathscr{U}
\leq (2\xi^{-k_j})/(1-2/\xi)<3\xi^{-k_j}\end{equation} by our requirement that $\xi>6$. By (iii), we conclude that $k_j\neq k_m$. Thus, $k_j\geq k_m+1$, because $k_m$ was chosen to be the minimum level among $k_1,\dots,k_{N+1}$.  Ergo, $$\textstyle\bigcup \mathscr{U} \subset B\big(x_j, \xi^{-1}(1+2\xi^{-1}+4\xi^{-2}+8\xi^{-3}+\cdots)\xi^{-k_m}\big).$$ Thus, by (i) and the triangle inequality, $$\textstyle\bigcup\mathscr{U}\subset B\big(x_m, r_m+2\xi^{-1}(1+2\xi^{-1}+4\xi^{-2}+\cdots)\xi^{-k_m}\big).$$ As this conclusion is true for each family $\mathscr{U}$ and trivially true for $\{B(x_m,r_m)\}$, we obtain  $$\textstyle\bigcup_{i=1}^{N+1}B(x_i,r_i)\subset B\big(x_m, r_m+2\xi^{-1}(1+2\xi^{-1}+4\xi^{-2}+\cdots)\xi^{-k_m}\big).$$  Applying (ii) yields \eqref{e:union-goal} for $n=N+1$. Therefore, by induction, \eqref{e:union-goal} holds for all integers $1\leq n\leq I$. Further, reviewing the inductive step, we conclude that $M$ is the unique index such that $k_M=\min_{i\geq 1} k_i$.

To finish, observe that for any point $z\in \bigcup_{i=1}^I B(x_i,r_i)$, there exists an index $n\geq M$ such that $z\in \bigcup_{i=1}^n B(x_i,r_i)$. By \eqref{e:union-goal}, we have $$z\in B\left (x_M, \left(1+(2\xi^{-1})/(1-2/\xi)\right)\xi^{-k_M}\right).$$ Because $\xi>6$ and $r_0=1$, this yields \eqref{e:union-location}.
\end{proof}

\section{Lipschitz projections onto lines in Banach spaces}\label{sec:smooth}

We now present a class of 1-Lipschitz projections onto a line in a Banach space. Given a real Banach space $\XX$, let $\XX^*$ denote the dual of $\XX$ and let $J: \XX \rightarrow \XX^*$ denote a \textit{normalized duality mapping}, i.e.~ a (nonlinear) map satisfying
\begin{equation}\label{J-properties}
|J(x)|_{\XX^*} = |x| \quad\text{and}\quad \langle J(x), x \rangle = |x|^2\quad\text{for all }x\in \XX,
\end{equation} where $\langle f,x\rangle\equiv f(x)\in\RR$ denotes the natural pairing of $f\in \XX^*$ and $x\in\XX$. Alternatively, $J$ is a subgradient of the convex function $x\in\XX\mapsto (1/2)|x|^2$ (see \cite{Asplund,Kien02}). The norm on any (uniformly) smooth Banach space $\XX$ is Gateaux (uniformly Fr\'echet) differentiable, and thus, $J$ is uniquely determined (see e.g.~\cite[Chapter Two]{Diestel}) when $\XX$ is smooth.

\begin{example}When $\XX=\ell_p$ with $1<p<\infty$, $J(x)= |x|_{\ell_p}^{2-p}y\in \ell_p^*=\ell_{p'},$ where $y=(|x_1|^{p-2}x_1,|x_2|^{p-2}x_2,\dots)$ and $p'$ is the conjugate exponent to $p$.\end{example}

\begin{definition}[{\cite[Definition 3.31]{ENV-Banach}}] \label{def:J-proj}
Let $\XX$ be a Banach space and let $L$ be a one-dimensional linear subspace of $\XX$. Define the \emph{$J$-projection} $\Pi_L$ onto $L$ by
\begin{equation}\Pi_{L}(x) := \langle J(v), x \rangle v\quad\text{for all }x\in\XX,\end{equation} where $J$ is a normalized dual mapping and $v$ is a point in $L$ with $|v|=1$. When $L$ is a one-dimensional affine subspace of $\XX$, define $\Pi_L\equiv p+\Pi_{L-p}(\cdot-p)$ for any choice of $p\in L$.
\end{definition}

\begin{example} Let $\XX=\ell_1^2=(\RR^2,|\cdot|_1)$, let $v=(1,0)$, and let $L=\Span v$ be the $x$-axis. There is a one-parameter family of $J$-projections onto $L$ given as follows. For any $|s|\leq 1/2$, let $w_s=(s,1-|s|)$. With respect to the basis $v,w_s$, $$(x,y)=\Big(x-\frac{s}{1-|s|}y\Big)v+\Big(\frac{1}{1-|s|}y\Big)w_s\quad\text{for all }(x,y)\in\ell^2_1.$$ For any $|s|\leq 1/2$, a $J$-projection onto $L$ is given by $$\Pi_L(x,y)=\Big(x-\frac{s}{1-|s|}y,0\Big)\quad\text{for all }(x,y)\in\ell^2_1.$$ Geometrically, the fibers $\Pi_L^{-1}(x,0)$ are lines parallel to $\Span w_s$ and $\Pi_L^{-1}(v)=v+\Span w_s$ is a supporting line for the unit ball in $\ell_1^2$. When $s=0$, $\Pi_L$ is the orthogonal projection onto $L$. See Figure \ref{fig:cylinder} for an illustration.\end{example}

The following lemma is easily derived from the definition of $\Pi_L$ and \eqref{J-properties}; see \cite[Lemma 2.17]{Badger-McCurdy-1} for sample details.

\begin{lemma}\label{j-proj facts} Let $\XX$ be a Banach space and let $L$ be a line in $\XX$. Every $J$-projection $\Pi_L$ onto $L$ is a 1-Lipschitz projection, i.e.~$\Pi_L(x)\in L$ for all $x$, $\Pi_L(x)=x$ if and only if $x\in L$, and $|\Pi_L(x)-\Pi_L(y)|\leq |x-y|$ for all $x,y$. Moreover, $\dist(x,L)\leq |x-\Pi_L(x)|\leq 2\dist(x,L)$ for every $x\in\XX$.
\end{lemma}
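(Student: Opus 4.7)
The plan is to reduce to the case where $L$ is a linear (one-dimensional) subspace by exploiting the affine definition $\Pi_L = p + \Pi_{L-p}(\cdot - p)$; translations are isometries, so all four conclusions transfer immediately from the linear case. Fix a unit vector $v \in L$ and write $f := J(v) \in \XX^*$, so by \eqref{J-properties} we have $|f|_{\XX^*} = 1$ and $\langle f, v\rangle = 1$, and by definition $\Pi_L(x) = \langle f, x\rangle v$ for all $x \in \XX$.

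That $\Pi_L(x) \in L$ is immediate since $\Pi_L(x)$ is a scalar multiple of $v$. For the fixed-point characterization, if $x = tv \in L$ then $\Pi_L(x) = t \langle f, v\rangle v = tv = x$, and conversely $\Pi_L(x) = x$ forces $x$ to be a scalar multiple of $v$, hence $x \in L$. For the Lipschitz estimate, I would use the fact that $f$ is a bounded linear functional and compute
\begin{equation*}
  |\Pi_L(x) - \Pi_L(y)| = |\langle f, x-y\rangle|\cdot |v| \leq |f|_{\XX^*}\,|x-y| = |x-y|,
\end{equation*}
which also shows $\Pi_L$ is a projection in the functional-analytic sense.

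For the final two-sided estimate on $|x - \Pi_L(x)|$, the lower bound $\dist(x,L) \leq |x - \Pi_L(x)|$ is trivial since $\Pi_L(x) \in L$. For the upper bound, the trick is to use that $\Pi_L$ fixes $L$ pointwise together with the 1-Lipschitz property: for any $y \in L$,
\begin{equation*}
  |x - \Pi_L(x)| \leq |x - y| + |\Pi_L(y) - \Pi_L(x)| \leq 2|x-y|,
\end{equation*}
and taking the infimum over $y \in L$ yields $|x - \Pi_L(x)| \leq 2\dist(x,L)$. There is really no obstacle here; the only subtlety is simply recognizing that the two defining properties of the normalized duality map in \eqref{J-properties} supply exactly what is needed ($|f|_{\XX^*}=1$ for the Lipschitz bound and $\langle f,v\rangle = 1$ for the fixed-point property). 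The factor of $2$ in the final inequality is sharp in general Banach spaces and reflects the possible non-orthogonality of the $J$-projection.
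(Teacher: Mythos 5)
Your proof is correct, and it is exactly the standard derivation the paper has in mind: the paper itself gives no argument here, remarking only that the lemma is ``easily derived from the definition of $\Pi_L$ and \eqref{J-properties}'' and pointing to Part I for sample details. Your use of $|J(v)|_{\XX^*}=1$ for the Lipschitz bound, $\langle J(v),v\rangle=1$ for the fixed-point property, and the triangle-inequality trick $|x-\Pi_L(x)|\leq |x-y|+|\Pi_L(y)-\Pi_L(x)|\leq 2|x-y|$ for $y\in L$ is precisely that intended argument, with the affine case handled correctly by translation.
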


A separated set of points that is sufficiently close to a line admits a canonical ordering (up to choice of orientation) and is locally finite, quantitatively.

\begin{lemma}\label{l:count} Let $\XX$ be a Banach space. Let $\Pi_{L_1}$ and $\Pi_{L_2}$ be $J$-projections onto lines $L_1$ and $L_2$, respectively. If $V\subset \XX$ is a $\delta$-separated set and there exists $0\leq \alpha<1/6$ such that $|v-\Pi_{L_i}(v)|\leq \alpha\delta$ for all $v\in V$ and $i=1,2$, then there exist compatible identifications of $L_1$ and $L_2$ with $\RR$ such that $\Pi_{L_1}(v')\leq \Pi_{L_2}(v'')$ if and only if $\Pi_{L_2}(v')\leq \Pi_{L_2}(v'')$ for all $v',v''\in V$. Moreover, if $v_1,v_2\in V$ and $i=1,2$, then $$|\Pi_{L_i}(v_1)-\Pi_{L_i}(v_2)|\leq |v_1-v_2|\leq (1+3\alpha)|\Pi_{L_i}(v_1)-\Pi_{L_i}(v_2)|.$$ In particular, $V$ is locally finite: $\# V\cap B(x,r\delta)\leq 1+3r$ for every $x\in \XX$ and $r>0$.\end{lemma}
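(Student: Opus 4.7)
The plan is to prove the lemma in three stages: the bi-Lipschitz estimate, the ordering compatibility, and the local-finiteness count.

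For the bi-Lipschitz estimate (the ``moreover'' clause), the Lipschitz upper bound $|\Pi_{L_i}(v_1) - \Pi_{L_i}(v_2)| \leq |v_1 - v_2|$ is immediate from Lemma~\ref{j-proj facts}. For the other direction, the triangle inequality applied with $|\Pi_{L_i}(v_j) - v_j| \leq \alpha\delta$ yields $|v_1 - v_2| \leq |\Pi_{L_i}(v_1) - \Pi_{L_i}(v_2)| + 2\alpha\delta$. When $v_1 \neq v_2$, the $\delta$-separation of $V$ lets me rearrange to obtain $|\Pi_{L_i}(v_1) - \Pi_{L_i}(v_2)| \geq (1-2\alpha)\delta$, and then feeding this lower bound back into the triangle bound produces $|v_1 - v_2| \leq |\Pi_{L_i}(v_1) - \Pi_{L_i}(v_2)|/(1-2\alpha)$. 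The elementary inequality $(1-2\alpha)^{-1} \leq 1 + 3\alpha$, equivalent to $\alpha(1-6\alpha) \geq 0$, closes out the bound for $\alpha \leq 1/6$.

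The crux of the argument is compatibility of the orderings, which I would route through the composition $\Pi_{L_1} \circ \Pi_{L_2} \colon \XX \to L_1$. Since $\Pi_{L_1}$ is $1$-Lipschitz and $|\Pi_{L_2}(v) - v| \leq \alpha\delta$ for $v \in V$, one has
\[
 |\Pi_{L_1}(\Pi_{L_2}(v)) - \Pi_{L_1}(v)| \leq \alpha\delta \quad\text{for all } v \in V.
\]
Moreover, $\Pi_{L_1}|_{L_2}\colon L_2 \to L_1$ is affine linear: once unit direction vectors $e_i$ are fixed on the linear part of each $L_i$, its linear part has slope $\mu := \langle J(e_1), e_2\rangle$ with $|\mu|\leq 1$. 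If $\mu = 0$, then $\Pi_{L_1}(L_2)$ collapses to a single point $q \in L_1$, so the above bound forces $\Pi_{L_1}(V) \subset B(q,\alpha\delta)$; combined with the $(1-2\alpha)\delta$-separation of $\Pi_{L_1}(V)$ from the previous paragraph and the gap $(1-2\alpha)\delta > 2\alpha\delta$ available for $\alpha < 1/6$, this forces $\#V \leq 1$, and the claim is vacuous. Otherwise, flip the orientation of $L_2$ if needed so that $\mu > 0$, which makes $\Pi_{L_1}|_{L_2}$ order-preserving.

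With orientations chosen, for distinct $v, v' \in V$ the identity
\[
 \Pi_{L_1}(\Pi_{L_2}(v')) - \Pi_{L_1}(\Pi_{L_2}(v)) = \bigl(\Pi_{L_1}(v') - \Pi_{L_1}(v)\bigr) + \mathcal{E}, \qquad |\mathcal{E}| \leq 2\alpha\delta,
\]
together with $|\Pi_{L_1}(v') - \Pi_{L_1}(v)| \geq (1-2\alpha)\delta > 2\alpha\delta$, shows that $\Pi_{L_1}(\Pi_{L_2}(v')) - \Pi_{L_1}(\Pi_{L_2}(v))$ and $\Pi_{L_1}(v') - \Pi_{L_1}(v)$ carry a common sign on $L_1$; since $\mu > 0$, this sign also matches that of $\Pi_{L_2}(v') - \Pi_{L_2}(v)$ on $L_2$, giving the stated compatibility. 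For the local-finiteness bound, $\Pi_{L_1}(V \cap B(x,r\delta))$ is a $(1-2\alpha)\delta$-separated subset of $L_1\cong\RR$ contained in an interval of diameter $\leq 2r\delta$, so $\#(V\cap B(x,r\delta)) \leq 2r/(1-2\alpha) + 1 \leq 1 + 3r$ whenever $\alpha \leq 1/6$. The main subtlety is isolating the degenerate case $\mu = 0$ and recognizing that the hypothesis $\alpha < 1/6$ renders $V$ essentially trivial there, so no fancier argument (e.g.\ a triple/cocycle analysis) is needed to promote pairwise sign agreement to a global compatible identification.
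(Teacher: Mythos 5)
Your proof is correct, and all three pieces (the bi-Lipschitz estimate, the order compatibility, the counting bound) check out: the lower bound $|\Pi_{L_i}(v_1)-\Pi_{L_i}(v_2)|\geq(1-2\alpha)\delta$ for distinct points, the elementary inequality $(1-2\alpha)^{-1}\leq 1+3\alpha$ for $0\leq\alpha\leq 1/6$, and the packing count $n-1\leq 2r\delta/((1-2\alpha)\delta)\leq 3r$ are all exactly what is needed, and you correctly prove the intended statement (the first inequality in the lemma's display should read $\Pi_{L_1}(v')\leq\Pi_{L_1}(v'')$; the printed $\Pi_{L_2}(v'')$ is a typo). The comparison with the paper is a bit lopsided: the paper's ``proof'' of this lemma is essentially a citation --- it says to repeat the proof of Lemma 2.1 of Part I \emph{mutatis mutandis} and only writes out the local-finiteness count, which coincides with yours ($(2/3)\delta(n-1)\leq 2r\delta$). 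Where you do genuine work that the paper outsources is the compatibility of the two orderings: your device of composing $\Pi_{L_1}\circ\Pi_{L_2}$, observing that $\Pi_{L_1}|_{L_2}$ is affine with slope $\mu=\langle J(e_1),e_2\rangle$, isolating the degenerate case $\mu=0$ (where the hypotheses force $\#V\leq 1$ because $(1-2\alpha)\delta>2\alpha\delta$), and then using that a perturbation of size $\leq 2\alpha\delta$ cannot flip the sign of a difference of size $\geq(1-2\alpha)\delta$, is a clean, self-contained way to pass from pairwise triangle-inequality estimates to a global compatible identification. This buys a proof readable without Part I, at the modest cost of introducing the slope $\mu$ (of which only the sign, not the bound $|\mu|\leq 1$, is actually used); the spirit --- separation beats the projection error, so signs of projected differences are stable --- is the same as the argument the paper points to.
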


\begin{proof} Repeat the proof of \cite[Lemma 2.1]{Badger-McCurdy-1}, \emph{mutatis mutandis}. (See \cite[Lemma 2.18]{Badger-McCurdy-1} for a related result.) The displayed inequality implies $\Pi_{L_i}|_V$ is injective and $\Pi_{L_i}(V)$ is a $(2/3)\delta$-separated subset of the line $L_i$, whence $V$ is locally finite. To be precise, writing $n\leq\#V\cap B(x,r\delta)$, we have $(2/3)\delta (n-1) \leq \diam \Pi_L(B(x,r\delta))\leq 2r\delta$.\end{proof}

\section{Comments on Lemma 3.28 in [Schul 2007]}\label{appendix328}

In the authors' opinion, the proof of \cite[Lemma 3.28]{Schul-Hilbert} is incorrect and the mistake made in the proof resists a simple fix. The error is in addition to the gap identified in \cite[Remark 3.8]{Badger-McCurdy-1} and is unrelated to the issue of radial versus diametrical arcs discussed in Remarks \ref{r:Schul-1} and \ref{remark:challenges}.

To describe the situation, let us quickly recall the basic setup in \cite{Schul-Hilbert}, which is similar to \S\ref{martingale}, but with some differences. Given a nested sequence $(X_n)_{n=n_0}^\infty$ of $2^{-n}$-nets for a rectifiable curve $\Gamma$ in a Hilbert space $H$, let  $\hat{\mathscr{G}}=\{B(x,A_{\mathscr{G}}):x\in X_n,n\geq n_0\}$ denote the corresponding (truncated) multiresolution family for $\Gamma$. Let $\mathscr{G}$ denote the set of all $Q\in\hat{\mathscr{G}}$ such that $4Q\setminus\Gamma\neq\emptyset$. Choose a Lipschitz continuous parameterization $f:[0,1]\rightarrow \Gamma$ such that $f(0)=f(1)$ and $\#f^{-1}(\{x\})\leq 2$ for $\Haus^1$-a.e.~$x\in\Gamma$. For any ball $Q\in\mathscr{G}$, define $\Lambda(Q)$ to be the set of arcs $\tau=f|_{[a,b]}$ such that $[a,b]$ is a maximal connected component of $f^{-1}(\Gamma\cap Q)$. For each arc $\tau$, define the \emph{arc beta number} $\tilde\beta(\tau)$ by \eqref{e:arc-beta}. Fix parameters $0<\epsilon_1,\epsilon_2\ll_{A_\mathscr{G}} 1$. We say that $\tau$ is \emph{almost flat} and write $\tau\in S(Q)$ if $\tilde\beta(\tau)<\epsilon_2 \beta_\Gamma(Q)$. Fix an integer $J\gg \log_2 A_\mathscr{G}$ and for each $Q\in\mathscr{G}$, define cores $$U_Q:=U_Q^{J,1/64}\quad\text{and}\quad U_Q^x:=U_Q^{J,1/8}$$ using Definition \ref{def-general-cores} above with $\mathscr{G}$ in place of $\mathscr{H}$. For each $Q\in\mathscr{G}$ and $\lambda\in\{1,2,4\}$ such that $\lambda Q\in\mathscr{G}$, choose an arc $\gamma_{\lambda Q}\in\Lambda(\lambda Q)$ containing the center of $Q$. Do this in such a way that $\gamma_{2Q}$ extends $\gamma_Q$ and $\gamma_{4Q}$ extends $\gamma_{2Q}$ whenever the arcs are defined. For each $\lambda\in\{1,2,4\}$, introduce the family  $$\mathscr{G}_2^\lambda:=\{Q\in\mathscr{G}:\gamma_{\lambda Q}\in S(\lambda Q)\text{ and }\beta_{S(\lambda Q)}>\epsilon_1\beta_\Gamma(Q)\}.$$ (Schul's $\mathscr{G}_2^\lambda$ balls correspond to this paper's $\mathscr{B}^\lambda$ balls. Schul also defines $\mathscr{G}_1^\lambda$ and $\mathscr{G}_3^\lambda$ balls, but these are unrelated to Lemma 3.28.) Continuing to follow \cite{Schul-Hilbert}, let us focus on the case $\lambda=1$. Choose a parameter $C_U\gg_{A_\mathscr{G}} 1$ and define  $\Delta_{2.1}$ to be the subfamily of all balls $Q\in\mathscr{G}_2^1$ such that \begin{itemize}
\item almost flat arcs are flatter in $U_Q^x$ than in $Q$: $\beta_{S(Q)}(U_Q^x)\leq C_U^{-1}\beta_{S(\lambda Q)}(Q)$; and,
\item every arc $\tau\in\Lambda(Q)$ such that $\Image(\tau)\cap U_Q\neq\emptyset$ is almost flat: $\tau\in S(Q)$.\end{itemize}
(There are also subfamilies $\Delta_1$ and $\Delta_{2.2}$, which are not relevant here.)

\begin{lemma}[{\cite[Lemma 3.28]{Schul-Hilbert}}] \label{l:328} For every integer $0\leq j\leq J-1$, the family $\Delta'=\{Q\in\Delta_{2.1}:\radius Q=A_\mathscr{G} 2^{-k}\text{ for some }k\equiv j\ (\mathrm{mod}\ J)\}$ satisfies $$\sum_{Q\in\Delta'}\diam Q \lesssim_{A\mathscr{H}} \Haus^1(\Gamma).$$\end{lemma}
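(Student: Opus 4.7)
\textbf{Proof proposal for Lemma \ref{l:328}.}

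The plan is to adapt the martingale framework from Section \ref{martingale} to this setting, exploiting the fact that Schul's definition of almost flatness (based on $\tilde\beta$ rather than $\beta$) is strictly stronger than the one in Definition \ref{def:arcs}. First, I would fix $j$ and $\Delta'$, and for each $Q\in\Delta'$ form a core $U_Q$ (of type $(J,1/64)$, with $J\equiv 0\pmod{j}$ arranged to match the stratification), so that $\{U_Q:Q\in\Delta'\}$ is a tree under inclusion by Lemma \ref{core-family}. Since the membership in $\Delta'$ restricts to scales $k\equiv j\pmod J$, children in $\Delta'$ sit at least $J$ generations below their parents. Having set up the tree, the reduction to weight-building proceeds as in \S\ref{ss:B-decomp}--\ref{ss:martingale}: it suffices by Lemma \ref{q-lemma} to construct, for each $Q\in\Delta'$, a maximal $\tilde\beta$-almost flat arc fragment $H_Q\subset U_Q$ touching $(1/4)Q_*$ and verify
\begin{equation*}
   \diam H_Q \leq q\, s_Q, \qquad s_Q = 101\,\ell(R_Q) + \!\!\sum_{Q'\in\Child_{\Delta'}(Q)}\!\!\diam H_{Q'},
\end{equation*}
for a universal $0<q<1$, where $\Child_{\Delta'}(Q)$ is taken in $\Delta'$ and $R_Q$ is the corresponding remainder.

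Next I would exploit Schul's stricter definition \eqref{e:arc-beta}. Any arc $\tau$ with $\tilde\beta(\tau)\leq\epsilon_2\beta_\Gamma(Q)$ whose image passes through $U_Q^x$ must be \emph{diametrical}: its image in $U_Q$ lies in a $O(\epsilon_2\diam Q)$-neighborhood of the chord $[\Start(\tau),\End(\tau)]$, and by the condition defining $\Delta_{2.1}$ every such arc is almost flat and the endpoints lie essentially on $\partial Q$. Consequently $\diam H_Q\geq (1-O(\epsilon_2))\diam U_Q$, and moreover $\diam H_{Q'}\geq (1-O(\epsilon_2))\diam U_{Q'}$ for each child, eliminating the radial-arc complication identified in Remarks \ref{r:Schul-1} and \ref{remark:challenges}. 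In particular, the coarse estimate \eqref{e:coarse estimate} holds for $H_Q$ with coefficient $1+O(\epsilon_2)$ instead of $2.00002$, i.e.
\begin{equation*}
    \diam H_Q \leq \ell(R_Q\cap B) + (1+O(\epsilon_2))\sum_{\mathclap{U_{Q'}\cap B\neq\emptyset}}\diam H_{Q'},
\end{equation*}
where $B$ is a thin tube around $H_Q$ accommodating the $\tilde\beta$-deviation. The large-remainder case $\ell(R_Q)>(1/100)\diam H_Q$ is handled as in Lemma \ref{case1}. So the remaining task is to gain a definite saving below $1$ in the small-remainder case.

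To extract this saving I would run the necessary-cores argument of \S\ref{sec: N'}--\S\ref{proof-I}, adapted to the present setup. The two defining conditions of $\Delta_{2.1}$ play complementary roles: $\beta_{S(Q)}(U_Q^x)\leq C_U^{-1}\beta_{S(Q)}(Q)$ says that all almost flat arcs cluster very tightly around a common line $L_Q$ inside $U_Q^x$, while $Q\in\mathscr{G}_2^1$ guarantees $\beta_{S(Q)}(Q)>\epsilon_1\beta_\Gamma(Q)$, so some almost flat arc must deviate from $L_Q$ by $\gtrsim\epsilon_1\beta_\Gamma(Q)\diam Q$ in the annulus $Q\setminus U_Q^x$. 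Since every such arc is almost flat (and hence diametrical through $U_Q$), it must re-enter $U_Q$ and contribute cores $U_{Q'}$ that lie in a neighborhood of $H_Q$ but are essentially transverse to it. This is exactly the $\mathcal{N}_2$-type geometry of Definition \ref{nec-cores}. Covering $\Pi_{H_Q}(H_Q)\setminus \Pi_{H_Q}(R_Q)$ by sufficient cores via the topological Lemma \ref{topological lemma}, and paying for each $\diam\Pi_{H_Q}(D_{Q'})$ using the auxiliary fragments of Lemmas \ref{N21-auxiliary} / \ref{N22-auxiliary}, yields the improved estimate
\begin{equation*}
    \diam H_Q \leq c_1\,\ell(R_Q) + c_2\sum_{Q'\in\Child_{\Delta'}(Q)}\diam H_{Q'}
\end{equation*}
with $c_2<1$, provided $C_U$ is chosen large and $\epsilon_2$ small depending on $A_{\mathscr{H}}$ and $\epsilon_1$.

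\textbf{The main obstacle.} The hard step is exactly the improvement from coefficient $\geq 1$ in the coarse estimate to a coefficient $q<1$: one must locate enough diametrically-transverse child cores $U_{Q'}$ to pay not only for the portion of $H_Q$ they cover but also for the non-$\mathcal{N}_2$ portion, and show these children's $H_{Q'}$-diameters (which account for child cost) sum to a definite fraction of $\diam H_Q$. This is precisely the content of Lemmas \ref{improve} and \ref{l:B_2 subset} in the present paper and is the place where Schul's original proof has the gap noted in \cite[Remark 3.8]{Badger-McCurdy-1}; using the necessary/sufficient cores of \S\ref{sec: N'} together with the topological Lemma \ref{topological lemma} circumvents the gap. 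Once $q<1$ is secured, the martingale construction of \S\ref{ss:martingale} produces weights $w_Q$ with $\int_\Gamma w_Q\,d\Haus^1\gtrsim_{A_{\mathscr{G}}}\diam Q$ and $\sum_{Q\in\Delta'}w_Q\lesssim 1$ almost everywhere on $\Gamma$, from which $\sum_{Q\in\Delta'}\diam Q\lesssim_{A_{\mathscr{G}}}\Haus^1(\Gamma)$ follows by integration.
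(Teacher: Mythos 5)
Your proposal is correct and takes essentially the same route as the paper: Appendix \ref{appendix328} likewise runs Schul's weight/martingale scheme on the core tree for $\Delta'$, notes that the $\tilde\beta$-flatness forces the central arc fragments to be diametrical (so $\diam H_Q\approx \diam U_Q$ and the radial-arc difficulty disappears), and then obtains the key sub-unit estimate \eqref{qq-goal} by invoking the classification of cores in Definition \ref{nec-cores}, the case analysis of \S\ref{proof-main}, the topological Lemma \ref{topological lemma}, and Lemmas \ref{improve} and \ref{l:B_2 subset} --- exactly the plan you outline, at the same level of detail as the paper's own sketch. One small caveat: your heuristic that the arc witnessing $\beta_{S(Q)}(Q)>\epsilon_1\beta_\Gamma(Q)$ must ``re-enter $U_Q$'' is not quite right (such an arc need not meet $U_Q$ at all, and in the paper the transverse extra length near a child core $U_{Q'}$ is extracted from the child ball's own $\mathscr{G}_2^1$-type conditions at scale $Q'$), but since you delegate the quantitative saving to the adapted Lemmas \ref{improve} and \ref{l:B_2 subset}, this does not affect the structure of the argument.
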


Schul's strategy for proving Lemma \ref{l:328} is the one that we described in \S\ref{martingale}. It suffices to construct Borel functions $w_Q:H\rightarrow[0,\infty]$ for each $Q\in\Delta'$, which satisfy the inequalities \eqref{B-mass} and \eqref{B-bounded} with $\Delta'$ in place of $\mathscr{G}$. Build weights $w_Q$ using the cores $U_Q$ as in \S\ref{ss:martingale} with $\diam U_Q$ in place of $\diam H_Q$. (The concept of maximal arc fragments $H_Q$ introduced in Remark \ref{r:maximal-fragments} did not appear in \cite{Schul-Hilbert}, but in any event $\diam H_Q\geq \diam \gamma_Q\approx \diam U_Q$ because $\gamma_Q$ is diametrical for all $Q\in\Delta_{2.1}$.) Define the remainder set $R_Q$ as in \eqref{remainder} and define an auxiliary quantity $s_Q=2\ell(R_Q)+\sum_{Q'\in\Child(Q)}\diam U_{Q'}$. By the argument in \cite[Lemma 3.25, Steps 2--3]{Schul-Hilbert} or Lemma \ref{q-lemma} above, the weights $\{w_Q:Q\in\Delta'\}$ satisfy \eqref{B-mass} and \eqref{B-bounded} so long as there exists a universal constant $0<q<1$ such that \begin{equation}\label{qq-goal} \diam U_Q \leq q s_Q\quad\text{for all } Q\in\Delta'.\end{equation} Unfortunately, the proof of \eqref{qq-goal} in \cite{Schul-Hilbert} contains an error and is incomplete.

\begin{figure}\begin{center}\includegraphics[width=\textwidth]{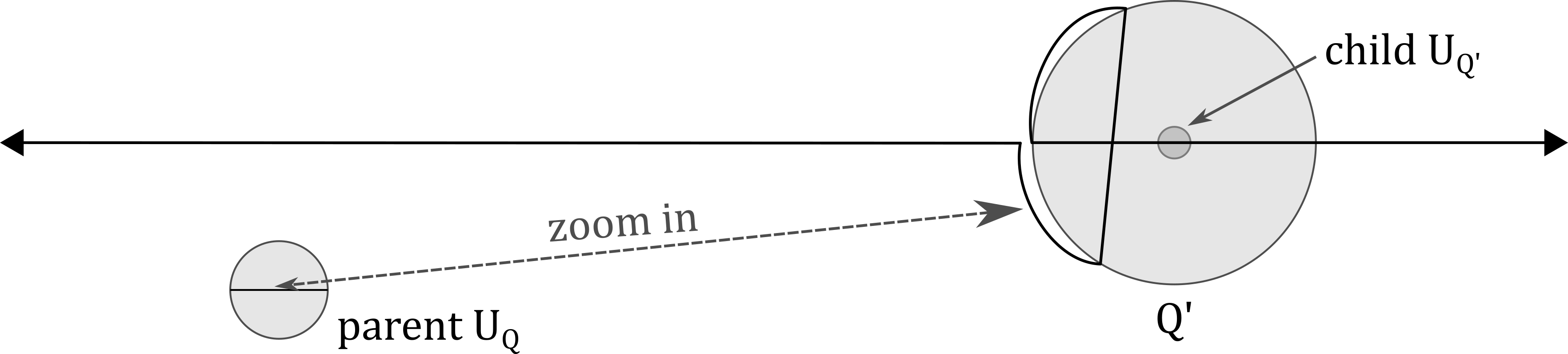}\end{center}\caption[\,Example of an almost flat arc inside of the core of a Schul-type $\Delta_{2.1}$ ball]{Example of an almost flat arc $\tau\in S(Q)$ inside of the core $U_{Q}$ of a Schul-type $\Delta_{2.1}$ ball $Q$. At the resolution of $Q$ or $U_Q$, the portion of $\Image(\tau)$ inside of $U_Q$ is indistinguishable from a line segment. However, zooming in reveals a more complicated picture. The portion of $\Image(\tau)$ inside of the sub-ball $Q'\in\Delta_{2.1}$ is the union of two line segments, only one of which intersects the core $U_{Q'}$. The orthogonal projection $\pi$ from $\Gamma\cap U_Q$ onto the horizontal line through the center of $Q'$ is 1-to-1 when restricted to the cylinder above points in $\pi(\Gamma\cap U_{Q'})$. This shows that \cite[(3.24)]{Schul-Hilbert} used in the proof of Lemma 3.28 is invalid.} \label{fig:L328}\end{figure}

Fix $Q=B(x_Q,A_\mathscr{H}2^{-k})\in\Delta'$. Simplifying the notation from \cite{Schul-Hilbert} slightly, write $Q_*=B(x_Q,(1/64)2^{-k})$. As long as we choose $J$ to be sufficiently large, we have $$Q_*\subset U_Q\subset 1.00001Q_*.$$ Suppose that the central arc $\gamma_Q=f|_{[a,b]}$. Choose an interval $[c,d]\subset[a,b]$ such that $[c,d]$ is a connected component of $\gamma_{Q}^{-1}(0.99999Q_*)$ and $f([c,d])$ has maximal diameter among all such intervals. (This is like extracting $G_Q$ from $H_Q$.) Define $\eta_Q=f|_{[c,d]}$ and let $L$ denote the line passing through $\Edge(\eta_Q)=[f(c),f(d)]$. Because $\gamma_Q$ is almost flat, $\dist(z,L)\lesssim_{A_\mathscr{G}} \epsilon_2 \diam Q_*$ for every $z\in\Image(\eta_Q)$. Finally, let $\pi$ denote the orthogonal projection from $\Gamma\cap 0.99999Q_*$ onto $L$. The first error in the proof is in \cite[(3.24)]{Schul-Hilbert}, which states that for all $x\in\pi(\Gamma\cap 0.99999Q_*)\setminus \pi(R_Q)$, there are at least two points in $\Gamma\cap 0.99999Q_*$ that project onto $x$. In Figure \ref{fig:L328}, we show that this is not the case.

A second (implicit) error appears in the preamble to the proof just before \cite[Remark 3.27]{Schul-Hilbert}. Let $Q'\in\Child(Q)$; in addition to the central arc $\gamma_{Q'}$, the set $S(Q')$ includes at least one other arc $\tau_{Q'}$ with a distinct image. (In the figure, $\gamma_{Q'}$ traces the horizontal line segment and $\tau_{Q'}$ traces the diagonal line segment.) Let $\widehat{\gamma_{Q'}}$ and $\widehat{\tau_{Q'}}$ denote the extensions of the arcs to elements in $\Lambda(Q)$. It is implicitly suggested that the arcs $\widehat{\gamma_{Q'}}$ and $\widehat{\tau_{Q'}}$ are distinct and this together with \cite[(3.24)]{Schul-Hilbert} is what let's one check \eqref{qq-goal}. The example in the figure shows that it is possible for $\Image(\widehat{\gamma_{Q'}})=\Image(\widehat{\tau_{Q'}})$ even though $\Image(\gamma_{Q'})\neq \Image(\tau_{Q'})$. Ultimately, the proof of \eqref{qq-goal} offered in \cite{Schul-Hilbert} is incomplete and unconvincing.

Nevertheless, \eqref{qq-goal} and \cite[Lemma 3.28]{Schul-Hilbert} are correct and this can be shown using the arguments in \S\S3--7. The essential new ingredients that let us wrap up Schul's proof of the Analyst's Traveling Salesman theorem in Hilbert space (Corollary \ref{c:hilbert}) are the classification of cores in Definition \ref{nec-cores}, the case analysis in \S\ref{proof-main}, Lemma \ref{improve}, and Lemma \ref{l:B_2 subset}.

\bibliography{btsp-refs}
\bibliographystyle{amsbeta}

\end{document}